\tikzset{
  norm/.style     = {shape=circle, draw},
  blue/.style     = {shape=circle, draw, fill=blue!25},
  high/.style     = {shape=circle, draw, color=red},
  bluehigh/.style = {shape=circle, draw, color=red, fill=blue!25},
  red/.style      = {shape=circle, draw, fill=red!25},
  both/.style     = {shape=circle, draw, fill=violet!35},
  root/.style     = {node, bottom color=red!30},
  env/.style      = {treenode, font=\ttfamily\normalsize},
  dummy/.style    = {circle}
}
\tikzstyle{standard}=[circle, draw=black, fill=white, very thick, minimum size=7mm]
\tikzstyle{standard2}=[circle, draw=black, fill=white, very thick]
\tikzstyle{blue2}=[circle, draw=black, fill=blue!25, very thick]
\tikzstyle{small}=[circle, draw=black, fill=black, very thick, minimum size=4mm]
\tikzstyle{special}=[circle, draw=red!60, fill=red!5, very thick, minimum size=5mm]
\newtheorem{theorem}{Theorem}[section]
\newtheorem{lemma}[theorem]{Lemma}
\newtheorem{cor}[theorem]{Corollary}
\newtheorem{prop}[theorem]{Proposition}
\theoremstyle{definition}
\newtheorem{df}[theorem]{Definition}
\newtheorem{rem}[theorem]{Remark}
\newtheorem{ex}[theorem]{Example}
\newtheorem{conj}[theorem]{Conjecture}
\DeclareMathOperator{\F}{\mathcal{F}}
\DeclareMathOperator{\susp}{susp}
\DeclareMathOperator{\bbS}{\mathbb{S}} 
\DeclareMathOperator{\sgn}{\mathrm{sgn}} 
\DeclareMathOperator{\tr}{\mathrm{tr}} 
\newcommand{\sheila}[1]{{\color{purple} \sf Sheila: [#1]}}
\newcommand{\heha}{\tikz[baseline=.1ex, scale=.5]{
    \draw[fill=black] (0,1) circle (3pt);
    \draw[fill=black] (1,1) circle (3pt);
    \draw[fill=black] (1,0) circle (3pt);
    \draw[fill=black] (2,0) circle (3pt);
    \draw (0,1) -- (1,1);
    \draw (1,1) -- (1,0); 
    \draw (1,0) -- (2,0);
    } }
\newcommand{\hehb}{\tikz[baseline=.1ex, scale=.5]{
    \draw[fill=black] (0,1) circle (3pt);
    \draw[fill=black] (1,1) circle (3pt);
    \draw[fill=black] (1,0) circle (3pt);
    \draw[fill=black] (2,1) circle (3pt);
    \draw (0,1) -- (1,1);
    \draw (1,1) -- (1,0); 
    \draw (1,1) -- (2,1);
    } }
\newcommand{\hehc}{\tikz[baseline=.1ex, scale=.5]{
    \draw[fill=black] (0,0) circle (3pt);
    \draw[fill=black] (1,0) circle (3pt);
    \draw[fill=black] (1,1) circle (3pt);
    \draw[fill=black] (2,1) circle (3pt);
    \draw (0,0) -- (1,0);
    \draw (1,0) -- (1,1); 
    \draw (1,1) -- (2,1);
    } }
\newcommand{\hehd}{ \tikz[baseline=.1ex, scale=.5]{
    \draw[fill=black] (0,0) circle (3pt);
    \draw[fill=black] (1,1) circle (3pt);
    \draw[fill=black] (1,0) circle (3pt);
    \draw[fill=black] (2,0) circle (3pt);
    \draw (0,0) -- (1,0);
    \draw (1,1) -- (1,0); 
    \draw (1,0) -- (2,0);
    }}
\newcommand{\hevDOWN}{\tikz[baseline=.1ex, scale=.5]{
    \draw[fill=black] (0,1) circle (3pt);
    \draw[fill=black] (1,1) circle (3pt);
    \draw[fill=black] (2,1) circle (3pt);
    \draw[fill=black] (2,0) circle (3pt);
    \draw (0,1) -- (1,1);
    \draw (1,1) -- (2,1); 
    \draw (2,1) -- (2,0);
    } }
\newcommand{\vDOWNeh}{\tikz[baseline=.1ex, scale=.5]{
    \draw[fill=black] (0,0) circle (3pt);
    \draw[fill=black] (0,1) circle (3pt);
    \draw[fill=black] (1,1) circle (3pt);
    \draw[fill=black] (2,1) circle (3pt);
    \draw (0,0) -- (0,1) -- (1,1) -- (2,1);
    } }
\newcommand{\hevUP}{\tikz[baseline=.1ex, scale=.5]{
    \draw[fill=black] (0,0) circle (3pt);
    \draw[fill=black] (1,0) circle (3pt);
    \draw[fill=black] (2,0) circle (3pt);
    \draw[fill=black] (2,1) circle (3pt);
    \draw (0,0) -- (1,0) -- (2,0) -- (2,1);
    } }
\newcommand{\vUPeh}{\tikz[baseline=.1ex, scale=.5]{
    \draw[fill=black] (0,1) circle (3pt);
    \draw[fill=black] (0,0) circle (3pt);
    \draw[fill=black] (1,0) circle (3pt);
    \draw[fill=black] (2,0) circle (3pt);
    \draw (0,1) -- (0,0) -- (1,0) -- (2,0);
    } }
\newcommand{\veva}{\tikz[baseline=.1ex, scale=.5]{
    \draw[fill=black] (0,-1) circle (3pt);
    \draw[fill=black] (0,0) circle (3pt);
    \draw[fill=black] (0,1) circle (3pt);
    \draw[fill=black] (1,0) circle (3pt);
    \draw (0,-1) -- (0,0) -- (0,1) ; 
    \draw (0,0) -- (1,0);
    } }
\newcommand{\vevb}{\tikz[baseline=.1ex, scale=.5]{
    \draw[fill=black] (0,-1) circle (3pt);
    \draw[fill=black] (0,0) circle (3pt);
    \draw[fill=black] (0,1) circle (3pt);
    \draw[fill=black] (-1,0) circle (3pt);
    \draw (0,-1) -- (0,0) -- (0,1) ; 
    \draw (0,0) -- (-1,0);
    } }
\newcommand{\vevc}{\tikz[baseline=.1ex, scale=.5]{
    \draw[fill=black] (0,0) circle (3pt);
    \draw[fill=black] (0,1) circle (3pt);
    \draw[fill=black] (1,0) circle (3pt);
    \draw[fill=black] (1,-1) circle (3pt);
    \draw (0,1) -- (0,0) -- (1,0) -- (1,-1); 
        } }
\newcommand{\vevd}{\tikz[baseline=.1ex, scale=.5]{
    \draw[fill=black] (0,-1) circle (3pt);
    \draw[fill=black] (0,0) circle (3pt);
    \draw[fill=black] (1,0) circle (3pt);
    \draw[fill=black] (1,1) circle (3pt);
    \draw (0,-1) -- (0,0) -- (1,0) -- (1,1); 
        } }
\newcommand{\hLEFTevDOWN}{\tikz[baseline=.1ex, scale=.5]{
    \draw[fill=black] (-1,1) circle (3pt);
    \draw[fill=black] (0,1) circle (3pt);
    \draw[fill=black] (0,0) circle (3pt);
    \draw[fill=black] (0,-1) circle (3pt);
    \draw (-1,1)--(0,1)--(0,0)--(0,-1); 
        } }
\newcommand{\hRIGHTevDOWN}{\tikz[baseline=.1ex, scale=.5]{
    \draw[fill=black] (1,1) circle (3pt);
    \draw[fill=black] (0,1) circle (3pt);
    \draw[fill=black] (0,0) circle (3pt);
    \draw[fill=black] (0,-1) circle (3pt);
    \draw (1,1)--(0,1)--(0,0)--(0,-1); 
        } }
\newcommand{\hhh}{\tikz[baseline=.1ex, scale=.5]{
    \draw[fill=black] (0,0) circle (3pt);
    \draw[fill=black] (1,0) circle (3pt);
    \draw[fill=black] (2,0) circle (3pt);
    \draw (0,0)--(1,0) -- (2,0); 
        } }    
\title{Topology of Cut complexes II}
\author[M. Bayer]{Margaret Bayer}
\address{Margaret Bayer: University of Kansas, Lawrence, Kansas, USA}
\email{bayer@ku.edu}
\author[M. Denker]{Mark Denker}
\address{Mark Denker: University of Kansas, Lawrence, Kansas, USA}
\email{mark.denker@ku.edu}
\author[M. Jeli\'c Milutinovi\'c]{Marija Jeli\'c Milutinovi\'c}
\address{Marija Jeli\'c Milutinovi\'c: University of Belgrade, Serbia}
\email{marija.jelic@matf.bg.ac.rs}
\author[S. Sundaram]{Sheila Sundaram}
\address{Sheila Sundaram: University of Minnesota, Minneapolis, Minnesota, USA}
\email{shsund@umn.edu}
\author[L. Xue]{Lei Xue}
\address{Lei Xue: University of Michigan, Ann Arbor, Michigan, USA}
\email{leixue@umich.edu}
\begin{document}    
\subjclass{{57M15, 57Q70, 05C69, 05E45, 05E18}}

\keywords{graph complex, grid graph, squared path graph, disconnected set, homology representation, homotopy,   Morse matching, shellability}

\begin{abstract}
 We continue the study of the $k$-cut complex $\Delta_k(G)$ of a graph $G$ initiated in the paper of Bayer, Denker, Jeli\'c Milutinovi\'c, Rowlands, Sundaram and Xue [Topology of cut complexes of graphs, SIAM J. on Discrete Math. 38(2): 1630--1675 (2024)]. 
 We give explicit formulas for the $f$- and $h$-polynomials of the cut complex $\Delta_k(G_1+G_2) $ of the disjoint union of two graphs $G_1$ and $G_2$, and for the homology representation of $\Delta_k(K_m+K_n)$.
 We also study the cut complex of the squared path and the grid graph.  Our techniques include tools from combinatorial topology, discrete Morse theory and equivariant poset topology.
\end{abstract}
\maketitle
\tableofcontents
\section{Introduction}\label{sec:Intro}

This paper continues the investigations begun in \cite{BDJRSX-TOTAL2024} and  \cite{BDJRSX2024}.  We study the \emph{$k$-cut complex} $\Delta_k(G)$ of a graph $G=(V,E)$. This is the simplicial complex whose facets $\sigma$ are subsets of the vertex set $V$ of $G$, with  the property that the complement $V\setminus \sigma$ is a $k$-subset of $V$ inducing a disconnected subgraph of $G$. Our definition is inspired by a celebrated result of Fr\"oberg \cite{Froberg1990} for the case $k=2$,  which characterizes the Stanley-Reisner ideal generated by monomials of degree two in terms of chordal graphs. Consideration of higher degree monomials led us to 
the above generalization of the simplicial complex  to arbitrary $k$.  For more details see \cite{BDJRSX-TOTAL2024} and  \cite{BDJRSX2024}.

The behavior of the cut complex in relation to some well-known graph operations was established in \cite{BDJRSX2024}, and some particular families of graphs were studied.  Here we study in more detail the effect of one particular graph operation, namely, the disjoint union  $G_1+G_2$ of two graphs $G_1, G_2$. We show that in this case the topology of the $k$-cut complex of the disjoint union is determined by the join $G_1*G_2$ and the disjoint union of the complete graphs on the vertex sets of $G_1$ and $G_2$.  This observation then allows us to give precise formulas for the $f$- and $h$-polynomials of $\Delta_k(G_1+G_2)$. 
We also study two additional families of graphs, the squared path and the grid graph.  

The paper is organized as follows.  Section 2 recalls the necessary definitions from~\cite{BDJRSX2024}, background information from topology, and  facts about the face lattice of the $k$-cut complex from \cite{BDJRSX2024}, establishing further results that will be necessary for the study of the grid graph in Section~\ref{sec:Grid2021Sept15-17}.

In Section 3 we present an in-depth discussion of the effect of the disjoint union $G_1+G_2$ of two graphs $G_1, G_2$ on the cut complex $\Delta_k$. It had already been shown in \cite{BDJRSX2024} that 
$\Delta_k(G_1+G_2)$ is shellable if and only if $\Delta_k(G_i)$ is shellable for $i=1,2$.  We give simple explicit formulas for the $f$-polynomial and the $h$-polynomial of $\Delta_k(G_1+G_2)$ in terms of the respective polynomials for $\Delta_k(G_i)$, $i=1,2$; see Theorem~\ref{fdisjunion} and Corollary~\ref{hdisjunion}.  The $k$-cut complex $\Delta_k(K_{n_1}+K_{n_2})$, where $n_i$ is the number of vertices of the graph $G_i$ and $K_{n_i}$ is the complete graph, plays a crucial role in determining the topology of $\Delta_k(G_1+G_2)$. 

Section 4 studies the   $k$-cut complex of the squared path, showing that it is always shellable. We give an enumeratively interesting conjecture about the Betti numbers.

Section~\ref{sec:Grid2021Sept15-17} examines the grid graph. The homotopy type of the  $2$-cut complex was determined in \cite{BDJRSX-TOTAL2024}.  Here we show that for $k=3,4$, the $k$-cut complex has the homotopy type of a wedge of spheres in the top dimension, and we determine the Betti numbers. There is a considerable literature on independence complexes of grid graphs, 
and the topology in general is difficult to determine.  The case of the cut complex seems to be similarly challenging.  We prove shellability of the 3-cut complex, and determine the homotopy type of the 4-cut complex by a discrete Morse matching. For the 6-cut complex we show that homology is torsion-free and occurs in at most the top two dimensions.

Section~\ref{sec:homology-reps} studies group actions on the homology of the cut complexes. We present a formula for the representation of the symmetric group on the unique nonvanishing homology of $\Delta_k(K_{n_1}+K_{n_2})$. As shown in \cite{BDJRSX2024} for the edgeless graph and the complete multipartite graph, the Specht modules indexed by hooks play a prominent role in the representation.  Also included here are descriptions of the homology representation on the cut complexes of the path and the cycle for their respective symmetry groups.

The paper concludes with ideas for further exploration.

\section{Definitions}\label{sec:Definitions}
\phantom{}
General references for simplicial complexes, shellability and topology are \cite{BjTopMeth1995}, \cite{Hatcher2002} 
and \cite{WachsPosetTop2007}, and \cite{WestGraphTheory1996} for graph theory.  All graphs in this paper are simple (no loops and no multiple edges).

\begin{df}\label{def:simplicial-complex}  A \emph{simplicial complex} $\Delta$ on a set $A$ is a collection of subsets of $A$ 
\[ \sigma\in \Delta \text{ and } \tau\subseteq \sigma \Rightarrow \tau \in \Delta. \] 
The elements of $\Delta$ are called its \emph{faces} or \emph{simplices}. 
If the collection of subsets is empty, i.e., $\Delta$ has no faces,  we call $\Delta$ the \emph{void complex}.  Otherwise $\Delta$ always contains the empty set as a face. 

The \emph{dimension of a face} $\sigma$, $\dim(\sigma)$, is one less than its cardinality; thus the dimension of the empty face is $(-1)$, and the 0-dimensional faces are the \emph{vertices} of $\Delta$. A \emph{$d$-face} or \emph{$d$-simplex} is a face of dimension $d$.  The maximal faces of $\Delta$ are called its \emph{facets}, and the maximum dimension of a facet is the \emph{dimension} $\dim(\Delta)$ of the nonvoid simplicial complex $\Delta$. We  write $\Delta=\langle\mathcal{F}\rangle$ for the simplicial complex $\Delta$ whose set of facets is $\mathcal{F}$.

In this paper all simplicial complexes will be \emph{finite}, that is, the vertex set is finite.

A (nonvoid) simplicial complex is \emph{pure} if all its facets have the same dimension, which is then the dimension of the complex.

The \emph{join} of two simplicial complexes $\Delta_1$ and $\Delta_2$ with disjoint vertex sets is the complex \begin{center}{$\Delta_1 * \Delta_2= \{\sigma\cup \tau: \sigma\in \Delta_1, \tau\in \Delta_2\}.$}\end{center}
Thus the join $\Delta_1 * \Delta_2$ contains $\Delta_1$ and $ \Delta_2$ as subcomplexes.

The \emph{cone} over $\Delta$ and the \emph{suspension} of $\Delta$ are the complexes 
\begin{center}{$\mathrm{cone}(\Delta)=\Delta* \Gamma_1, \ 
\mathrm{susp}(\Delta)=\Delta*\Gamma_2=\Delta*\{u\}\cup\Delta*\{v\},$}\end{center}
where $\Gamma_1$ is the 0-dimensional simplicial complex with one vertex, and $\Gamma_2$ is the 0-dimensional complex with two vertices $u,v$. 

The \emph{$j$-skeleton} 
of $\Delta$ is the simplicial complex formed by the faces of $\Delta$ of dimension at most $j$.

\end{df}

\begin{df}[{\cite[Chapter III, Section 2]{RPSCCA1996}, \cite[Section~11.2]{BjTopMeth1995}}]\label{def:shelling}
An ordering $F_1,F_2,\dots,F_t$ of the facets of a simplicial complex $\Delta$ is a \emph{shelling} if, for every $j$ with $1<j\leq t$, $\left( \bigcup_{i=1}^{j-1}\langle F_i\rangle\right)\cap \langle F_j\rangle$
is a simplicial complex whose facets all have cardinality $|F_j|-1$, where $\langle F_i\rangle$ is the simplex generated by the face $F_i$.
The simplicial complex $\Delta$ is  \emph{shellable}  if it admits a shelling order of its facets. 

Equivalently, an ordering $F_1,F_2,\dots,F_t$ of the facets of $\Delta$ is a shelling if and only if for all $i,j$ such that $1\leq i<j\leq t$, there exists $k<j$ such that
$$F_i\cap F_j\subset F_k\cap F_j \quad \text{and} \quad |F_k\cap F_j| = |F_j|-1.$$

Given a shelling order $F_1,F_2,\dots,F_t$ of $\Delta$, the \emph{restriction} $r(F_i)$ of the facet $F_i$ is defined to be the unique minimal face of $\left( \bigcup_{j=1}^{i}\langle F_j\rangle\right)\setminus  \left( \bigcup_{j=1}^{i-1}\langle F_j\rangle\right)$. Hence the shelling order gives a partition into disjoint Boolean intervals 
$\Delta=\sqcup_{j=1}^t [r(F_j), F_j].$ The facet $F_i$ is called a \emph{full-restriction facet} if $r(F_i) = F_i$. 
\end{df}
\begin{rem}\label{rem:empty-complex-0-dim-complex} By convention, the void complex is shellable. The complex whose only face is the empty set 
 is vacuously shellable. 
The  complex with a unique nonempty facet is (also vacuously) shellable, and contractible. 
\end{rem}
See Figures~\ref{fig:Ex-Delta2}(\textsc{b}) and~\ref{fig:Ex-Delta4}(\textsc{b}) for examples of shellable and nonshellable complexes.

In combinatorial topology, shellability is an important tool  for determining the homotopy type of simplicial complexes, thanks to the following theorem of Bj\"orner.
\begin{theorem}[{\cite[(9.19) and Sec. 11]{BjTopMeth1995}}] \label{thm:shell-implies-homotopytype}
A pure shellable simplicial complex of dimension $d$ has the homotopy type of a wedge of spheres, all of dimension $d$. (We include in this the wedge of no spheres, that is, when the complex is contractible.)
The number of spheres in the wedge is the number of full restriction facets in the shelling.
\end{theorem}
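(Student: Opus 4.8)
The plan is to run the standard incremental argument, building $\Delta$ one facet at a time along the shelling order $F_1,\dots,F_t$ and tracking the homotopy type. Set $\Delta_i=\langle F_1,\dots,F_i\rangle$, so that $\Delta_0$ is void, $\Delta_1=\langle F_1\rangle$ is a single $d$-simplex, and $\Delta_t=\Delta$; purity guarantees $|F_j|=d+1$ for every $j$. The first point to nail down is the shape of the attaching region $R_i:=\Delta_{i-1}\cap\langle F_i\rangle$ for $2\le i\le t$. Since $F_i$ is a facet of $\Delta$ it does not lie in $\Delta_{i-1}$, so $R_i$ is a proper subcomplex of $\langle F_i\rangle$, hence $R_i\subseteq\partial\langle F_i\rangle$; and the shelling condition forces $R_i$ to be pure of dimension $|F_i|-2=d-1$, so $R_i$ is exactly the union of those among the $d+1$ facets of $\partial\langle F_i\rangle$ that already belong to $\Delta_{i-1}$. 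By the description of the restriction in Definition~\ref{def:shelling} we have $\langle F_i\rangle\setminus R_i=[r(F_i),F_i]$, and therefore $r(F_i)=F_i$ (that is, $F_i$ is a full restriction facet) precisely when $R_i$ is the entire boundary $\partial\langle F_i\rangle\cong S^{d-1}$, while $r(F_i)\subsetneq F_i$ precisely when $R_i$ omits at least one facet of $\partial\langle F_i\rangle$.

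Next I would induct on $i$, with hypothesis that $\Delta_i$ is homotopy equivalent to a wedge of $d$-spheres whose number equals the count of full restriction facets among $F_1,\dots,F_i$ (reading the empty wedge as a contractible space, and, when $d=0$, a wedge of $m$ copies of $S^0$ as $m+1$ points). The base case $i=1$ holds because $\langle F_1\rangle$ is contractible and $F_1$ is not a full restriction facet for $d\ge 0$. For the inductive step write $\Delta_i=\Delta_{i-1}\cup\langle F_i\rangle$ with $\Delta_{i-1}\cap\langle F_i\rangle=R_i$ and split into two cases. If $F_i$ is \emph{not} a full restriction facet, then $R_i$ misses some facet $F_i\setminus\{v\}$ of $\partial\langle F_i\rangle$; the only $d$-subset of $F_i$ avoiding $v$ is $F_i\setminus\{v\}$, so every facet of $R_i$ contains $v$ and $R_i$ is a cone with apex $v$, hence contractible. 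Then $R_i\hookrightarrow\langle F_i\rangle$ is an inclusion of CW complexes that is a homotopy equivalence (both spaces being contractible), so by the gluing lemma the parallel inclusion $\Delta_{i-1}\hookrightarrow\Delta_i$ is a homotopy equivalence, and the count is unchanged. If $F_i$ \emph{is} a full restriction facet, then $\Delta_i$ is obtained from $\Delta_{i-1}$ by attaching a $d$-cell along the inclusion $S^{d-1}\cong\partial\langle F_i\rangle\hookrightarrow\Delta_{i-1}$; by the inductive hypothesis $\Delta_{i-1}$ is a wedge of $d$-spheres, hence $(d-1)$-connected, so the attaching map is null-homotopic, and attaching a $d$-cell along a null-homotopic map yields $\Delta_{i-1}\vee S^d$. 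Thus $\Delta_i$ is a wedge of $d$-spheres with exactly one more sphere, matching the increase in the number of full restriction facets. Taking $i=t$ completes the argument.

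The routine points I am suppressing are: that pure shellable complexes of dimension $\ge 1$ are connected (so the wedges are well defined up to homotopy), the precise form of the gluing lemma and of the fact that attaching a cell along a null-homotopic map produces a wedge with an extra sphere, and the transport of a cell attachment along a homotopy equivalence $\Delta_{i-1}\to\bigvee S^d$ using that subcomplex inclusions are cofibrations; all of these are standard (see \cite{Hatcher2002} and \cite{BjTopMeth1995}). The one genuinely delicate step, and the place where the hypotheses really enter, is the dichotomy in the inductive step: purity is what makes each $R_i$ equidimensional of dimension $d-1$, the shelling condition is what pins $R_i$ down to a union of facets of a single simplex boundary, and together these force $R_i$ to be either a $(d-1)$-sphere or a cone, which is exactly the input the two homotopy lemmas require.
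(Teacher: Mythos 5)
Your argument is correct. Note that the paper does not prove this statement at all: it is quoted verbatim from Bj\"orner's work (\cite[(9.19) and Sec.~11]{BjTopMeth1995}) and used as a black box. What you have written is essentially the standard proof behind that citation: the shelling condition plus purity force each attaching region $\Delta_{i-1}\cap\langle F_i\rangle$ to be a union of facets of $\partial\langle F_i\rangle$, hence either the full boundary sphere (exactly when $r(F_i)=F_i$) or a cone over a vertex, and the induction then proceeds by the gluing lemma in the cone case and by attaching a $d$-cell along a null-homotopic map (using $(d-1)$-connectedness of a wedge of $d$-spheres) in the full-restriction case. Your identification of the new faces with the interval $[r(F_i),F_i]$ and the resulting dichotomy is exactly right, and the low-dimensional conventions ($d=0$, empty wedge) are handled consistently with the theorem's parenthetical remark; in the cone case you could equally well have invoked the paper's own Propositions~\ref{prop:TopFact2ndIsoThm} and~\ref{prop:quotient-by-contractible-homotopy}(1) in place of the gluing lemma, but that is a cosmetic difference.
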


\begin{df}\label{def:graph-disc-sep}
Let $G=(V,E)$ be a graph. 
If $S$ is a subset of the vertex set $V$,  write $G[S]$ to denote the \emph{induced subgraph} of $G$ whose vertex set is $S$. 
\end{df}

\begin{df}\label{def:cut-cplx}
Let $G=(V,E)$ be a graph on $|V|=n$ vertices, and let $k\ge 2$. Define the \emph{$k$-cut complex} of the graph $G$  to be the $(n-k-1)$-dimensional simplicial complex 
\[\Delta_k(G):=\langle F\subseteq V, |F|=n-k\mid G[V\setminus F] \text{ is disconnected}\rangle.\] The facets of the cut complex $\Delta_k(G)$ are the vertex subsets of $G$ of size $(n-k)$ whose removal disconnects the graph $G$.   Thus $\sigma$ is a face of the cut complex $\Delta_k(G)$ if and only if its complement $V\setminus \sigma$ contains a subset $S$ of size $k$ such that the induced subgraph $G[S]$ is disconnected.   Note  the inclusion
$\Delta_{k+1}(G)\subseteq \Delta_k(G)$ for $ k\ge 2,$ and the fact that the vertices of $\Delta_k(G)$ may be  a proper subset of the vertices of the graph $G$. See Figures~\ref{fig:Ex-Delta2} and~\ref{fig:Ex-Delta4} for contrasting examples, reproduced here from \cite{BDJRSX2024} for completeness.
\end{df}
    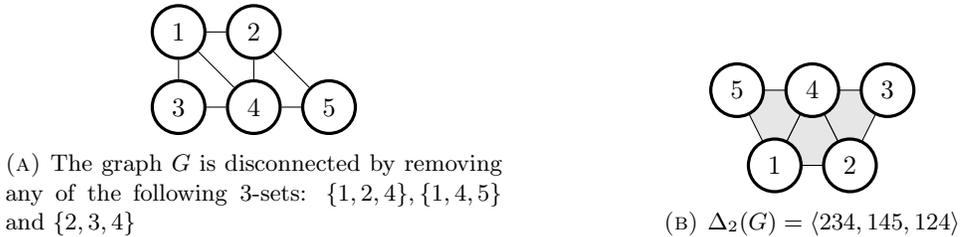
\begin{figure}[htb]
        \centering
        \begin{subfigure}{0.4\textwidth}
        \centering
        \begin{tikzpicture}[scale=0.5]
\node[standard] (node3) at (0,0) {3};
\node[standard] (node1) at (0,2) {1};
\node[standard] (node4) at (2,0) {4};
\node[standard] (node2) at (2,2) {2};
\node[standard] (node5) at (4,0) {5};

\draw (node1) -- (node3);
\draw (node1) -- (node2);
\draw (node1) -- (node4);
\draw (node2) -- (node4);
\draw (node2) -- (node5);
\draw (node3) -- (node4);
\draw (node4) -- (node5);
        \end{tikzpicture}
        \caption{The graph $G$ is disconnected by removing any of the following  3-sets: $\{1,2,4\}, \{1,4,5\}$ and $\{2,3,4\}$}
        \end{subfigure} \qquad
        \begin{subfigure}{0.4\textwidth}
        \centering
        \begin{tikzpicture}[scale=0.5]
\draw[fill=gray!20] (3,0) -- (4,2) -- (2,2) -- cycle;
\draw[fill=gray!20] (1,0) -- (2,2) -- (3,0) -- cycle;
\draw[fill=gray!20] (1,0) -- (2,2) -- (0,2) -- cycle;

\node[standard] (node5) at (0,2) {5};
\node[standard] (node1) at (1,0) {1};
\node[standard] (node4) at (2,2) {4};
\node[standard] (node2) at (3,0) {2};
\node[standard] (node3) at (4,2) {3};
        \end{tikzpicture}
       \caption{$\Delta_2 (G)=\langle234,145,124\rangle$}
        \end{subfigure}
       \caption{(Shellable) 2-cut complex of graph $G$}\label{fig:Ex-Delta2}
    \end{figure}

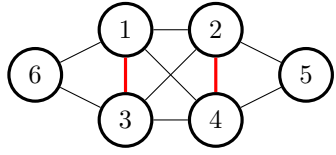
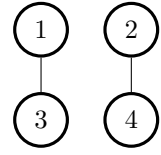
\begin{figure}[htb!]
        \centering
        \begin{subfigure}{0.4\textwidth}
        \centering
        \begin{tikzpicture}[scale=0.6]
\node[standard] (node3) at (0,0) {3};
\node[standard] (node1) at (0,2) {1};
\node[standard] (node4) at (2,0) {4};
\node[standard] (node2) at (2,2) {2};
\node[standard] (node5) at (4,1) {5};
\node[standard] (node6) at (-2,1) {6};
\draw[very thick, color=red] (node1) -- (node3); 
\draw (node1) -- (node2);
\draw (node1) -- (node4);  
\draw (node2) -- (node3);  
\draw[very thick, color=red] (node2) -- (node4);
\draw (node2) -- (node5);
\draw (node3) -- (node4);
\draw (node4) -- (node5);
\draw (node1) -- (node6);
\draw (node3) -- (node6);

        \end{tikzpicture}
        \caption{$G$ is disconnected by removing\\ one of the 2-sets $\{1,3\}$, $\{2,4\}$}
        \end{subfigure} \qquad
        \begin{subfigure}{0.4\textwidth}
        \centering
        \begin{tikzpicture}[scale=0.6]
\node[standard] (node3) at (0,0) {3};
\node[standard] (node1) at (0,2) {1};
\node[standard] (node4) at (2,0) {4};
\node[standard] (node2) at (2,2) {2};

\draw (node1) -- (node3);
\draw (node2) -- (node4);
        \end{tikzpicture}
        \caption{$\Delta_4 (G)=\langle 13, 24\rangle$}
        \end{subfigure}
       \caption{(Nonshellable) 4-cut complex of graph $G$}\label{fig:Ex-Delta4}
    \end{figure}

In \cite{BDJRSX-TOTAL2024} we considered a related complex, the {\em total $k$-cut complex}.  The facets of that complex are complements of independent sets of $k$ vertices, that is, those vertex sets that induce ``totally disconnected'' subgraphs.  Clearly, for $k=2$ these are the same as those of \Cref{def:cut-cplx}, so for $2$-cut complexes we will sometimes use results from \cite{BDJRSX-TOTAL2024}.

\begin{ex}\label{ex:Examples-cut-complex}  Let $G$ be a graph on $n$ vertices. We record some easy facts about cut complexes.  
\begin{enumerate}

\item $\Delta_k(G)$ is void if $k=1$ or $k>n$. 

\item $\Delta_n(G)$ is 
$\begin{cases} \text{ the void complex, } & \text{if $G$ is connected},\\
            \text{ the $(-1)$-dimensional complex } \{\emptyset\},  
                              & \text{otherwise}. \end{cases}$

\item $\Delta_k(G)$ is void for $n-k\le r-1$ if $G$ is $r$-connected, since at least $r$ vertices must be removed to disconnect the graph.

\item  If $G$ is the complete graph $K_n$, then $\Delta_k(G)$  is void for all $k\ge 1$. 

\item If $G=E_n$ is the edgeless graph on $n$ vertices, then for $2\le k\le n-1$, $\Delta_k(G)$ is the $(n-k-1)$-skeleton of an $(n-1)$-dimensional simplex, hence shellable \cite{BjWachsI1996}.
\end{enumerate}
\end{ex}

In view of Item (1) above,  we will assume $n\ge k\ge 2$ for the cut complex $\Delta_k(G)$.

 Recall that the Betti numbers of a topological space $X$ are the ranks of the homology groups of $X$.  In this paper we always consider reduced homology $\tilde{H}(X)$, with coefficients in the rationals. 
We are particularly interested in the case where $\Delta_k(G)$ is shellable and not contractible.  In that case, $\Delta_k(G)$ is homotopy equivalent to a wedge of spheres of dimension $n-k-1$, and the number of spheres in the wedge is the unique nonzero Betti number of  $\Delta_k(G)$. 
\begin{df}[{\cite{WestGraphTheory1996}}] \label{defn:chordal}
    A graph is \emph{chordal} if it has no induced cycle of size greater than 3.
    \end{df}
\begin{theorem}\label{thm:Froberg-Eagon-Reiner}
\cite{Froberg1990, EagonReiner1998}
The graph  $G$ is chordal  $\!\iff\!$ $\Delta_2(G)$ is shellable $\!\iff\!$ $\Delta_2(G)$ is vertex decomposable.
\end{theorem}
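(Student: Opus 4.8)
The plan is to establish the chain of implications
$$ G \text{ chordal} \ \Rightarrow\ \Delta_2(G) \text{ vertex decomposable} \ \Rightarrow\ \Delta_2(G) \text{ shellable} \ \Rightarrow\ \Delta_2(G) \text{ Cohen--Macaulay over } \mathbb{Q} \ \Rightarrow\ G \text{ chordal}.$$
Since $\Delta_2(G)$ is pure of dimension $n-3$ (every face is contained in the complement of a non-edge), this chain yields all the stated equivalences. The two middle implications are classical and require no new work: a vertex decomposable complex is shellable (Provan--Billera), and a pure shellable complex is Cohen--Macaulay (Reisner's criterion, or \Cref{thm:shell-implies-homotopytype} applied to links). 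So the content lies in the first and last implications.

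For ``$G$ chordal $\Rightarrow$ $\Delta_2(G)$ vertex decomposable'' I would induct on $n=|V|$. By Dirac's theorem a chordal graph has a simplicial vertex, i.e.\ a vertex $v$ with $N(v)$ a clique. If \emph{every} simplicial vertex $v$ satisfies $G-v=K_{n-1}$, a short argument (produce a second simplicial vertex when $G$ is not complete) shows $G$ is complete or equals $K_n$ minus an edge, so $\Delta_2(G)$ is void or a single simplex and we are done. Otherwise choose a simplicial vertex $v$ with $G-v\ne K_{n-1}$, equivalently with $v$ a vertex of $\Delta_2(G)$. The two identities to check, both immediate from \Cref{def:cut-cplx}, are $\lk_{\Delta_2(G)}(v)=\Delta_2(G-v)$ and $\del_{\Delta_2(G)}(v)=\{\sigma\subseteq V\setminus\{v\} : W\not\subseteq\sigma\}$, where $W=V\setminus(N(v)\cup\{v\})$ is the set of non-neighbors of $v$. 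Simpliciality enters in the second identity: since $N(v)$ is a clique, every non-edge of $G$ lying in $V\setminus\{v\}$ meets $W$, so $\Delta_2(G-v)$ is already contained in the right-hand complex. Now $G-v$ is chordal, so $\lk_{\Delta_2(G)}(v)$ is vertex decomposable by induction; the complex $\del_{\Delta_2(G)}(v)$ is vertex decomposable by an easy auxiliary induction (shed any vertex of $W$, which replaces $W$ by a smaller set and eventually leaves a simplex); and $v$ is a shedding vertex because the facets of $\del_{\Delta_2(G)}(v)$ are exactly the sets $V\setminus\{v,w\}$ with $w\in W$, each of which is a facet of $\Delta_2(G)$ since $\{v,w\}$ is a non-edge. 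Hence $\Delta_2(G)$ is vertex decomposable.

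For ``$\Delta_2(G)$ Cohen--Macaulay $\Rightarrow$ $G$ chordal'' I would pass to the Alexander dual. A direct computation identifies $\Delta_2(G)^{\vee}$ (taken over the ground set $V$) with the clique complex $\operatorname{Cl}(G)$, equivalently the independence complex of the complement $\overline{G}$; its Stanley--Reisner ideal is therefore the edge ideal $I(\overline{G})$. By the Eagon--Reiner theorem, $\Delta_2(G)$ is Cohen--Macaulay over $\mathbb{Q}$ if and only if $I(\overline{G})$ has a linear resolution, and by Fr\"oberg's theorem $I(\overline{G})$ has a linear resolution if and only if the complement of $\overline{G}$, namely $G$, is chordal. (Alternatively one can avoid the commutative-algebra machinery entirely: if $G$ has an induced cycle $C$ of length $m\ge 4$, then $\lk_{\Delta_2(G)}(V\setminus V(C))=\Delta_2(C_m)$; since links of a shellable complex are shellable, shellability of $\Delta_2(G)$ would force, via \Cref{thm:shell-implies-homotopytype}, the reduced homology of $\Delta_2(C_m)$ to be concentrated in its top degree $m-3$, contradicting the known homotopy type of $\Delta_2(C_m)$, which is disconnected for $m=4$ and carries homology in degree $m-4$ for $m\ge 5$.)

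The main obstacle is the first implication: the inductive step for vertex decomposability requires care in handling the degenerate graphs (complete and near-complete), in verifying that the deletion has the claimed explicit form and is itself vertex decomposable, and in checking the shedding condition. Once that is in place, the converse is a routine application of Alexander duality together with the Eagon--Reiner and Fr\"oberg theorems.
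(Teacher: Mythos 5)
The paper offers no proof of this statement: it is quoted from Fr\"oberg and Eagon--Reiner, whose route is precisely the commutative-algebra one you sketch (Alexander duality identifies $\Delta_2(G)^{\vee}$ with the clique complex of $G$, so its Stanley--Reisner ideal is the edge ideal $I(\overline{G})$, and Cohen--Macaulayness of $\Delta_2(G)$ is equivalent to linearity of the resolution of $I(\overline{G})$, hence to chordality of $G$). Your proposal is correct, and its genuinely new content is the direct combinatorial induction for ``chordal $\Rightarrow$ vertex decomposable'': the identities $\lk_{\Delta_2(G)}(v)=\Delta_2(G-v)$ and $\del_{\Delta_2(G)}(v)=\{\sigma\subseteq V\setminus\{v\}:W\not\subseteq\sigma\}$ both check out (the second is exactly the statement that $\{v\}\cup N(v)$ is a clique, i.e.\ simpliciality of $v$); the deletion is the join of the simplex on $N(v)$ with the boundary of the simplex on $W$, hence vertex decomposable and pure of the right dimension; the shedding condition holds because each facet $V\setminus\{v,w\}$, $w\in W$, of the deletion is the complement of the non-edge $\{v,w\}$ and so a facet of $\Delta_2(G)$; and Dirac's theorem covers both the choice of $v$ and the degenerate case ($G$ complete or $K_n$ minus an edge, giving the void complex or a single simplex). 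Be aware, though, that the last step of your main chain (Cohen--Macaulay $\Rightarrow$ chordal via Eagon--Reiner and Fr\"oberg) simply re-invokes the two results the theorem is attributed to; what makes your argument self-contained is the parenthetical alternative, which is also correct: for an induced $m$-cycle $C$, $m\ge 4$, one has $\lk_{\Delta_2(G)}(V\setminus V(C))=\Delta_2(C_m)$, and since $\Delta_2(C_m)\simeq\mathbb{S}^{m-4}$ (disconnected when $m=4$) its homology is not concentrated in its top dimension $m-3$, contradicting shellability (or Reisner's criterion) of $\Delta_2(G)$, whose links inherit that property. In short, the cited route buys the algebraic statement about linear resolutions, while your induction delivers the strongest of the three properties, vertex decomposability, by elementary means.
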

Figure~\ref{fig:Ex-Delta2} shows a chordal graph and its shellable 2-cut complex, whereas Figure~\ref{fig:Delta2C5} is an example of a nonchordal graph, with  nonshellable 2-cut complex.
These examples, reproduced for completeness from \cite{BDJRSX2024}, also illustrate  Fr\"oberg's Theorem, the first equivalence in~\Cref{thm:Froberg-Eagon-Reiner}.
\begin{figure}[htb]
\centering
\begin{subfigure}{0.4\textwidth}
\centering
\begin{tikzpicture}
\node[standard] (node1) at (0,3) {1};
\node[special] (node1) at (0,3) {1};
\node[standard] (node2) at (1.5,1.7) {2};
\node[special] (node2) at (1.5,1.7) {2};
\node[standard] (node3) at (1,0) {3};
\node[special] (node3) at (1,0) {3};
\node[standard] (node4) at (-1,0) {4};
\node[special] (node4) at (-1,0) {4};
\node[standard] (node5) at (-1.5,1.7) {5};
\node[special] (node5) at (-1.5,1.7) {5};
%
\draw (node1) -- (node2);
\draw (node2) -- (node3);
\draw (node3) -- (node4);
\draw (node4) -- (node5);
\draw (node5) -- (node1);
\end{tikzpicture}
\caption{The cycle $C_5$}
\end{subfigure}
\begin{subfigure}{0.4\textwidth}
\centering
\begin{tikzpicture}
\draw[fill=gray!20] (0,0) -- (0.7,1.5) -- (1.4,0) -- (0,0);
\draw[fill=gray!20] (2.1,1.5) -- (0.7,1.5) -- (1.4,0) -- (2.1,1.5);
\draw[fill=gray!20] (2.1,1.5) -- (1.4,0) -- (2.8,0) -- (2.1,1.5);
\draw[fill=gray!20] (3.5,1.5) -- (2.8,0) -- (2.1,1.5) -- (3.5,1.5);
\draw[fill=gray!20] (3.5,1.5) -- (4.2,0) -- (2.8,0) -- (3.5,1.5);
\draw[very thick, color=red,->] (0,0) -- (.55*0.7,.55*1.5);
\draw[very thick, color=red] (.55*0.7,.55*1.5) -- (0.7,1.5);
\draw[very thick, color=red,->] (3.5,1.5) -- (3.5+.55*.7,1.5-.55*1.5);
\draw[very thick, color=red] (3.5+.55*.7,1.5-.55*1.5) -- (4.2,0);

\node[standard] (n5) at (0,0) {5};
\node[standard] (n2) at (0.7,1.5) {2};
\node[standard] (n4) at (1.4,0) {4};
\node[standard] (n1) at (2.1,1.5) {1};
\node[standard] (n3) at (2.8,0) {3};
\node[standard] (n5') at (3.5,1.5) {5};
\node[standard] (n2') at (4.2,0) {2};
%
\end{tikzpicture}
\caption{$\Delta_2 (C_5)=\langle{245}{,124}{,134} {,135}{,235}\rangle$}
\end{subfigure}
\caption{The 2-cut complex for $C_5$ is a M\"obius strip}
\label{fig:Delta2C5}
\end{figure}
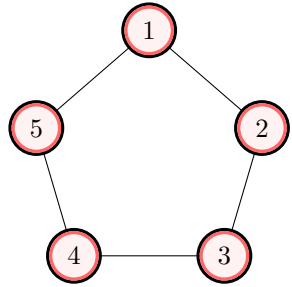
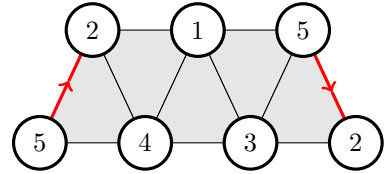

\subsection{Some background from  combinatorial topology}\label{sec:background}
\phantom{}

The following propositions are useful in analyzing cut complexes.  

\begin{prop}[{\cite[Chapter 21, (21.3)]{VINK2008}, \cite[Proposition~2.7]{BDJRSX-TOTAL2024}}] \label{prop:TopFact2ndIsoThm}
Let $X$ be a topological space with subspaces $A,B \subseteq X$ such that $X=A\cup B$, $A\cap B\neq \emptyset$, and $A, B$ are both closed subspaces or both open subspaces. Then the quotient map $A/(A\cap B) \rightarrow X/B$ of the inclusion $A\hookrightarrow X$ is a homeomorphism.
\end{prop}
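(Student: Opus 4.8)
The plan is to verify directly that the map $\bar\iota\colon A/(A\cap B)\to X/B$ induced by the inclusion $\iota\colon A\hookrightarrow X$ is a continuous bijection which is moreover a closed map (in the closed case) or an open map (in the open case), and therefore a homeomorphism. Write $p\colon A\to A/(A\cap B)$ and $q\colon X\to X/B$ for the two quotient maps, so that $\bar\iota\circ p=q\circ\iota$ by construction, and note that $\bar\iota(p(a))=q(a)$ for every $a\in A$.

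First I would dispatch the formal points. Well-definedness of $\bar\iota$ is immediate because $A\cap B\subseteq B$, so points of $A$ identified in $A/(A\cap B)$ remain identified in $X/B$. Continuity is the universal property of the quotient topology: $q\circ\iota$ is continuous and constant on the fibers of $p$, hence descends to the continuous map $\bar\iota$. For surjectivity, use $X=A\cup B$: a class of $X/B$ represented by a point of $A$ is visibly in the image, and the unique class represented by points of $B$ — the collapsed basepoint — is the image of $p(a)$ for any $a\in A\cap B$, which is exactly where the hypothesis $A\cap B\neq\emptyset$ enters. For injectivity, if $p(a)$ and $p(a')$ have the same image in $X/B$, then either $a=a'$, or $a,a'\in B$; in the latter case $a,a'\in A\cap B$, so $p(a)=p(a')$ already in $A/(A\cap B)$.

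The main point — and the only place the closed/open hypothesis is used — is that $\bar\iota$ is a closed (resp. open) map. I would argue the closed case, the open case being verbatim with ``open'' for ``closed''. Let $C\subseteq A/(A\cap B)$ be closed; then $p^{-1}(C)$ is closed in $A$, and since $A$ is closed in $X$, $p^{-1}(C)$ is closed in $X$. Since $p$ is surjective, $\bar\iota(C)=q(p^{-1}(C))$, so I compute $q^{-1}(\bar\iota(C))=q^{-1}(q(p^{-1}(C)))$: because $q$ collapses exactly $B$, this equals $p^{-1}(C)\cup B$ if the basepoint lies in $C$, and $p^{-1}(C)$ (which is then disjoint from $B$) otherwise. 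In either case it is a union of subsets that are closed in $X$ — here we use that $B$ is closed in $X$ — hence closed, so $\bar\iota(C)$ is closed in $X/B$ by definition of the quotient topology.

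Combining these steps, $\bar\iota$ is a continuous closed (resp. open) bijection, hence a homeomorphism. The only real care needed is the bookkeeping in the preimage computation $q^{-1}(q(p^{-1}(C)))$ — splitting on whether the basepoint lies in $C$ — and invoking the hypotheses at the right spots ($A\cap B\neq\emptyset$ for surjectivity, and $A,B$ simultaneously closed or open for the closed/open map property); everything else is routine.
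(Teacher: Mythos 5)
Your proof is correct. Note that the paper does not prove this proposition at all---it is quoted from Viro--Ivanov--Netsvetaev--Kharlamov and from Proposition~2.7 of the earlier cut-complex paper---so there is no internal argument to compare against; your direct verification (well-definedness, continuity via the universal property, bijectivity using $X=A\cup B$ and $A\cap B\neq\emptyset$, and the closed/open-map computation $q^{-1}(\bar\iota(C))=p^{-1}(C)$ or $p^{-1}(C)\cup B$, which is where both closedness/openness hypotheses on $A$ and $B$ enter) is exactly the standard proof of this fact, and all the case bookkeeping checks out.
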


\begin{prop}[{\cite[Proposition~0.17, Example~0.14]{Hatcher2002}}] \label{prop:quotient-by-contractible-homotopy}
Let $(X,A)$ be a CW pair consisting of a CW complex $X$ and a subcomplex $A$. 
\begin{enumerate}
\item If the subcomplex $A$ is contractible, then the quotient map $X\rightarrow X/A$ is a homotopy equivalence.
\item If $A$ is contractible in the complex $X$, then there is a homotopy equivalence  
\[X/A\simeq X \vee \susp(A).\]
\end{enumerate} 
\end{prop}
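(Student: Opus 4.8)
The plan is to derive both parts from the homotopy extension property (HEP) enjoyed by every CW pair, together with elementary manipulations of mapping cones; this is exactly the route taken in \cite{Hatcher2002}, so I only sketch it.

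For part (1), I would begin with a contraction of $A$: a homotopy $f_t\colon A\to A$ with $f_0=\mathrm{id}_A$ and $f_1\equiv a_0$ for some $a_0\in A$. By the HEP for $(X,A)$ this extends to a homotopy $F_t\colon X\to X$ with $F_0=\mathrm{id}_X$ and $F_t|_A=f_t$. Since $F_t(A)\subseteq A$ for all $t$, composing with the quotient map $q\colon X\to X/A$ gives maps $q\circ F_t$ that carry $A$ to the basepoint, hence descend to a homotopy $\overline{F}_t\colon X/A\to X/A$ with $\overline{F}_0=\mathrm{id}_{X/A}$. Because $F_1(A)=\{a_0\}$ is a single point, $F_1$ itself factors as $F_1=g\circ q$ for a map $g\colon X/A\to X$. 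One then reads off that $g\circ q=F_1\simeq F_0=\mathrm{id}_X$ via $F_t$, while $q\circ g\circ q=q\circ F_1=\overline{F}_1\circ q$ together with surjectivity of $q$ gives $q\circ g=\overline{F}_1\simeq\overline{F}_0=\mathrm{id}_{X/A}$; thus $g$ is a homotopy inverse of $q$.

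For part (2), I would realize $X/A$ as a mapping cone. Let $i\colon A\hookrightarrow X$ be the inclusion and form $X\cup_i CA$, where $CA=(A\times[0,1])/(A\times\{1\})$ is the (contractible) cone on $A$ and $A\times\{0\}$ is attached to $X$ via $i$. This is again a CW complex containing $CA$ as a contractible subcomplex, and collapsing that subcomplex recovers $(X\cup_i CA)/CA\cong X/A$, so part (1) yields $X/A\simeq X\cup_i CA$. Since $A$ is contractible in $X$, the map $i$ is homotopic to a constant map $c\colon A\to\{x_0\}\subseteq X$; by homotopy invariance of the mapping cone (legitimate here because the relevant inclusions are CW cofibrations) we get $X\cup_i CA\simeq X\cup_c CA$. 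Finally, the constant attaching map collapses $A\times\{0\}$ to the single point $x_0$, turning $CA$ into the (unreduced) suspension $\susp(A)$ wedged onto $X$ at $x_0$, i.e.\ $X\cup_c CA=X\vee\susp(A)$. Chaining these equivalences proves (2).

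The genuinely delicate point is the last one: one must check that the identification $(X\cup_i CA)/CA\cong X/A$ is the evident homeomorphism, that the basepoint $x_0$ used for the wedge coincides with one of the cone points of $\susp(A)$, and that the resulting wedge has the expected homotopy type and not merely the expected underlying space. These are exactly where the CW hypothesis (equivalently, well-pointedness) enters; everything else — the HEP for CW pairs and homotopy invariance of the unreduced mapping cone — is standard.
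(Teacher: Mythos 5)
Your argument is correct and is essentially the standard one from the source the paper cites for this statement (Hatcher, Proposition 0.17 for part (1) via the homotopy extension property, and Example 0.14 via the mapping cone $X\cup_i CA$ for part (2)); the paper itself offers no independent proof, so there is nothing different to compare against. The only point worth noting is that your closing caveats (identifying $(X\cup_i CA)/CA$ with $X/A$ and locating the wedge point) are indeed where the CW/cofibration hypotheses are used, and your sketch handles them correctly.
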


\begin{prop}\label{prop:complete2-skeleton}
    Let $X$ be a simplicial complex such that its 2-skeleton is complete. Then $X$ is simply connected.
\end{prop}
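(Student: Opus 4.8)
The plan is to use the standard fact that the fundamental group of a CW complex depends only on its $2$-skeleton, together with a presentation of $\pi_1$ read off from a spanning tree of the $1$-skeleton. First I would observe that the inclusion $X^{(2)} \hookrightarrow X$ of the $2$-skeleton induces an isomorphism on $\pi_1$; since by hypothesis $X^{(2)}$ is the full $2$-skeleton of a simplex on the vertex set of $X$, it suffices to prove that the $2$-skeleton of a simplex is simply connected. (If $X$ has at most one vertex this is trivial, so assume $X$ is connected with vertex set $A$; note that the hypothesis that the $2$-skeleton is complete already forces $X$ to be connected once $|A|\ge 2$.)

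Next I would fix a vertex $v_0 \in A$ and take the spanning tree $T$ of the $1$-skeleton consisting of all edges $\{v_0, w\}$ with $w \in A \setminus \{v_0\}$ (a star). By the Seifert--van Kampen / edge-path group description, $\pi_1(X, v_0)$ has a presentation with one generator $g_e$ for each edge $e$ of the $1$-skeleton, the relations $g_e = 1$ for $e \in T$, and one relation for each $2$-face coming from its boundary triangle. The key step is then purely combinatorial: every edge $\{w, w'\}$ with $w, w' \ne v_0$ lies in the triangle $\{v_0, w, w'\}$, which is a $2$-face since the $2$-skeleton is complete; the boundary relation of this triangle, after killing the tree generators $g_{\{v_0,w\}}$ and $g_{\{v_0,w'\}}$, reads $g_{\{w,w'\}} = 1$. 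Hence every generator is trivial and $\pi_1(X,v_0)$ is trivial.

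The main (and only) obstacle is making sure the bookkeeping of the edge-path group presentation is stated correctly — in particular the orientation conventions in the triangle relations and the fact that contracting the tree $T$ leaves exactly the ``non-tree'' generators, one per edge $\{w,w'\}$ outside $T$. This is routine once the right reference (e.g.\ \cite{Hatcher2002}) is cited, and no higher-skeleton information is needed since $2$-cells already kill all the $\pi_1$ generators. I would close by remarking that this gives a direct proof without invoking nerves or discrete Morse theory, which is all that is required for the applications to cut complexes later in the paper.
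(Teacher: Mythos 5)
Your proposal is correct, and it diverges from the paper's argument in the second half. Both proofs begin the same way, by invoking the standard fact (Hatcher, Proposition 4.12 / cellular approximation) that the inclusion $X^{(2)}\hookrightarrow X$ induces an isomorphism on $\pi_1$, and both then exploit that the complete $2$-skeleton of $X$ equals the $2$-skeleton of the full simplex $Y$ on $V(X)$. At that point the paper simply applies the same skeleton theorem a second time, to $Y$, and concludes $\pi_1(X^{(2)})\cong\pi_1(Y^{(2)})\cong\pi_1(Y)=0$ because the simplex is contractible; you instead compute $\pi_1$ of the complete $2$-skeleton directly via the edge-path group presentation, choosing the star at a base vertex $v_0$ as spanning tree and observing that every non-tree edge $\{w,w'\}$ is killed by the boundary relation of the triangle $\{v_0,w,w'\}$. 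Your route is slightly longer but more self-contained and explicit: it exhibits generators and relations and, in fact, since the edge-path group presentation of $\pi_1(X)$ itself involves only the $1$- and $2$-simplices of $X$, your combinatorial argument could even be run on $X$ directly, dispensing with the skeleton-isomorphism step altogether. The paper's version buys brevity, at the cost of citing the skeleton theorem twice; either is acceptable, and your bookkeeping (orientations, one generator per non-tree edge after contracting $T$) is the only place requiring care, as you note.
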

\begin{proof}
    The proof follows directly from  Proposition 4.12 in \cite{Hatcher2002}. This proposition claims that for arbitrary $n$, the skeleton $X^n$ and the entire complex $X$ have isomorphic homotopy groups $\pi_i$ for all $i < n$. Therefore $\pi_1(X) \cong \pi_1(X^2)$. Further, since $X^2$ contains all triangles on the vertex set $V(X)$ of $X$, we know that $X^2$ is the same as $Y^2$, where $Y$ is the full simplex on $V(X)$ vertices. We apply Proposition 4.12 from \cite{Hatcher2002} to $Y$ and we obtain:
    $$\pi_1(X^2) \cong \pi_1(Y^2) \cong \pi_1(Y) \cong 0.$$
    The triviality of the last group follows from the fact that every simplex is contractible, hence it has all trivial homotopy groups. 
    The connectivity of $X$ follows from the connectivity of $X^2$, so we conclude that the complex $X$ is simply connected. 
\end{proof}

The converse of \Cref{prop:complete2-skeleton} does not hold, as shown by the example in Figure~\ref{fig:Ex-Delta2}. Figure~\ref{fig:Delta2C5} gives a simple example of a 2-cut complex that does not contain a complete 2-skeleton, and is not simply connected.

As in \cite{BDJRSX2024}, it will often be helpful to examine the face lattice of a simplicial complex.   We record some facts from poset topology below. For more details see \cite{BjWachsII1997}, \cite{RPSEC11997}.

A poset $Q$ is bounded if it has a unique minimal element $\hat 0$ and a unique maximal element $\hat 1$. Its proper part is the poset $\bar{Q}=Q\setminus \{\hat 0, \hat 1\}$. The face lattice $\mathcal{L}(\Delta)$ of a simplicial complex $\Delta$ is defined to be the poset of faces ordered by inclusion, with the empty face as the minimal element, and an artificially appended maximal element.  
The order complex of $Q$ is defined to be the simplicial complex $\Delta(\bar Q)$ of chains in the proper part $\bar{Q}$ of $Q$. 
The order complex of  $\mathcal{L}(\Delta)$  is the barycentric subdivision of $\Delta$ and hence is homeomorphic to $\Delta$, and therefore  has the same homotopy type.   
Recall (see \cite{RPSEC11997}) that the reduced Euler characteristic of a simplicial complex is precisely the M\"obius number of its face lattice.  Thus, when the simplicial complex $\Delta$ is homotopy equivalent to a wedge of equidimensional spheres,  the M\"obius number of the face lattice coincides, up to sign, with the number of spheres in the wedge. In addition, when the complex $\Delta$ is not contractible, this number is also the unique nonzero Betti number.

We will need the following result of Baclawski for the work in  \Cref{sec:homology-reps}.
\begin{theorem}\label{thm:Bac-mu} \cite[Theorem 4.6]{BacEuJC1982}, 
 \cite[Lemma 3.16.4]{RPSEC11997} If $P$ is a bounded poset and $Q$ is a subposet of $P$ containing $\hat 0, \hat 1,$ then 
\[\mu(Q)-\mu(P) =\sum_{\stackrel{\hat 0<x_1<x_2<\dots<x_r<\hat 1}{r\ge 1, x_i\notin Q}} (-1)^r \mu_P(\hat 0, x_1) \mu_P(x_1,x_2) \cdots \mu_P( x_r,\hat 1).\]
where the sum runs over all nonempty chains with elements not in $Q$.  Here $\mu_P$ denotes the M\"obius function of the poset $P$.

When $Q$ is a subposet obtained from $P$ by removing an antichain $\mathcal{A},$  this simplifies to 
\begin{equation}\label{eqn:mu-deleted-antichain}
\mu(Q)-\mu(P) =\sum_{\stackrel{\hat 0<x<\hat 1} {x\in \mathcal{A}}} (-1) \mu_P(\hat 0, x)  \mu_P( x,\hat 1).
\end{equation}
\end{theorem}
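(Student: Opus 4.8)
The plan is to prove this by induction on the number of elements of $P$ that do not lie in $Q$, reducing to the single-element case, which is exactly Equation~\eqref{eqn:mu-deleted-antichain} in disguise. First I would handle the base case where $Q=P$: here the sum on the right is empty and both sides vanish, so there is nothing to prove. For the inductive step, suppose $P\setminus Q$ is nonempty, and pick a minimal element $z$ of the subposet $P\setminus Q$ (with the order induced from $P$). Then $Q':=Q\cup\{z\}$ is still a subposet of $P$ containing $\hat 0,\hat 1$, and $z$ is minimal among the elements of $P$ not in $Q'$... actually more usefully, I would pick $z$ to be a \emph{maximal} element of $P\setminus Q$, so that $Q':=Q\cup\{z\}$ has the property that $z$ covers nothing outside $Q$ from above. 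The key observation is that $z$ is a ``modular-type'' single deletion: since every element strictly above $z$ in $P$ already lies in $Q'$, removing $z$ from $Q'$ deletes the antichain $\{z\}$ in the appropriate local sense, and the one-element case gives
\[
\mu(Q)-\mu(Q') = -\,\mu_P(\hat 0,z)\,\mu_{Q'}(z,\hat 1).
\]

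The main obstacle—and the crux of the argument—is relating the M\"obius-function values computed in the intermediate poset $Q'$ to those computed in $P$, so that the telescoping sum collapses into the stated formula over chains in $P\setminus Q$. Concretely, I expect to need the following recursion for $\mu_P$: for $x<\hat 1$ in $P$,
\[
\mu_P(x,\hat 1)\;=\;\mu_{P\setminus\{y\,:\,y>x,\ y\notin Q\}}(x,\hat 1)\;+\;\sum_{\substack{x<y_1<\dots<y_s<\hat 1\\ y_i\notin Q}}(-1)^{s-1}\,\mu_P(x,y_1)\cdots\mu_P(y_{s-1},y_s)\,\mu_P(y_s,\hat 1)^{?}
\]
—i.e.\ an inclusion–exclusion that strips off, one chain at a time, the contributions of elements above $x$ that are missing from $Q$. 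This is precisely the content of Baclawski's original argument and of Stanley's Lemma~3.16.4; I would cite the statement from \cite[Lemma 3.16.4]{RPSEC11997} and only need to verify that our hypotheses (both posets bounded, $Q$ a subposet through $\hat 0,\hat 1$) match. For the specialization, when $\mathcal A=P\setminus Q$ is an antichain, any chain $\hat 0<x_1<\dots<x_r<\hat 1$ with all $x_i\notin Q$ must have $r=1$, since two elements of an antichain cannot be comparable; thus only the single-element terms survive, yielding \eqref{eqn:mu-deleted-antichain} immediately.

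Alternatively—and this may be the cleanest route to actually present—I would invoke Baclawski's theorem directly from \cite[Theorem 4.6]{BacEuJC1982} and give only the derivation of the antichain specialization \eqref{eqn:mu-deleted-antichain} as the comparability observation above, since the general formula is a known result and the paper flags it with a citation rather than claiming originality. In that case the ``proof'' is essentially a one-line reduction: no chain of length $\ge 2$ can be supported on an antichain, so the multi-term sum truncates to its linear part. The only care needed is the sign bookkeeping, $(-1)^r$ with $r=1$ giving the leading minus sign in \eqref{eqn:mu-deleted-antichain}, which I would check against the small example $P$ = Boolean lattice $B_2$ with its two atoms removed.
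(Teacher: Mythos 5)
The paper does not actually prove this statement: it is quoted from Baclawski \cite[Theorem 4.6]{BacEuJC1982} and Stanley \cite[Lemma 3.16.4]{RPSEC11997}, with the antichain specialization \eqref{eqn:mu-deleted-antichain} left as the immediate observation that a chain of length at least two cannot lie inside an antichain. Your second, ``cleanest'' route is therefore exactly the paper's approach, and your derivation of \eqref{eqn:mu-deleted-antichain} is correct: when $P\setminus Q=\mathcal{A}$ is an antichain only $r=1$ survives, the sign $(-1)^r$ becomes the factor $(-1)$, and the $B_2$ sanity check works out ($-1-1=-2$ on the left, $2\cdot(-1)(-1)(-1)=-2$ on the right). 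One caution about your inductive sketch, should you want a self-contained proof: with $z$ a maximal element of $P\setminus Q$ and $Q'=Q\cup\{z\}$, the one-element deletion gives $\mu(Q)-\mu(Q')=-\,\mu_{Q'}(\hat 0,z)\,\mu_{Q'}(z,\hat 1)$, and maximality of $z$ only guarantees $\mu_{Q'}(z,\hat 1)=\mu_P(z,\hat 1)$; your step replaces $\mu_{Q'}(\hat 0,z)$ by $\mu_P(\hat 0,z)$, which is false in general because elements of $P\setminus Q$ lying below $z$ are still absent from $Q'$. Likewise the displayed ``recursion'' for $\mu_P(x,\hat 1)$ is not a well-formed statement (it carries an unresolved term). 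These issues are not fatal since you ultimately defer to the cited result, but the induction as written would need repair---for instance by expanding $\mu_{Q'}(\hat 0,z)$ via the induction hypothesis applied to the interval $[\hat 0,z]_P$ with subposet $[\hat 0,z]_P\cap Q'$---before it could replace the citation.
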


We record the following  special case.

\begin{cor}\label{cor:simplicial-complex-antichain} Let $P$, $Q$ be  face lattices of simplicial complexes $\Delta$ and $\Delta_1$ respectively,  such that $\Delta_1$ is a subcomplex of $\Delta$. Assume  $P\setminus Q$ is a subset $A$ of the facets of $\Delta$. Let $\Delta$ be of dimension $d$. Then
\[ \mu(Q)- \mu(P)=(-1)^{d-1} |A| =(-1)^{d-1} \left(|\{\text{facets of } \Delta\} |
- |\{\text{facets of } \Delta_1\} |\right) . \]  
\end{cor}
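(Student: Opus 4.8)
The plan is to specialize \Cref{thm:Bac-mu} to the situation at hand. First I would observe that the hypotheses of the corollary put us in the setting of the simplified formula \eqref{eqn:mu-deleted-antichain}: the set $A = P \setminus Q$ consists of facets of $\Delta$, and facets of $\Delta$ form an antichain in the face lattice $P = \mathcal{L}(\Delta)$ (no facet contains another), so $\mathcal{A} = A$ is an antichain whose removal yields the subposet $Q$. Here I should check that $Q$ really is the face lattice $\mathcal{L}(\Delta_1)$ of a subcomplex: since the only faces removed are facets of $\Delta$, and a facet that is removed cannot be a face of any remaining face, the remaining faces still form a simplicial complex $\Delta_1$, and $Q = \mathcal{L}(\Delta_1)$ with the same $\hat 0$ (empty face) and $\hat 1$ (artificial top). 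Note also $|A| = |\{\text{facets of }\Delta\}| - |\{\text{facets of }\Delta_1\}|$ since removing these facets from $\Delta$ is exactly what produces $\Delta_1$; this gives the second equality in the statement for free.

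Next I would evaluate the two Möbius-function factors appearing in \eqref{eqn:mu-deleted-antichain} for each $x \in A$. Each $x$ is a facet of $\Delta$, hence a face of dimension $d$, i.e. a set of cardinality $d+1$. The interval $[\hat 0, x] = [\emptyset, x]$ in $\mathcal{L}(\Delta)$ is the Boolean lattice $B_{d+1}$ on the $d+1$ vertices of $x$, so $\mu_P(\hat 0, x) = \mu(B_{d+1}) = (-1)^{d+1}$. For the other factor: since $x$ is a facet (maximal face), the only element of $P$ strictly above $x$ is the artificial top $\hat 1$, so the interval $[x, \hat 1]$ is a two-element chain and $\mu_P(x, \hat 1) = -1$. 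Therefore each summand in \eqref{eqn:mu-deleted-antichain} equals $(-1)\cdot(-1)^{d+1}\cdot(-1) = (-1)^{d+1} = (-1)^{d-1}$, and summing over the $|A|$ elements of $\mathcal{A}$ gives $\mu(Q) - \mu(P) = (-1)^{d-1}|A|$, as claimed.

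I do not anticipate a genuine obstacle here; the only point requiring a little care is the degenerate case where removing facets of $\Delta$ drops the dimension (e.g. if $A$ contains all facets of the top dimension, so $\dim \Delta_1 < d$), or where $\Delta_1$ becomes the void or $\{\emptyset\}$ complex. In all these cases the formula \eqref{eqn:mu-deleted-antichain} still applies verbatim because it only references the intervals inside the \emph{original} poset $P = \mathcal{L}(\Delta)$, whose dimension is $d$; the computation of $\mu_P(\hat 0, x)$ and $\mu_P(x,\hat 1)$ above is unaffected. One should also note the edge case $d \le 0$ separately if desired, but the Boolean-lattice computation $\mu(B_{d+1}) = (-1)^{d+1}$ remains valid. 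Hence the corollary follows directly from \Cref{thm:Bac-mu}.
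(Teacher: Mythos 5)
Your proposal is correct and follows essentially the same route as the paper: specialize \Cref{thm:Bac-mu} via \eqref{eqn:mu-deleted-antichain}, note that each $x\in A$ is a top-dimensional facet so $\mu_P(\hat 0,x)=(-1)^{d+1}$ and $\mu_P(x,\hat 1)=-1$, and sum over $A$, with the second equality coming directly from the definition of $A$. The extra remarks about antichains and degenerate cases are fine but not needed beyond what the paper's argument already contains.
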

\begin{proof} The hypotheses imply that the facets of $P$ have cardinality $d+1$, and thus for every $x\in A$, 
$(\hat 0, x)$ is the Boolean lattice $B_{d+1}$, while $(x,\hat 1)$ is empty, giving  
$\mu(\hat 0, x)=(-1)^{d+1}, \ \mu(x,\hat 1)=-1$.
The first equality now follows from \Cref{eqn:mu-deleted-antichain} in \Cref{thm:Bac-mu}, and the second equality is clear from the definition of $A$.  
\end{proof}

Our cut complexes $\Delta_k(G)$ often have interesting symmetries arising from the automorphism group $\mathrm{Aut}(G)$ of the graph $G$. These symmetries can be studied by examining the induced action on the homology.  In many cases the automorphism group $\mathrm{Aut}(G)$ acts simplicially on the cut complex $\Delta_k(G)$, and hence on the rational homology, when the homotopy type is a wedge of spheres.  It is well known that the face lattice $\mathcal{L}(\Delta)$ of the simplicial complex $\Delta$ captures information about the homotopy type 
via lexicographic shellings (see e.g., \cite{WachsPosetTop2007}), and the homology representation  via  the M\"obius number and the Hopf-Lefschetz trace formula  (see \cite{RPSGaP1982, RPSEC11997}).

For example, the symmetric group $\mathfrak{S}_n$ acts on the cut complex $ \Delta_k(E_n)$ of the edgeless graph $E_n$. 
 The face lattice $\mathcal{L}(\Delta_k(E_n))$ is a rank-selected subposet, denoted $B_n^{\le n-k}$,  of the Boolean lattice $B_n$ of subsets of an $n$-element set, consisting of subsets of size at most $n-k$, since the cut complex is the $(n-k-1)$-skeleton of an $(n-1)$-dimensional simplex.  It is well known \cite{BjWachsTAMS1983}, \cite{RPSGaP1982} that $\mu(B_n^{\le n-k})=(-1)^{n-k-1} \binom{n-1}{k-1}$ and that  
 $B_n^{\le n-k}$ is shellable, and therefore has the homotopy type of a wedge of spheres in the top dimension $n-k-1$.  Hence 
we have the following result for the homotopy type and equivariant homology of  $ \Delta_k(E_n)$.

\begin{prop}\label{prop:Deltak-edgeless-graph-homrepS}\cite[Proposition~3.6]{BDJRSX2024} Let $E_n$ be the edgeless graph on $n$ vertices. If $k\ge n$, the cut complex  $\Delta_k(E_n)$ is void. If $2\le k\le n-1$, the cut complex is shellable and  homotopy equivalent to a wedge of $\binom{n-1}{k-1}$ spheres in dimension $n-k-1$:
\[ \Delta_k(E_n)\simeq\bigvee_{\binom{n-1}{k-1}} \mathbb{S}^{n-k-1}.\]
 The $\mathfrak{S}_n$-representation on the unique nonvanishing homology of $\Delta_k(E_n)$ is 
the irreducible module indexed by the partition $(k, 1^{n-k})$ of $n$.
\end{prop}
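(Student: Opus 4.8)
The plan is to identify $\Delta_k(E_n)$ explicitly as a well-understood complex, then read off shellability and the Betti number, and finally pin down the $\mathfrak{S}_n$-representation on its homology by an equivariant Euler characteristic (Hopf trace) computation.

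\textbf{Identifying the complex.} Since $E_n$ has no edges, \emph{every} $k$-subset of the vertex set induces a disconnected (in fact edgeless) subgraph when $k\ge 2$; hence a set $F$ with $|F|=n-k$ is a facet of $\Delta_k(E_n)$ precisely when $|V\setminus F|=k$, so the facets are \emph{all} $(n-k)$-subsets of $[n]$. Thus $\Delta_k(E_n)$ is the $(n-k-1)$-skeleton of the $(n-1)$-simplex on vertex set $[n]$ (for $k\ge n$ it is void or $\{\emptyset\}$, as in Example~\ref{ex:Examples-cut-complex}), equivalently its face lattice is the rank-selected Boolean lattice $B_n^{\le n-k}$. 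I would then invoke the classical fact (cited as \cite{BjWachsI1996}, and already recalled above for $B_n^{\le n-k}$) that skeleta of simplices are shellable; Theorem~\ref{thm:shell-implies-homotopytype} then gives the wedge-of-spheres homotopy type in dimension $n-k-1$, with the number of spheres equal to $|\tilde\chi|$. A one-line binomial identity, $\sum_{i=0}^{n-k}(-1)^{i-1}\binom{n}{i}=(-1)^{n-k-1}\binom{n-1}{k-1}$ (equivalently $|\mu(B_n^{\le n-k})|=\binom{n-1}{k-1}$, as recalled above), finishes this part.

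\textbf{The homology representation.} Since $\tilde H_\bullet(\Delta_k(E_n))$ is concentrated in degree $n-k-1$, in the representation ring of $\mathfrak{S}_n$ one can write
\[[\tilde H_{n-k-1}(\Delta_k(E_n))]=\sum_{i=-1}^{n-k-1}(-1)^{\,n-k-1-i}\,[\tilde C_i],\]
where $\tilde C_i$ denotes the $i$-th reduced simplicial chain module. The key step is that an $i$-face is an $(i+1)$-subset equipped with an orientation, on which $\mathfrak{S}_n$ acts by the sign of the reordering permutation; hence $\tilde C_i\cong \mathrm{Ind}_{\mathfrak{S}_{i+1}\times\mathfrak{S}_{n-i-1}}^{\mathfrak{S}_n}(\mathrm{sgn}\boxtimes\mathrm{triv})$, which by Pieri's rule decomposes as the sum of at most two hook Specht modules $S^{(n-i,1^{i})}\oplus S^{(n-i-1,1^{i+1})}$ (a degenerate ``partition'' contributing $0$). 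Substituting this into the alternating sum and telescoping, every hook cancels in consecutive pairs except the single surviving term $S^{(k,1^{n-k})}$; as it appears with multiplicity one, $\tilde H_{n-k-1}(\Delta_k(E_n))\cong S^{(k,1^{n-k})}$ as $\mathfrak{S}_n$-modules. As a consistency check, $\dim S^{(k,1^{n-k})}=\binom{n-1}{k-1}$, matching the Betti number. (An alternative is to recognize the order complex as the barycentric subdivision of this skeleton and apply Stanley's formula for the rank-selected homology representations of $B_n$: the multiplicity of $S^\lambda$ equals the number of standard Young tableaux of shape $\lambda$ with descent set exactly $\{1,\dots,n-k\}$, which is $1$ for the hook $(k,1^{n-k})$ and $0$ for every other shape.)

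\textbf{Main obstacle.} The combinatorial identification, the shellability, and the Betti count are routine. The delicate point is the orientation sign in the chain modules: dropping it would replace $\mathrm{Ind}(\mathrm{sgn}\boxtimes\mathrm{triv})$ by the ordinary permutation module on $(i+1)$-subsets and produce row-shaped rather than hook-shaped Specht modules, so I would be careful to establish $\tilde C_i\cong \mathrm{Ind}_{\mathfrak{S}_{i+1}\times\mathfrak{S}_{n-i-1}}^{\mathfrak{S}_n}(\mathrm{sgn}\boxtimes\mathrm{triv})$ correctly. Once that is in place, the Pieri expansion and the telescoping cancellation are purely formal.
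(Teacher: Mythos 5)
Your argument is correct, and it diverges from the paper's treatment in an interesting way. The first half coincides with what the paper does: since $E_n$ has no edges, every $k$-subset is disconnected, so $\Delta_k(E_n)$ is the $(n-k-1)$-skeleton of the $(n-1)$-simplex, whose face lattice is the truncated Boolean lattice $B_n^{\le n-k}$; shellability and the Betti number $\binom{n-1}{k-1}$ are then quoted from the classical literature (\cite{BjWachsI1996}, \cite{BjWachsTAMS1983}, \cite{RPSGaP1982}), exactly as in your first paragraph. The difference is in the representation-theoretic part: the paper does not prove it at all, but cites Proposition~3.6 of \cite{BDJRSX2024} and, ultimately, Solomon's theorem \cite{SolJAlg1968} that the top homology of $B_n^{\le n-k}$ carries the hook module $V_{(k,1^{n-k})}$ (equivalently Stanley's rank-selection formula, which is your parenthetical alternative). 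You instead reprove this from scratch by an equivariant Euler characteristic in the representation ring, using $\tilde C_i\cong \mathrm{Ind}_{\mathfrak{S}_{i+1}\times\mathfrak{S}_{n-i-1}}^{\mathfrak{S}_n}(\mathrm{sgn}\boxtimes\mathrm{triv})$, Pieri's rule, and the telescoping of consecutive hooks; this computation is correct (including the sign on the orientation line, which you rightly flag as the delicate point, and the boundary terms at $i=-1$ and $i=n-k-1$), and it is legitimate to pass from the virtual alternating sum to the actual homology module because homology is concentrated in a single degree by shellability. What your route buys is a short, self-contained proof independent of Solomon/Stanley; what the paper's route buys is brevity and consistency with the poset-topology machinery (Hopf trace on fixed-point subposets, \Cref{thm:Bac-mu}) that it reuses for the harder equivariant computations in Section~\ref{sec:homology-reps}.
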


\subsection{The face lattice and Betti numbers 
}\label{sec:FaceLattice-BettinNoS}
\phantom{nothing}

In this section we review a method from \cite{BDJRSX2024} for determining the Betti numbers of a cut complex, under certain favorable conditions. 

Recall \cite{RPSEC11997} that the reduced Euler characteristic of a simplicial complex $\Delta$ is the M\"obius number $\mu(\mathcal{L}(\Delta))$ of its face lattice $\mathcal{L}(\Delta)$. The idea is to describe the face lattice of the cut complex as a subposet of the truncated Boolean lattice, and then use poset topology techniques to determine the M\"obius number, using Theorem~\ref{thm:Bac-mu}.   As pointed out earlier, this is especially useful when homology is concentrated in a single degree, since in this case the M\"obius number determines the Betti number. Recall that the truncated Boolean lattice, by definition, has an artificially appended top element.

 These results 
 will be used  in \Cref{sec:Grid2021Sept15-17} to analyze the cut complexes of grid graphs. 

For a subset  $A$  of the vertex set $V(G)$ of a graph $G$, we say $A$ is a \emph{(dis)connected set}  if the induced subgraph $G[A]$ is (dis)connected. 
Let $P(n,k)$ denote the truncated Boolean lattice $B_n^{\le n-k}$, and let 
 \[\mathcal{Z}_k(G) \coloneqq \{A^c: |A|=k, A \text{ is a connected subset of }V(G)\}\subseteq P(n,k),\]
 where $A^c$ denotes the complement of $A$ in the vertex set $V(G)$. 
Thus $\mathcal{Z}_k(G)$  is the set of those $(n-k)$-element subsets of $[n]$ that are not facets of the cut complex,
 and 
the number of facets of $ \Delta_k(G)$ is $\binom{n}{k}-|\mathcal{Z}_k(G)|$.
Clearly the face lattice of $\Delta_k(G)$ is a subposet of $P(n,k)\setminus \mathcal{Z}_k(G)$.

The following theorem was proved in \cite{BDJRSX2024}.
 It will be needed in \Cref{lem:GridGraphs-k=2-4-6} and \Cref{prop:EulerChar246GridGraph}.
\begin{theorem}\cite[Theorem~6.1]{BDJRSX2024}\label{thm:truncBoolean-minus-antichain} Let $G$ be a graph with vertex set $V(G)$ of size $n$, and let $k\ge 2$.
 Then 
 the face lattice of $\Delta_k(G)$ coincides with  $P(n,k)\setminus \mathcal{Z}_k(G)$ if and only if the $(n-k-1)$-dimensional complex $\Delta_k(G)$ contains a complete $(n-k-2)$-skeleton, that is, if and only if either of the following equivalent conditions holds:
\begin{gather}\label{eqn:Condition-truncBoolean} \text{for every subset $X$ of a set $A^c\in \mathcal{Z}_k(G)$ with $|X|=n-k-1$, $X^c$ contains a disconnected set of size $k$;} \\
\label{eqn:Condition-truncBoolean-2} 
\text{if $A^c\in \mathcal{Z}_k(G)$ and  $x\notin A$, 
 there is a $y\in A$}  \text{ such that $(A\setminus\{y\})\cup\{x\}$ is disconnected. } 
\end{gather}
 If condition~\eqref{eqn:Condition-truncBoolean} holds, the reduced Euler characteristic of $\Delta_k(G)$ is given by 
\[(-1)^{n-k-1}\mu(\Delta_k(G))=\binom{n-1}{k-1}- |\mathcal{Z}_k(G)|
=|\{F: \text{$F$ is a facet of $\Delta_k(G)$}\}| -\binom{n-1}{k}.\]
Furthermore, in this case the nonzero homology of $\Delta_k(G)$ is torsion-free and occurs in at most two dimensions, $n-k-1$ and $n-k-2$.

Suppose condition~\eqref{eqn:Condition-truncBoolean} holds and $\Delta_k(G)$ is shellable.  Then
\begin{itemize}
    \item if $\mu(\Delta_k(G))=0$, i.e., if the number of facets of $\Delta_k(G)$ is $\binom{n-1}{k}$, then $\Delta_k(G)$ is contractible; 
    \item if $\mu(\Delta_k(G))\ne 0$, then $\Delta_k(G)$ is homotopy equivalent to a wedge of $\left(\binom{n-1}{k-1}- |\mathcal{Z}_k(G)|\right)$ spheres in dimension $n-k-1$.
\end{itemize}

\end{theorem}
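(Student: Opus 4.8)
The plan is to prove this in two stages, corresponding to the two bullet points, using \Cref{thm:truncBoolean-minus-antichain} (whose first assertion gives us the face lattice description and the M\"obius/Euler characteristic formula) together with \Cref{thm:shell-implies-homotopytype} (Bj\"orner) for the shellability input. First I would record the setup: condition~\eqref{eqn:Condition-truncBoolean} guarantees that $\mathcal{L}(\Delta_k(G)) = P(n,k)\setminus\mathcal{Z}_k(G)$ and that $\Delta_k(G)$ contains a complete $(n-k-2)$-skeleton, so in particular $\Delta_k(G)$ is pure of dimension $d:=n-k-1$. The purity is the crucial structural point: since $\Delta_k(G)$ contains the full $(n-k-2)$-skeleton of the simplex on its vertex set, every face of dimension $\le n-k-2$ is contained in a facet, and every facet has exactly $n-k$ vertices by \Cref{def:cut-cplx}, so no facet is lower-dimensional. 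Thus $\Delta_k(G)$ is a \emph{pure} shellable complex of dimension $d$, and \Cref{thm:shell-implies-homotopytype} applies directly: it is homotopy equivalent to a wedge of $d$-spheres (possibly empty, i.e.\ contractible), the number of spheres being the number of full-restriction facets in the shelling.

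Next I would pin down the number of spheres via Euler characteristic. Since a wedge of $N$ spheres of dimension $d$ has reduced Euler characteristic $(-1)^d N$, and the reduced Euler characteristic equals the M\"obius number $\mu(\mathcal{L}(\Delta_k(G)))$, the first part of \Cref{thm:truncBoolean-minus-antichain} gives $(-1)^d N = (-1)^d\bigl(\binom{n-1}{k-1}-|\mathcal{Z}_k(G)|\bigr)$, hence
\[N = \binom{n-1}{k-1}-|\mathcal{Z}_k(G)|.\]
(Equivalently, this is the count of full-restriction facets, but we do not need that interpretation.) Now the two cases are immediate. If $\mu(\Delta_k(G))=0$, then $N=0$, so $\Delta_k(G)$ is a wedge of no spheres, i.e.\ contractible; and $N=0$ is equivalent to the facet count being $\binom{n}{k}-|\mathcal{Z}_k(G)| = \binom{n}{k}-\binom{n-1}{k-1} = \binom{n-1}{k}$, matching the stated reformulation. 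If $\mu(\Delta_k(G))\ne 0$, then $N = \binom{n-1}{k-1}-|\mathcal{Z}_k(G)|$ is a nonzero (necessarily positive, since a wedge of a negative number of spheres is impossible) integer, and $\Delta_k(G)\simeq \bigvee_N \mathbb{S}^{d}$ with $d=n-k-1$.

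I would also note the consistency check with the torsion-freeness claim of \Cref{thm:truncBoolean-minus-antichain}: without shellability, homology could a priori live in dimensions $d$ and $d-1$, but adding shellability forces it into the single top dimension $d$, so the Betti number in degree $d-1$ vanishes and the degree-$d$ Betti number is exactly $N$ — this is why the M\"obius number alone determines the homotopy type here. The main (and essentially only) obstacle is the verification that ``$\Delta_k(G)$ contains a complete $(n-k-2)$-skeleton'' $\Rightarrow$ ``$\Delta_k(G)$ is pure of dimension $n-k-1$,'' which is needed to legitimately invoke Bj\"orner's theorem in its pure form; but this is a short argument from the definition of the cut complex (all facets that are removal-complements of a size-$k$ disconnected induced subgraph have size exactly $n-k$), combined with the skeleton condition ruling out any smaller facet. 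Everything else is bookkeeping with binomial identities and the standard dictionary between reduced Euler characteristic, M\"obius number, and Betti numbers of a wedge of spheres.
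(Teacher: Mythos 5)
Your proposal does not actually prove the statement: it is circular with respect to most of it. The theorem you were asked to prove has four parts --- (i) the equivalence of ``face lattice $=P(n,k)\setminus\mathcal{Z}_k(G)$'' with the complete-$(n-k-2)$-skeleton condition and with conditions~\eqref{eqn:Condition-truncBoolean}/\eqref{eqn:Condition-truncBoolean-2}; (ii) the M\"obius/Euler characteristic formula $(-1)^{n-k-1}\mu(\Delta_k(G))=\binom{n-1}{k-1}-|\mathcal{Z}_k(G)|$; (iii) torsion-free homology concentrated in dimensions $n-k-1$ and $n-k-2$; and (iv) the two shellability bullets. You open by saying you will use ``\Cref{thm:truncBoolean-minus-antichain} (whose first assertion gives us the face lattice description and the M\"obius/Euler characteristic formula),'' i.e.\ you assume (i) and (ii) --- the substantive content of the theorem --- in order to derive (iv), and you treat (iii) only as a ``consistency check.'' What is actually needed for (i) is an argument that the skeleton condition is exactly what prevents faces from being lost when the antichain $\mathcal{Z}_k(G)$ is deleted from the truncated Boolean lattice (this is where conditions~\eqref{eqn:Condition-truncBoolean} and~\eqref{eqn:Condition-truncBoolean-2} come from); for (ii) one applies Baclawski's formula for removing an antichain, exactly as in \Cref{thm:Bac-mu} and \Cref{cor:simplicial-complex-antichain} (each removed element $A^c$ contributes $(-1)\mu(\hat 0,A^c)\mu(A^c,\hat 1)=(-1)^{n-k}$ against $\mu(P(n,k))=(-1)^{n-k-1}\binom{n-1}{k-1}$); and (iii) requires its own argument from the complete codimension-$2$ skeleton (homology below $n-k-2$ vanishes because attaching top cells to the full $(n-k-2)$-skeleton of a simplex does not change homology in low degrees, plus a torsion-freeness argument). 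None of this appears in your write-up. Note also that this paper does not reprove the theorem at all --- it imports it from \cite[Theorem~6.1]{BDJRSX2024} --- so the comparison point is that cited proof, which proceeds exactly through the antichain-removal computation you skipped.

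The part you do prove, (iv), is fine but easy, and one small point there is misplaced effort: purity of $\Delta_k(G)$ does not need the skeleton condition at all, since by \Cref{def:cut-cplx} the complex is generated by facets all of cardinality $n-k$, so every facet has dimension $n-k-1$ automatically. Given purity and shellability, \Cref{thm:shell-implies-homotopytype} gives a wedge of $(n-k-1)$-spheres, and the sphere count is pinned down by the reduced Euler characteristic as you say; the binomial bookkeeping $\binom{n}{k}-\binom{n-1}{k-1}=\binom{n-1}{k}$ is correct. So your argument is a valid derivation of the two bullets \emph{from} the earlier assertions, but as a proof of the stated theorem it has a genuine gap: the face-lattice identification, the M\"obius number computation, and the torsion-freeness/concentration claim are assumed rather than proved.
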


 The next proposition will be needed in \Cref{prop:EulerChar5GridGraph}.
 \begin{prop}\label{prop:truncBoolean-minus-antichain-and-cycles} Let $G$ be a graph on $n$ vertices and let $k\ge 3$.  Let $\mathcal{Z}_k(G)$ be the set of subsets whose complements are connected of size $k$, and let $\mathcal{Y}_{k+1}(G)$  be the set of complements of $(k+1)$-cycles in the graph $G$.  

Then the face lattice of $\Delta_k(G)$ coincides with the poset 
\[P(n,k)\setminus \{\mathcal{Z}_k(G) \cup \mathcal{Y}_{k+1}(G)\}\]
if and only if the following two conditions are satisfied: 
\begin{enumerate}
    \item If $A^c\in \mathcal{Z}_k(G)$, $x\not\in A$, and $A\cup \{x\}$ is not a $(k+1)$-cycle, then there is a $y\in A$ such that $(A\setminus \{y\})\cup \{x\}$ is disconnected.
    \item If $B$ is a $(k+1)$-cycle of $G$ and $x\not\in B$, then there is a $y\in B$ such that $(B\setminus \{y\})\cup \{x\}$ is disconnected.
\end{enumerate}

The reduced Euler characteristic of $\Delta_k(G)$ is  then 
\begin{equation}\label{eqn:EulerChar-truncBoolean-minus-antichain-cycle}
\begin{gathered}
(-1)^{n-k-1}\left(\binom{n-1}{k-1} + |\mathcal{Y}_{k+1}(G)| -|\mathcal{Z}_k(G)|\right)\\
=(-1)^{n-k-1}\left(|\{F:F \text{ is a facet of }\Delta_k(G)\}|   -       \binom{n-1}{k}                                                +|\mathcal{Y}_{k+1}(G)| \right).
\end{gathered}
\end{equation}
\end{prop}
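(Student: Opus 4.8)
The plan is to mimic the proof of \Cref{thm:truncBoolean-minus-antichain}, but now removing two antichains from the truncated Boolean lattice instead of one. First I would verify the elementary point that $\mathcal{Z}_k(G)$ and $\mathcal{Y}_{k+1}(G)$ are genuinely disjoint: an element of $\mathcal{Z}_k(G)$ is the complement of a connected $k$-set, while an element of $\mathcal{Y}_{k+1}(G)$ is the complement of a $(k+1)$-set (a $(k+1)$-cycle), so these sets have different cardinalities as subsets of $V(G)$ and cannot coincide. The next step is to identify exactly which elements of $P(n,k)$ lie in the face lattice of $\Delta_k(G)$: a set $X$ with $|X|\le n-k$ is a face precisely when $X^c$ contains a disconnected $k$-set. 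The non-faces of size exactly $n-k$ are the elements of $\mathcal{Z}_k(G)$ by definition. A non-face of size $n-k-1$ would be a set $X$ with $|X^c|=k+1$ such that every $k$-subset of $X^c$ is connected; one shows (this is where the cycle condition enters) that such an $X^c$ must induce a graph in which every $k$-subset is connected, and that for $k\ge 3$ the only way this can happen while $X^c$ itself is not already forcing $X$ into the face lattice via a smaller non-face is when $X^c$ is a $(k+1)$-cycle, i.e.\ $X\in\mathcal{Y}_{k+1}(G)$. Below dimension $n-k-2$ there can be no additional non-faces once conditions (1) and (2) hold, because those conditions guarantee that every element of $\mathcal{Z}_k(G)\cup\mathcal{Y}_{k+1}(G)$ is \emph{maximal} among non-faces: adding any vertex to its complement's removal produces a disconnected $k$-set.

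\textbf{Key steps, in order.} (a) Show $\mathcal{Z}_k(G)\cup\mathcal{Y}_{k+1}(G)$ is an antichain in $P(n,k)$ lying strictly between $\hat0$ and $\hat1$, using the cardinality observation above together with the fact that $(k+1)$-cycles are connected so their complements have size $n-k-1<n-k$. (b) Prove the ``only if'' direction: if the face lattice equals $P(n,k)\setminus\{\mathcal{Z}_k(G)\cup\mathcal{Y}_{k+1}(G)\}$, then in particular every set strictly below an element of either antichain is a face, which unwinds precisely to conditions (1) and (2) — condition (1) says the $\mathcal{Z}_k$-elements have no non-face below them except possibly $\mathcal{Y}_{k+1}$-elements, and condition (2) says the $\mathcal{Y}_{k+1}$-elements have no non-face strictly below them at all. (c) Prove the ``if'' direction: assuming (1) and (2), show that every $X$ with $|X|\le n-k$ not in the two antichains is a face. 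The cases $|X|=n-k$ and $|X|=n-k-1$ are handled directly from the definitions and the classification of when all $k$-subsets of a $(k+1)$-set are connected; for $|X|\le n-k-2$ one uses that $\Delta_k(G)$ automatically contains the full $(n-k-3)$-skeleton on $V(G)$ when $n\ge k+2$, together with conditions (1), (2) to fill in dimension $n-k-2$. (d) Compute the reduced Euler characteristic by applying the general form of \Cref{thm:Bac-mu} (or, cleaner, \Cref{eqn:mu-deleted-antichain} applied to the two-block antichain $\mathcal{A}=\mathcal{Z}_k(G)\cup\mathcal{Y}_{k+1}(G)$) starting from $\mu(P(n,k))=(-1)^{n-k-1}\binom{n-1}{k-1}$. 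For $x\in\mathcal{Z}_k(G)$ the interval $(\hat0,x)$ is Boolean of rank $n-k$ and $(x,\hat1)$ is empty, contributing $-(-1)^{n-k}(-1)=(-1)^{n-k-1}$, i.e.\ $-1$ relative to the leading sign; for $x\in\mathcal{Y}_{k+1}(G)$ the interval $(\hat0,x)$ is Boolean of rank $n-k-1$ so contributes $+1$ relative to the leading sign. Summing gives $(-1)^{n-k-1}\bigl(\binom{n-1}{k-1}+|\mathcal{Y}_{k+1}(G)|-|\mathcal{Z}_k(G)|\bigr)$, and the second displayed equality in \eqref{eqn:EulerChar-truncBoolean-minus-antichain-cycle} follows from $|\{\text{facets}\}|=\binom{n}{k}-|\mathcal{Z}_k(G)|$ and the identity $\binom{n}{k}-\binom{n-1}{k-1}=\binom{n-1}{k}$.

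\textbf{Main obstacle.} The delicate point is step (c) in dimension $n-k-1$: I must argue that when conditions (1) and (2) hold, a set $X$ with $|X^c|=k+1$ is a \emph{face} of $\Delta_k(G)$ unless $X^c$ is exactly a $(k+1)$-cycle. This requires the combinatorial fact that for $k\ge 3$, if $H$ is a graph on $k+1$ vertices in which every induced $k$-vertex subgraph is connected, then $H$ is either connected-with-a-disconnecting-vertex-free structure forcing membership, or $H$ is a $(k+1)$-cycle — more precisely, one needs to rule out that $X^c$ induces a subgraph all of whose $k$-subsets are connected \emph{and} which is not a cycle, since such an $X$ would be a genuine non-face not accounted for by $\mathcal{Z}_k(G)\cup\mathcal{Y}_{k+1}(G)$, contradicting the claimed equality. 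I expect this is exactly why condition (2) is phrased the way it is and why the hypothesis $k\ge 3$ is imposed (for $k=2$ a $2$-subset of a triangle is connected but triangles are not the only obstruction, and the edgeless-graph skeleton behavior differs). Handling this classification carefully, and checking that conditions (1)–(2) really are equivalent to ``no unexpected non-faces appear in \emph{any} dimension'', is the heart of the argument; the Euler characteristic computation in step (d) is then a routine application of Baclawski's formula.
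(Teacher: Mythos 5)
There is a genuine gap, and it is in the part you call routine. Your step (a) is false: $\mathcal{Z}_k(G)\cup\mathcal{Y}_{k+1}(G)$ is \emph{not} an antichain in $P(n,k)$. If $B$ is a $(k+1)$-cycle, then deleting any one of its $k+1$ vertices leaves a path, which is connected; hence $B^c$ (of size $n-k-1$) is contained in exactly $k+1$ elements $A^c\in\mathcal{Z}_k(G)$ (of size $n-k$). So the removed set contains two-element chains $B^c<A^c$, and \Cref{eqn:mu-deleted-antichain} simply does not apply. The paper's proof uses \Cref{thm:Bac-mu} in full generality: the singleton chains through $A^c\in\mathcal{Z}_k(G)$ contribute $-|\mathcal{Z}_k(G)|$ relative to the sign $(-1)^{n-k-1}$; the singleton chains through $B^c\in\mathcal{Y}_{k+1}(G)$ contribute $-k$ per cycle, because $\mu_P(B^c,\hat 1)=k$ (the open interval $(B^c,\hat 1)$ is an antichain of $k+1$ sets of size $n-k$), not $-1$ as your bookkeeping implicitly assumes; and the two-element chains $\hat 0<B^c<A^c<\hat 1$ contribute $+(k+1)$ per cycle, since each $B^c$ lies under exactly $k+1$ elements of $\mathcal{Z}_k(G)$. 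Only the combination $-k+(k+1)=+1$ yields the per-cycle contribution $+1$ and hence the stated formula $(-1)^{n-k-1}\bigl(\binom{n-1}{k-1}+|\mathcal{Y}_{k+1}(G)|-|\mathcal{Z}_k(G)|\bigr)$. Your derivation reaches the right number only by asserting a $+1$ per cycle that does not follow from the formula you invoke, and it omits the chain terms entirely; even your fallback ("the general form of \Cref{thm:Bac-mu}") is not carried out, since you never account for the containments between the two families.

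On the structural half, your outline is essentially workable but your "main obstacle" is a red herring: you do not need (and cannot have) a classification asserting that a $(k+1)$-vertex graph all of whose $k$-subsets are connected must be a $(k+1)$-cycle (an induced $K_{k+1}$ is a counterexample). The point is that condition (1) itself forbids such configurations inside $G$: if $C$ is such a $(k+1)$-set that is not a cycle, apply condition (1) with $A=C\setminus\{a\}$ (connected of size $k$) and $x=a$ to produce a disconnected $k$-subset of $C$, a contradiction. This is how the paper disposes of the lattice-equality direction in two sentences, with condition (2) guaranteeing that codimension-one subsets of cycle complements are faces; the downward closure to all smaller ranks then follows because a disconnected $(k+1)$-set contains a disconnected $k$-set. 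So the face-lattice equivalence can be repaired along your lines, but the M\"obius computation must be redone with the non-antichain structure taken into account.
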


\begin{proof} The first condition in the statement guarantees the  inclusion 
\[\mathcal{L}(\Delta_k(G))\subseteq P(n,k)\setminus \{\mathcal{Z}_k(G) \cup \mathcal{Y}_{k+1}(G)\}.\]
The second condition then ensures that  no faces of the cut complex have been lost by removing all the complements of $(k+1)$-cycles.

It remains to compute the M\"obius number.
For the subposet $Q=P(n,k)\setminus \{\mathcal{Z}_k(G) \cup \mathcal{Y}_{k+1}(G)\}$ of the poset $P=P(n,k)$, using Theorem~\ref{thm:Bac-mu} in its full generality,
\begin{align*}
\mu(Q) -\mu(P) & = \sum_{A^c\in\mathcal{Z}_k(G) }(-1) \mu(\hat 0, A^c) \mu(A^c, \hat 1)
+ \sum_{B^c\in\mathcal{Y}_{k+1}(G) } (-1)\mu(\hat 0, B^c) \mu(B^c, \hat 1)\\
& \qquad + \sum_{B^c\in\mathcal{Y}_{k+1}(G) }\,\sum_{\substack{A^c\in\mathcal{Z}_k(G) \\ B^c\subset A^c}} \mu(\hat 0, B^c) \mu(B^c, A^c)  \mu(A^c, \hat 1).
\end{align*}

Since all intervals are in the Boolean lattice $B_{n}$,  the first sum is simply 
\begin{center}$(-1) |\mathcal{Z}_k(G)| (-1)^{n-k} (-1)=(-1) |\mathcal{Z}_k(G)| (-1)^{n-k-1}$. \end{center}

For the second sum, since the Boolean lattice is self-dual and $B^c\subset A^c$ if and only if 
$A\subset B,$ it is clear that the interval $(B^c, \hat 1)$ consists of $(k+1)$ subsets of size $n-k$, and hence its M\"obius number is $+k$.  Also $(\hat 0, B^c)$ is a Boolean interval of rank $(n-k-1)=|B^c|$. The second sum therefore evaluates to 
\begin{center}$(-k)\,| \mathcal{Y}_{k+1}(G)| (-1)^{n-k-1}=(-1)^{n-k}k\,| \mathcal{Y}_{k+1}(G)|  $. \end{center}

The third sum is over all chains $\hat 0\subset B^c\subset A^c\subset \hat 1$, but each $B^c$ is covered by exactly $(k+1)$ elements of type $A^c$ (obtained by removing in turn each of the $(k+1)$ vertices in the $(k+1)$-cycle $B$), so it evaluates to 
\begin{center}$(k+1)\,| \mathcal{Y}_{k+1}(G)|  (-1)^{n-k-1}$. \end{center}

Putting all this together, we find that 
$\mu(Q) -\mu(P)=(-1)^{n-k-1}\left(| \mathcal{Y}_{k+1}(G)|-|\mathcal{Z}_k(G)| \right),$ 
which is as claimed, recalling that $\mu(P)=(-1)^{n-k-1}\binom{n-1}{k-1}$.
\end{proof}

The following situation applies, for example,  to the 3-cut complex of the grid graph, and will be needed in \Cref{prop:BettiNumberDelta3GridGraph}.
\begin{prop}\label{prop:class-of-graphs2-2022-4-25} Let $G$ be a graph on $n$ vertices, let $k\ge 3$, and assume that $G$ contains no cycles of length less than or equal to $k$, but does have a cycle of length $k+1$.  Then the face lattice of $\Delta_k(G)$ coincides with the poset 
\[P(n,k)\setminus \{\mathcal{Z}_k(G) \cup \mathcal{Y}_{k+1}(G)\},\]
where $\mathcal{Z}_k(G)$ is the set of subsets whose complements are connected of size $k$, and $\mathcal{Y}_{k+1}(G)$ is the set of complements of $(k+1)$-cycles in the graph $G$.  In particular, $\Delta_k(G)$ contains a complete $(n-k-3)$-skeleton, that is, a complete codimension 2-skeleton.
The reduced Euler characteristic of $\Delta_k(G)$ is  then given by Equation~\ref{eqn:EulerChar-truncBoolean-minus-antichain-cycle} of \Cref{prop:truncBoolean-minus-antichain-and-cycles}.
\end{prop}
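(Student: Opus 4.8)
The plan is to obtain the statement from \Cref{prop:truncBoolean-minus-antichain-and-cycles} by verifying that the girth hypothesis on $G$ implies both of its numbered conditions; the Euler characteristic formula and the complete-skeleton claim then follow formally. Say a vertex subset is \emph{connected} if it induces a connected subgraph. I would first record two facts, both immediate from ``$G$ has no cycle of length $\le k$'' (a cycle inside $G[W]$ has length at most $|W|$): every connected subset of size $\le k$ induces a \emph{tree}; and every connected subset $W$ of size $k+1$ induces either a tree or exactly a $(k+1)$-cycle — if $G[W]$ contains a cycle it must be spanning of length $k+1$, and any chord would create a cycle of length at most $\lfloor(k+1)/2\rfloor+1\le k$. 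In particular every $(k+1)$-cycle of $G$ is chordless, so deleting a vertex from it leaves a path. I would also use that a tree on $m\ge 3$ vertices has a non-leaf vertex whose deletion disconnects it.

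To check condition~(1): let $A$ be connected with $|A|=k$, let $x\notin A$, and suppose $A\cup\{x\}$ is not a $(k+1)$-cycle. If $x$ has no neighbor in $A$, then $x$ is isolated in $(A\setminus\{y\})\cup\{x\}$ for every $y$, so any $y\in A$ works. Otherwise $A\cup\{x\}$ is connected, hence a tree on $k+1$ vertices by the second fact; since $A$ is connected with $k\ge 2$ it has an edge, so $A\cup\{x\}$ is not a star centered at $x$, and therefore has a non-leaf vertex $y$ lying in $A$; deleting $y$ disconnects $A\cup\{x\}$, i.e.\ $(A\setminus\{y\})\cup\{x\}$ is disconnected.

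To check condition~(2): let $B$ be a $(k+1)$-cycle and $x\notin B$. Since $B\setminus\{y\}$ is a path for each $y\in B$, the set $(B\setminus\{y\})\cup\{x\}$ is disconnected exactly when $x$ has no neighbor in $B\setminus\{y\}$, i.e.\ when $N_G(x)\cap B\subseteq\{y\}$; so it suffices to prove $|N_G(x)\cap B|\le 1$. If instead $x$ had two neighbors on $B$, separated along $B$ by arcs of edge-lengths $d$ and $k+1-d$, then closing each arc through $x$ yields cycles of lengths $d+2$ and $k+3-d$, and the girth hypothesis forces $d\ge k-1$ and $d\le 2$ — impossible for $k\ge 3$. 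The inequality is tight at $k=3$, where one must additionally rule out an outside vertex joined to two opposite vertices of a $4$-cycle; this holds in the cases of interest, e.g.\ the grid graph, where opposite corners of a unit square have no common neighbor outside it.

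Given both conditions, \Cref{prop:truncBoolean-minus-antichain-and-cycles} identifies $\mathcal L(\Delta_k(G))$ with $P(n,k)\setminus\{\mathcal Z_k(G)\cup\mathcal Y_{k+1}(G)\}$ and delivers Equation~\eqref{eqn:EulerChar-truncBoolean-minus-antichain-cycle}. The complete codimension-$2$ skeleton is then immediate: the only elements removed from $P(n,k)$ have rank $n-k$ (those in $\mathcal Z_k(G)$) or $n-k-1$ (those in $\mathcal Y_{k+1}(G)$), so every subset of $V(G)$ of size at most $n-k-2$ remains a face of $\Delta_k(G)$. I expect condition~(2) — bounding how an external vertex can attach to a $(k+1)$-cycle — to be the crux, with the tight boundary case $k=3$ the delicate point.
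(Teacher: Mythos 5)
Your check of condition (1) is correct (the tree/internal-vertex argument is a clean alternative to the paper's), and for $k\ge 4$ your reduction of condition (2) to $|N_G(x)\cap B|\le 1$ together with the two-arc girth computation is also fine. The genuine gap is exactly the point you flag and then wave away: the case $k=3$. The inequality $|N_G(x)\cap B|\le 1$ is \emph{not} a consequence of the hypotheses of the proposition, which is stated for an arbitrary graph $G$ with no cycle of length $\le k$ and some $(k{+}1)$-cycle. For instance $G=K_{2,3}$ with parts $\{a,b\}$, $\{c,d,e\}$ is triangle-free and contains $4$-cycles, yet $e$ is adjacent to the two opposite vertices $a,b$ of the $4$-cycle $B=\{a,c,b,d\}$, and one checks that $(B\setminus\{y\})\cup\{e\}$ is connected for every $y\in B$, so condition (2) of \Cref{prop:truncBoolean-minus-antichain-and-cycles} genuinely fails there. ``This holds in the cases of interest, e.g.\ the grid graph'' therefore does not prove the proposition as stated; and since you are invoking \Cref{prop:truncBoolean-minus-antichain-and-cycles} as an equivalence, the failure of (2) would, on your reading, even suggest the face-lattice identification is false for such graphs --- whereas it is true (for $K_{2,3}$, $\Delta_3$ has the single facet $\{a,b\}$, which is exactly $P(5,3)\setminus\{\mathcal{Z}_3(G)\cup\mathcal{Y}_4(G)\}$). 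So this route cannot be completed for $k=3$.

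The paper's proof circumvents this by not insisting that the disconnected witness inside $B\cup\{x\}$ have the special form $(B\setminus\{y\})\cup\{x\}$: it verifies the poset equality directly, checking all codimension-one faces of removed elements. In the problematic case, where $x$ has exactly two neighbours $u,v$ on the (necessarily chordless) $(k{+}1)$-cycle $B$, the set $\{x\}\cup(B\setminus\{u,v\})$ has size $k$, is contained in $B\cup\{x\}$, and is disconnected because $x$ has no neighbour in the nonempty set $B\setminus\{u,v\}$; hence $B^c\setminus\{x\}$ is a face of $\Delta_k(G)$ after all. Three or more neighbours of $x$ on $B$ force a cycle of length $\le k$, matching your computation. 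Once the identification of $\mathcal{L}(\Delta_k(G))$ with $P(n,k)\setminus\{\mathcal{Z}_k(G)\cup\mathcal{Y}_{k+1}(G)\}$ is established in this direct way, the Euler characteristic follows from the M\"obius computation in \Cref{prop:truncBoolean-minus-antichain-and-cycles}, which uses only the poset equality and not conditions (1)--(2). To repair your argument, keep your condition-(1) verification, but in the cycle case replace the appeal to condition (2) by this two-vertex-removal witness when $|N_G(x)\cap B|=2$ (your codimension-2-skeleton and Euler characteristic conclusions are then fine as written).
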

\begin{proof} For simplicity write $Q=P(n,k)\setminus \{\mathcal{Z}_k(G) \cup \mathcal{Y}_{k+1}(G)\}$. If $B$ is any set of size $(k+1)$, then $B^c$ is contained in a facet of the $k$-cut complex 
if and only if $B$ contains a disconnected set of size $k$. This is impossible if $B$ is a $(k+1)$-cycle, and hence the face lattice of the cut complex is a subposet of  $Q$. 

For the reverse inclusion, first let $X$ be a codimension 1 face of $\mathcal{Z}_k(G)$ that is not in $\mathcal{Y}_{k+1}(G)$. This means $X=A^c\setminus\{x\},$ i.e., $X^c=A\cup\{x\}$, where $A$ is connected of size $k$, and $X^c$ is not a $(k+1)$-cycle.  

If $X^c$ is disconnected, then $x$ is not connected by an edge to any vertex in $A$ and hence $X^c$ contains a disconnected set of size $k$, namely $(A\setminus\{u\})\cup\{x\}$ for any $u
\in A$. 

If $X^c$ is connected, and $x$ is connected by an edge to only one vertex $u$ in $A$, 
then $(A\setminus\{u\})\cup\{x\}$ is a disconnected set of size $k$ contained in $X^c$, and hence $X$ is a subset of a facet of the cut complex.
Finally $x$ cannot be connected to two  vertices $v_1\ne v_2$ of $A$, since this  would create a cycle of length at most $k+1$,
a contradiction since  there are no cycles of length $\le k$, and  $X^c$ is not a $(k+1)$-cycle.   

Now let $Y$ be a codimension 1 face of $\mathcal{Y}_{k+1}(G)$.   Then $Y=B^c\setminus\{y\}$, i.e., $Y^c=B\cup\{y\}$ where $B$ is now a $(k+1)$-cycle. 

If $y$ is connected by an edge to at most one vertex of $B$ we are done as in the argument above.  If $y$ is connected to only two vertices $u,v$ of $B$, then $\{y\}\cup B\setminus\{u,v\}$ is a disconnected set of size $k$ which is contained in $Y^c$, and hence $Y$ is contained in a facet of the cut complex. 

Let $B=\{b_1, b_2, \ldots, b_{k+1}\}$ be the $(k+1)$-cycle. The final possibility is that there are at least three vertices, say $b_1, b_i, b_j$, $1<i<j\le k+1$, such that $y$ is connected to each by an edge.  But then it is easy to see that this creates three cycles, at least one of which must have length strictly less than $k+1$. 

This establishes the reverse inclusion.  \end{proof}

\section{Disjoint Unions: $f$-{} and $h$-vectors}

We begin by recalling some facts about disjoint unions and joins of graphs from \cite{BDJRSX2024}.

\begin{df}\label{defn:Disjunion}
If $G_1$, $G_2$ are graphs, their \emph{disjoint union} is the graph $G_1+G_2$ having vertex set equal to the disjoint union of the vertex sets of $G_1$ and $G_2$, and edge set equal to the disjoint union of the edge sets of $G_1$ and $G_2$.

The \emph{join} $G_1*G_2$ of $G_1$ and $G_2$ is the graph formed from  the disjoint union by adding to the edge set of $G_1+G_2$ the set of all edges between a vertex of $V_1$ and a vertex of $V_2$. 
\end{df}
The cut complex of the disjoint union of two graphs was studied in \cite{BDJRSX2024}.  The facets of this complex are described as follows.  Let $G_1$ and $G_2$ be disjoint graphs with vertex sets $V_1$ and $V_2$.  The facets of $\Delta_k(G_1+G_2)$ are of three types: Type 1: $(|V_1|+|V_2|-k)$-sets containing some, but not all vertices of each of
$V_1$ and $V_2$; Type 2: sets of the form $V_1\cup A$, where $A$ is a $(|V_2|-k)$-subset of $V_2$
that disconnects $G_2$; and Type 3: sets of the form $B\cup V_2$, where $B$ is a $(|V_1|-k)$-subset of $V_1$ that disconnects $G_1$. This description was used to prove the following theorem.

\begin{theorem}\cite[Theorem~4.8]{BDJRSX2024}\label{thm:MargeDisjunion}
Let $k\geq2$, and $G_1$, $G_2$ graphs.   Then $\Delta_k(G_1+G_2)$ is shellable if and only if $\Delta_k(G_1)$ and $\Delta_k(G_2)$ are shellable.
\end{theorem}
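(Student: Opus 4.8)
I would prove the two implications separately: the forward direction (``$\Delta_k(G_1+G_2)$ shellable $\Rightarrow$ each $\Delta_k(G_i)$ shellable'') via links, and the reverse direction by writing down an explicit shelling of $\Delta_k(G_1+G_2)$ in three consecutive blocks. Write $V=V_1\sqcup V_2$, $n_i=|V_i|$, $n=n_1+n_2$, and recall that $\Delta_k(G_1+G_2)$ is pure of dimension $n-k-1$. For the forward direction the key observation is that $\lk_{\Delta_k(G_1+G_2)}(V_2)=\Delta_k(G_1)$: a face of the link is a subset $\sigma\subseteq V_1$ with $\sigma\cup V_2\in\Delta_k(G_1+G_2)$, and since $V_2\subseteq\sigma\cup V_2$, the complement $V_1\setminus\sigma$ must contain a disconnected $k$-set, which, lying inside $V_1$, is already disconnected in $G_1$; hence $\sigma\in\Delta_k(G_1)$, and the reverse inclusion is equally immediate. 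Now a shelling $F_1,\dots,F_t$ of a pure complex restricts, on the subsequence of those $F_i$ containing $V_2$, to a shelling $\{F_i\setminus V_2\}$ of $\lk(V_2)$ (the codimension-one face $F_m\cap F_j$ witnessing the shelling condition for $F_i,F_j$ already contains $V_2$ whenever $V_2\subseteq F_i\cap F_j$). So shellability of $\Delta_k(G_1+G_2)$ forces shellability of $\Delta_k(G_1)$, and symmetrically of $\Delta_k(G_2)$. (If some $\Delta_k(G_i)$ is void it is shellable by convention, so nothing is needed.)

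For the reverse direction, fix shellings $A_1,\dots,A_s$ of $\Delta_k(G_2)$ and $B_1,\dots,B_r$ of $\Delta_k(G_1)$ (empty lists if the complex is void). Recall the three families of facets of $\Delta_k(G_1+G_2)$: \emph{Type 1}, the $(n-k)$-subsets omitting at least one vertex of $V_1$ and at least one of $V_2$; \emph{Type 2}, the sets $V_1\cup A_j$; \emph{Type 3}, the sets $B_j\cup V_2$. The structural fact driving the proof is that the subcomplex generated by the Type 1 facets is precisely the $(n-k-1)$-skeleton of the join $\partial\Delta_{V_1}\ast\partial\Delta_{V_2}$ of the boundary complexes of the full simplices on $V_1$ and $V_2$ — its faces are exactly the subsets of size $\le n-k$ omitting a vertex of each $V_i$ (and this subcomplex coincides with $\Delta_k(K_{n_1}+K_{n_2})$). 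Since joins of shellable complexes are shellable and skeleta of shellable complexes are shellable, this subcomplex admits a shelling; take one and call it Block 1. Follow it by $V_1\cup A_1,\dots,V_1\cup A_s$ (Block 2) and then $B_1\cup V_2,\dots,B_r\cup V_2$ (Block 3). The claim is that the concatenation Block 1, Block 2, Block 3 is a shelling of $\Delta_k(G_1+G_2)$.

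To check this, proceed facet by facet. Block 1 is a shelling of the complex it generates by construction. When $F=V_1\cup A_j$ is added, the intersection of $\langle F\rangle$ with the union of the earlier facets splits into two contributions: against the Type 1 facets it equals $\bigcup_{v\in V_1}\langle F\setminus\{v\}\rangle$ — because a subset of $F$ omits a vertex of $V_1$ exactly when it omits some $v\in V_1$, and crucially \emph{no Type 1 facet contains all of $V_1$} — while against the earlier Type 2 facets it equals $\langle V_1\rangle\ast\bigl(\bigcup_{i<j}\langle A_i\rangle\cap\langle A_j\rangle\bigr)$, which is generated by $\{F\setminus\{v\}:v\in r(A_j)\}$ since $A_1,\dots,A_s$ is a shelling; hence the total intersection is pure of codimension one, with restriction $r(F)=V_1\cup r(A_j)$. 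The Type 3 step is symmetric under $V_1\leftrightarrow V_2$, with one extra point: a Type 3 facet $B_j\cup V_2$ meets a Type 2 facet $V_1\cup A_i$ in $B_j\cup A_i$, a set of size $n-2k\le n-k-2$ and hence of codimension at least $2$; but $B_j\cup A_i$ omits every vertex of the nonempty set $V_2\setminus A_i$, so it lies inside some codimension-one face $(B_j\cup V_2)\setminus\{v\}$, $v\in V_2$, already supplied by Block 1, so these cross-terms are harmlessly absorbed.

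The step I expect to be the main obstacle is precisely this last round of bookkeeping: verifying that for \emph{every} newly added facet the union of all prior intersections is \emph{exactly} pure of codimension one — that the Type 1 facets supply all the ``omits a vertex of $V_i$'' codimension-one faces, that the earlier same-type facets supply the remaining ones via the chosen shellings, and that no lower-dimensional debris from cross-type intersections escapes absorption — together with dispatching the degenerate cases (some $G_i$ with fewer than $k$ vertices, or $\Delta_k(G_i)=\{\emptyset\}$), which fit the same scheme after minor adjustments. As a by-product, the full-restriction facets of the resulting shelling are those of Block 1 together with the $V_1\cup A_j$ having $r(A_j)=A_j$ and the $B_j\cup V_2$ having $r(B_j)=B_j$; this refined count is the combinatorial mechanism underlying the $f$- and $h$-vector formulas established later in this section.
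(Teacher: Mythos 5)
Your argument is correct: the forward direction via $\lk_{\Delta_k(G_1+G_2)}(V_2)=\Delta_k(G_1)$ and restriction of a shelling to the star, and the reverse direction via the Type 1/2/3 block shelling (with the Type 1 subcomplex identified as $\Delta_k(K_{n_1}+K_{n_2})$, and the cross-term and degenerate-case checks you flag do go through). This paper does not reprove the theorem but cites it from [BDJRSX2024], noting that the three-type facet description is what drives the proof there, so your construction is essentially the intended approach.
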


Note that $\Delta_k(K_{n_1}+K_{n_2})$ is shellable, since each $\Delta_k(K_{n_i})$ is void.

\begin{prop}\label{prop:disjoint-union-facets}
Let $G_1$ and $G_2$ be disjoint graphs on $n_1$ and $n_2$ vertices, respectively.  Let $k\ge 2$.  Then $\Delta_k(G_1+G_2)=\Delta_k(G_1*G_2)\cup\Delta_k(K_{n_1}+K_{n_2})$.
\end{prop}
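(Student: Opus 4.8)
The plan is to verify the stated equality of simplicial complexes by a direct face-by-face comparison straight from the definition of the cut complex; it is cleanest to argue in terms of complements. Write $V=V_1\sqcup V_2$ and $n=n_1+n_2$, and recall that for a graph $H$ on vertex set $V$ a subset $\sigma$ is a face of $\Delta_k(H)$ exactly when $T:=V\setminus\sigma$ contains some $k$-element subset $S$ with $H[S]$ disconnected. Since the three complexes $\Delta_k(G_1+G_2)$, $\Delta_k(G_1*G_2)$ and $\Delta_k(K_{n_1}+K_{n_2})$ all live on subsets of the common ground set $V$, their union is well defined, and it suffices to prove: for every $T\subseteq V$, $T$ contains a $k$-subset disconnected in $G_1+G_2$ if and only if $T$ contains a $k$-subset disconnected in $G_1*G_2$ or a $k$-subset disconnected in $K_{n_1}+K_{n_2}$.

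The heart of the argument is a trichotomy for a $k$-subset $S\subseteq V$ according to how it meets the two parts. If $S\subseteq V_1$ (and symmetrically if $S\subseteq V_2$), then $(G_1+G_2)[S]=(G_1*G_2)[S]=G_1[S]$, while $(K_{n_1}+K_{n_2})[S]=K_{|S|}$ is connected; hence such an $S$ is disconnected in $G_1+G_2$ iff it is disconnected in $G_1*G_2$, and is never disconnected in $K_{n_1}+K_{n_2}$. If instead $S\cap V_1\neq\emptyset$ and $S\cap V_2\neq\emptyset$, then in both $G_1+G_2$ and $K_{n_1}+K_{n_2}$ no edge of $S$ joins $S\cap V_1$ to $S\cap V_2$, so $S$ induces a disconnected subgraph in each; whereas $(G_1*G_2)[S]$ contains the complete bipartite graph between the nonempty sets $S\cap V_1$ and $S\cap V_2$ and is therefore connected. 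Consequently, among the $k$-subsets of a fixed $T$: those disconnected in $G_1+G_2$ are exactly the ones contained in a single part and disconnected there, together with the ones meeting both parts; those disconnected in $G_1*G_2$ are exactly the ones contained in a single part and disconnected there; and those disconnected in $K_{n_1}+K_{n_2}$ are exactly the ones meeting both parts.

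Putting these three descriptions together, the collection of $k$-subsets of $T$ that are disconnected in $G_1+G_2$ is precisely the union of the collection disconnected in $G_1*G_2$ and the collection disconnected in $K_{n_1}+K_{n_2}$; in particular it is nonempty iff at least one of the latter two is. Letting $T$ range over all complements $V\setminus\sigma$ yields $\sigma\in\Delta_k(G_1+G_2)$ iff $\sigma\in\Delta_k(G_1*G_2)$ or $\sigma\in\Delta_k(K_{n_1}+K_{n_2})$, which is the asserted identity. (One could instead run the same bookkeeping at the level of facets, matching the three facet types of $\Delta_k(G_1+G_2)$ recalled above with the facets of $\Delta_k(K_{n_1}+K_{n_2})$ and of $\Delta_k(G_1*G_2)$; working with faces has the advantage of needing no size hypotheses relating $n_1,n_2$ to $k$.)

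I do not anticipate a real obstacle here: the only substantive input is that a join of two nonempty graphs — equivalently, a graph containing a spanning complete bipartite subgraph with both sides nonempty — is connected. The points that merely require care are bookkeeping: making explicit that the three complexes share a ground set so the union is meaningful, and noting that degenerate cases (such as $|T|<k$, one of the $\Delta_k(G_i)$ being void, or $k$ exceeding some $n_i$) are handled automatically, since in all of them the relevant statement ``$T$ contains a disconnected $k$-subset'' is simply false.
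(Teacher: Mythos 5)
Your proof is correct and follows essentially the same route as the paper's: the trichotomy on how a $k$-subset meets $V_1$ and $V_2$ is exactly the content of the paper's three facet types (Type 1, meeting both parts, giving the facets of $\Delta_k(K_{n_1}+K_{n_2})$; Types 2 and 3, lying in one part and disconnecting that $G_i$, giving the facets of $\Delta_k(G_1*G_2)$). The only difference is that you run the comparison over all faces via complements instead of quoting the facet classification, which makes the argument self-contained and, as you note, handles the degenerate cases automatically.
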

\begin{proof}
This follows from the description of the three types of facets of $\Delta_k(G_1+G_2)$.  Type 1 facets are those omitting at least one vertex from each of $G_1$ and $G_2$.  These are exactly the facets of $\Delta_k(K_{n_1}+K_{n_2})$.  Types~2 and~3 facets are those containing all vertices of one of the $G_i$ along with a disconnecting set of the other $G_i$.  These are exactly the facets of $\Delta_k(G_1*G_2)$. 
\end{proof}
\begin{df}[{\cite[Chapter II, Section 2]{RPSCCA1996}}]
The \emph{$f$-vector} of a $(d-1)$-dimensional simplicial complex $\Delta$ is 
the $d$-tuple $f(\Delta)=(f_0, f_1, \ldots, f_{d-1})$ where $f_i$ is the 
number of $i$-dimensional faces of $\Delta$. If $\Delta\ne \emptyset$, 
the empty set is a face of $\Delta$, and we write $f_{-1}=1$.

The \emph{$f$-polynomial} of a $(d-1)$-dimensional simplicial complex $\Delta$ 
is $f(\Delta,x)=\sum_{i=0}^d f_{i-1}x^i$.
\end{df}
We see that any description of the $f$-polynomials of the cut complexes of disjoint unions should start with the $f$-polynomials of cut complexes of joins and of the specific disjoint union $K_{n_1}+K_{n_2}$.

We will use the following simple facts about $f$-polynomials. 
\begin{prop}\label{fpoly-prop}
Let $\Delta$ and $\Gamma$ be simplicial complexes.
\begin{enumerate}
\item The $f$-polynomial of $\Delta\cup\Gamma$ is
$f(\Delta
\cup\Gamma,x)=f(\Delta,x)+f(\Gamma,x)-f(\Delta\cap\Gamma,x)$.
\item 
The $f$-polynomial of $\Delta*\Gamma$ is
$f(\Delta*\Gamma,x)=f(\Delta,x)f(\Gamma,x)$.
\item The $f$-polynomial of the  simplex $\langle V\rangle$ is
$f(\langle V\rangle,x)=(1+x)^{|V|}$.
\end{enumerate}
\end{prop}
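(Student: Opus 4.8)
The plan is to verify each of the three identities directly from the definition of the $f$-polynomial, organizing the bookkeeping around faces of each fixed dimension. For part (1), I would use inclusion–exclusion at the level of faces: a face of $\Delta\cup\Gamma$ is a face of $\Delta$ or of $\Gamma$, and it is counted twice on the right-hand side precisely when it lies in both, that is, in $\Delta\cap\Gamma$ (which is again a simplicial complex, since the intersection of two down-closed families is down-closed). Taking the count dimension by dimension gives $f_{i-1}(\Delta\cup\Gamma)=f_{i-1}(\Delta)+f_{i-1}(\Gamma)-f_{i-1}(\Delta\cap\Gamma)$ for every $i\ge 0$, and multiplying by $x^i$ and summing yields the claimed polynomial identity. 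One should note the boundary case $i=0$, i.e. the empty face: if both $\Delta$ and $\Gamma$ are nonvoid then $f_{-1}=1$ on all three of $\Delta$, $\Gamma$, $\Delta\cap\Gamma$, so the constant terms read $1+1-1=1$, consistent with the convention; the void cases are handled by declaring the $f$-polynomial of the void complex to be $0$.

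For part (2), I would argue that since $V(\Delta)$ and $V(\Gamma)$ are disjoint, every face of $\Delta*\Gamma$ decomposes uniquely as $\sigma\cup\tau$ with $\sigma\in\Delta$, $\tau\in\Gamma$, and $\dim(\sigma\cup\tau)=\dim\sigma+\dim\tau+1$, equivalently $|\sigma\cup\tau|=|\sigma|+|\tau|$. Hence the number of $(i-1)$-dimensional faces of the join is $\sum_{a+b=i} f_{a-1}(\Delta)f_{b-1}(\Gamma)$, which is exactly the coefficient of $x^i$ in the product $f(\Delta,x)f(\Gamma,x)$ once we use the normalization $f_{-1}=1$; summing over $i$ gives the identity. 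Part (3) is then either a direct count — the simplex $\langle V\rangle$ has $\binom{|V|}{i}$ faces of cardinality $i$, so $f(\langle V\rangle,x)=\sum_{i=0}^{|V|}\binom{|V|}{i}x^i=(1+x)^{|V|}$ by the binomial theorem — or, more slickly, an induction on $|V|$ using part (2), since $\langle V\rangle$ is the join of $|V|$ copies of a single point, and the $f$-polynomial of a point is $1+x$.

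I do not expect a genuine obstacle here; the only thing requiring care is the treatment of the empty face and of void/irrelevant complexes, so that the constant-term bookkeeping in (1) and (2) is consistent. Concretely, I would fix the convention $f_{-1}(\Delta)=1$ for every nonvoid $\Delta$ and $f(\varnothing\text{-complex},x)=0$ at the start, check that part (1) degenerates correctly when $\Delta\cap\Gamma=\{\varnothing\}$ or when one of the complexes is void, and check that part (2) gives $f(\Delta*\Gamma,x)=f(\Delta,x)$ when $\Gamma=\{\varnothing\}$ (the join with the empty complex), matching $f(\Delta,x)\cdot 1$. With those conventions pinned down, all three identities are immediate from counting faces of each cardinality.
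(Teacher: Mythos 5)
Your proposal is correct: the paper states these as ``simple facts'' and gives no proof at all, and your dimension-by-dimension face counting (inclusion--exclusion for the union, the convolution identity $f_{i-1}(\Delta*\Gamma)=\sum_{a+b=i}f_{a-1}(\Delta)f_{b-1}(\Gamma)$ for the join, and the binomial theorem for the simplex) is exactly the standard argument being taken for granted. Your care with the empty face ($f_{-1}=1$ for nonvoid complexes) and with void complexes is the only point of substance, and you handle it consistently with the paper's conventions.
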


\begin{lemma}
Suppose $G_1$ is a graph on $n_1$ vertices and $G_2$ is a graph on $n_2$ vertices. Then the $f$-polynomial of the cut complex of their join $\Delta_k(G_1*G_2)$ is
\begin{align*}
    f(\Delta_k(G_1*G_2),x)=(1+x)^{n_2}f(\Delta_k(G_1),x)+(1+x)^{n_1}f(\Delta_k(G_2),x)-f(\Delta_k(G_1),x)f(\Delta_k(G_2),x).
\end{align*}
\end{lemma}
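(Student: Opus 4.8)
The plan is to realize $\Delta_k(G_1*G_2)$ as the union of two subcomplexes, each of which is the join of a cut complex with a full simplex, and then to apply the elementary $f$-polynomial identities of Proposition~\ref{fpoly-prop}.

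First I would record the structural observation underlying everything. Since $G_1*G_2$ contains every edge between $V_1$ and $V_2$, any subset of $V(G_1*G_2)$ meeting both $V_1$ and $V_2$ induces a connected subgraph; hence a $k$-set $S$ with $(G_1*G_2)[S]$ disconnected must lie entirely inside $V_1$ or entirely inside $V_2$, and in that case $(G_1*G_2)[S]=G_i[S]$. Consequently a subset $F\subseteq V_1\cup V_2$ is a face of $\Delta_k(G_1*G_2)$ if and only if $F\cap V_1$ is a face of $\Delta_k(G_1)$ or $F\cap V_2$ is a face of $\Delta_k(G_2)$. Writing $\langle V_i\rangle$ for the full simplex on $V_i$, this is precisely the statement
\[
\Delta_k(G_1*G_2) \;=\; \bigl(\Delta_k(G_1)*\langle V_2\rangle\bigr)\;\cup\;\bigl(\langle V_1\rangle*\Delta_k(G_2)\bigr),
\]
which is just the Type~2 and Type~3 facet description recalled before Theorem~\ref{thm:MargeDisjunion}, rephrased at the level of all faces rather than facets.

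Next I would identify the intersection. A face $F$ of $\Delta_k(G_1)*\langle V_2\rangle$ is characterized by $F\cap V_1\in\Delta_k(G_1)$, and a face of $\langle V_1\rangle*\Delta_k(G_2)$ by $F\cap V_2\in\Delta_k(G_2)$; thus a face lies in both exactly when $F\cap V_1\in\Delta_k(G_1)$ and $F\cap V_2\in\Delta_k(G_2)$, i.e.
\[
\bigl(\Delta_k(G_1)*\langle V_2\rangle\bigr)\cap\bigl(\langle V_1\rangle*\Delta_k(G_2)\bigr)\;=\;\Delta_k(G_1)*\Delta_k(G_2).
\]
Then I would finish by computation: parts (2) and (3) of Proposition~\ref{fpoly-prop} give $f(\Delta_k(G_1)*\langle V_2\rangle,x)=(1+x)^{n_2}f(\Delta_k(G_1),x)$ and $f(\langle V_1\rangle*\Delta_k(G_2),x)=(1+x)^{n_1}f(\Delta_k(G_2),x)$, while $f(\Delta_k(G_1)*\Delta_k(G_2),x)=f(\Delta_k(G_1),x)f(\Delta_k(G_2),x)$; substituting into the inclusion-exclusion identity of part (1) yields the claimed formula.

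The only genuine content is the structural identity and the identification of the intersection; once those are secured the rest is immediate. The step most likely to need care is checking that no faces are lost or gained in the union description — that a face of $\Delta_k(G_1*G_2)$ whose witnessing disconnected $k$-set lies in $V_1$ is genuinely a face of $\Delta_k(G_1)*\langle V_2\rangle$ (and conversely), which rests entirely on the observation $(G_1*G_2)[S]=G_1[S]$ for $S\subseteq V_1$ and on the fact that the vertex set of $\Delta_k(G_i)$ may be a proper subset of $V_i$, so one must take the full simplex on $V_i$, not on the smaller vertex set, in the join.
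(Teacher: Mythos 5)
Your proposal is correct and follows essentially the same route as the paper: decompose $\Delta_k(G_1*G_2)$ as $\bigl(\Delta_k(G_1)*\langle V_2\rangle\bigr)\cup\bigl(\langle V_1\rangle*\Delta_k(G_2)\bigr)$ with intersection $\Delta_k(G_1)*\Delta_k(G_2)$, then apply the $f$-polynomial identities of Proposition~\ref{fpoly-prop}. The only difference is that you prove the union decomposition from first principles (via the observation that a disconnected $k$-set of $G_1*G_2$ must lie in a single $V_i$), whereas the paper simply cites it from \cite[Theorem~4.16]{BDJRSX2024} and notes the intersection identity is easy to check.
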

\begin{proof}
According to \cite[Theorem~4.16]{BDJRSX2024}
$$   \Delta_k(G_1*G_2)=\Delta_k(G_1)*\langle V_2\rangle\cup \Delta_k(G_2)*\langle V_1\rangle.$$  Also, it is easy to check that $$
\Delta_k(G_1)*\Delta_k(G_2)=\Delta_k(G_1)*\langle V_2\rangle\cap \Delta_k(G_2)*\langle V_1\rangle.$$
So, by (Proposition~\ref{fpoly-prop}) 
\begin{align*}
    f(\Delta_k(G_1*G_2),x)&=f(\Delta_k(G_1)*\langle V_2\rangle,x)+f(\Delta_k(G_2)*\langle V_1\rangle,x)-f(\Delta_k(G_1)*\Delta_k(G_2),x)\\
    &=f(\Delta_k(G_1),x)f(\langle V_2\rangle,x)+f(\Delta_k(G_2),x)f(\langle V_1\rangle,x)- f(\Delta_k(G_1),x)f(\Delta_k(G_2),x)\\
    &=f(\Delta_k(G_1),x)(1+x)^{n_2}+f(\Delta_k(G_2),x)(1+x)^{n_1}-f(\Delta_k(G_1),x)f(\Delta_k(G_2),x).\qedhere
\end{align*}
\end{proof}

\begin{prop}\label{fpoly-kn-km}
The $f$-polynomial of the cut complex of the disjoint union of two cliques $\Delta_k(K_{n_1}+K_{n_2})$ is
\begin{align*}
    f(\Delta_k(K_{n_1}+K_{n_2}),x)&=\sum_{j=k}^{n_1+n_2}\left(\binom{n_1+n_2}{j}-\binom{n_1}{j}-\binom{n_2}{j}\right)x^{n_1+n_2-j}.
\end{align*}
\end{prop}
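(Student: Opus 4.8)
The plan is to count the faces of $\Delta_k(K_{n_1}+K_{n_2})$ by cardinality and then re-index. Write $V=V_1\sqcup V_2$ with $|V_i|=n_i$ and set $n=n_1+n_2$. In $K_{n_1}+K_{n_2}$ a $k$-element set $S$ induces a disconnected subgraph exactly when $S$ meets both $V_1$ and $V_2$, since each clique is connected and there are no edges between them. Hence a subset $\sigma\subseteq V$ is a face of $\Delta_k(K_{n_1}+K_{n_2})$ if and only if $V\setminus\sigma$ contains such an $S$, which happens precisely when $|\sigma|\le n-k$ and $\sigma$ contains neither $V_1$ nor $V_2$. (Equivalently, this can be read off the Type~1/2/3 facet description recalled before Theorem~\ref{thm:MargeDisjunion}, noting that $\Delta_k(K_{n_i})$ is void, so only Type~1 facets appear, and the complex is the $(n-k-1)$-skeleton of $\langle V\rangle$ with all faces containing $V_1$ or $V_2$ deleted.)

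First I would count, for each $i$ with $0\le i\le n-k$, the faces $\sigma$ of cardinality $i$ by inclusion--exclusion on the full set of $\binom{n}{i}$ subsets of size $i$: subtract the $\binom{n_2}{i-n_1}$ subsets containing $V_1$ and the $\binom{n_1}{i-n_2}$ subsets containing $V_2$. No subset of size $i\le n-k<n$ contains both $V_1$ and $V_2$, so these two families are disjoint and there is no further correction term; thus $f_{i-1}=\binom{n}{i}-\binom{n_2}{i-n_1}-\binom{n_1}{i-n_2}$. Then, substituting $i=n-j$ so that $x^i$ matches the $x^{n-j}$ in the statement and using $\binom{n}{i}=\binom{n}{j}$, $\binom{n_2}{i-n_1}=\binom{n_2}{n_2-j}=\binom{n_2}{j}$ and $\binom{n_1}{i-n_2}=\binom{n_1}{n_1-j}=\binom{n_1}{j}$, summing over $j$ from $k$ (the facets, $i=n-k$) to $n$ (the empty face, $i=0$) produces exactly the claimed polynomial.

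There is essentially no obstacle here: the argument is the identification of the face set plus a routine count. The only points needing a moment's care are verifying that no face of the complex simultaneously contains $V_1$ and $V_2$ (so the inclusion--exclusion has only two subtracted terms) and keeping the index conventions straight under $i\leftrightarrow n-j$. Equivalently, one can phrase the whole thing at the level of generating functions: since the faces of the $(n-k-1)$-skeleton $\Delta_k(E_n)$ of $\langle V\rangle$ that contain $V_1$ (resp.\ $V_2$) are in size-preserving bijection, after a shift by $n_1$ (resp.\ $n_2$), with the faces of $\Delta_k(E_{n_2})$ (resp.\ $\Delta_k(E_{n_1})$), one gets $f(\Delta_k(K_{n_1}+K_{n_2}),x)=f(\Delta_k(E_n),x)-x^{n_1}f(\Delta_k(E_{n_2}),x)-x^{n_2}f(\Delta_k(E_{n_1}),x)$, and expanding each $f(\Delta_k(E_m),x)=\sum_{i\le m-k}\binom{m}{i}x^i$ and re-indexing gives the result.
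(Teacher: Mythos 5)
Your proof is correct and is essentially the paper's argument: both identify a face $\sigma$ by the condition that its complement has size at least $k$ and meets both $V_1$ and $V_2$, and then count. The only cosmetic difference is that you count faces of size $i=n-j$ (containing neither $V_1$ nor $V_2$) and reindex, while the paper counts the complements of size $j$ directly.
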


\begin{proof} A set
$\sigma\in2^{V_1\cup V_2}$ is a face of $\Delta_k(K_{n_1}+K_{n_2})$ if and only if $\sigma^c$ contains a disconnected set of size $k$, i.e., if and only if $\sigma^c\nsubseteq V_1$, $\sigma^c\nsubseteq V_2$, and $|\sigma^c|\geq k$. 
Hence for $j\ge k$ the number of $(n_1+n_2-j)$-faces equals the number of valid complements of size $j$, which is thus $\binom{n_1+n_2}{j}-\binom{n_1}{j}-\binom{n_2}{j}$. Therefore 
\begin{align*}
    f(\Delta_k(K_{n_1}+K_{n_2}),x)&=\sum_{j=k}^{n_1+n_2}\left(\binom{n_1+n_2}{j}-\binom{n_1}{j}-\binom{n_2}{j}\right)x^{n_1+n_2-j}. \qedhere
\end{align*}
\end{proof}

Now we can move to the general formula for the $f$-polynomial of the disjoint union of two graphs.

\begin{theorem}\label{fdisjunion}
Suppose $G_1$ is a graph on $n_1$ vertices and $G_2$ is a graph on $n_2$ vertices. Then the $f$-polynomial of the cut complex of their disjoint union $\Delta_k(G_1+G_2)$ is
\begin{align*}
    f(\Delta_k(G_1+G_2),x)&=x^{n_2}f(\Delta_k(G_1),x)+x^{n_1}f(\Delta_k(G_2),x)+\sum_{j=k}^{n_1+n_2}\left(\binom{n_1+n_2}{j}-\binom{n_1}{j}-\binom{n_2}{j}\right)x^{n_1+n_2-j}.
\end{align*}
\end{theorem}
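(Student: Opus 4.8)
The plan is to feed the set-theoretic decomposition $\Delta_k(G_1+G_2)=\Delta_k(G_1*G_2)\cup\Delta_k(K_{n_1}+K_{n_2})$ of Proposition~\ref{prop:disjoint-union-facets} into the inclusion--exclusion rule for $f$-polynomials (Proposition~\ref{fpoly-prop}(1)):
\[
f(\Delta_k(G_1+G_2),x)=f(\Delta_k(G_1*G_2),x)+f(\Delta_k(K_{n_1}+K_{n_2}),x)-f\bigl(\Delta_k(G_1*G_2)\cap\Delta_k(K_{n_1}+K_{n_2}),x\bigr).
\]
The first term on the right is supplied by the preceding Lemma and the second by Proposition~\ref{fpoly-kn-km}, so the entire task reduces to identifying the intersection complex $\Delta_k(G_1*G_2)\cap\Delta_k(K_{n_1}+K_{n_2})$ and computing its $f$-polynomial.

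For that I would use two facts. First, from \cite[Theorem~4.16]{BDJRSX2024} (already invoked in the Lemma), $\Delta_k(G_1*G_2)=\Delta_k(G_1)*\langle V_2\rangle\cup\Delta_k(G_2)*\langle V_1\rangle$. Second, as in the proof of Proposition~\ref{fpoly-kn-km}, a set $\sigma\subseteq V_1\cup V_2$ is a face of $\Delta_k(K_{n_1}+K_{n_2})$ precisely when $\sigma\cap V_1\neq V_1$, $\sigma\cap V_2\neq V_2$, and $|\sigma|\le n_1+n_2-k$. The key observation is that every face of $\Delta_k(G_i)$ has size at most $n_i-k<n_i$, so it automatically omits a vertex of $V_i$ and the cardinality bound is automatically met. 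Hence $\bigl(\Delta_k(G_1)*\langle V_2\rangle\bigr)\cap\Delta_k(K_{n_1}+K_{n_2})$ consists exactly of the faces $\sigma_1\cup\tau_2$ with $\sigma_1\in\Delta_k(G_1)$ and $\tau_2$ a \emph{proper} subset of $V_2$; that is, it equals $\Delta_k(G_1)*\partial\langle V_2\rangle$, where $\partial\langle V_2\rangle$ denotes the boundary complex of the simplex $\langle V_2\rangle$ (the complex of all proper subsets of $V_2$), whose $f$-polynomial is $(1+x)^{n_2}-x^{n_2}$. Symmetrically for the second join; and since $\Delta_k(G_i)\subseteq\partial\langle V_i\rangle$, the intersection of the two pieces is $\Delta_k(G_1)*\Delta_k(G_2)$, exactly as in the Lemma's proof. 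Applying Proposition~\ref{fpoly-prop}(1),(2) again gives
\[
f\bigl(\Delta_k(G_1*G_2)\cap\Delta_k(K_{n_1}+K_{n_2}),x\bigr)=\bigl((1+x)^{n_2}-x^{n_2}\bigr)f(\Delta_k(G_1),x)+\bigl((1+x)^{n_1}-x^{n_1}\bigr)f(\Delta_k(G_2),x)-f(\Delta_k(G_1),x)f(\Delta_k(G_2),x).
\]

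Substituting this together with the Lemma's expression $f(\Delta_k(G_1*G_2),x)=(1+x)^{n_2}f(\Delta_k(G_1),x)+(1+x)^{n_1}f(\Delta_k(G_2),x)-f(\Delta_k(G_1),x)f(\Delta_k(G_2),x)$ into the inclusion--exclusion identity, the $(1+x)^{n_i}f(\Delta_k(G_i),x)$ terms cancel against the $-\bigl((1+x)^{n_i}-x^{n_i}\bigr)f(\Delta_k(G_i),x)$ terms, leaving $x^{n_2}f(\Delta_k(G_1),x)+x^{n_1}f(\Delta_k(G_2),x)$; the two copies of $f(\Delta_k(G_1),x)f(\Delta_k(G_2),x)$ cancel as well; and the surviving summand is $f(\Delta_k(K_{n_1}+K_{n_2}),x)=\sum_{j=k}^{n_1+n_2}\bigl(\binom{n_1+n_2}{j}-\binom{n_1}{j}-\binom{n_2}{j}\bigr)x^{n_1+n_2-j}$ from Proposition~\ref{fpoly-kn-km}. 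This is precisely the asserted formula. The only real content is the identification of the intersection complex above --- specifically, that imposing membership in $\Delta_k(K_{n_1}+K_{n_2})$ on a face of $\Delta_k(G_i)*\langle V_j\rangle$ ($j\neq i$) is the single condition that its $V_j$-part be a proper subset of $V_j$ --- and after that everything is routine cancellation.
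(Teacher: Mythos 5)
Your proof is correct. It starts from the same decomposition $\Delta_k(G_1+G_2)=\Delta_k(G_1*G_2)\cup\Delta_k(K_{n_1}+K_{n_2})$ of Proposition~\ref{prop:disjoint-union-facets}, but the bookkeeping differs from the paper's. The paper never invokes the preceding Lemma at all: it observes that a face of the union lying \emph{outside} $\Delta_k(K_{n_1}+K_{n_2})$ must contain all of $V_1$ or all of $V_2$, and that these faces are exactly $V_2\cup\sigma_1$ with $\sigma_1\in\Delta_k(G_1)$ or $V_1\cup\sigma_2$ with $\sigma_2\in\Delta_k(G_2)$; counting this set difference directly yields $f(\Delta_k(G_1+G_2),x)=x^{n_2}f(\Delta_k(G_1),x)+x^{n_1}f(\Delta_k(G_2),x)+f(\Delta_k(K_{n_1}+K_{n_2}),x)$, after which one substitutes Proposition~\ref{fpoly-kn-km}. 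You instead run inclusion--exclusion via Proposition~\ref{fpoly-prop}(1), which forces you to identify the intersection complex; your identification $\bigl(\Delta_k(G_i)*\langle V_j\rangle\bigr)\cap\Delta_k(K_{n_1}+K_{n_2})=\Delta_k(G_i)*\partial\langle V_j\rangle$ is right (the conditions $V_i\not\subseteq\sigma$ and $|\sigma|\le n_1+n_2-k$ are indeed automatic because faces of $\Delta_k(G_i)$ have size at most $n_i-k$, and the residual intersection of the two pieces is $\Delta_k(G_1)*\Delta_k(G_2)$ for the same reason), and the subsequent cancellation against the Lemma's formula is routine and checks out, including the degenerate cases where some $\Delta_k(G_i)$ is void since then $f(\Delta_k(G_i),x)=0$. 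So the two arguments encode complementary pieces of the same information: the paper counts the faces containing $V_1$ or $V_2$ (the difference), you compute the faces omitting a vertex on each side (the intersection); the paper's route is a bit shorter and independent of the Lemma, while yours has the mild virtue of making the inclusion--exclusion structure explicit and reconfirming the Lemma along the way.
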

\begin{proof}
From \Cref{prop:disjoint-union-facets}, we know that 
\begin{align*}
    \Delta_k(G_1+G_2)=\Delta_k(G_1*G_2)\cup\Delta_k(K_{n_1}+K_{n_2}).
\end{align*}
Consider $\Delta_k(G_1*G_2)\setminus\Delta_k(K_{n_1}+K_{n_2})$. If $\sigma\notin\Delta_k(K_{n_1}+K_{n_2})$, and $|\sigma^c|\geq k$ (the latter being true of all faces of a $k$-cut complex), then $\sigma^c\subseteq V_1$ or $\sigma^c\subseteq V_2$. In particular, $V_1\subseteq \sigma$ or $V_2\subseteq \sigma$. The faces in $\Delta_k(G_1*G_2)$ that fit this description are precisely the union of faces of $\Delta_k(G_1)$ with $V_2$ and the union of faces of $\Delta_k(G_2)$ with $V_1$. 
This gives us the formula
\begin{align}
    f(\Delta_k(G_1+G_2),x)&=x^{n_2}f(\Delta_k(G_1),x)+x^{n_1}f(\Delta_k(G_2),x)+f(\Delta(K_{n_1}+K_{n_2}),x).
\end{align}
Now we just substitute our formula from Proposition \ref{fpoly-kn-km} to arrive at the result.
\end{proof}

We turn now to a related vector, the $h$-vector of a simplicial complex.
\begin{df}\label{def:h-vec}
The \emph{$h$-vector} of a $(d-1)$-dimensional simplicial complex $\Delta$ is 
the $(d+1)$-tuple $f(\Delta)=(h_0, h_1, \ldots, h_d)$ where 
$h_j = \sum_{i=0}^j (-1)^{j-i} \binom{d-i}{j-i} f_{i-1}$.

The \emph{$h$-polynomial} of a $(d-1)$-dimensional simplicial complex $\Delta$ 
is $h(\Delta,x)=\sum_{i=0}^d h_ix^i$.

It is straightforward to check that $h(\Delta, x)=(1-x)^{d}f(\Delta,\frac{x}{1-x})$. 
\end{df}
Recall from \Cref{def:shelling} that a shelling order $F_1,F_2,\dots,F_t$  partitions the simplicial complex $\Delta$ into a disjoint union of Boolean intervals 
$\Delta=\bigcup_{i=1}^t[r(F_i), F_i],$ where the $r(F_i)$ are the restriction facets.  Then the $h$-vector $(h_0,h_1,\ldots,h_d)$ of $\Delta$ satisfies the following proposition.

\begin{prop}[{\cite[Proposition~2.3]{RPSCCA1996}}] \label{prop:hvec-from-shelling} Let $\Delta$ be a shellable $d$-dimensional simplicial complex, with shelling order $F_1,F_2,\dots,F_t$.
For $0\le i\le d$, $h_i(\Delta)$ equals the number of facets $F_j$ for which the restriction $r(F_j)$ has cardinality $i$, or equivalently, dimension $(i-1)$.
\end{prop}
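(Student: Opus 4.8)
The plan is to count the faces of $\Delta$ block by block, using the partition into disjoint Boolean intervals recorded in \Cref{def:shelling}, and then to match the resulting $f$-polynomial against the $h$-polynomial through the substitution $h(\Delta,x)=(1-x)^{d+1}f(\Delta,\tfrac{x}{1-x})$ of \Cref{def:h-vec} (with $d+1$ in place of $d$, since $\Delta$ is $d$-dimensional). First I would fix notation: since $\Delta$ is pure of dimension $d$, every facet $F_j$ has $|F_j|=d+1$, so each block $[r(F_j),F_j]$ of the partition $\Delta=\bigsqcup_{j=1}^t[r(F_j),F_j]$ is a Boolean lattice of rank $d+1-|r(F_j)|$. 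For $0\le i\le d+1$ let $b_i$ denote the number of indices $j$ with $|r(F_j)|=i$; this is exactly the quantity the proposition identifies with $h_i$.

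Next I would count faces by cardinality. Every face $\sigma\in\Delta$ lies in exactly one block $[r(F_j),F_j]$, and $\sigma$ has cardinality $\ell$ precisely when it is obtained from $r(F_j)$ by adjoining $\ell-|r(F_j)|$ of the remaining $d+1-|r(F_j)|$ vertices of $F_j$. Hence
$$f_{\ell-1}(\Delta)=\sum_{j=1}^t\binom{d+1-|r(F_j)|}{\ell-|r(F_j)|}=\sum_{i=0}^{d+1}b_i\binom{d+1-i}{\ell-i},$$
and summing against $x^\ell$ (noting $\binom{d+1-i}{\ell-i}=0$ outside $i\le\ell\le d+1$, so the interchange of sums is harmless) gives
$$f(\Delta,x)=\sum_{\ell\ge 0}f_{\ell-1}(\Delta)\,x^\ell=\sum_{i=0}^{d+1}b_i\,x^i\sum_{m\ge 0}\binom{d+1-i}{m}x^m=\sum_{i=0}^{d+1}b_i\,x^i(1+x)^{d+1-i}.$$
Finally I would substitute into $h(\Delta,x)=(1-x)^{d+1}f(\Delta,\tfrac{x}{1-x})$: the $i$-th term of $f$ contributes $b_i\bigl(\tfrac{x}{1-x}\bigr)^i\bigl(\tfrac{1}{1-x}\bigr)^{d+1-i}=b_i\,x^i(1-x)^{-(d+1)}$, so the factor $(1-x)^{d+1}$ cancels and $h(\Delta,x)=\sum_{i=0}^{d+1}b_i\,x^i$. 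Reading off coefficients yields $h_i(\Delta)=b_i$, which is the claim (as a consistency check, $b_0=1$ since $r(F_1)=\emptyset$ and no later restriction can equal $\emptyset$, matching $h_0=1$).

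There is no deep obstacle here; the one point requiring care is the input that a shelling partitions $\Delta$ into \emph{disjoint} Boolean intervals $[r(F_j),F_j]$ with each top a facet of the pure complex — this is precisely what \Cref{def:shelling} records, the disjointness coming from $r(F_j)$ being the unique minimal new face added at step $j$. The rest is the bookkeeping with binomial coefficients and a geometric-series identity carried out above. If one prefers to avoid generating functions, an alternative is induction on the number of facets $t$: deleting $F_t$ leaves the shellable complex $\Delta'=\bigcup_{j<t}\langle F_j\rangle$, the faces of $\Delta\setminus\Delta'$ are exactly the block $[r(F_t),F_t]$, and a direct expansion shows $f(\Delta,x)-f(\Delta',x)=x^{|r(F_t)|}(1+x)^{d+1-|r(F_t)|}$, hence $h(\Delta,x)-h(\Delta',x)=x^{|r(F_t)|}$; the base case $t=1$ is the single simplex, for which $h(\Delta,x)=1$.
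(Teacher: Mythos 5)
The paper gives no proof of this proposition at all: it is quoted from Stanley (Proposition~2.3 of \emph{Combinatorics and Commutative Algebra}), and your argument — partitioning $\Delta$ into the Boolean intervals $[r(F_j),F_j]$, writing $f(\Delta,x)=\sum_i b_i x^i(1+x)^{d+1-i}$, and applying the substitution $h(\Delta,x)=(1-x)^{d+1}f(\Delta,\tfrac{x}{1-x})$ — is exactly the standard proof found in that reference, and it is correct. Your explicit purity assumption is the right reading of the statement (the restriction-count interpretation of $h_i$ needs all facets to have cardinality $d+1$, and all applications in the paper are to pure complexes), so there is nothing to fix.
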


\begin{prop}
Let $G_1$ and $G_2$ be disjoint graphs on $n_1$ and $n_2$ vertices,respectively.
Let $k\ge 2$.  
Then $h(\Delta_k(G_1+G_2), x)=x^{n_2}h(\Delta_k(G_1),x) + 
x^{n_1}h(\Delta_k(G_2),x) + h(\Delta_k(K_{n_1}+K_{n_2}),x)$.
\end{prop}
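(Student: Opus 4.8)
The plan is to deduce this identity directly from the $f$-polynomial formula of \Cref{fdisjunion} via the substitution relating $f$- and $h$-polynomials. Recall from \Cref{def:h-vec} that for a $(d-1)$-dimensional complex $\Delta$ one has $h(\Delta,x)=(1-x)^{d}f(\Delta,\tfrac{x}{1-x})$. Here $\Delta_k(G_1+G_2)$ and $\Delta_k(K_{n_1}+K_{n_2})$ are both $(n_1+n_2-k-1)$-dimensional, so for them $d=n_1+n_2-k$, while $\Delta_k(G_i)$ is $(n_i-k-1)$-dimensional, so there $d=n_i-k$.

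First I would take the identity of \Cref{fdisjunion},
\[
f(\Delta_k(G_1+G_2),x)=x^{n_2}f(\Delta_k(G_1),x)+x^{n_1}f(\Delta_k(G_2),x)+f(\Delta_k(K_{n_1}+K_{n_2}),x),
\]
substitute $x\mapsto \tfrac{x}{1-x}$ throughout, and multiply both sides by $(1-x)^{n_1+n_2-k}$. The left-hand side becomes $h(\Delta_k(G_1+G_2),x)$ by definition. On the right-hand side, the last term becomes $h(\Delta_k(K_{n_1}+K_{n_2}),x)$ directly, since it has the same value of $d$. For the first term, $(1-x)^{n_1+n_2-k}\bigl(\tfrac{x}{1-x}\bigr)^{n_2}=x^{n_2}(1-x)^{n_1-k}$, and the remaining factor $(1-x)^{n_1-k}f(\Delta_k(G_1),\tfrac{x}{1-x})$ is exactly $h(\Delta_k(G_1),x)$; the second term is handled symmetrically. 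This yields the claimed formula.

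The only point requiring care is the bookkeeping of dimensions --- using the correct value of $d$ for each complex when passing between $f$ and $h$ --- and, relatedly, the convention that $\Delta_k(G_i)$ is regarded as $(n_i-k-1)$-dimensional (with the $f$-vector padded by zeros, and equal to the zero polynomial when $G_i$ is complete) even if its geometric dimension is smaller; the substitution identity $h(\Delta,x)=(1-x)^{d}f(\Delta,\tfrac{x}{1-x})$ is insensitive to this. An alternative, more combinatorial route would be to use \Cref{thm:MargeDisjunion} together with \Cref{prop:hvec-from-shelling}: one would fix shellings of $\Delta_k(G_1)$ and $\Delta_k(G_2)$, assemble the induced shelling of $\Delta_k(G_1+G_2)$ from \cite{BDJRSX2024}, and track how the cardinality of each restriction facet transforms under the three facet types. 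This requires shellability and is considerably more delicate, whereas the generating-function argument is unconditional. I expect no real obstacle; once the exponents are matched the substitution proof is essentially a one-line computation.
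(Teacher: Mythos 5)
Your proposal is correct and is essentially the paper's own proof: the paper likewise takes the $f$-polynomial identity of \Cref{fdisjunion}, substitutes $x\mapsto \tfrac{x}{1-x}$, multiplies through by the appropriate power of $(1-x)$, and regroups each term as the corresponding $h$-polynomial, exactly as you describe. Your dimension bookkeeping (using $(1-x)^{n_1+n_2-k}$ and $(1-x)^{n_i-k}$ as dictated by \Cref{def:h-vec}, and noting the void-complex convention when some $G_i$ is complete) is the right way to carry this out.
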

\begin{proof}
Note that the dimension of $\Delta_k(G_i)$ is $n_i-k-1$ and the dimension
of $\Delta_k(G_1+G_2)$ is $n_1+n_2-k-1$. From \Cref{def:h-vec} we obtain 
\begin{eqnarray*}
 \lefteqn{h(\Delta_k(G_1+G_2),x)} \\ &=&
(1-x)^{n_1+n_2-k-1}f\left(\Delta_k(G_1+G_2),\frac{x}{1-x}\right)\\
 &=&(1-x)^{n_1+n_2-k-1}\left[
\left(\frac{x}{1-x}\right)^{n_2}f\left(\Delta_k(G_1),\frac{x}{1-x}\right)+
\left(\frac{x}{1-x}\right)^{n_1}f\left(\Delta_k(G_2),\frac{x}{1-x}\right)\right.\\ 
& & \quad + \left. f\left(\Delta_k(K_{n_1}+K_{n_2}),\frac{x}{1-x}\right)\right] \\
&=& x^{n_2}(1-x)^{n_1-k-1}f\left(\Delta_k(G_1),\frac{x}{1-x}\right)+
x^{n_1}(1-x)^{n_2-k-1}f\left(\Delta_k(G_2),\frac{x}{1-x}\right)\\
 & & \quad +\ (1-x)^{n_1+n_2-k-1}
f\left(\Delta_k(K_{n_1}+K_{n_2}),\frac{x}{1-x}\right) \\
&=& x^{n_2}h(\Delta_k(G_1),x)+
x^{n_1}h(\Delta_k(G_2),x)+
h(\Delta_k(K_{n_1}+K_{n_2}),x). \qedhere
\end{eqnarray*}
\end{proof}

For the $h$-polynomial of $\Delta_k(K_{n_1}+K_{n_2})$ we use the following 
formula for the $h$-polynomial of the $j$-skeleton of a simplex. 
\begin{lemma}\cite[Lemma 3.3]{BjWachsI1996}
Let $\Delta^{j,d}$ be the $j$-skeleton of the $d$-simplex.  Then 
$$ h(\Delta^{j,d},x) = \sum_{i=0}^{j+1} \binom{d-j-1+i}{i}x^i.$$
\end{lemma}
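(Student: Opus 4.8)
The plan is to derive the $h$-polynomial of $\Delta^{j,d}$ directly from its $f$-polynomial via the substitution $h(\Delta,x)=(1-x)^{\dim\Delta+1}f(\Delta,\tfrac{x}{1-x})$ recorded in \Cref{def:h-vec}, and then read off coefficients with a short generating-function argument. To begin, the $j$-skeleton of the $d$-simplex has $d+1$ vertices and has as its faces exactly the subsets of size at most $j+1$; hence $f_{i-1}(\Delta^{j,d})=\binom{d+1}{i}$ for $0\le i\le j+1$, so $f(\Delta^{j,d},x)=\sum_{i=0}^{j+1}\binom{d+1}{i}x^{i}$ is a truncation of $(1+x)^{d+1}$. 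Since $\dim\Delta^{j,d}=j$, the substitution formula yields
\[
h(\Delta^{j,d},x)=(1-x)^{j+1}\sum_{i=0}^{j+1}\binom{d+1}{i}\Bigl(\tfrac{x}{1-x}\Bigr)^{i}=\sum_{i=0}^{j+1}\binom{d+1}{i}x^{i}(1-x)^{j+1-i},
\]
which is visibly a polynomial of degree at most $j+1$.

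It then remains to identify this polynomial with $\sum_{i=0}^{j+1}\binom{d-j-1+i}{i}x^{i}$. I would split off the ``tail'' of the truncated binomial: writing $\sum_{i=0}^{j+1}\binom{d+1}{i}t^{i}=(1+t)^{d+1}-\sum_{i=j+2}^{d+1}\binom{d+1}{i}t^{i}$ and substituting $t=\tfrac{x}{1-x}$ (so $1+t=\tfrac1{1-x}$), the main term contributes $(1-x)^{j+1}(1-x)^{-(d+1)}=(1-x)^{-(d-j)}=\sum_{m\ge0}\binom{d-j-1+m}{m}x^{m}$ by the negative binomial series. Each summand of the tail is $\binom{d+1}{i}x^{i}(1-x)^{j+1-i}$ with $i\ge j+2$, and since $j+1-i<0$ its power-series expansion begins in degree $0$, so the whole summand involves only monomials $x^{m}$ with $m\ge j+2$. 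Therefore the desired identity holds in all degrees $m\le j+1$, and because $h(\Delta^{j,d},x)$ has degree at most $j+1$ the two polynomials must be equal.

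A clean alternative is induction on $j$: from $f(\Delta^{j,d},x)=f(\Delta^{j-1,d},x)+\binom{d+1}{j+1}x^{j+1}$ and the substitution formula (keeping in mind that $\dim\Delta^{j,d}=\dim\Delta^{j-1,d}+1$) one obtains the recursion $h(\Delta^{j,d},x)=(1-x)\,h(\Delta^{j-1,d},x)+\binom{d+1}{j+1}x^{j+1}$, after which Pascal's rule checks the coefficient of each power of $x$ against $\binom{d-j-1+i}{i}$, with $j=-1$ (the complex $\{\emptyset\}$, with $h$-polynomial $1$) as the trivial base case. I do not anticipate a genuine obstacle; the only point demanding care is the bookkeeping with the power-series expansions of $(1-x)^{j+1-i}$ for $i>j+1$ — one must verify that the tail correction really starts in degree $j+2$ and so cannot disturb the coefficients of $x^{0},\dots,x^{j+1}$ — or, in the inductive version, tracking the one-step dimension shift when passing between $f$- and $h$-polynomials.
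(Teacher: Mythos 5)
This lemma is quoted in the paper from Bj\"orner--Wachs \cite[Lemma 3.3]{BjWachsI1996} without proof, so there is no internal argument to compare against; your proposal supplies a correct, self-contained derivation. The computation is right: $f(\Delta^{j,d},x)=\sum_{i=0}^{j+1}\binom{d+1}{i}x^i$, the substitution $h(\Delta,x)=(1-x)^{\dim\Delta+1}f\bigl(\Delta,\tfrac{x}{1-x}\bigr)$ from \Cref{def:h-vec} gives $h(\Delta^{j,d},x)=\sum_{i=0}^{j+1}\binom{d+1}{i}x^i(1-x)^{j+1-i}$, and your splitting into $(1+t)^{d+1}$ minus the tail does exactly what you claim: the main term becomes $(1-x)^{-(d-j)}=\sum_{m\ge0}\binom{d-j-1+m}{m}x^m$, while each tail summand is $x^i$ times a power series with nonzero constant term for $i\ge j+2$, hence contributes only in degrees $\ge j+2$ and cannot disturb the coefficients of $x^0,\dots,x^{j+1}$; since $h(\Delta^{j,d},x)$ is a polynomial of degree at most $j+1$, the identity follows. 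The inductive alternative also checks out: the recursion $h(\Delta^{j,d},x)=(1-x)h(\Delta^{j-1,d},x)+\binom{d+1}{j+1}x^{j+1}$ is correct (the dimension shift supplies the extra factor $(1-x)$), Pascal's rule handles the coefficients of $x^i$ for $i\le j$, the top coefficient works out as $\binom{d+1}{j+1}-\binom{d}{j}=\binom{d}{j+1}$, and the base case $j=-1$ gives $h=1$ as required. Either route is a legitimate replacement for the citation; the generating-function version is shorter, while the induction avoids any discussion of formal power series.
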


 \begin{prop} The $h$-polynomial of $\Delta_k(K_{n_1}+K_{n_2})$ is
 $$h(\Delta_k(K_{n_1}+K_{n_2}),x)=
 \sum_{j=0}^{n_1+n_2-k}\binom{k-1+j}{k-1}x^j
 -x^{n_2}\sum_{j=0}^{n_1-k}\binom{k-1+j}{k-1}x^j
 -x^{n_1}\sum_{j=0}^{n_2-k}\binom{k-1+j}{k-1}x^j.$$
\end{prop}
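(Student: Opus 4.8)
The plan is to reduce everything to the $h$-polynomial of the cut complex of an edgeless graph, which by Example~\ref{ex:Examples-cut-complex}(5) is a skeleton of a simplex, and then invoke the known formula \cite[Lemma~3.3]{BjWachsI1996} for the $h$-polynomial of such a skeleton. The point is that the preceding proposition already relates $h(\Delta_k(K_{n_1}+K_{n_2}),x)$ to the $h$-polynomials of $\Delta_k(G_1)$ and $\Delta_k(G_2)$ for \emph{arbitrary} disjoint $G_1,G_2$, so we are free to specialize to the most convenient choice.

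First I would apply the displayed identity $h(\Delta_k(G_1+G_2),x)=x^{n_2}h(\Delta_k(G_1),x)+x^{n_1}h(\Delta_k(G_2),x)+h(\Delta_k(K_{n_1}+K_{n_2}),x)$ established just above, with $G_1=E_{n_1}$ and $G_2=E_{n_2}$ the edgeless graphs. Since a disjoint union of edgeless graphs is edgeless, $E_{n_1}+E_{n_2}=E_{n_1+n_2}$, and the identity rearranges to
\[h(\Delta_k(K_{n_1}+K_{n_2}),x)=h(\Delta_k(E_{n_1+n_2}),x)-x^{n_2}h(\Delta_k(E_{n_1}),x)-x^{n_1}h(\Delta_k(E_{n_2}),x).\]
(The degenerate cases $n_i\le k$, where $\Delta_k(E_{n_i})$ is void or is $\{\emptyset\}$, are consistent with this: the $h$-polynomial of the void complex is $0$ and that of $\{\emptyset\}$ is $1$, matching the empty-sum convention in the target formula.) It therefore suffices to compute $h(\Delta_k(E_m),x)$ for $m\in\{n_1,n_2,n_1+n_2\}$.

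Next, by Example~\ref{ex:Examples-cut-complex}(5) (or Proposition~\ref{prop:Deltak-edgeless-graph-homrepS}), $\Delta_k(E_m)$ is the $(m-k-1)$-skeleton of the $(m-1)$-simplex, i.e. $\Delta^{m-k-1,\,m-1}$ in the notation of \cite[Lemma~3.3]{BjWachsI1996}. Substituting $j=m-k-1$ and $d=m-1$ into that lemma gives
\[h(\Delta_k(E_m),x)=\sum_{i=0}^{m-k}\binom{(m-1)-(m-k-1)-1+i}{i}x^i=\sum_{i=0}^{m-k}\binom{k-1+i}{i}x^i=\sum_{i=0}^{m-k}\binom{k-1+i}{k-1}x^i.\]
Plugging $m=n_1+n_2$, $m=n_1$, and $m=n_2$ into this and substituting the three results into the displayed difference above collects termwise into exactly the asserted formula.

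The only real obstacle is bookkeeping — correctly matching the pair $(j,d)$ in the skeleton formula to the parameters of $\Delta_k(E_m)$, and tracking the $x^{n_1}$, $x^{n_2}$ shifts — but nothing genuinely hard is involved. (One could instead bypass the preceding proposition entirely and compute $h$ directly from Proposition~\ref{fpoly-kn-km} via $h(\Delta,x)=(1-x)^{d}f(\Delta,\tfrac{x}{1-x})$ with $d=n_1+n_2-k$, expanding the factors $(1-x)^{j-k}$; this also works but is messier than the skeleton reduction.)
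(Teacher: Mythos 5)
Your proof is correct, and it reaches the formula by a genuinely different mechanism than the paper. The paper works directly from the $f$-polynomial of $\Delta_k(K_{n_1}+K_{n_2})$ in Proposition~\ref{fpoly-kn-km}: it applies the transformation $h(\Delta,x)=(1-x)^d f\bigl(\Delta,\tfrac{x}{1-x}\bigr)$, splits the result into three sums $A-B-C$, and after an index shift recognizes each sum as (a monomial shift of) the $h$-polynomial of a skeleton of a simplex, to which it applies the Bj\"orner--Wachs lemma — so it is not the brute-force expansion you call ``messier,'' but a recognition of the same skeleton structure inside the direct computation. You instead obtain the three-term decomposition for free by specializing the preceding proposition, $h(\Delta_k(G_1+G_2),x)=x^{n_2}h(\Delta_k(G_1),x)+x^{n_1}h(\Delta_k(G_2),x)+h(\Delta_k(K_{n_1}+K_{n_2}),x)$, to $G_i=E_{n_i}$ and using $E_{n_1}+E_{n_2}=E_{n_1+n_2}$ together with the identification of $\Delta_k(E_m)$ as the $(m-k-1)$-skeleton of the $(m-1)$-simplex from Example~\ref{ex:Examples-cut-complex}(5); the Bj\"orner--Wachs lemma then enters exactly once, and your parameter bookkeeping $j=m-k-1$, $d=m-1$ giving $\sum_{i=0}^{m-k}\binom{k-1+i}{k-1}x^i$ is right. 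There is no circularity: the disjoint-union $h$-identity is proved from Theorem~\ref{fdisjunion} independently of the statement at hand, and your handling of the degenerate cases ($h=0$ for the void complex, $h=1$ for $\{\emptyset\}$, matching the empty-sum convention when $n_i\le k$) is consistent with the conventions used throughout the paper. What your route buys is the elimination of any explicit $f\to h$ manipulation in this proof, at the cost of leaning on the disjoint-union identity as a black box; the paper's route is self-contained modulo Proposition~\ref{fpoly-kn-km} and makes the appearance of the three skeleton $h$-polynomials visible inside a single computation. Either argument would serve.
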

\begin{proof}
Proposition~\ref{fpoly-kn-km} and
the transformation from $f$-polynomial to $h$-polynomial give 
\begin{eqnarray*}
h(\Delta_k(K_{n_1}+K_{n_2}),x)
&=&(1-x)^{n_1+n_2-k}\sum_{j=k}^{n_1+n_2}\binom{n_1+n_2}{j} \left(\frac{x}{1-x}\right)^{n_1+n_2-j}\\
& & \quad -\ (1-x)^{n_1+n_2-k}\sum_{j=k}^{n_1}\binom{n_1}{j} \left(\frac{x}{1-x}\right)^{n_1+n_2-j}\\
 & & \quad -\ (1-x)^{n_1+n_2-k}\sum_{j=k}^{n_2}\binom{n_2}{j} \left(\frac{x}{1-x}\right)^{n_1+n_2-j}.
\end{eqnarray*}
Write this as $A-B-C$.
In $A$, shifting indices with $\ell=n_1+n_2-j$, we get (since $f_{\ell-1}(\Delta^{n_1+n_2-1})=\binom{n_1+n_2}{\ell}$)
\begin{eqnarray*}
A &=&(1-x)^{n_1+n_2-k}\sum_{\ell=0}^{n_1+n_2-k}\binom{n_1+n_2}{\ell} \left(\frac{x}{1-x}\right)^{\ell}\\
&=& h(\Delta^{n_1+n_2-k-1,n_1+n_2-1},x)\\
&=& \sum_{j=0}^{n_1+n_2-k}\binom{k-1+j}{j}x^j.
\end{eqnarray*}
Similar computations show that
$$B = x^{n_2}h(\Delta^{n_1-k-1,n_1-1},x)=x^{n_2}\sum_{j=0}^{n_1-k}\binom{k-1+j}{j}x^j,$$
and 
$$C=x^{n_1}h(\Delta^{n_2-k-1,n_2-1},x)=x^{n_1}\sum_{j=0}^{n_2-k}\binom{k-1+j}{j}x^j.$$

So $ h(\Delta_k(K_{n_1}+K_{n_2}),x) = A-B-C$ gives the result.
\end{proof}
\begin{cor}\label{hdisjunion}
Let $G_1$ and $G_2$ be disjoint graphs on $n_1$ and $n_2$ vertices,respectively.
Let $k\ge 2$.  
Then 
\begin{eqnarray*}
h(\Delta_k(G_1+G_2), x)&=&x^{n_2}h(\Delta_k(G_1),x) + 
x^{n_1}h(\Delta_k(G_2),x) \\
& & \quad +
 \sum_{j=0}^{n_1+n_2-k}\binom{k-1+j}{k-1}x^j
 -x^{n_2}\sum_{j=0}^{n_1-k}\binom{k-1+j}{k-1}x^j
 -x^{n_1}\sum_{j=0}^{n_2-k}\binom{k-1+j}{k-1}x^j.
\end{eqnarray*}
In particular,
$$h(\Delta_2(G_1+G_2),x)=
(1+x+\dotsb+x^{n_1-1})(1+x+\dotsb+x^{n_2-1}) + x^{n_1}h(\Delta_2(G_2),x)+
x^{n_2}h(\Delta_2(G_1),x)
$$ and 
\begin{eqnarray*}
h(\Delta_3(G_1+G_2),x)&=& \sum_{j=0}^{n_1+n_2-3}\left[\binom{j+2}{2}-\binom{j+2-n_2}{2}-\binom{j+2-n_1}{2}\right]x^j
\\ & & \quad +\ x^{n_1}h(\Delta_3(G_2),x)+x^{n_2}h(\Delta_3(G_1),x).\qedhere
\end{eqnarray*}
\end{cor}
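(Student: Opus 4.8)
The plan is to obtain the general formula by combining the two propositions immediately preceding the corollary, and then to read off the $k=2$ and $k=3$ special cases by substitution together with an elementary reindexing of the binomial sums. Concretely, the penultimate proposition gives
\[
h(\Delta_k(G_1+G_2),x)=x^{n_2}h(\Delta_k(G_1),x)+x^{n_1}h(\Delta_k(G_2),x)+h(\Delta_k(K_{n_1}+K_{n_2}),x),
\]
and the last proposition supplies the closed form for $h(\Delta_k(K_{n_1}+K_{n_2}),x)$ as a difference of three truncated sums of the coefficients $\binom{k-1+j}{k-1}$. Substituting the second expression into the first yields the general formula of the corollary verbatim, with no further work.

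For the case $k=2$, I would use $\binom{k-1+j}{k-1}=j+1$, which reduces the claim to the polynomial identity $\sum_{j=0}^{n_1+n_2-2}(j+1)x^j-x^{n_2}\sum_{j=0}^{n_1-2}(j+1)x^j-x^{n_1}\sum_{j=0}^{n_2-2}(j+1)x^j=(1+x+\dots+x^{n_1-1})(1+x+\dots+x^{n_2-1})$. This I would verify either by comparing the coefficient of $x^j$ on both sides — on the right it is $\#\{(a,b):0\le a\le n_1-1,\ 0\le b\le n_2-1,\ a+b=j\}$, and a short case analysis according to the position of $j$ relative to $\min(n_1,n_2)$, $\max(n_1,n_2)$, and $n_1+n_2-2$ matches it with the left side — or, more slickly, by multiplying through by $(1-x)^2$ and using $1+x+\dots+x^{n_i-1}=(1-x^{n_i})/(1-x)$.

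For the case $k=3$, I would use $\binom{k-1+j}{k-1}=\binom{j+2}{2}$, so the general formula already has the right leading sum $\sum_{j=0}^{n_1+n_2-3}\binom{j+2}{2}x^j$, and the only remaining point is the reindexing of the two subtracted sums: $x^{n_2}\sum_{j=0}^{n_1-3}\binom{j+2}{2}x^j=\sum_{j=n_2}^{n_1+n_2-3}\binom{j-n_2+2}{2}x^j$, and the terms one would add to extend the lower limit down to $j=0$ all vanish because $\binom{m}{2}=0$ for every integer $m\le 1$ — this is precisely the ``number of $2$-subsets'' convention under which the corollary's formula is stated — and symmetrically for the $x^{n_1}$ term. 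Collecting the three sums over the common range $0\le j\le n_1+n_2-3$ then produces the stated closed form $\sum_{j=0}^{n_1+n_2-3}\bigl[\binom{j+2}{2}-\binom{j+2-n_2}{2}-\binom{j+2-n_1}{2}\bigr]x^j$. Since every step is either a direct substitution or an elementary manipulation of truncated power series, there is no genuine obstacle here; the one place I would be careful to state explicitly is the convention $\binom{m}{2}=0$ for $m\le 1$ (and, for $k=2$, the truncated geometric-series identity), without which the compact formulas fail for small $j$.
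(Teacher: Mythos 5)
Your proposal is correct and matches the paper's (implicit) derivation exactly: the corollary is obtained by substituting the closed form for $h(\Delta_k(K_{n_1}+K_{n_2}),x)$ into the preceding proposition's decomposition, and the $k=2$, $k=3$ cases follow by the elementary simplifications you describe (your $(1-x)^2$ check and the vanishing-binomial convention are both sound). No gaps.
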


The formulas for $h(\Delta_2(G_1+G_2),x)$ and $h(\Delta_3(G_1+G_2),x)$, when the complexes are shellable, can also be found from their shellings.  

For shellable complexes, the $h$-vector/polynomial tells us more about the complex.  In this case,  the coefficient of $x^d$ is the number of spheres in the wedge of spheres (of dimension $d-1$) for the homotopy type of the complex.  From the above formulas we obtain
\begin{prop} Let $G_1$ be a graph with $n_1$ vertices and $G_2$ be a graph with $n_2$ vertices.
Suppose $\Delta_k(G_1)$
is shellable and is homotopy equivalent to a wedge of $w_1$ spheres (0, if it is contractible), and $\Delta_k(G_2)$ is shellable and is homotopy equivalent to a wedge of $w_2$ spheres.  Then $\Delta_k(G_1+G_2)$ is homotopy equivalent to a wedge of $w$ spheres, of dimension $(n_1+n_2-k-1)$, where $$w= w_1+w_2+
\binom{n_1+n_2-1}{k-1}-\binom{n_1-1}{k-1}-\binom{n_2-1}{k-1}.$$
In particular, under the hypotheses,
\begin{itemize}
    \item $\Delta_2(G_1+G_2)$ is homotopy equivalent to a wedge of $w_1+w_2+1$ spheres of dimension $(n_1+n_2-3)$. 
    \item $\Delta_3(G_1+G_2)$ is homotopy equivalent to a wedge of $w_1+w_2+n_1n_2-1$ spheres of dimension $(n_1+n_2-4)$.
\end{itemize}
\end{prop}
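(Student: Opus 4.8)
The plan is to read off the number of spheres from the top coefficient of the $h$-polynomial, invoking Theorem~\ref{thm:shell-implies-homotopytype} together with Proposition~\ref{prop:hvec-from-shelling}. First I would note that by Theorem~\ref{thm:MargeDisjunion}, since $\Delta_k(G_1)$ and $\Delta_k(G_2)$ are shellable, so is $\Delta_k(G_1+G_2)$; it is pure of dimension $d=n_1+n_2-k-1$ (all its facets have size $n_1+n_2-k$, as is clear from the three types in Proposition~\ref{prop:disjoint-union-facets}), hence by Bj\"orner's theorem it is homotopy equivalent to a wedge of $w$ spheres of dimension $n_1+n_2-k-1$, where $w$ is the number of full-restriction facets in any shelling. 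By Proposition~\ref{prop:hvec-from-shelling}, $w$ equals $h_d(\Delta_k(G_1+G_2))$, the top coefficient of the $h$-polynomial, i.e.\ the coefficient of $x^{d}=x^{n_1+n_2-k-1}$.

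Next I would extract that coefficient from the formula in Corollary~\ref{hdisjunion}. The term $x^{n_2}h(\Delta_k(G_1),x)$ contributes the coefficient of $x^{n_1+n_2-k-1-n_2}=x^{n_1-k-1}$ in $h(\Delta_k(G_1),x)$; since $\dim \Delta_k(G_1)=n_1-k-1$, this is the top coefficient of $h(\Delta_k(G_1),x)$, which by the same Bj\"orner/Proposition~\ref{prop:hvec-from-shelling} argument applied to $\Delta_k(G_1)$ equals $w_1$ (interpreting $w_1=0$ when $\Delta_k(G_1)$ is contractible, consistent with the convention in Theorem~\ref{thm:shell-implies-homotopytype}). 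Symmetrically, $x^{n_1}h(\Delta_k(G_2),x)$ contributes $w_2$. For the remaining three sums, the coefficient of $x^{n_1+n_2-k-1}$ in $\sum_{j=0}^{n_1+n_2-k}\binom{k-1+j}{k-1}x^j$ is $\binom{k-1+(n_1+n_2-k-1)}{k-1}=\binom{n_1+n_2-2}{k-1}$; wait---I should double-check the exponent bookkeeping here against the $j$-skeleton $h$-polynomial formula, since $\Delta_k(K_{n_1}+K_{n_2})$ has dimension $n_1+n_2-k-1$ and its top $h$-coefficient should be $\binom{n_1+n_2-1}{k-1}-\binom{n_1-1}{k-1}-\binom{n_2-1}{k-1}$ (the number of spheres in $\Delta_k(K_{n_1}+K_{n_2})$). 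Indeed, from $h(\Delta^{j,d},x)=\sum_{i=0}^{j+1}\binom{d-j-1+i}{i}x^i$ with $d=n_1+n_2-1$, $j=n_1+n_2-k-1$, the top coefficient ($i=j+1=n_1+n_2-k$) is $\binom{d-j-1+j+1}{j+1}=\binom{n_1+n_2-1}{n_1+n_2-k}=\binom{n_1+n_2-1}{k-1}$; the two subtracted terms $x^{n_2}h(\Delta^{n_1-k-1,n_1-1},x)$ and $x^{n_1}h(\Delta^{n_2-k-1,n_2-1},x)$ similarly contribute $\binom{n_1-1}{k-1}$ and $\binom{n_2-1}{k-1}$ to the coefficient of $x^{n_1+n_2-k}$... so I must be careful whether the relevant exponent is $d=n_1+n_2-k-1$ or $n_1+n_2-k$; the $h$-polynomial of a $(d-1)$-dimensional complex has degree $d$, so for $\Delta_k(K_{n_1}+K_{n_2})$ of dimension $n_1+n_2-k-1$ the top degree is $n_1+n_2-k$, and it is that coefficient---equal to $\binom{n_1+n_2-1}{k-1}-\binom{n_1-1}{k-1}-\binom{n_2-1}{k-1}$---which gives the number of spheres. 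Adding the three contributions $w_1$, $w_2$, and this binomial expression yields the stated $w$.

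Finally, the two special cases are immediate substitutions: for $k=2$, $\binom{n_1+n_2-1}{1}-\binom{n_1-1}{1}-\binom{n_2-1}{1}=(n_1+n_2-1)-(n_1-1)-(n_2-1)=1$, giving $w=w_1+w_2+1$; for $k=3$, $\binom{n_1+n_2-1}{2}-\binom{n_1-1}{2}-\binom{n_2-1}{2}$ expands (e.g.\ via $\binom{a+b}{2}-\binom{a}{2}-\binom{b}{2}=ab$ applied with $a=n_1-1$, $b=n_2-1$, then adjusting by the linear terms) to $n_1n_2-1$, giving $w=w_1+w_2+n_1n_2-1$. The main obstacle---such as it is---is purely bookkeeping: keeping the index shifts straight between ``dimension $d-1$ versus degree $d$'' for each of the three complexes $\Delta_k(G_1+G_2)$, $\Delta_k(G_i)$, and $\Delta_k(K_{n_1}+K_{n_2})$, and making sure that the ``top $h$-coefficient $=$ number of spheres'' identification is applied consistently (including the contractible case, where the top $h$-coefficient vanishes and $w_i=0$). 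No genuinely new idea is needed beyond Propositions~\ref{prop:hvec-from-shelling}, Theorem~\ref{thm:shell-implies-homotopytype}, and the already-established $h$-polynomial formula.
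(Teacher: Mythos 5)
Your argument is correct and is essentially the paper's own: shellability of $\Delta_k(G_1+G_2)$ from Theorem~\ref{thm:MargeDisjunion}, Bj\"orner's theorem plus Proposition~\ref{prop:hvec-from-shelling} to identify the number of spheres with the top $h$-coefficient, and then reading that coefficient off Corollary~\ref{hdisjunion}. Your initial index slips (using $x^{n_1+n_2-k-1}$ and $x^{n_1-k-1}$ instead of $x^{n_1+n_2-k}$ and $x^{n_1-k}$) are resolved by your own mid-proof correction, and the final extraction of $w_1$, $w_2$, and $\binom{n_1+n_2-1}{k-1}-\binom{n_1-1}{k-1}-\binom{n_2-1}{k-1}$, together with the $k=2,3$ specializations, matches the paper.
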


\section{Squared Paths}

\begin{df} The \emph{squared path} $P^2_n$ is the graph on the vertex set $[n]=\{1,2,\ldots,n\}$,  with edge set 
$ \{i, i+1\}_{1\le i\le n-1}\cup \{i, i+2 \}_{1\le i\le n-2} $.
\end{df}

From \cite[Theorem~4.13]{BDJRSX-TOTAL2024} the total cut complex $\Delta_k^t(P_n^2)$ of the squared path is contractible if $2\le k\le n-2$.  Since for $k=2$ this is the same as the 2-cut complex, we know that  $\Delta_2(P_n^2)$ is contractible for $n\ge 4$.  We wish to describe the cut complex of the squared path for higher $k$. We begin with a classification of the facets of this complex.

\begin{lemma}\label{lem:facets-sq-path}  Let $G=P^2_n$, $n\ge 4$, be the squared path on the vertex set $[n]$. Let $2\le k\le n-2$. Then a subset $F\subset [n]$  is a  facet of $\Delta_k(P^2_n)$ if and only if  $|F|=n-k$ and there is  some $i$, $2\le i \le n-2$, such that 
\begin{itemize}
    \item $\{i, i+1\}\subset F$, 
    \item  $i-1\notin F$,  
    \item $j\notin F$ for some $j\ge i+2$.
\end{itemize}
\end{lemma}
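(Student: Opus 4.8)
The plan is first to reduce the statement to a purely combinatorial claim about subsets of $[n]$, and then to prove that claim by analysing when an induced subgraph of $P^2_n$ is disconnected. By the definition of the cut complex, $\Delta_k(P^2_n)$ is generated by those $(n-k)$-subsets $F\subseteq[n]$ for which $P^2_n[[n]\setminus F]$ is disconnected, and since all these generators have the common cardinality $n-k$, each of them is in fact a facet and there are no others. So it suffices to show, for a fixed $(n-k)$-subset $F$ with complement $S:=[n]\setminus F$ (a set of $k\ge 2$ vertices), that $P^2_n[S]$ is disconnected if and only if the three bullet conditions hold for some $i$ with $2\le i\le n-2$; here I read the third bullet as ``$\{i+2,\dots,n\}\not\subseteq F$'', i.e.\ some vertex $j\in[n]$ with $j\ge i+2$ avoids $F$.

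The key structural observation is the connectivity dichotomy in the squared path: writing $S=\{s_1<s_2<\dots<s_m\}$, two elements $s_t<s_{t'}$ are adjacent in $P^2_n$ precisely when $s_{t'}-s_t\le 2$. Hence if $s_{t+1}-s_t\le 2$ for every $t$, the path $s_1-s_2-\dots-s_m$ spans $P^2_n[S]$ and the induced graph is connected; conversely, if $s_{t+1}-s_t\ge 3$ for some $t$, then every vertex of $\{s_1,\dots,s_t\}$ is at label-distance at least $3$ from every vertex of $\{s_{t+1},\dots,s_m\}$, so $P^2_n[S]$ is disconnected. Therefore $P^2_n[S]$ is disconnected exactly when $S$ contains a gap of length at least $3$: there exist $a\in S$ with $a+1\notin S$, $a+2\notin S$, and some $b\in S$ with $b\ge a+3$. (For the backward implication, take the given $a$ and let $b_0$ be the least element of $S$ exceeding $a$; since $a+1,a+2\notin S$ we get $b_0\ge a+3$, and $a,b_0$ are consecutive in $S$.)

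It remains to translate the gap condition across the complementation $F=[n]\setminus S$. The membership dictionary $a\in S\Leftrightarrow a\notin F$ and $a+1,a+2\notin S\Leftrightarrow a+1,a+2\in F$ turns the gap condition, upon setting $i:=a+1$, into exactly ``$\{i,i+1\}\subseteq F$, $i-1\notin F$, and some $j\ge i+2$ with $j\notin F$''. The range $2\le i\le n-2$ comes for free: $a\ge 1$ gives $i\ge 2$, and a witness $b\le n$ with $b\ge i+2$ gives $i\le n-2$. Reversing the dictionary --- given such an $i$, put $a:=i-1\in[n]$, note $a\in S$ while $a+1,a+2\in F$ so $a+1,a+2\notin S$, and take $b:=j$ --- recovers the gap condition. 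Combining this with the dichotomy of the previous paragraph proves the lemma.

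I do not anticipate a genuine obstacle: the whole argument rests on the single fact that vertices of $P^2_n$ are adjacent iff their labels differ by at most $2$, and the rest is bookkeeping. The two points needing a little attention are (i) checking that every generator of $\Delta_k(P^2_n)$ is a facet, which holds because the generators are equicardinal, and (ii) keeping track of the endpoint inequalities so that $i-1$ and the witness $j$ genuinely lie in $[n]$ --- the hypothesis $2\le k\le n-2$ ensures $|S|=k\ge 2$, without which no gap could exist.
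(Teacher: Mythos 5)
Your proposal is correct and follows essentially the same route as the paper: the paper's proof is exactly the observation that $[n]\setminus F$ is disconnected in $P^2_n$ if and only if it contains two vertices $\ell<m$ with $m-\ell\ge 3$ and all intermediate vertices in $F$, which is your gap dichotomy; your translation via $i=a+1$ and the remark that the equicardinal generators are precisely the facets just make explicit what the paper leaves implicit.
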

\begin{proof} It suffices to observe that in order for $G\setminus F$ to be disconnected, $G\setminus F$ must contain two elements that have a gap of size at least two between them.  
That is, 
$G\setminus F$ must contain two vertices $\ell< m$ such that $m-\ell\ge 3$ and $\ell+1, \ldots, m-1\in F$.
\end{proof}
\begin{ex}\label{ex:facets-sq-path-k-k+2} Let $k\ge 2$.  There are $k-1$ facets of $\Delta_k(P_{k+2}^2)$, namely, the following subsets of size 2:
$\{2,3\}, \{3,4\}, \ldots, \{k,k+1\}$.  In particular the one-dimensional complex $\Delta_k(P_{k+2}^2)$ is contractible and  shellable.
\end{ex}

\begin{theorem}\label{thm:sqpath-shellable-Betti-number}
    Let $k\ge 2$ and $n\ge k+3$. The cut complex  $\Delta_k(P^2_n)$ of the squared path $P^2_n$ is shellable.  It is homotopy equivalent to a wedge of  spheres in the top dimension $n-k-1$.
\end{theorem}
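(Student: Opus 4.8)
The plan is to prove shellability by induction on $n$; the homotopy statement is then automatic, because \Cref{lem:facets-sq-path} shows every facet of $\Delta_k(P^2_n)$ has cardinality $n-k$, so the complex is pure of dimension $n-k-1$ and \Cref{thm:shell-implies-homotopytype} applies. The cases $n\le k+2$ serve as base cases (for $n=k+2$, \Cref{ex:facets-sq-path-k-k+2} gives a contractible $1$-complex; for $n<k+2$ the complex is void). For the inductive step, fix $n\ge k+3$, write $\Delta=\Delta_k(P^2_n)$, and peel off the vertex $n$: $\Delta=\del_n(\Delta)\cup\bigl(\{n\}\ast\lk_n(\Delta)\bigr)$. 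From \Cref{def:cut-cplx} one reads off $\lk_n(\Delta)=\Delta_k(P^2_{n-1})$ as complexes on $[n-1]$, since for $F\subseteq[n-1]$ the set $[n-1]\setminus F$ induces the same subgraph in $P^2_n$ and in $P^2_{n-1}$; thus $\lk_n(\Delta)$ is shellable by induction, and it remains to show $\del_n(\Delta)$ is shellable and to glue the two shellings.

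The gluing rests on a \emph{swap lemma}: if $H$ is a facet of $\lk_n(\Delta)$ with associated disconnected $k$-set $S=[n-1]\setminus H$, then $H\cup\{\max S\}$ is again a facet of $\Delta$ and it avoids $n$ --- equivalently $\{n\}\cup(S\setminus\{\max S\})$ is disconnected, which holds since $\max S\le n-1$ forces $\max S$ and $n$ into different components of $S\cup\{n\}$, so deleting $\max S$ cannot reconnect $n$. This lemma implies in particular that $\del_n(\Delta)$ is pure of dimension $n-k-1$ (a maximal face of $\Delta$ avoiding $n$ is in fact a facet of $\Delta$, because $G\setminus\{n\}$ with $G\ni n$ a facet can always be enlarged away from $n$), hence that no facet of $\lk_n(\Delta)$ is a facet of $\del_n(\Delta)$. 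Granting all this, if $G_1,\dots,G_p$ is a shelling of $\del_n(\Delta)$ and $H_1,\dots,H_q$ a shelling of $\lk_n(\Delta)$, then $G_1,\dots,G_p,\,\{n\}\ast H_1,\dots,\{n\}\ast H_q$ is a shelling of $\Delta$ in the sense of \Cref{def:shelling}: interactions among the cone facets are inherited from the shelling of $\lk_n(\Delta)$, while against an earlier $G_i$ the swap lemma supplies the earlier facet $G_{i'}=H_j\cup\{\max([n-1]\setminus H_j)\}$ of $\del_n(\Delta)$, which meets $\{n\}\ast H_j$ in the codimension-one face $H_j\supseteq G_i\cap(\{n\}\ast H_j)$.

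The main obstacle is thus to show $\del_n(\Delta_k(P^2_n))$ --- with facets $[n-1]\setminus S'$, where $|S'|=k-1$ and $S'\cup\{n\}$ is disconnected --- is shellable. I plan to peel again, at the vertex $1$: one checks $\lk_1\bigl(\del_n(\Delta_k(P^2_n))\bigr)\cong\del_{n-1}\bigl(\Delta_k(P^2_{n-1})\bigr)$, a smaller complex of exactly the same shape handled by a parallel induction on $n$, while $\del_1\bigl(\del_n(\Delta_k(P^2_n))\bigr)=\del_{\{1,n\}}\bigl(\Delta_k(P^2_n)\bigr)$; for $n\ge 2k$ this last complex is simply the $(n-k-1)$-skeleton of the simplex on $\{2,\dots,n-1\}$ --- because in that range the disconnected pair $\{1,n\}$ can be completed to a disconnected $k$-set by adjoining any $k-2$ of the remaining available vertices --- hence is shellable, and the finitely many pairs $(n,k)$ with $k+3\le n<2k$ need a short separate analysis. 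Carrying this nested induction through --- verifying purity and the ``no common facet'' condition at each stage, and settling the base cases --- is where the real work lies. A more self-contained alternative, bypassing $\del_n$ altogether, is to list the facets $S^{c}$ of $\Delta_k(P^2_n)$ by the position of the first ``gap'' of $S$ and then lexicographically, and to verify the criterion of \Cref{def:shelling} directly by exhibiting, for each earlier facet, an explicit one-element modification of the later disconnected $k$-set; the casework on how the two gap structures compare is then the crux.
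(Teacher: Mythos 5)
Your first-level reduction is sound: $\lk_n(\Delta_k(P^2_n))=\Delta_k(P^2_{n-1})$ is correct, your swap lemma is a true statement, and granting it, the purity of $\del_n(\Delta_k(P^2_n))$ and the concatenated shelling do work, with the homotopy claim then following from \Cref{lem:facets-sq-path} and \Cref{thm:shell-implies-homotopytype}. But the proposal stops short of a proof precisely where the difficulty sits, namely shellability of $\del_n(\Delta_k(P^2_n))$. Concretely: (i) the second-level decomposition at vertex $1$ needs its own swap-type lemma (for each facet $H$ of $\lk_1(\del_n\Delta)$, an earlier facet of $\del_1(\del_n\Delta)$ meeting $\{1\}\cup H$ in codimension one) together with purity of $\del_{\{1,n\}}(\Delta_k(P^2_n))$; you neither state nor prove these, and you explicitly defer them as ``where the real work lies''. (ii) Your claim that only ``finitely many pairs $(n,k)$ with $k+3\le n<2k$'' remain is false: for every $k$ there are about $k-3$ such values of $n$, so this is an infinite family requiring a uniform argument, and in that range $\del_{\{1,n\}}(\Delta_k(P^2_n))$ is genuinely not the full $(n-k-1)$-skeleton (for $k=4$, $n=7$ the set $\{2,4,6\}$ is not even a face, since $\{1,3,5,7\}$ is connected in $P^2_7$), so its purity and shellability there are exactly the kind of statement that still has to be proved, not a short finite check.

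A smaller but real flaw: your justification of the swap lemma is incorrect as written, since $\max S$ and $n$ need not lie in different components of $S\cup\{n\}$ (they are adjacent whenever $n-\max S\le 2$). The correct argument uses the gap description of disconnection in $P^2_n$: $S$ is disconnected iff two consecutive elements in sorted order differ by at least $3$; either such a gap of $S$ avoids $\max S$ and survives in $(S\setminus\{\max S\})\cup\{n\}$, or the gap is at the top, in which case $n-\max(S\setminus\{\max S\})\ge 4$ and the new set is again disconnected. Finally, note that the ``self-contained alternative'' you mention in closing --- order the facets by the position of the first gap/consecutive pair and then lexicographically, and verify the criterion of \Cref{def:shelling} by explicit one-element modifications --- is essentially the paper's actual proof (it sorts the facets into blocks $\F_i$ indexed by the first $i$ with $i\notin F$, $i+1,i+2\in F$, ordered by $i$ and lexicographically within blocks); the casework you defer there is the entire content of that argument, so as submitted the theorem remains unproved.
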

\begin{proof}
 
    Let $G=P^2_n$ be the squared path on vertex set $[n]$. 
    We use the set $\F$ of facets of $\Delta_k(P^2_n)$ determined in  Lemma~\ref{lem:facets-sq-path}. 
    Define $\F_i=\{F\in\F\mid i\notin F, i+1,i+2\in F, \text{ and } F\notin \F_j \text{ for } j<i\}$. Then $\F=\bigsqcup_{i\in[n-3]} \F_i$, as every facet must have a consecutive pair, but the groups are defined to only sort them by the first such pair. The shelling order for the facets will be such that every facet in $\F_i$ comes before every facet in $\F_{i+1}$, and if two facets are in the same $\F_i$, we order them lexicographically. We will now verify that this is a shelling order, by using the equivalent shelling criterion of Definition~\ref{def:shelling}.

    Suppose $F,H\in \F_i$ with $F$ before $H$ in lexicographic order. Let $x$ be the smallest element of $F\setminus H$ and $y$ the largest element of $H\setminus F$.  If $x<i$, $x-1\in H$, and some $z<x$ is not in $H$, then $H\cup\{x\}\setminus \{y\}\in\F_\ell$ for some $\ell < i$.  Otherwise, $H\cup\{x\}\setminus \{y\}$ is in $\F_i$ and is before $H$ in lexicographic order.  In either case $F\cap H\subset H\cup\{x\}\setminus\{y\}$, which is before $H$ in the shelling order.

    Now suppose $F\in\F_i, H\in \F_j$, with $i<j$. Suppose $H\cap \{1,2,\ldots, j\} = \{1,2,\ldots, j-1\}$. Then $i\in H$ and for $y=i$, $H\cup\{j\}\setminus\{y\}\in \F_i$.  Otherwise, let $y\in (H\setminus F)\setminus \{j+1\}$.  Then $H\cup\{j\}\setminus \{y\}$ 
    contains $j$ and $j+1$ and omits some element less than $j$ and some element greater than $j+2$.  So $H\cup\{j\}\setminus \{y\}\in \F_\ell$ for some $\ell < j$. In either case $F\cap H\subset H\cup\{j\}\setminus\{y\}$, which is before $H$ in the shelling order.
\end{proof}

A graph operation simultaneously generalizing disjoint unions and wedges of graphs  was defined in  \cite[Section 5]{BDJRSX2024} as follows. 
\begin{df}\label{def:Gen-wedge-graphs} \cite[Theorem 5.1]{BDJRSX2024} Let $G=(V,E)$ be a graph with vertex set $V$ and edge set $E$.  Suppose there is a partition  $A\cup B=V$ of $V$ such that for every edge $e\in E$, $e$ is either between two vertices of $A$ or between two vertices of $B$. Then  $G$ is the \emph{generalized wedge product} of its induced subgraphs $G[A]$ and $G[B]$.
\end{df}
It was shown in 
\cite[Theorem 5.1]{BDJRSX2024} that when $G$ is a generalized wedge product for the partition $V=A\cup B$, and $\Delta_3(A\cap B)$ is the void complex, the 3-cut complex $\Delta_3(G)$ is shellable if and only if 
$\Delta_3(G[A])$ and $\Delta_3(G[B])$ are shellable. 
We use that theorem and the following result to count the full-restriction facets of $\Delta_3(P^2_n)$ inductively.

\begin{prop}\label{prop:gen-wedge-Delta3-K3} Suppose the graph $G=(V,E)$ is a generalized wedge product  for the partition $A\cup B=V$, such that the induced subgraph $G[B]$ is $K_3$, and 
$G[A\cap B]$ is $K_2$. Suppose also that $\Delta_3(G[A])$ is shellable, with Betti number $\beta$. Then $\Delta_3(G)$ has Betti number $\beta+\gamma$ where $\gamma$ is the number of vertices $a$  such that $a$ is an isolated vertex of the induced subgraph $G[\{a\}\cup B]$.
\end{prop}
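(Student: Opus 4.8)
The plan is to use the generalized wedge product theorem \cite[Theorem 5.1]{BDJRSX2024} to identify $\Delta_3(G)$ with the union $\Delta_3(G[A]) * \langle B \rangle \cup \langle A \rangle * \Delta_3(G[B])$ (in the spirit of \cite[Theorem~4.16]{BDJRSX2024}, adapted to the generalized wedge setting), and then count full-restriction facets combinatorially from an explicit shelling. Since $G[B] = K_3$, the complex $\Delta_3(G[B])$ is void, so $\langle A \rangle * \Delta_3(G[B])$ contributes nothing; thus every facet of $\Delta_3(G)$ is obtained from a facet $F$ of $\Delta_3(G[A])$ (together with all of $B$), or is a ``mixed'' facet that omits at least one vertex from each part. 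First I would make this facet decomposition precise, using that $G[A \cap B] = K_2$ so that removing a $3$-set to disconnect $G$ either disconnects $G[A]$ within $A$, or isolates some vertex $a \in A \setminus B$ from the rest via the two-vertex ``bridge'' $A \cap B$ plus one more vertex.

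Next I would fix a shelling of $\Delta_3(G[A])$ realizing its $\beta$ full-restriction facets, and extend it to a shelling of $\Delta_3(G)$. The natural order is: first shell the subcomplex coming from $\Delta_3(G[A])$ (each facet $F$ of $\Delta_3(G[A])$ becomes $F \cup B$ in $\Delta_3(G)$), preserving restriction faces, so these contribute exactly $\beta$ full-restriction facets to $\Delta_3(G)$; then append the mixed facets in a carefully chosen order. A mixed facet corresponds to a vertex $a \in A \setminus B$ that is isolated in $G[\{a\} \cup B]$: the facet is $(V \setminus \{a\}) \setminus (\text{one vertex of } B)$, and for each such $a$ exactly one such facet will turn out to be a full-restriction facet (its restriction being the whole facet, forced because $a$ together with the two vertices of $A \cap B$ forms a disconnected triple that no earlier facet's complement captures), while the remaining mixed facets attach along a codimension-one face and contribute to lower $h$-entries. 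This yields $\gamma$ additional full-restriction facets, for a total of $\beta + \gamma$, and by Theorem~\ref{thm:shell-implies-homotopytype} the Betti number equals the number of full-restriction facets.

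I would verify the shelling condition of Definition~\ref{def:shelling} for the appended mixed facets: given a mixed facet $H$ not among the $\gamma$ distinguished ones and any earlier facet, I must produce an earlier facet $F_k$ with $F_i \cap H \subseteq F_k \cap H$ of size $|H|-1$. This is where the hypothesis that $A \cap B = K_2$ and $B = K_3$ does the real work: it pins down exactly which $3$-subsets of $V \setminus H$ are disconnected, hence which codimension-one faces of $H$ lie in earlier facets, and one checks that for all but one choice of the omitted $B$-vertex (for each isolated $a$) there is such a prior facet. The main obstacle I expect is precisely this bookkeeping for the mixed facets — making sure the shelling order among them (and relative to the $\Delta_3(G[A])$-part) is chosen so that each contributes the correct restriction-face cardinality, and in particular that no isolated vertex $a$ yields more than one or fewer than one full-restriction facet. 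An alternative, cleaner route that sidesteps some of this is to compute the Euler characteristic directly: by shellability homology is concentrated in top dimension $n-4$, so $\beta + \gamma$ equals $(-1)^{n-4}$ times the reduced Euler characteristic, which by Proposition~\ref{fpoly-prop} and the facet count splits as the contribution of $\Delta_3(G[A])$ (giving $\beta$ via its own shellability) plus the number of mixed facets corrected by lower-dimensional overlaps — and the net mixed contribution simplifies to $\gamma$. I would present the shelling argument as the primary proof and remark on the Euler-characteristic cross-check.
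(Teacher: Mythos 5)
Your overall strategy---split the facets into those coming from $\Delta_3(G[A])$ and ``mixed'' ones, build a shelling block by block, and count full-restriction facets---is the same strategy the paper uses, but your execution has a genuine gap in the step that produces the count. The paper uses the shelling of \cite[Theorem 5.1]{BDJRSX2024}, in which the mixed facets come \emph{first} (first $\F_0$, those whose complement misses $S=A\cap B$, then $\F_1$, those whose complement meets $S$ in one vertex), and the facets $F'\cup\{w\}$, with $F'$ a facet of $\Delta_3(G[A])$ and $w$ the unique vertex of $B\setminus A$, come \emph{last}. That order is essential: $F'\cup\{w\}$ is full-restriction exactly when its codimension-one face $F'$ already lies in an earlier facet, and the only facets that can absorb $F'$ are mixed ones. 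In your order (``first shell the subcomplex coming from $\Delta_3(G[A])$, \dots then append the mixed facets'') every facet of the first block contains $w$, so for $F=F'\cup\{w\}$ the face $F'$ is contained in no earlier facet (containment in $F''\cup\{w\}$ would force $F'=F''$); the first block is a cone with apex $w$ and contributes \emph{zero} full-restriction facets, not $\beta$. Your parenthetical ``preserving restriction faces'' actually proves this: the restriction of $F'\cup\{w\}$ is the restriction of $F'$, a proper face. Since you obtain the total $\beta+\gamma$ by adding the block-wise counts, the argument collapses; even if your order could be completed to a shelling, all $\beta+\gamma$ full-restriction facets would then sit in the mixed block, and they would not match your description of them.

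There are also inaccuracies in the mixed-facet analysis that would have to be repaired. A facet omits three vertices, so it is not of the form $(V\setminus\{a\})\setminus\{v\}$; more importantly, mixed facets are not indexed by vertices $a$ isolated in $G[\{a\}\cup B]$. Every $3$-set $\{x,y,w\}$ with $x,y\in A\setminus B$ is disconnected (in a generalized wedge $w$ has no neighbours outside $B$), so all complements of such sets are mixed facets with no isolation condition, and the complement of $\{a,v,w\}$ with $v\in S$ is a facet as soon as $\{a,v\}\notin E$. The isolation condition enters only when deciding which of the latter facets are full-restriction, and that decision depends on an order you have not pinned down: in the paper, the $\F_0$ facets all contain $S$ and form a contractible cone contributing nothing, and within $\F_1$ (ordered lexicographically) the facet with complement $\{a,v,w\}$ is full-restriction precisely when $a$ is adjacent to neither vertex of $S$, which is where $\gamma$ comes from. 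Finally, the Euler-characteristic ``cross-check'' is only asserted (``the net mixed contribution simplifies to $\gamma$''); making it precise amounts to redoing the same count. So the missing idea is concrete: shell the mixed facets first and the $\Delta_3(G[A])$-facets last, then carry out the covering analysis of codimension-one faces in that order.
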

\begin{proof} Since $\Delta_3(K_3)$ and $\Delta_3(G[A\cap B])=\Delta_3(K_2)$ are both void and hence shellable (see \Cref{ex:Examples-cut-complex}),  the hypotheses of \cite[Theorem 5.1]{BDJRSX2024}  are satisfied, and therefore $\Delta_3(G)$ is shellable.  We now count the full-restriction facets (see \Cref{def:shelling}) using the facets from the shelling described in \cite[Theorem 5.1]{BDJRSX2024}. Write $S=A\cap B$. The  facets of $\Delta_3(G)$ fall into the following four categories: $\F_0, \F_1, \F_A, \F_B$, which we now recall.

We first let 
\[\F_A := \{F \in \F | F^c \subseteq A\} \quad \text{ and }\quad \F_B := \{F \in \F | F^c \subseteq B\}.\] 
In the present case, since $G[B]$ is a complete graph, 
$\F_B$ is empty, and so is the intersection $ \F_A \cap \F_B$.

We characterize the remaining facets, that is, those not in $\F_A$,  by the size of the intersection between their complement and $S$.
 Since  $|S|=2$ and $|F^c|=3$ for any facet $F$, clearly for a facet  $F$ whose complement is not contained in $A$,  we must have $|F^c\cap S|\le 1$.  Let 
\[ \F_i := \{F \in \F \setminus \F_A  \;|\; |F^c \cap S| =i\}\quad \text{ for } i= 0,1.\] 
The proof of \cite[Theorem 5.1]{BDJRSX2024} 
gives a shelling order 
with lexicographic ordering within each $\F_i$, 
for  all the facets $\F = \F_0 \sqcup \F_1 \sqcup  \F_A$. The facets in $\F_0$ are followed by those in $\F_1$, which are followed by those in $\F_A$. 
Since $\Delta_3(G[A])$ is shellable, there is already a shelling order for $\F_A$.

We now count the full-restriction facets in each of $\F_0, \F_1, \F_A$.

The facets in $\F_0$ all contain the connected set $S$, so the simplicial complex $\langle F_0\rangle$ is a cone over $S$, and therefore contractible.  The full-restriction facets of $\Delta_3(G)$ must therefore all come  from $\F_1\sqcup \F_A$. 

Label the vertex set $V$ with $\{1,2,\ldots, n\}$ and let $A=\{1,2,\ldots, n-1\}$, $B=\{n-2, n-1, n\}$ so that $S=\{n-2, n-1\}$.

The facets of $\F_0$ are all the subsets of the form \begin{equation}\label{eqn:gen-3-wedgeS}
X\cup \{n-2, n-1\}, \,  X\subseteq [n-3], \, |X|=n-5,
\end{equation}
since vertex $n$ makes no edges with any vertex in $[n-3]=\{1,2,\ldots,n-3\}$ by hypothesis.  These facets precede $\F_1$ in the shelling order.

Now let $F\in \F_1$. Then each of $F$ and $F^c$ contains exactly one of the two elements in $S$.
If $F$ is a facet in $\F_1$, then either $F^c=\{a, n-1, n\}$, $a$ such that $\{a,n-1\}$ is not an edge, or $F^c=\{b, n-2, n\}$, $b$ such that $\{b, n-2\}$ is not an edge. 
Equivalently, the facets are 
\begin{equation}\label{eqn:F1-facetS}
\begin{split}
F_{a,n-1}&=\{1, \ldots, \hat a, \ldots, n-3, n-2\},\ 
 1\le a\le n-3, \{a, n-1\}\notin E,\\ 
F_{b,n-2}&=\{1, \ldots, \hat b, \ldots, n-3, n-1\},
\ 1\le b\le n-3, \, \{b, n-2\}\notin E,
\end{split}
\end{equation}
where the hat indicates the element was omitted.
In either case the face 
$F_{a,n-1}\setminus\{n-2 \}$, $F_{b,n-2}\setminus\{n-1 \}$ cannot appear in previous facets of $\langle \F_0\rangle$, by~\eqref{eqn:gen-3-wedgeS}.

We determine which of these are full-restriction facets, that is, for which
facets all proper faces are contained in previous facets.
Suppose $F\in \F_1$, say, $F=F_{a,n-1}$. If $x\in F$, $x\ne n-2$, then 
$F\setminus \{x\} \cup \{n-1\}\in \F_0$.  So $F\setminus \{x\}$ is
contained in a previous facet.  For $x=n-2$, 
$F\setminus\{n-2\} \cap \{n-2,n-1\}=\emptyset$ and
$|F\setminus\{n-2\}| = n-4$.  So $F\setminus \{n-2\}$ is not contained
in a facet of $\F_0$ by~\eqref{eqn:gen-3-wedgeS}.

It remains to check whether $F\setminus \{n-2\}=F_{a,n-1}\setminus \{n-2\}$
 is contained in a previous
facet of $\F_1$.  If $\{a,n-2\}\in E$, then $F\setminus \{n-2\}$ is not
in a previous facet of $\F_1$, so the restriction of $F$ is contained in
$F\setminus\{n-2\}$, so $F$ is not a full-restriction facet.  On the
other hand, if $\{a,n-2\}\not\in E$, then $F_{a,n-2}$ is a facet in $\F_1$
containing $F\setminus \{n-2\}$, and 
$F_{a,n-2}$ precedes $F_{a,n-1}$ in lexicographic order.
So $F_{a,n-2}$ has restriction $F\setminus\{n-2\}$, while $F_{a,n-1}$ is a
full-restriction facet.

Thus the full-restriction facets in $\F_1$ are
 $\{F_{a, n-1}\,|\,  \{a, n-1\}\notin E, \{a, n-2\}\notin E\}$.
Hence the number $\gamma$ of full-restriction facets coming from $\F_1$ is the number of elements $a\le n-3$ such that neither $\{a, n-2\}$ nor $\{a,n-1\}$ is an edge of $G$.

If $F\in \F_A$, then $n \in F$ and $F\setminus \{n\}$ is a facet of $\Delta_3(G[A])$.  The map $F\mapsto F\setminus \{n\}$ is clearly a bijection 
between facets of $\langle\F_A\rangle$ and those of $\Delta_3(G[A])$. Since  the set of faces containing $n$ is a contractible subcomplex of $\Delta_3(G)$, removing these faces results in a complex that is homotopy equivalent to $\Delta_3(G[A])$. In particular the Betti numbers are equal, so 
 the number of full-restriction facets in $\F_A$ is the same as the number of full-restriction facets in $\Delta_3(G[A])$.

 The statement of the proposition now follows. \qedhere
 \end{proof}

\begin{cor}\label{cor:Sq-pathS} Let $n\ge 6$.  The 3-cut complex of the squared path $P_n^2$ is homotopy equivalent to a wedge of $\binom{n-4}{2}$ spheres in dimension $n-4$. There is a shelling whose full-restriction facets  are given by the complements of the sets 
\[\{ b, j-2, j\}, 1\le b\le j-5, 6\le j\le n.
\]
\end{cor}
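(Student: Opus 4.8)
The plan is to induct on $n\ge 5$, at each step stripping off the last vertex of the squared path and applying Proposition~\ref{prop:gen-wedge-Delta3-K3}. For the base case $n=5$, Example~\ref{ex:facets-sq-path-k-k+2} gives that $\Delta_3(P^2_5)=\Delta_3(P^2_{3+2})$ is shellable and contractible; this matches a wedge of $\binom{1}{2}=0$ spheres and an empty list of full-restriction facets (the index set $6\le j\le 5$ is empty), so the statement holds vacuously there.

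For the inductive step I would fix $n\ge 6$ and set $A=\{1,\dots,n-1\}$, $B=\{n-2,n-1,n\}$. First I would check that $P^2_n$ is the generalized wedge product of $G[A]$ and $G[B]$ in the sense of Definition~\ref{def:Gen-wedge-graphs}: the only edges of $P^2_n$ meeting $n$ are $\{n-1,n\}$ and $\{n-2,n\}$, which lie inside $B$, so every edge lies inside $A$ or inside $B$; moreover $G[A]=P^2_{n-1}$, $G[B]=K_3$, and $G[A\cap B]=G[\{n-2,n-1\}]=K_2$. By the inductive hypothesis (or by Example~\ref{ex:facets-sq-path-k-k+2} when $n-1=5$), $\Delta_3(P^2_{n-1})$ is shellable with Betti number $\binom{n-5}{2}$, so Proposition~\ref{prop:gen-wedge-Delta3-K3} applies. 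Hence $\Delta_3(P^2_n)$ is shellable and, being pure of dimension $n-4$, is by Theorem~\ref{thm:shell-implies-homotopytype} homotopy equivalent to a wedge of spheres of dimension $n-4$, the number of spheres being $\binom{n-5}{2}+\gamma$, where $\gamma$ is the number of vertices $a$ isolated in $G[\{a\}\cup B]$.

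Next I would evaluate $\gamma$. For $1\le a\le n-3$, the vertex $a$ is joined to some vertex of $B$ exactly when $a\in\{n-4,n-3\}$ (these are the vertices of $\{1,\dots,n-3\}$ within distance $2$ of $n-2$ or $n-1$, and none is adjacent to $n$), so $a$ is isolated in $G[\{a\}\cup B]$ precisely when $1\le a\le n-5$; thus $\gamma=n-5$. Therefore the number of spheres equals $\binom{n-5}{2}+(n-5)=\binom{n-4}{2}$, as asserted.

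\textbf{Full-restriction facets and main obstacle.} Finally I would analyze the explicit shelling from the proof of Proposition~\ref{prop:gen-wedge-Delta3-K3}, whose facets fall into blocks $\F_0,\F_1,\F_A$. The block $\F_0$ contributes no full-restriction facet, since its facets all contain the connected set $A\cap B$ and hence $\langle\F_0\rangle$ is contractible. The block $\F_A$ consists of the facets $F$ containing $n$, and $F\mapsto F\setminus\{n\}$ is a complement-preserving bijection onto the facets of $\Delta_3(P^2_{n-1})$ which, by the count established in Proposition~\ref{prop:gen-wedge-Delta3-K3}, carries full-restriction facets to full-restriction facets; by the inductive hypothesis these have complements $\{b,j-2,j\}$ with $1\le b\le j-5$ and $6\le j\le n-1$. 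The block $\F_1$ consists of the facets whose complement is $\{a,n-1,n\}$ or $\{a,n-2,n\}$ with $a\le n-3$; for each $a$ with $\{a,n-1\},\{a,n-2\}\notin E$ (equivalently $1\le a\le n-5$) exactly one facet of the associated pair is a full-restriction facet, and choosing the lexicographic tie-break within $\F_1$ appropriately makes this the facet with complement $\{a,n-2,n\}$, that is, the sets $\{b,j-2,j\}$ with $j=n$. Combining the $\F_A$ and $\F_1$ contributions yields exactly the claimed list with $6\le j\le n$, closing the induction. The step needing the most care is this last one: one must track the restriction map through the shelling order of \cite[Theorem 5.1]{BDJRSX2024}, confirm that the tie-break inside $\F_1$ that makes $\{a,n-2,n\}^{c}$ the full-restriction facet still yields a shelling, and confirm that the $\F_A$-bijection respects the full-restriction property; the identity $\binom{n-5}{2}+(n-5)=\binom{n-4}{2}$ is a convenient check that no facets have been miscounted.
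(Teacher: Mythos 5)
Your proposal is correct and follows essentially the same route as the paper: an induction that peels off vertex $n$, applies Proposition~\ref{prop:gen-wedge-Delta3-K3} with $A=[n-1]$, $B=\{n-2,n-1,n\}$, computes $\gamma=n-5$, and recursively identifies the full-restriction facets through the blocks $\F_0,\F_1,\F_A$, using the telescoping identity $\binom{n-5}{2}+(n-5)=\binom{n-4}{2}$. The only difference is cosmetic: your explicit tie-break inside $\F_1$ making $\{a,n-2,n\}^{c}$ the full-restriction facet is exactly the choice the paper's own proof of the corollary relies on, so nothing further is needed.
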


\begin{proof} 
We apply \Cref{prop:gen-wedge-Delta3-K3} to $G=P_n^2$. Note first that in this case, the proof itself  recursively determines a shelling order for $\F_A$, since $G[A]=P_{n-1}^2$.  

We check the initial case $n=6$. 
Here $\F_A$ has facets $\{\{2,3,6\}, \{3,4,6\}\}$, and is thus contractible.  The only full-restriction facet comes from $\F_1$, and it is $\{2, 3, 5\}$, the complement of $\{1, 4, 6\}$.

The number $\gamma$ in the statement of \Cref{prop:gen-wedge-Delta3-K3} is now exactly $n-5$, since the vertices not connected to $n-2$ or to $n-1$ are $\{1,2,\ldots, n-5\}$. The full-restriction facets coming from $\F_1$ are precisely the sets $F_{b,n-2}$, and these are the complements of $\{b, n-2, n\}, b\le n-5$. Applying \Cref{prop:gen-wedge-Delta3-K3} recursively,  the full-restriction facets coming from $\F_A$, for $A=[n-1]$,  are precisely the complements of $\{b, n-3, n-1\}, b\le n-6$, in addition to the ones coming from $\Delta_3(P_{n-2}^2)$.  Continuing in this manner, the conclusion  follows.
\end{proof}

\begin{rem}\label{rem:facets-Delta3Sqpath} The above proof also shows that the  number of facets $f(3,n)$ of $\Delta_3(P_n^2)$ satisfies the recurrence 
\[f(3,n)-f(3,n-1)=\binom{n-3}{2}+ (n-4)+(n-5), \, n\ge 6.\]
\end{rem}

Table 1 contains SageMath data for the Betti numbers of the shellable cut complexes for squared paths.

\begin{table}[htbp]
\begin{center}
\scalebox{0.9}{
\begin{tabular}{|c|c|c|c|c|c|c|c|c|c|c|c|}
\hline
$k\backslash n$  & 5 &6 &7 &8 &9 &10 &11 &12 &13 &14 &15\\[2pt]\hline
$3$  &0 &1 &3 &6 &10 & 15 &21 &28 &36 & 45 & 55\\
$4$ &0 &0 & 3 &11 & 25 & 46 & 75 & 113 & 161 & 220 & 291\\
$5$ &0  &0 & 0 &6 &26 & 67 & 136 & 241 & 391 & 596 & 867\\
$6$ &0 &0 &0 & 0 & 10 & 50 & 145 & 324 & 623 & 1087 & 1771\\
$7$ &0 &0 &0 & 0 & 0 & 15 & 85 & 275 & 674 & 1403 &2627\\
$8$ & 0 &0 &0 & 0 & 0 & 0 & 21 & 133 & 476 & 1274 & 2863\\
$9$ & 0 &0 &0 & 0 & 0 & 0 & 0  & 28 & 196 & 770 & 2240\\
$10$ & 0 &0 &0 & 0 & 0 & 0 & 0  & 0 & 36 & 276 & 1182\\
$11$ & 0 &0 &0 & 0 & 0 & 0 & 0  & 0 &0 &45 & 375\\
$12$ & 0 &0 &0 & 0 & 0 & 0 & 0  & 0 &0 &0 & 55\\
\hline
\end{tabular}
}
\end{center}
\vskip .1in
\caption{\small Betti numbers $\beta(k,n)$ for the shellable complex ${\Delta_k (P_n^2)}, 3\le k\le 12, 3\le n\le 15.$}
\label{table:Betti-SquaredPath}
\end{table}
\begin{theorem}\label{thm:DeltakSqPk+3-Marija}
    $\Delta_k(P_{k+3}^2)$ is homotopy equivalent to the wedge of $\binom{k-1}{2}$ spheres $\bbS^2$, for all $k\ge 3$. 
    It has $k^2-1$ facets.
\end{theorem}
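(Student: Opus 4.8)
The plan is to apply the general machinery just developed for squared paths, specializing the recursion and the Euler-characteristic formula to the case $n = k+3$.

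\textbf{Setting up the count of facets.} First I would compute the number of facets of $\Delta_k(P_{k+3}^2)$ directly from the facet characterization in \Cref{lem:facets-sq-path}. With $n = k+3$, a facet $F$ has $|F| = n-k = 3$, and must contain a consecutive pair $\{i,i+1\}$ with $i \ge 2$, omit $i-1$, and omit some $j \ge i+2$. Since $|F| = 3$, the facet is $F = \{i, i+1, m\}$ for a single extra vertex $m$; the conditions force $m$ to avoid the ``gap'' constraints appropriately, and one tallies the valid triples. I expect this to come out to $k^2 - 1$ after a short binomial simplification (consistent with the $n=k+3$ column and the shelling data, e.g. Table~\ref{table:Betti-SquaredPath} together with the recurrence in \Cref{rem:facets-Delta3Sqpath}). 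An alternative, cleaner route is to use \Cref{thm:sqpath-shellable-Betti-number} (so the complex is shellable, pure, and a wedge of $2$-spheres since $\dim = n-k-1 = 2$) and then get the facet count from the $f$- or $h$-polynomial specialized to $n=k+3$; but the direct combinatorial count is probably fastest.

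\textbf{Computing the Betti number.} Since $\dim \Delta_k(P_{k+3}^2) = n-k-1 = 2$ and the complex is shellable by \Cref{thm:sqpath-shellable-Betti-number}, it is homotopy equivalent to a wedge of $2$-spheres, and the number of spheres equals $(-1)^2 \tilde\chi = \tilde\chi$, the reduced Euler characteristic. I would either (i) invoke \Cref{prop:class-of-graphs2-2022-4-25} — valid here because $P_{k+3}^2$ has girth $\le k$ only if... wait, one must check the girth hypothesis: $P_n^2$ contains triangles $\{i,i+1,i+2\}$, so its girth is $3$, meaning \Cref{prop:class-of-graphs2-2022-4-25} applies only when $k \ge 3$ and there are no cycles of length $\le k$, which fails for $k \ge 3$. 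So instead I would compute $\tilde\chi$ directly from the $f$-vector: $f_2 = k^2-1$ facets, and $f_1, f_0$ counted from the complex (or, since the full $(n-k-2) = 1$-skeleton need not be complete, count $1$-faces and $0$-faces directly from \Cref{lem:facets-sq-path}), then $\tilde\chi = -1 + f_0 - f_1 + f_2$. Comparing with the known value for $k=3$ (where \Cref{cor:Sq-pathS} gives $\binom{n-4}{2} = \binom{2}{2} = 1 = \binom{k-1}{2}$) and the pattern $\binom{k-1}{2}$, one verifies the formula. Actually the slickest approach: use the $h$-polynomial. By \Cref{prop:hvec-from-shelling} the top $h$-coefficient $h_{d} = h_3$ (here $d = \dim + 1 = 3$... no, $d-1 = \dim = 2$ so $d = 3$, and the wedge count is the coefficient of $x^{d}$ where the complex has dimension $d-1$, i.e. $h_3$) counts full-restriction facets and equals the Betti number. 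One can get $h_3$ from the $f$-to-$h$ transform $h(\Delta,x) = (1-x)^2 f(\Delta, \tfrac{x}{1-x})$ once the full $f$-vector $(f_0,f_1,f_2)$ is in hand.

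\textbf{Main obstacle.} The genuinely delicate step is getting the full $f$-vector $(f_0, f_1, f_2)$ of $\Delta_k(P_{k+3}^2)$ correctly — i.e. counting the $1$-faces (edges) and $0$-faces (vertices) of the complex, not just the facets. \Cref{lem:facets-sq-path} describes facets, but a $d$-face for $d < 2$ need not sit ``cleanly'' and the complex does not contain the full codimension-one skeleton, so one must count, for each size-$1$ and size-$2$ subset $\sigma$ of $[k+3]$, whether $[k+3] \setminus \sigma$ contains a disconnected set of size $k$ — equivalently whether $\sigma^c$ has two vertices at distance $\ge 3$ along the path with all intermediate vertices inside $\sigma^c$. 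This is a finite but fiddly inclusion–exclusion. Once $(f_0,f_1,f_2)$ is pinned down, both $|\mathcal{F}| = k^2-1$ and $\beta = \binom{k-1}{2}$ fall out by routine algebra, and shellability (hence the wedge-of-$\mathbb{S}^2$ conclusion) is immediate from \Cref{thm:sqpath-shellable-Betti-number}. I would double-check the final formulas against the $k=3,4,5,6$ entries in the $n=k+3$ diagonal implied by Table~\ref{table:Betti-SquaredPath}: $\beta$ should read $1, 3, 6, 10, \ldots = \binom{k-1}{2}$ and $|\mathcal{F}|$ should read $8, 15, 24, 35, \ldots = k^2-1$.
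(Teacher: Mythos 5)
Your outline is viable in principle, but as written it has a genuine gap: it never actually establishes either number in the statement. The two claims of the theorem are precisely the facet count $k^2-1$ and the sphere count $\binom{k-1}{2}$, and your proposal defers both — the facet count is only "expected to come out to $k^2-1$," and the computation of $f_0$ and $f_1$ (which your Euler-characteristic route requires, since the $1$-skeleton of $\Delta_k(P_{k+3}^2)$ is not complete) is explicitly left as an unsolved "fiddly inclusion–exclusion." So nothing is proved yet. The missing counts are genuinely the heart of the argument, not routine algebra: one must show (i) every singleton is a face, so $f_0=k+3$; (ii) the only non-face pairs are $\{1,2\}$, $\{k+2,k+3\}$ and $\{1,k+3\}$, so $f_1=\binom{k+3}{2}-3$; and (iii) $f_2=k^2-1$, i.e.\ exactly $\binom{k+3}{3}-(k^2-1)$ of the $k$-subsets of $[k+3]$ induce connected subgraphs of $P_{k+3}^2$ (for $P^2_n$, connected means all consecutive gaps are at most $2$). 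With those three counts in hand, $\tilde\chi=-1+f_0-f_1+f_2=\binom{k-1}{2}$ and, by \Cref{thm:sqpath-shellable-Betti-number}, the shellable pure $2$-complex is a wedge of that many copies of $\mathbb{S}^2$ — so your plan does close, but the verifications (i)–(iii) must actually be carried out for general $k$, not just checked against the table. (Your observation that \Cref{prop:class-of-graphs2-2022-4-25} is inapplicable because $P_n^2$ contains triangles is correct.)

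For comparison, the paper's proof avoids the edge count entirely by first simplifying up to homotopy: the facets through vertex $1$ are $\{1,3,4\},\ldots,\{1,k+1,k+2\}$, so the closed star of $1$ is a cone and its intersection with the deletion of $1$ is a path, hence contractible; by \Cref{prop:quotient-by-contractible-homotopy} and \Cref{prop:TopFact2ndIsoThm} one may delete vertex $1$, and symmetrically vertex $k+3$, without changing the homotopy type. The remaining induced subcomplex $T_k$ on $\{2,\ldots,k+2\}$ (facets = $3$-sets containing a consecutive pair) \emph{does} have a complete $1$-skeleton, so \Cref{cor:simplicial-complex-antichain} gives $\mu(T_k)=\binom{k}{3}-\binom{k+1}{3}+(k-1)^2=\binom{k-1}{2}$ from the single facet count $(k-1)^2$; the total facet count is then $(k-1)^2+2(k-1)=k^2-1$. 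So the paper trades your three global counts for one easy local count plus a homotopy reduction — if you prefer your direct $f$-vector route, you must supply the proofs of (i)–(iii) above to make it a complete argument.
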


\begin{proof}
     Observe that the facets containing vertex 1 are: $\{1,3,4\}, \{1,4,5\}, \ldots, \{1,k+1,k+2\}$. Therefore vertex 1 is the apex of a cone over a path $P$ formed by the edges $\{i, i+1\}$ for $3 \le i \le k+1$. 
     
    Now we use the fact that adding a cone over a contractible subspace does not change the homotopy type, so our complex is homotopy equivalent to its subcomplex obtained by deleting vertex 1 and all faces that contain it. 
Indeed, denote by $A$ the subcomplex obtained by deleting vertex 1 and all faces that contain it (the deletion of 1), and by $B$ the subcomplex defined by all facets containing vertex 1 (the closed star of 1). Then $A$ and $B$ are closed subspaces of $\Delta_k(P_{k+3}^2)$, whose intersection is the path $P$, and whose union is $\Delta_k(P_{k+3}^2)$.
 The following topological relations hold:
$$\Delta_k(P_{k+3}^2) = A \cup B \simeq (A \cup B) / B \approx A / (A \cap B) \simeq A.$$
The two homotopy equivalences follow from Part (1) in \Cref{prop:quotient-by-contractible-homotopy} ($B$ is contractible as a cone with vertex 1, and $A \cap B = P$ is contractible as a path), while the topological equivalence follows from   \Cref{prop:TopFact2ndIsoThm}. Therefore the deletion of vertex 1 and all faces containing it does not change the homotopy type of complex $\Delta_k(P_{k+3}^2).$

    Similarly, the facets containing vertex $k+3$ are $\{2,3, k+3\}, \{3,4,k+3\}, \ldots, \{k,k+1,k+3\}$, and by the same argument, the deletion of all faces that contain vertex $k+3$ does not change the homotopy type.
    
    Hence $\Delta_k(P_{k+3}^2)$ is homotopy equivalent to its induced subcomplex $T_k$ on the vertex set $\{2, 3, \ldots, k+2\}$.  
    The face lattice $\mathcal{L}(T_k)$ of $T_k$ is a subposet of $B_{k+1}^{\le 3}$, the truncated  Boolean lattice on $k+1$ elements of sets of size at most 3.  A 3-element subset of $\{2, 3, \ldots, k+2\}$ is a facet of $T_k$ if and only if it contains at least two consecutive elements. It follows that the two-dimensional complex $T_k$  contains a full 1-skeleton, and hence its face lattice  coincides with  $B_{k+1}^{\le 3}\setminus A$, where $A$ is a subset of the facets of $B_{k+1}^{\le 3}$. In particular $A$ is an antichain, and we can apply \Cref{cor:simplicial-complex-antichain} with $P=B_{k+1}^{\le 3}$ and $Q=\mathcal{L}(T_k)$. Now $\mu(P)= (-1)^{2}\binom{k}{3}$, while the number of facets of $P=B_{k+1}^{\le 3}$ is $\binom{k+1}{3}$. A simple  calculation shows that $T_k$ has $(k-1)^2$ facets.  Since $B_{k+1}^{\le 3}$
     is the face lattice of a two-dimensional complex that is a wedge of $\binom{k}{3}$ spheres, and $T_k$ is also two-dimensional, \Cref{cor:simplicial-complex-antichain} gives us
    \[\mu(T_k)=\binom{k}{3}-\binom{k+1}{3} + (k-1)^2=\binom{k-1}{2}.\]

     By \Cref{thm:sqpath-shellable-Betti-number}, the two-dimensional complex  $T_k$ is homotopy equivalent to a wedge of spheres $\mathbb{S}^2$. It follows  that $\mu(T_k)=\binom{k-1}{2}$ is the number of spheres in the wedge.  This completes the proof. \end{proof}

Recall from \Cref{ex:facets-sq-path-k-k+2} that $\Delta_k(P^2_{k+1})$ is contractible. 
The data in Table~\ref{table:Betti-SquaredPath}  led us to make the following conjecture about the entries along the diagonals:

\begin{conj}\label{conj:Betti-numbers-SqPaths}
Let $\beta_{k}(k+r)$ be the unique nonzero  Betti number of the $(r-1)$-dimensional shellable cut complex $\Delta_k(P_{k+r}^2)$ 
for the squared path, $r\ge 3$. These numbers satisfy the following recurrence, for each FIXED $r\ge 3$ and $k\ge r+3$.
    \[{ \beta(k,k+r)=\binom{r}{1} \beta(k-1,k-1+r) -\binom{r}{2} \beta(k-2,k-2+r) +\cdots+(-1)^{r-1} \binom{r}{r} \beta(k-r,k-r+r)}.\]
We have the known values 
$\beta(3,n)=\binom{n-4}{2}$, by \Cref{cor:Sq-pathS}, $\beta(k, k+2)=0$ and $\beta(k, k+3)=\binom{k-1}{2}$, $k\ge 3$, by \Cref{thm:DeltakSqPk+3-Marija}.
\end{conj}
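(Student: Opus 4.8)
The plan is to show that, for each fixed $r\ge 3$, the function $k\mapsto\beta(k,k+r)$ agrees on the range $k\ge r+3$ with a polynomial in $k$ of degree at most $r-1$. The stated recurrence is then nothing but the vanishing of the $r$-th finite difference of such a polynomial: for any $p$ with $\deg p\le r-1$ one has the identity $\sum_{j=0}^{r}(-1)^{j}\binom{r}{j}p(k-j)=0$, which rearranges to the displayed formula in \Cref{conj:Betti-numbers-SqPaths}. Since $\Delta_k(P^2_{k+r})$ is shellable of dimension $r-1$ by \Cref{thm:sqpath-shellable-Betti-number}, it is a wedge of $\beta(k,k+r)$ spheres $\mathbb{S}^{r-1}$, so $\beta(k,k+r)=(-1)^{r-1}\tilde\chi(\Delta_k(P^2_{k+r}))$ and it suffices to control the reduced Euler characteristic.

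\textbf{Reduction to a counting problem.} For $n=k+r$ let $M(n,k,t)$ denote the number of $t$-subsets $T\subseteq[n]$ that contain \emph{no} disconnected $k$-subset of $P^2_n$. By the gap description established in the proof of \Cref{lem:facets-sq-path}, a set $T$ contains a disconnected $k$-subset if and only if there are $a<b$ in $T$ with $b-a\ge 3$ and $|T\cap(a,b)|\le t-k$. Counting faces of $\Delta_k(P^2_n)$ by the sizes of their complements and using $\sum_{t}(-1)^t\binom{n}{t}=0$, a short inclusion-exclusion yields
\begin{equation*}
\beta(k,k+r)=\binom{k+r-1}{r}-\sum_{s=0}^{r}(-1)^{s}\,M(k+r,k,k+s).
\end{equation*}
It therefore suffices to treat the regimes $s=0$, $s=1$, $s\ge 2$ and show their contributions are, for $k$ large, polynomial in $k$ of degree $\le r-1$.

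\textbf{The three regimes.} For $s=0$, $M(k+r,k,k)$ counts the $k$-subsets of $[k+r]$ all of whose consecutive gaps are $\le 2$; a direct count by gap compositions gives $M(k+r,k,k)=\sum_{j=0}^{r}(r+1-j)\binom{k-1}{j}$, a polynomial of degree exactly $r$ whose leading term $\binom{k-1}{r}$ cancels that of $\binom{k+r-1}{r}$, leaving degree $\le r-1$. For $s\ge 2$ one argues that any $(k+s)$-subset $T\subseteq[k+r]$ omits only $r-s\le r$ elements, so once $k$ is large it contains two elements exactly three apart; taking $b=a+3$ gives $|T\cap(a,b)|\le 2\le s$, so $T$ has a disconnected $k$-subset and $M(k+r,k,k+s)=0$. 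For $s=1$ a case analysis of the block decomposition of $T$ shows that any nontrivial block structure forces a disconnected $k$-subset, so the admissible $T$ are exactly the $r$ intervals of length $k+1$ obtained by deleting a prefix and a suffix of total length $r-1$, whence $M(k+r,k,k+1)$ is eventually the constant $r$. Combining these, for $k$ sufficiently large $\beta(k,k+r)=\binom{k+r-1}{r}-\sum_{j=0}^{r}(r+1-j)\binom{k-1}{j}+r$, a polynomial of degree $\le r-1$; this already matches the known values $\beta(3,3+r)=\binom{r-1}{2}$ and $\beta(k,k+3)=\binom{k-1}{2}$ from \Cref{cor:Sq-pathS} and \Cref{thm:DeltakSqPk+3-Marija}.

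\textbf{Main obstacle.} The delicate point is matching the range $k\ge r+3$ claimed in the conjecture: the pigeonhole estimate for $s\ge 2$ and the block analysis for $s=1$ are only immediately valid for $k$ bounded below by an explicit function of $r$ that a priori exceeds $r+3$, so one must either sharpen these estimates down to $k=r+3$ or check the finitely many borderline pairs $(k,r)$ with $r+3\le k<C(r)$ directly (the two extreme diagonals are already handled in \Cref{cor:Sq-pathS} and \Cref{thm:DeltakSqPk+3-Marija}). An alternative, more structural route would delete the cone vertices $1$ and $n$ exactly as in the proof of \Cref{thm:DeltakSqPk+3-Marija}, identify the face lattice of the resulting induced subcomplex inside a truncated Boolean lattice $B_{n-2}^{\le n-k}$ with a suitable subposet removed, and apply Baclawski's formula (\Cref{thm:Bac-mu}) to compute the M\"obius number; this would make the polynomial dependence on $k$ transparent, at the cost of a more intricate description of the removed subposet.
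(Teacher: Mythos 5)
First, a point of status: \Cref{conj:Betti-numbers-SqPaths} is stated in the paper as a conjecture, supported only by the data in Table~\ref{table:Betti-SquaredPath} and the known diagonals from \Cref{cor:Sq-pathS} and \Cref{thm:DeltakSqPk+3-Marija}; the paper contains no proof, so there is nothing of the authors' to compare your argument against — you are attempting to settle an open question. Your strategy is sound and several steps are correct: since $\Delta_k(P^2_{k+r})$ is a shellable wedge of $(r-1)$-spheres by \Cref{thm:sqpath-shellable-Betti-number}, the Betti number is $(-1)^{r-1}\tilde\chi$; your reduction $\beta(k,k+r)=\binom{k+r-1}{r}-\sum_{s=0}^{r}(-1)^{s}M(k+r,k,k+s)$ checks out, as does the gap-composition count $M(k+r,k,k)=\sum_{j=0}^{r}(r+1-j)\binom{k-1}{j}$, and vanishing of the $r$-th finite difference of a degree-$\le r-1$ polynomial does yield the recurrence. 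The genuine gap is the one you flag yourself, and it is real: your arguments for $M(k+r,k,k+s)=0$ when $s\ge 2$ (via a pair of elements exactly three apart) and for $M(k+r,k,k+1)=r$ are only claimed ``for $k$ large''/``eventually'', whereas the recurrence at $k\ge r+3$ uses $\beta(k-j,k-j+r)$ down to $k-r=3$, so the polynomial expression is needed on all of $k\ge 3$, not just $k\ge r+3$; moreover the fallback of ``checking finitely many borderline pairs'' is never carried out and is not uniform in $r$ (the threshold $C(r)$ grows with $r$). As written, the proof is incomplete.

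The gap is, however, repairable by replacing your two largeness-dependent steps with exact ones. For $s\ge 2$: if $t_1<\cdots<t_{k+s}$ are the elements of $T$, then $S=\{t_1,t_{s+2},t_{s+3},\dots,t_{k+s}\}$ is a $k$-subset of $T$ whose two smallest elements differ by at least $s+1\ge 3$ with no element of $S$ between them, so $S$ is disconnected; hence $M(k+r,k,k+s)=0$ for \emph{every} $k$, with no pigeonhole and no lower bound on $k$. For $s=1$: if every $k$-subset of a $(k+1)$-set $T$ is connected, then deleting any interior element forces the two adjacent gaps of $T$ to both equal $1$ (their sum must be $\le 2$); for $k\ge 3$ these conditions cover all gaps, so $T$ is an interval of $k+1$ consecutive integers, and there are exactly $r$ such intervals in $[k+r]$ — again for every $k\ge 3$. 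With these replacements your closed formula $\beta(k,k+r)=\binom{k+r-1}{r}-\sum_{j=0}^{r}(r+1-j)\binom{k-1}{j}+r$ holds for all $k\ge 3$, $r\ge 3$; it is a polynomial in $k$ of degree $\le r-1$ (the degree-$r$ terms cancel), it reproduces $\binom{k-1}{2}$ at $r=3$ and $\binom{r-1}{2}$ at $k=3$, and it agrees with the table data. The finite-difference step then proves the conjecture. So: as submitted there is a genuine gap at exactly the range the conjecture cares about, but sharpening the $s\ge 2$ and $s=1$ counts as above closes it.
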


The first diagonal, corresponding to $r=3$, is accounted for in \Cref{thm:DeltakSqPk+3-Marija}.  The second diagonal, 
$3,11,26,50, 85, \ldots$  appears as OEIS sequence A051925.
The third diagonal, $6, 25, 67, 145, 275, 476, 770, \ldots$, 
is  OEIS A241170.		

\begin{conj}\label{conj:Betti-numbers-SqPaths-kis4-kis5}
For the unique nonzero Betti number $\beta(k,n)$  of $\Delta_k(P_n^2)$, $k=4,5$, we conjecture that 
\[\beta(4,n)=3 + 8\binom{n-7}{1} + 6\binom{n-7}{2} +\binom{n-7}{3},\,  n\ge 7,\]
\[
\beta(5,n)=6 + 20\binom{n-8}{1} + 21\binom{n-8}{2} +7\binom{n-8}{3} +\binom{n-8}{4},\, n\ge 8.\]
\end{conj}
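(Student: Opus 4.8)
The plan is to repeat, for the squared path, the poset-topology argument of Section~\ref{sec:FaceLattice-BettinNoS}: realize the face lattice of $\Delta_k(P^2_n)$ as the truncated Boolean lattice $P(n,k)=B_n^{\le n-k}$ with two explicit families of elements deleted, compute its M\"obius number using \Cref{thm:Bac-mu} in exactly the same way as in the proof of \Cref{prop:truncBoolean-minus-antichain-and-cycles}, and then invoke the shellability and top-dimensional homology of \Cref{thm:sqpath-shellable-Betti-number} to read off $\beta(k,n)$ as the absolute value of that M\"obius number.

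First I would check that $\Delta_k(P^2_n)$ contains a complete codimension-$2$ skeleton, i.e.\ that every $X\subseteq[n]$ with $|X|\le n-k-2$ is a face. This is an elementary gap argument: if $T=\{t_1<t_2<\dots<t_m\}\subseteq[n]$ with $m\ge k+2$, then $S=\{t_1\}\cup\{t_{m-k+2},\dots,t_m\}$ is a $k$-subset of $T$ with $t_{m-k+2}-t_1\ge (m-k+2)-1\ge 3$, so $P^2_n[S]$ is disconnected; applying this with $T=X^c$ shows $X$ is a face. Consequently the elements of $P(n,k)$ that are not faces of $\Delta_k(P^2_n)$ are exactly the sets of size $n-k$ that are complements of connected $k$-sets, i.e.\ the family $\mathcal Z_k(P^2_n)$, together with the sets of size $n-k-1$ whose complement is a $(k+1)$-set containing no disconnected $k$-subset; call the latter family $\mathcal W_{k+1}$. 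A short check identifies $\mathcal W_{k+1}$: a $(k+1)$-set $T=\{t_1<\dots<t_{k+1}\}$ has every $k$-subset connected iff all its gaps equal $1$ (deleting an interior $t_i$ forces $t_{i+1}-t_{i-1}\le 2$, hence $t_{i+1}-t_i=t_i-t_{i-1}=1$), so $\mathcal W_{k+1}$ consists of the complements of the $n-k$ blocks of $k+1$ consecutive integers. Thus $\mathcal L(\Delta_k(P^2_n))=P(n,k)\setminus\bigl(\mathcal Z_k(P^2_n)\cup\mathcal W_{k+1}\bigr)$.

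Now each $W=T^c\in\mathcal W_{k+1}$ has rank $n-k-1$ in $B_n$ and is covered by exactly $k+1$ members of $\mathcal Z_k(P^2_n)$, namely the complements of the sets $T\setminus\{t_i\}$, each of which is connected; this is precisely the combinatorial input played by $(k+1)$-cycles in the proof of \Cref{prop:truncBoolean-minus-antichain-and-cycles}, so the identical three-sum computation with \Cref{thm:Bac-mu} gives
\[
(-1)^{n-k-1}\mu\bigl(\mathcal L(\Delta_k(P^2_n))\bigr)=\binom{n-1}{k-1}+(n-k)-\bigl|\mathcal Z_k(P^2_n)\bigr|.
\]
By \Cref{thm:sqpath-shellable-Betti-number} the complex is shellable with homology concentrated in dimension $n-k-1$, so the left side equals $\beta(k,n)$. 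It remains to count connected $k$-subsets of $P^2_n$: such a subset is determined by its least element together with $k-1$ successive gaps in $\{1,2\}$, and summing over the number of $2$'s among the gaps yields
\[
\bigl|\mathcal Z_k(P^2_n)\bigr|=\sum_{j=0}^{k-1}\binom{k-1}{j}(n-k+1-j)=2^{k-2}(2n-3k+3),
\]
the closed form being valid once $n\ge\max(k+3,2k-2)$, so that no term is truncated. Substituting $k=4$ (valid for $n\ge 7$) gives $\beta(4,n)=\binom{n-1}{3}-7n+32$, and $k=5$ (valid for $n\ge 8$) gives $\beta(5,n)=\binom{n-1}{4}-15n+91$; a routine Vandermonde rewriting of $\binom{n-1}{3}$ and $\binom{n-1}{4}$ around the base points $n=7$ and $n=8$ shows these agree with the two claimed binomial expressions. (The same argument in fact yields $\beta(k,n)=\binom{n-1}{k-1}+(n-k)-2^{k-2}(2n-3k+3)$ for every $k$ with $n\ge\max(k+3,2k-2)$, recovering \Cref{cor:Sq-pathS} at $k=3$ and providing a uniform route to \Cref{conj:Betti-numbers-SqPaths}.)

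I expect the work to be bookkeeping rather than a conceptual obstacle. The two points requiring care are the exact description of $\mathcal W_{k+1}$ in terms of consecutive blocks and the faithful transcription of the M\"obius computation of \Cref{prop:truncBoolean-minus-antichain-and-cycles} with $\mathcal Y_{k+1}$ replaced by $\mathcal W_{k+1}$ (one must note that \Cref{prop:truncBoolean-minus-antichain-and-cycles} itself does \emph{not} apply to $P^2_n$, since condition~(1) fails precisely on consecutive $(k+1)$-blocks). The most delicate point is the range of $n$ for which the closed form of $\bigl|\mathcal Z_k(P^2_n)\bigr|$ holds: the truncation $\max(n-k+1-j,0)$ in the count first matters at $n=2k-3$, so for $k=5$ the endpoint $n=8=2k-2$ in the statement is borderline and must be checked to need no correction — it does not, since at $n=2k-2$ the only potentially problematic term ($j=k-1$) already equals zero.
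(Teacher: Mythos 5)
The statement you are proving is stated in the paper only as a conjecture (\Cref{conj:Betti-numbers-SqPaths-kis4-kis5}), supported by the SageMath data of Table~\ref{table:Betti-SquaredPath} and OEIS matches; the paper contains no proof to compare against, and its proved results cover only the neighbouring cases $k=3$ (\Cref{cor:Sq-pathS}) and $n=k+3$ (\Cref{thm:DeltakSqPk+3-Marija}). Your argument is a genuine proof, and it is sound: the gap argument gives the complete codimension-$2$ skeleton; the characterization of the rank-$(n-k-1)$ non-faces as complements of the $n-k$ consecutive $(k+1)$-blocks is correct (each such block has all $k$-subsets connected, and conversely a gap of $2$ or more always leaves a disconnected $k$-subset after deleting a suitable vertex); since every block is covered in $P(n,k)$ by exactly $k+1$ elements of $\mathcal{Z}_k(P_n^2)$, the three-sum application of \Cref{thm:Bac-mu} goes through verbatim as in \Cref{prop:truncBoolean-minus-antichain-and-cycles} (which, as you correctly note, cannot be quoted directly, since its hypothesis (1) fails exactly on consecutive blocks), giving $(-1)^{n-k-1}\mu=\binom{n-1}{k-1}+(n-k)-|\mathcal{Z}_k(P_n^2)|$; and \Cref{thm:sqpath-shellable-Betti-number} (valid for $n\ge k+3$, exactly the ranges $n\ge7$, $n\ge8$ claimed) converts this into the Betti number. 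I checked that your closed forms $\binom{n-1}{3}-7n+32$ and $\binom{n-1}{4}-15n+91$ agree algebraically with the conjectured binomial expressions and numerically with all entries of Table~\ref{table:Betti-SquaredPath} for $k=4,5$, and that the gap-sequence count $|\mathcal{Z}_k(P_n^2)|=2^{k-2}(2n-3k+3)$ (valid for $n\ge 2k-2$, with the $n=8$, $k=5$ boundary harmless as you observe) is correct; your truncated version even reproduces the $k=6,7$ table entries. One caveat: your parenthetical claim that the closed form gives ``a uniform route to \Cref{conj:Betti-numbers-SqPaths}'' overreaches, since that recurrence concerns $k\ge r+3$, i.e.\ $n=k+r\le 2k-3$, where the untruncated formula $2^{k-2}(2n-3k+3)$ is no longer the correct count of connected $k$-sets; any attack on that conjecture must work with the truncated sum $\sum_j\binom{k-1}{j}\max(n-k+1-j,0)$, which your argument does yield for all $n\ge k+3$ but whose $r$-th difference along the diagonal still needs a separate verification.
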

In \cite[Conjecture 7.25]{BDJRSX2024} we conjectured the shellability of the cut complex of a  similar graph, the squared cycle on $n$ vertices, when $n\ge k+6$ and $k\ge 3$, and also gave a conjectural formula for the Betti number when $k=3$.  These conjectures were  recently proved by Chauhan, Shukla and Vinayak \cite{chauhan20243cut} for $k=3$.

\section{Grid Graphs}\label{sec:Grid2021Sept15-17}

In this section $G(m,n)$ denotes the $m$ by $n$ grid graph whose vertices are indexed by matrix-style coordinates $(i,j), 1\le i\le m, 1\le j\le n$ (so with $m$ rows and $n$ columns).   The edges of $G(m,n)$ are then $\{\{(i,j), (i+1,j)\}: 1\le i \le m-1, 1\le j\le n\}\cup\{\{(i,j), (i, j+1)\}: 1\le i\le m, 1\le j\le n-1\}.$  We assume $m\le n.$  

Here we  determine completely the homotopy type and Betti numbers of the cut complex $\Delta_k(G(m,n))$ for $k=3$ and $k=4$. We  also give a formula for the Euler characteristic when $k=6.$

\begin{prop}\label{prop:grid1dim2-by-n} For the 1-dimensional cut complex $\Delta_{2n-2}(G(2,n)), n\ge 2,$  the homotopy type is 
\[\begin{cases} \bigvee_{n-3} \mathbb{S}^1, & \mbox{if $n> 3$,}\\
                 \textrm{ a point}, & \mbox{if $n=3$,}\\
                \mathbb{S}^0, & \mbox{if $n=2$.}
                \end{cases}\]
It is shellable only if $n\ge 3.$
\end{prop}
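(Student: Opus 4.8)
The plan is to first determine the facets of $\Delta_{2n-2}(G(2,n))$ completely and then read off both assertions from the combinatorics of the resulting graph. Since $G(2,n)$ has $2n$ vertices and $k=2n-2$, a nonempty set $F$ is a facet precisely when $|F|=2$ and $G(2,n)\setminus F$ is disconnected, i.e.\ $F$ is a $2$-vertex cut of the ladder. Writing $R_j=\{(1,j),(2,j)\}$, $D_j=\{(1,j),(2,j+1)\}$ and $D_j'=\{(2,j),(1,j+1)\}$, I would show that the $2$-vertex cuts are exactly the $R_j$ with $2\le j\le n-1$ together with the $D_j$ and $D_j'$ with $1\le j\le n-1$, a total of $3n-4$ facets; hence $\Delta_{2n-2}(G(2,n))$ is the $1$-dimensional complex $\Gamma_n$ on these edges.

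To prove that list is correct, I would first check that each listed pair is a cut: removing $R_j$ separates columns $<j$ from columns $>j$, removing $D_j$ separates $\{(1,\ell):\ell\le j-1\}\cup\{(2,\ell):\ell\le j\}$ from $\{(1,\ell):\ell\ge j+1\}\cup\{(2,\ell):\ell\ge j+2\}$, and symmetrically for $D_j'$, with both parts nonempty exactly in the stated index ranges. For the converse I would classify a removed pair $F$ by the columns $c_1\le c_2$ of its two vertices. If $c_1=c_2$ then $F$ is a whole rung, and removing a terminal rung leaves a connected ladder, so $2\le c_1\le n-1$. If $c_1<c_2$ and $c_2\ge c_1+2$, then column $c_1+1$ is untouched and its rung, together with the untouched rungs in columns $c_1-1$ and $c_2+1$ when these exist, reconnects the two pieces of the top path with the two pieces of the bottom path, so $F$ is not a cut. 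If $c_2=c_1+1$, a two-case check on which row contains the left vertex leaves only $D_{c_1}$ and $D_{c_1}'$. This exhaustive case analysis for the ladder is the only genuinely fiddly step and is the main obstacle; everything afterwards is routine bookkeeping.

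Next I would record the structure of $\Gamma_n$. All $2n$ vertices of $G(2,n)$ appear: the interior column vertices via the rungs, and the four corners via $D_1,D_1',D_{n-1},D_{n-1}'$. For $n\ge 3$ the graph $\Gamma_n$ is connected, since the rungs $R_2,\dots,R_{n-1}$ are chained together by $D_2,\dots,D_{n-2}$ and the four corners then attach via $D_1,D_1',D_{n-1},D_{n-1}'$; for $n=2$ there are no rungs, and $\Gamma_2=\{D_1,D_1'\}$ is two disjoint edges on the four vertices of $G(2,2)=C_4$.

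Finally, a finite graph with $V$ vertices, $E$ edges and $c$ components is homotopy equivalent to a wedge of $E-V+c$ circles together with $c-1$ extra points (collapse a spanning forest). With $V=2n$ and $E=3n-4$ this gives $\bigvee_{n-3}\mathbb{S}^1$ for $n>3$, a point for $n=3$, and $\mathbb{S}^0$ for $n=2$ (where $c=2$), as claimed. For shellability I would use the fact that a pure $1$-dimensional complex is shellable if and only if it is connected: if it is disconnected no shelling can exist, since each facet after the first must meet the union of the earlier ones in a face of cardinality one; if it is connected, shell the edges of a spanning tree first (each meeting the previous ones in a single vertex) and then append the remaining edges in any order (each meeting the union of earlier facets in its two endpoints), which satisfies Definition~\ref{def:shelling}. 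Thus $\Delta_{2n-2}(G(2,n))$ is shellable if and only if $n\ge 3$; in particular it fails to be shellable for $n=2$, which is the asserted ``only if''.
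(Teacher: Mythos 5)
Your proposal is correct and follows essentially the same route as the paper: identify the $3n-4$ facets (the $n-2$ interior rungs and the $2(n-1)$ diagonals of the squares), observe the complex is a graph on all $2n$ vertices, and read off the homotopy type of that graph, with your $E-V+c$ count replacing the paper's explicit description of it as a $(2n-4)$-gon with $n-4$ chords and four pendant edges. You also supply the shellability argument (pure $1$-dimensional complexes are shellable precisely when connected) in both directions, which the paper's proof leaves implicit; this is a welcome, correct addition rather than a deviation.
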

\begin{proof} 
The graph is a rectangle of height 1, partitioned into $(n-1)$ squares. 
The cut complex has $(n-2)+2(n-1)=3n-4$ facets, each of size 2:  the $n-2$ vertical interior edges and $2(n-1)$ diagonals of squares. 
It is a $(2n-4)$-gon with $n-4$ parallel chords and four pendant edges.  It is homotopy equivalent to a wedge of $n-3$ 1-spheres. 
\end{proof}
\begin{ex}
In Figure~\ref{fig:GridGraph26}, the facets of $\Delta_{10}(2,6)$ are $\bar{1}6, 62, 25, 53, 3\bar{4}, \bar{4}\bar{2}, \bar{2}\bar{5}, \bar{5}\bar{1},$ and $2\bar{5}, \bar{2}5,$ as well as the four diagonals in the two corner squares, $16, \bar{1}\bar{6}, 34, \bar{3}\bar{4}.$ 
The four extreme corners of the grid are each in exactly one facet (edge) of the cut-complex, so these edges collapse without changing the homotopy type.
\end{ex}

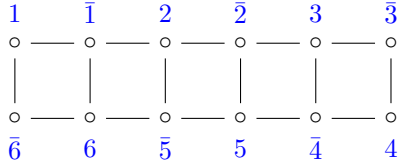
\begin{figure}[htb]
\scalebox{0.6}
\centering
\begin{tikzpicture}
\node (1) at (1,2) {$\circ$};
\node at (1,2.4) {$\textcolor{blue}{1}$};
\node (2) at (2,2) {$\circ$};
\node  at (2,2.4) {$\textcolor{blue}{\bar{1}}$};
\node (3) at (3,2) {$\circ$};
\node at  (3,2.4) {$\textcolor{blue}{2}$};
\node (4) at (4,2) {$\circ$};
\node at  (4,2.4) {$\textcolor{blue}{\bar{2}}$};
\node (5) at (5,2) {$\circ$};
\node at  (5,2.4) {$\textcolor{blue}{3}$};
\node (6) at (6,2) {$\circ$};
\node at  (6,2.4) {$\textcolor{blue}{\bar{3}}$};
\node (7) at (6,1) {$\circ$};
\node at  (6,.6) {$\textcolor{blue}{4}$};
\node (8) at (5,1) {$\circ$};
\node at  (5,.6) {$\textcolor{blue}{\bar{4}}$};
\node (9) at (4,1) {$\circ$};
\node at  (4,.6) {$\textcolor{blue}{5}$};
\node (10) at (3,1) {$\circ$};
\node at  (3,.6) {$\textcolor{blue}{\bar{5}}$};
\node (11) at (2,1) {$\circ$};
\node at  (2,.6) {$\textcolor{blue}{{6}}$};
\node (12) at (1,1) {$\circ$};
\node at  (1,.6) {$\textcolor{blue}{\bar{6}}$};
 \draw  (1) -- (2) -- (3) -- (4) -- (5) -- (6);   \draw (7) -- (8) -- (9) -- (10) -- (11) -- (12); 
\draw   (1) -- (12); \draw  (2) -- (11);   \draw (3) -- (10); 
\draw (4) -- (9); \draw (5) -- (8); \draw (6) -- (7);
\end{tikzpicture}
\caption{The Grid Graph  $G(2,6)$} \label{fig:GridGraph26}
\end{figure}

\begin{prop}\label{prop:grid1dim-m-by-n} For the 1-dimensional cut complex $\Delta_{mn-2}(G(m,n)), $  $n\ge m\ge 3,$ the homotopy type is 
\[\begin{cases} \bigvee_{3} \mathbb{S}^0, & \mbox{if $m\ge 4$,}\\
                 \mathbb{S}^0, & \mbox{if $3=m<n$,}\\
                \mathbb{S}^1, & \mbox{if $m=n=3$.}
                \end{cases}\]
It is shellable only if $m=n=3.$
\end{prop}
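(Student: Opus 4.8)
The plan is to describe the facets explicitly, then read off the homotopy type from a complete list of them. Since the facets of $\Delta_{mn-2}(G(m,n))$ have size $mn-(mn-2)=2$, they are exactly the two-element sets $\{u,v\}\subseteq V(G(m,n))$ for which $G(m,n)\setminus\{u,v\}$ is disconnected, and the vertices of the complex are those lying in some such pair. Because the grid is $2$-connected for $m,n\ge 2$ (any single vertex can be deleted without disconnecting), no singleton disconnects, so the complex is a pure $1$-dimensional complex --- a simple graph with no isolated vertices --- whose edge set is the set of $2$-vertex cuts of $G(m,n)$. The combinatorial core, which I would isolate as a lemma, is: \emph{for $n\ge m\ge 3$, the $2$-vertex cuts of $G(m,n)$ are precisely the four pairs $N(c)$ of neighbours of the four corner vertices $c$, and deleting $N(c)$ isolates $c$.}

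To prove the lemma I would argue on components. Let $\{u,v\}$ disconnect $G=G(m,n)$ and let $C$ be a component of $G-\{u,v\}$ of minimum size. Every edge of $G$ with exactly one endpoint in $C$ must have its other endpoint in $\{u,v\}$, so $C$ has at most two external neighbours. It then suffices to prove the geometric statement that a connected vertex set $C$ in $G(m,n)$ with $|C|\ge 2$ always has at least three external neighbours; this forces $|C|=1$, say $C=\{w\}$, whence $\deg_G(w)\le 2$ and $w$ must be a corner with $\{u,v\}=N(w)$. For the geometric statement I would use the bounding box $R=[a,b]\times[c,d]$ of $C$: each of the four sides of $R$ lying strictly inside the grid forces an external neighbour of $C$ just outside that side, and external neighbours produced by different sides are distinct; hence we are done unless at least two sides of $R$ lie on the grid boundary, and in that case a short case analysis --- distinguishing whether the two boundary sides are opposite or adjacent, and using that a proper connected subset of a solid rectangle always has an extra external neighbour inside the rectangle --- produces the third external neighbour. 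I expect this geometric lemma to be the main obstacle: the case bookkeeping is a little delicate, and in particular one must rule out that the minimum component has bounding box equal to the whole grid. (Alternatively one could invoke a known description of the minimum vertex cuts of grid graphs.)

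Once the lemma is established the complex is completely explicit, with facets $N((1,1)),N((1,n)),N((m,1)),N((m,n))$. When $m=n=3$ these four edges form the $4$-cycle $(1,2)-(2,1)-(3,2)-(2,3)-(1,2)$, homeomorphic to $\mathbb S^1$; this also agrees with \Cref{thm:shell-implies-homotopytype}, since the complex is connected hence shellable. When $3=m<n$, the pairs $N((1,1)),N((3,1))$ share the vertex $(2,1)$ and $N((1,n)),N((3,n))$ share $(2,n)$, so (as $n\ge 4$) the complex is two vertex-disjoint contractible paths, homotopy equivalent to $\mathbb S^0$. When $m\ge 4$ --- and then $n\ge 4$ as well --- the four pairs are pairwise disjoint (a direct check using $m,n\ge 4$), so the complex is four disjoint contractible edges, homotopy equivalent to four points, i.e.\ $\bigvee_3\mathbb S^0$. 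Finally, a pure $1$-dimensional complex is shellable if and only if it is connected (shell a connected one by a spanning tree in search order followed by the remaining edges; a disconnected one fails at the first facet of a second component), and the complex above is connected only when $m=n=3$; hence $\Delta_{mn-2}(G(m,n))$ is shellable only when $m=n=3$, and indeed it is shellable in that case.
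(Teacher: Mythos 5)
Your proposal is correct and follows essentially the same route as the paper: the paper's proof simply asserts that the facets are the four corner ``diagonals'' (i.e.\ the neighbour pairs $N(c)$ of the four corner vertices) and then reads off the homotopy type exactly as you do --- four disjoint edges for $m\ge 4$, two disjoint paths for $3=m<n$, a $4$-cycle for $m=n=3$ --- so the only difference is that you supply a detailed justification of the classification of $2$-cuts, which the paper treats as evident. One small caution on that extra lemma: as literally phrased, ``a connected set $C$ with $|C|\ge 2$ has at least three external neighbours'' is false (take $C$ to be the complement of a corner's closed neighbourhood), so it must be applied to the minimum-size component with the whole-grid bounding-box case excluded, which is precisely the caveat you already flag.
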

\begin{proof} 
The cut complex is again 1-dimensional.  There are only four facets, each of size 2: each of the four corner squares contributes one facet,  namely the diagonal which is disjoint from the corner vertex.  The facets are disjoint if $m\ge 4.$ If $3=m<n,$ they can be partitioned into two disjoint sets, each consisting of two facets  intersecting in one vertex.  If $m=n=3$, they form a 4-cycle.
\end{proof}

\begin{theorem}(\cite[Theorem~4.16]{BDJRSX-TOTAL2024}, see also \cite[Corollary 6.6]{BDJRSX2024})  \label{thm:grid-k=2MyMorseMatching2021Sept} For $n, m\ge 2,$ the $(mn-3)$-dimensional cut complex $\Delta_2 (G(m,n))$ has  homotopy type 
\[  \bigvee_{(m-1)(n-1)} \mathbb{S}^{mn-4}.\]
\end{theorem}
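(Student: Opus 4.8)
The plan is to bypass shellability --- which is unavailable here, since $G(m,n)$ contains an induced $4$-cycle, hence is not chordal, so $\Delta_2(G(m,n))$ is not shellable by \Cref{thm:Froberg-Eagon-Reiner} --- and instead compute the homology of $\Delta_2(G(m,n))$ by combinatorial Alexander duality, then promote this to a homotopy statement using \Cref{prop:complete2-skeleton}. The first step is to recognize $\Delta_2(G(m,n))$ as an Alexander dual. For any graph $G$ on vertex set $V$, a set $\sigma\subseteq V$ is a face of $\Delta_2(G)$ exactly when $V\setminus\sigma$ contains two nonadjacent vertices, i.e.\ exactly when $V\setminus\sigma$ is \emph{not} a clique of $G$; hence $\Delta_2(G)=X(G)^{\vee}$ is the Alexander dual of the clique complex $X(G)=\{\sigma\subseteq V:\sigma\text{ is a clique of }G\}$. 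Since $G(m,n)$ is bipartite, it is triangle-free, so $X(G(m,n))$ is one-dimensional and coincides with the graph $G(m,n)$, regarded as a simplicial complex.

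Next I would apply combinatorial Alexander duality in the form $\tilde H_i(\Delta^{\vee};\mathbb{Z})\cong \tilde H^{\,N-i-3}(\Delta;\mathbb{Z})$ with $N=mn$ and $\Delta=X(G(m,n))$. Because $\Delta$ is a connected graph, the right-hand side vanishes except in cohomological degree $1$, where it is the free abelian group $\tilde H^1(G(m,n);\mathbb{Z})$ of rank equal to the cycle rank $|E|-|V|+1=(2mn-m-n)-mn+1=(m-1)(n-1)$. Hence $\tilde H_\ast(\Delta_2(G(m,n));\mathbb{Z})$ is free and concentrated in the single degree $i=mn-4$, where it has rank $(m-1)(n-1)$. (This is consistent with the reduced Euler characteristic obtainable from \Cref{thm:truncBoolean-minus-antichain}, whose hypothesis holds because the relevant skeleton is complete, as noted in the next step.)

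To pass from homology to homotopy type, first suppose $mn\ge 6$. Then \emph{every} subset of $V$ of size at most $3$ is a face of $\Delta_2(G(m,n))$: its complement has at least $mn-3\ge 3$ vertices, and a triangle-free graph has no $3$-clique. In particular $\Delta_2(G(m,n))$ has a complete $2$-skeleton, so it is simply connected by \Cref{prop:complete2-skeleton}. A simply connected complex whose reduced integral homology is free and concentrated in a single degree $q=mn-4\ge 2$ is $(q-1)$-connected by the Hurewicz theorem, so a map $\bigvee_{(m-1)(n-1)}\mathbb{S}^{q}\to\Delta_2(G(m,n))$ realizing a basis of $H_q$ induces an isomorphism on all reduced homology and is therefore a homotopy equivalence by Whitehead's theorem; this gives $\Delta_2(G(m,n))\simeq\bigvee_{(m-1)(n-1)}\mathbb{S}^{mn-4}$. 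The only case with $2\le m\le n$ not of this form is $m=n=2$, where $G(2,2)=C_4$ and $\Delta_2(C_4)=\langle\{(1,1),(2,2)\},\{(1,2),(2,1)\}\rangle$ is a disjoint union of two edges, homotopy equivalent to $\mathbb{S}^0=\bigvee_{1}\mathbb{S}^{0}$, exactly as claimed.

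The only genuine content is the first step --- identifying $\Delta_2(G)$ with the Alexander dual of its clique complex --- combined with the triangle-freeness of grid graphs; once these are in hand everything is routine, the points needing care being the degree conventions in combinatorial Alexander duality and the trivial base case $m=n=2$. A more self-contained but more computational route, closer to the discrete Morse arguments of the cited references, would instead construct an explicit acyclic matching on the face poset of $\Delta_2(G(m,n))$ --- for instance first pairing faces by a fixed corner vertex, then cancelling the leftover critical facets against critical ridges along the edges of the grid --- arranged so that exactly $(m-1)(n-1)$ critical cells survive, all in dimension $mn-4$; verifying acyclicity and that the surviving critical cells lie in a single dimension would be the delicate step of that approach.
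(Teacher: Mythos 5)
Your argument is correct, and it is genuinely different from the one behind the statement: this paper does not reprove the theorem but cites it from the earlier papers, where (as the label and the analogous $k=4$ argument in Theorem~\ref{thm:MarkMorseMatching-GridGraph-k=4-topdim} indicate) the result was obtained by an explicit discrete Morse matching, together with the face-lattice/Euler-characteristic machinery of Theorem~\ref{thm:truncBoolean-minus-antichain} for the count. Your route instead observes that $\Delta_2(G)$ is the combinatorial Alexander dual of the clique complex of $G$, which for the triangle-free grid graph is just the graph itself; duality then gives at once that the reduced integral homology is free of rank $|E|-|V|+1=(m-1)(n-1)$ and concentrated in degree $mn-4$, and the complete-$2$-skeleton criterion (Proposition~\ref{prop:complete2-skeleton}) plus Hurewicz--Whitehead upgrades this to the homotopy statement for $mn\ge 6$, with the $2\times 2$ case checked by hand. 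All the delicate points are handled: the degree convention $\tilde H_i(\Delta^{\vee})\cong\tilde H^{\,mn-i-3}(\Delta)$, the need for $mn-4\ge 2$ before invoking simple connectivity, and the base case $G(2,2)=C_4$ giving $\mathbb{S}^0$. What your approach buys is conceptual economy and generality: the same three lines show that for \emph{any} connected triangle-free graph on $N\ge 6$ vertices, $\Delta_2(G)$ is a wedge of $|E|-N+1$ spheres $\mathbb{S}^{N-4}$, whereas the Morse-matching proof is tied to the grid structure; what the matching approach buys is an explicit cell-by-cell picture (and a template that the paper reuses for $k=4$, where no duality with a low-dimensional complex is available).
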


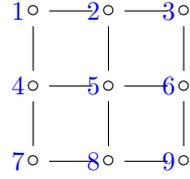
\begin{figure}[htb]
\begin{center}
\begin{tikzpicture}
\node (1) at (1,3) {$\circ$};
\node at (.8,3) {$\textcolor{blue}{1}$};
\node (2) at (2,3) {$\circ$};
\node  at (1.8,3) {$\textcolor{blue}{2}$};
\node (3) at (3,3) {$\circ$};
\node at  (2.8,3) {$\textcolor{blue}{3}$};
\node (4) at (1,2) {$\circ$};
\node at  (.8,2) {$\textcolor{blue}{4}$};
\node (5) at (2,2) {$\circ$};
\node at  (1.8,2) {$\textcolor{blue}{5}$};
\node (6) at (3,2) {$\circ$};
\node at  (2.8,2) {$\textcolor{blue}{6}$};
\node (7) at (1,1) {$\circ$};
\node at  (.8,1) {$\textcolor{blue}{7}$};
\node (8) at (2,1) {$\circ$};
\node at  (1.8,1) {$\textcolor{blue}{8}$};
\node (9) at (3,1) {$\circ$};
\node at  (2.8,1) {$\textcolor{blue}{9}$};
 \draw  (1) -- (2) -- (3); \draw  (4) -- (5) -- (6);   \draw (7) -- (8) -- (9); 
\draw   (1) -- (4) -- (7); \draw  (2) -- (5) -- (8);   \draw (3) -- (6) -- (9); 
\end{tikzpicture}
\end{center}
\caption{The Grid Graph  $G(3,3)$} 
\label{fig:GridGraph33}
\end{figure}

We need the following result from \cite{BDJRSX2024}, restated here for completeness. 

\begin{prop}\cite[Corollary~5.4]{BDJRSX2024}\label{cor:Delta3-min-forbidden-is3-conn-Results} If $G$ contains no subgraph that is 3-connected, then the 3-cut complex $\Delta_3(G)$ of graph $G$ is shellable.
\end{prop}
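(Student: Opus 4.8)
The plan is to induct on $n=|V(G)|$, cutting $G$ along a small vertex separator and invoking the two gluing results already available: the disjoint-union criterion \Cref{thm:MargeDisjunion} and the generalized wedge product criterion \cite[Theorem~5.1]{BDJRSX2024} recalled just before \Cref{prop:gen-wedge-Delta3-K3}. The key observation is that a graph with no $3$-connected subgraph is in particular not itself $3$-connected, so, once $n\ge 4$, it has a vertex separator of size at most $2$; removing it splits $G$ into two strictly smaller induced subgraphs, each of which again has no $3$-connected subgraph.

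For the base of the induction I would just check directly that $\Delta_3(G)$ is void when $n\le 2$ and is void or equal to $\{\emptyset\}$ when $n=3$ (\Cref{ex:Examples-cut-complex}), so that it is shellable by the conventions in \Cref{rem:empty-complex-0-dim-complex}. For the inductive step, with $n\ge 4$, I would note that since $G$ is a subgraph of itself the hypothesis forces $G$ not to be $3$-connected, hence there is a set $S\subseteq V(G)$ with $|S|\le 2$ and $G-S$ disconnected; I would then write $V(G)\setminus S=U\sqcup W$ with $U$ the vertex set of one component of $G-S$ and $W$ the union of the others, so that $U,W\ne\emptyset$ and no edge of $G$ joins $U$ to $W$, and set $A=U\cup S$, $B=W\cup S$. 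If $S=\emptyset$ this is the disjoint union $G=G[A]+G[B]$ and \Cref{thm:MargeDisjunion} applies. If $|S|\in\{1,2\}$, then $G$ is the generalized wedge product of $G[A]$ and $G[B]$ for the partition $A\cup B=V(G)$ in the sense of \Cref{def:Gen-wedge-graphs} (every edge lies inside $A$ or inside $B$, since none joins $U$ to $W$), and $\Delta_3(G[A\cap B])=\Delta_3(G[S])$ is void because $|S|<3$ (\Cref{ex:Examples-cut-complex}(1)), so \cite[Theorem~5.1]{BDJRSX2024} applies. In either case shellability of $\Delta_3(G)$ follows once $\Delta_3(G[A])$ and $\Delta_3(G[B])$ are known to be shellable.

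To close the induction I would observe that $|A|,|B|<n$ (because $U$ and $W$ are nonempty) and that $G[A]$ and $G[B]$, being subgraphs of $G$, contain no $3$-connected subgraph, so the induction hypothesis applies to them and finishes the argument. I do not expect a genuine obstacle here: the only verifications needed are that a separator of size at most $2$ produces a valid generalized wedge decomposition into strictly smaller pieces, and that $\Delta_3$ of a graph on at most two vertices is void, both of which are immediate. All of the real combinatorial difficulty — producing an explicit shelling of the glued complex — is already packaged inside \cite[Theorem~5.1]{BDJRSX2024} and \Cref{thm:MargeDisjunion}, which I would use as black boxes.
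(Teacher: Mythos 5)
Your proof is correct, but note that this paper does not actually prove the proposition — it is imported from \cite[Corollary~5.4]{BDJRSX2024} — and your induction, splitting $G$ along a separator of size at most two and gluing via \Cref{thm:MargeDisjunion} (when $S=\emptyset$) and the generalized wedge theorem \cite[Theorem~5.1]{BDJRSX2024} (when $|S|\in\{1,2\}$, where $\Delta_3(G[S])$ is void), is essentially the argument given in that source. The side verifications you list (base cases $n\le 3$ shellable by convention, $|A|,|B|<n$, the pieces again having no $3$-connected subgraph, and non-$3$-connectedness of $G$ yielding the separator for $n\ge 4$) are all handled correctly, so there is no gap beyond the black boxes you invoke.
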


The following proposition is then an immediate consequence.

\begin{prop}\label{prop:MarkDelta3GridGraph}
$\Delta_3(G(m,n))$ is shellable.
\end{prop}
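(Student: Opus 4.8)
The plan is to deduce this directly from Proposition~\ref{cor:Delta3-min-forbidden-is3-conn-Results}: once we know that $G(m,n)$ contains no $3$-connected subgraph, shellability of $\Delta_3(G(m,n))$ is immediate. So the only thing that needs proof is the purely graph-theoretic assertion that no subgraph of the grid graph is $3$-connected.

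For that, I would argue by contradiction. Suppose $H$ is a subgraph of $G(m,n)$ that is $3$-connected. First I would recall the elementary fact that a $k$-connected graph (which by convention has at least $k+1$ vertices) has minimum degree at least $k$, since deleting all the neighbors of a vertex $v$ of degree $d<k$ separates $v$ from the rest of the graph; hence every vertex of $H$ has at least three neighbors inside $H$. Then I would pick the vertex $(i,j)\in V(H)$ that is extremal for the lexicographic order on matrix coordinates: first minimize the row index $i$ over $V(H)$, and then, among the vertices of $H$ in row $i$, minimize the column index $j$. The grid-neighbors of $(i,j)$ are among $(i-1,j),(i+1,j),(i,j-1),(i,j+1)$; but $(i-1,j)$ lies in a strictly earlier row and $(i,j-1)$ lies in the same row $i$ but in a strictly earlier column, so by the extremal choice of $(i,j)$ neither of these belongs to $V(H)$. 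Therefore $(i,j)$ has at most two neighbors in $H$, contradicting the minimum-degree bound. This establishes the claim, and Proposition~\ref{cor:Delta3-min-forbidden-is3-conn-Results} then finishes the proof.

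I do not expect a genuine obstacle: the argument is short and elementary. The only points that need a little care are the convention that $3$-connectedness presupposes at least four vertices (so that the minimum-degree estimate applies) and the observation that the extremal-vertex argument is valid for an \emph{arbitrary} subgraph $H$, not merely an induced one, since it uses only which grid vertices lie in $V(H)$ and nothing about the edge set of $H$.
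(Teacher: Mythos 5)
Your argument is correct and is essentially the paper's own proof: both apply Proposition~\ref{cor:Delta3-min-forbidden-is3-conn-Results} after observing that the topmost, leftmost vertex of any subgraph of the grid has at most two neighbors, so no subgraph can be $3$-connected. Your extra remarks (the minimum-degree bound and that the argument works for arbitrary, not just induced, subgraphs) are fine but add nothing essentially new.
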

\begin{proof}
Let $H$ be an induced subgraph of $G=G(m,n)$. Then $H$ has a vertex of degree less than 3. Take the vertex in the top row of $H$, farthest to the left. It cannot have a neighbor above it or to the left, so it can have at most two neighbors. Thus $H$ is at best 2-connected.
 By \Cref{cor:Delta3-min-forbidden-is3-conn-Results},  $\Delta_3(G)$ is shellable.
\end{proof}

Using the results of Section~\ref{sec:FaceLattice-BettinNoS}, we are able to give a formula for the Betti number of the 3-cut complex $\Delta_3(G(m,n))$. 
Recall from Theorem~\ref{thm:truncBoolean-minus-antichain} that $\mathcal{Z}_3(G)$ is the set of complements of connected sets of size 3 in the graph $G$.  Our goal is to  compute the reduced Euler characteristic via the M\"obius number of the face lattice of $\Delta_3(G)$, following Section~\ref{sec:FaceLattice-BettinNoS}. Since the grid graph contains no 3-cycles but does contain 4-cycles, we can use Proposition~\ref{prop:class-of-graphs2-2022-4-25}  to determine the face lattice of $\Delta_3(G)$. 

Let $\tau_k(G)$ denote the number of connected sets with $k$ vertices in a graph $G$. If $G$ is a grid graph, there are no 3-cycles, so 
$\tau_3(G)$ is the number of subtrees with $3$ vertices.

\begin{lemma}\label{lem:3-trees-GridGraph}  For the grid graph $G(m,n)$, the number of subtrees on 3 vertices is 
\[\tau_3(G(m,n))=6m n-6m-6n+4.\]
\end{lemma}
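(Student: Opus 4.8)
The plan is to count $3$-vertex subtrees of $G(m,n)$ by their central vertex. Since the grid graph is bipartite it is triangle-free, so any $3$-element vertex subset $A$ with $G[A]$ connected must induce a path $P_3$; conversely every induced $P_3$ determines such an $A$. Each $P_3$ has a unique middle vertex $v$, and is recovered from $v$ together with an unordered pair of neighbors of $v$, distinct choices of $\bigl(v,\{a,b\}\bigr)$ giving distinct $3$-subsets precisely because there are no triangles. Hence
\[\tau_3(G(m,n)) = \sum_{v\in V(G(m,n))} \binom{\deg(v)}{2}.\]

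Next I would record the degree distribution of $G(m,n)$. The four corner vertices have degree $2$; the non-corner boundary vertices — the $2(m-2)$ vertices in the first and last columns and the $2(n-2)$ vertices in the first and last rows — have degree $3$; and the remaining $(m-2)(n-2)$ interior vertices have degree $4$. One checks that $4 + 2(m-2) + 2(n-2) + (m-2)(n-2) = mn$, so these classes account for every vertex, and the count remains valid, with the usual conventions, in the degenerate cases $m=2$ or $m=3$.

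Finally, substituting $\binom{2}{2}=1$, $\binom{3}{2}=3$, $\binom{4}{2}=6$ into the sum gives
\[\tau_3(G(m,n)) = 4 + 3\bigl(2(m-2) + 2(n-2)\bigr) + 6(m-2)(n-2),\]
and expanding the right-hand side yields $6mn - 6m - 6n + 4$. There is no genuine obstacle: the only step needing a moment's care is the reduction of ``connected $3$-subset'' to ``induced $P_3$ with a distinguished center,'' which uses triangle-freeness of the grid, after which the argument is just bookkeeping of the three degree classes.
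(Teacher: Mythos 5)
Your proposal is correct and follows essentially the same route as the paper: the paper also notes that triangle-freeness forces every $3$-vertex subtree to be a star (equivalently a $P_3$ with a distinguished center), and then sums $\binom{d}{2}$ over the same three degree classes (four corners of degree $2$, $2(m-2)+2(n-2)$ perimeter vertices of degree $3$, $(m-2)(n-2)$ interior vertices of degree $4$) to get $6mn-6m-6n+4$. No substantive difference.
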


\begin{proof} Note that all trees on 3 vertices are stars. The number of stars with central vertex of degree $d$ and $i$ leaves is $\binom{d}{i}$. For the grid graph $G(m,n)$, we have $d_2=4$ (the four corners), $d_3=2(m-2)+2(n-2)$ (along the perimeter) and $d_4=(m-2)(n-2)$ (the interior vertices). 
Hence 
\begin{center}$\tau_3(G(m,n))= 4\binom{2}{2}+2(m-2)\binom{3}{2}+2(n-2)\binom{3}{2}+(m-2)(n-2)\binom{4}{2}$,
 \end{center}
giving $6mn-6m-6n+4$, as claimed.
\end{proof}
\begin{prop}\label{prop:BettiNumberDelta3GridGraph} Let $G$ be the grid graph $G(m,n)$. The shellable complex $\Delta_3(G(m,n))$ has the homotopy type of a wedge of
\[\beta(\Delta_3(G(m,n)))=\binom{mn-1}{2} -5mn+5(m+n)-3, n\ge m\ge 2,\]
spheres in dimension $mn-4$.

In particular, $|\tau_3(G(2,n))|=6n-8$ and the Betti number for $G(2,n)$ is $2(n-2)^2$.

For $m=3$, $|\tau_3(G(3,n))|=2(6n-7)$ and 
the Betti number for $G(3,n)$ simplifies to the polynomial $(9n^2-29n+26)/{2}$.
\end{prop}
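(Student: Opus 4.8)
The plan is to feed the grid graph into the machinery of \Cref{sec:FaceLattice-BettinNoS}. First I would observe that $G(m,n)$ is bipartite (color a vertex $(i,j)$ by the parity of $i+j$), so it has no odd cycle and in particular no cycle of length $\le 3$; and for $m,n\ge 2$ it contains a $4$-cycle, namely a unit square $\{(i,j),(i{+}1,j),(i{+}1,j{+}1),(i,j{+}1)\}$. Hence \Cref{prop:class-of-graphs2-2022-4-25} applies with $k=3$, so the face lattice of $\Delta_3(G(m,n))$ is $P(mn,3)\setminus\{\mathcal{Z}_3(G)\cup\mathcal{Y}_4(G)\}$ and its reduced Euler characteristic equals $(-1)^{mn-4}\bigl(\binom{mn-1}{2}+|\mathcal{Y}_4(G)|-|\mathcal{Z}_3(G)|\bigr)$ by \eqref{eqn:EulerChar-truncBoolean-minus-antichain-cycle}. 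By \Cref{prop:MarkDelta3GridGraph} the complex is shellable, and it is pure of dimension $mn-4$ (every facet has size $mn-3$), so \Cref{thm:shell-implies-homotopytype} shows it is a wedge of spheres of dimension $mn-4$, the number of which is the absolute value of the reduced Euler characteristic. Thus
\[\beta(\Delta_3(G(m,n)))=\binom{mn-1}{2}+|\mathcal{Y}_4(G(m,n))|-|\mathcal{Z}_3(G(m,n))|.\]

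It then remains to evaluate the two counts. Since the grid has no triangle, $\mathcal{Z}_3(G)$ is indexed by the $3$-vertex subtrees of $G$, so \Cref{lem:3-trees-GridGraph} gives $|\mathcal{Z}_3(G(m,n))|=\tau_3(G(m,n))=6mn-6m-6n+4$. For $\mathcal{Y}_4(G)$ I would show that every $4$-cycle of $G(m,n)$ is a unit square: a closed $4$-step walk in the grid makes an even number of moves in each coordinate (the net displacement is zero), and four moves in a single coordinate cannot produce four distinct vertices, so exactly two moves change each coordinate; the only such walk through four distinct vertices is the boundary of a unit square. Hence $|\mathcal{Y}_4(G(m,n))|=(m-1)(n-1)$.

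Substituting and using $(m-1)(n-1)=mn-m-n+1$ yields
\[\beta(\Delta_3(G(m,n)))=\binom{mn-1}{2}+(mn-m-n+1)-(6mn-6m-6n+4)=\binom{mn-1}{2}-5mn+5(m+n)-3,\]
which is the asserted formula for $n\ge m\ge 2$. The two special cases are direct specializations. For $m=2$ we have $\tau_3(G(2,n))=6n-8$ and $\binom{2n-1}{2}=(2n-1)(n-1)=2n^2-3n+1$, so $\beta=(2n^2-3n+1)-10n+5(n+2)-3=2n^2-8n+8=2(n-2)^2$. For $m=3$ we have $\tau_3(G(3,n))=12n-14=2(6n-7)$ and $\binom{3n-1}{2}=\tfrac12(3n-1)(3n-2)=\tfrac12(9n^2-9n+2)$, so $\beta=\tfrac12(9n^2-9n+2)-15n+5(n+3)-3=\tfrac12(9n^2-29n+26)$.

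Since \Cref{prop:class-of-graphs2-2022-4-25} and \Cref{lem:3-trees-GridGraph} do the heavy lifting, the only genuinely new content is the structural observation about $G(m,n)$ — that its girth is $4$ and its $4$-cycles are precisely the unit squares — followed by a routine polynomial simplification. The main (and mild) point to be careful about is thus the $4$-cycle classification; one should also note in passing that the degenerate small cases (e.g.\ $G(2,2)=C_4$, where $mn-4=0$) are consistent with reading the formula as $\beta=0$.
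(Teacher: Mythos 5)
Your proposal is correct and follows essentially the same route as the paper: apply \Cref{prop:class-of-graphs2-2022-4-25} with $k=3$, count $|\mathcal{Z}_3|$ via \Cref{lem:3-trees-GridGraph} and $|\mathcal{Y}_4|=(m-1)(n-1)$, and invoke shellability (\Cref{prop:MarkDelta3GridGraph}) to identify the Euler characteristic with the Betti number. Your extra details (bipartiteness giving girth $4$, the check that every $4$-cycle is a unit square, and the explicit polynomial simplifications) are fine elaborations of steps the paper treats as immediate.
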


\begin{proof} Apply Proposition~\ref{prop:class-of-graphs2-2022-4-25} to the grid graph $G=G(m,n)$ with $k=3$. It follows that 
the face lattice of 
$\Delta_3(G)$ equals 
\[P(mn,3)\setminus (\mathcal{Z}_3(G) \cup \mathcal{Y}_4(G)),\]
where $\mathcal{Z}_3(G)$ is now the set of complements of connected trees of size 3 in the grid graph $G$,  $ \mathcal{Y}_4(G)$ is the set of complements of 4-cycles, and $P(mn,3)=B_{mn}^{\le mn-3}$ as before.   
Note that the grid graph $G(m,n)$ has $mn$ vertices, and the cut complex $\Delta_3(G)$ has dimension $(mn-4)$.  The formula of Proposition~\ref{prop:class-of-graphs2-2022-4-25} for the M\"obius function of the face lattice of $\Delta_3(G)$ now gives, for the reduced Euler characteristic, and hence in this case the unique nonzero Betti number, 
\[\binom{mn-1}{2} - |\tau_3(G(m,n))| +(m-1)(n-1),\]
since $\tau_3(G(m,n))$ is the number of trees of size 3 in the grid graph, and the number of 4-cycles is clearly $(m-1)(n-1)$.  Using Lemma~\ref{lem:3-trees-GridGraph} completes the proof.
\end{proof}

Table~\ref{table:Betti-Grid3n} below gives the Sage computation of the Betti numbers for $\Delta_k(G(3,n))$, when $m=3$ and $k\le 6$.  The homotopy type and Betti numbers for $\Delta_2, \Delta_3, \Delta_4$ and $\Delta_k$ (see below for the latter), $k\ge mn-2$, are determined in this section. $\Delta_k$ appears to be Cohen-Macaulay in all other cases, according to Sage; homology is in the top dimension. 

\begin{table}[htbp]
\begin{center}
\scalebox{0.7}{
\begin{tabular}{|c|c|c|c|c|c|l|}
\hline
$k\backslash n$ & 2  &3 & 4 & 5 &6& \\
[2pt]\hline
%
2 &2 &4 & 6& 8 & &Thm.~\ref{thm:grid-k=2MyMorseMatching2021Sept}\\
$3$ & 2 &10 &27 & 53 &  &Prop.~\ref{prop:MarkDelta3GridGraph}, Prop.~\ref{prop:BettiNumberDelta3GridGraph}\\
$4$ &0 &20 &100 & 270 & 557 & Prop.~\ref{prop:Delta4-GridGraph-2022April25}, Thm.~\ref{thm:MarkMorseMatching-GridGraph-k=4-topdim}\\
$5$ & $\emptyset$ & 25 & 221 &825   &2,137 &\\
($5$) &  & (21)  & (214)  & (815) & (2,124) & \\
$6$ & $\emptyset$ & 8 & 281 & 1,656 & &\\
\hline
\end{tabular}
}
\end{center}
\vskip .1in
\caption{\small Betti numbers for $\Delta_k (G(3,n)), 2\le k\le 6.$
Parentheses show numbers predicted by  the Euler characteristic formula in Theorem~\ref{thm:truncBoolean-minus-antichain}. 
Confirmed by Proposition~\ref{prop:EulerChar5GridGraph}.}
\label{table:Betti-Grid3n}
\end{table}
\begin{conj}\label{conj:GridGraphs} $\Delta_k(G(m,n))$ is shellable  for all $3\le k\le mn-3$.
\end{conj}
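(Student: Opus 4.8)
Since \Cref{conj:GridGraphs} is open, what follows is a proof \emph{plan} rather than a proof.

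The idea is to prove the stronger statement that $\Delta_k(H)$ is \emph{vertex decomposable} — hence shellable, and, being pure, homotopy equivalent to a wedge of top-dimensional spheres — for every graph $H$ in the class $\mathcal{G}$ built from the one-vertex graph by repeatedly adjoining a new vertex joined to at least one and at most two existing, mutually \emph{non-adjacent} vertices. Every member of $\mathcal{G}$ is connected and triangle-free (a triangle would need its last-added vertex to have two mutually adjacent earlier neighbours), the grid $G(m,n)$ lies in $\mathcal{G}$ (add the vertices row by row, left to right; each interior vertex $(i,j)$ is joined only to $(i-1,j)$ and $(i,j-1)$, which are non-adjacent), and $\mathcal{G}$ is closed under deleting the vertex added last in some such construction sequence. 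One then inducts on the pair $(|V(H)|,k)$ in lexicographic order, shedding such a last-added vertex $v$ at each step. The base of the induction collects the cases already settled in this section and in \cite{BDJRSX2024}: $k=2$ via \Cref{thm:Froberg-Eagon-Reiner} (for the chordal members of $\mathcal{G}$) and \Cref{thm:grid-k=2MyMorseMatching2021Sept}; $k=3$, where \Cref{cor:Delta3-min-forbidden-is3-conn-Results} gives vertex decomposability for \emph{all} of $\mathcal{G}$, since no member contains a $3$-connected subgraph; $k=4$ via \Cref{prop:Delta4-GridGraph-2022April25} and \Cref{thm:MarkMorseMatching-GridGraph-k=4-topdim}; and the range $k\ge|V(H)|-2$ via \Cref{prop:grid1dim2-by-n}, \Cref{prop:grid1dim-m-by-n}, and \Cref{ex:Examples-cut-complex}.

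Two local facts drive the inductive step. First, for any vertex $v$ of any graph $H$ one has $\lk_{\Delta_k(H)}(v)=\Delta_k(H-v)$, directly from \Cref{def:cut-cplx}: a face $F\not\ni v$ satisfies $F\cup\{v\}\in\Delta_k(H)$ precisely when $V(H)\setminus(F\cup\{v\})$ contains a disconnected $k$-set, and such a set cannot involve $v$, so it is a disconnected $k$-set of $H-v$. Thus the link is $\Delta_k$ of a smaller member of $\mathcal{G}$, and so it is vertex decomposable by induction (landing either in the main range or in the near-top range already handled). Second, one checks the shedding condition: for every facet $F\ni v$ of $\Delta_k(H)$ we must produce $w\ne v$ with $(F\setminus\{v\})\cup\{w\}$ again a facet, i.e.\ $w\in F^c$ with $H[(F^c\cup\{v\})\setminus\{w\}]$ disconnected. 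Writing the components of the disconnected $k$-set $F^c$ as $C_1,\dots,C_r$ with $r\ge2$: if $v$ (having degree at most $2$) misses some $C_j$, delete any vertex outside $C_j$; if $v$ meets all the $C_i$, then $r=2$ and $v$ has a single neighbour in each, and one deletes $v$'s neighbour in a component of size at least $2$ (which exists because $|C_1|+|C_2|=k\ge3$), severing $v$ from that component and leaving two nonempty pieces. (For $k=2$ this genuinely fails, e.g.\ for $C_4=G(2,2)$; this is why the hypothesis $k\ge3$ is exactly the right one.) Hence $v$ is a shedding vertex for all $k\ge3$.

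The crux is the remaining requirement that $\del_{\Delta_k(H)}(v)$ be vertex decomposable. This complex — the subcomplex of $\Delta_k(H)$ on $V(H)\setminus v$ generated by the facets avoiding $v$ — is \emph{not} a cut complex: its facets are the complements in $V(H)\setminus v$ of the $(k-1)$-sets $T$ with $H[T\cup\{v\}]$ disconnected, so it has the same dimension as $\Delta_k(H)$ rather than one less, and there is no evident graph $H'$ with $\del_{\Delta_k(H)}(v)=\Delta_k(H')$. Establishing its vertex decomposability appears to need a secondary, nested shedding argument — shedding one of the at most two neighbours of $v$, tracking the cases according to how the deleted set meets the neighbourhood of $v$, and recognising the innermost pieces as cones or as genuine cut complexes of smaller members of $\mathcal{G}$ — and carrying this out \emph{uniformly} in $k$, $m$, $n$ as the region degenerates under the recursion is the main obstacle. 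It is precisely here that the clean $k=3$ reduction of \Cref{cor:Delta3-min-forbidden-is3-conn-Results} has no known analogue. An alternative route is a direct lexicographic shelling of the facets of $\Delta_k(G(m,n))$: order each disconnected $k$-subset $S$ of the grid by the leftmost-then-topmost cell it occupies, refined recursively, order the facets $F$ by the rank of $F^c$, and verify the exchange criterion of \Cref{def:shelling} by producing, for $F_i$ before $F_j$, a disconnected $k$-set of the form $(F_j^c\setminus\{x\})\cup\{y\}$ with $y\in F_i^c\setminus F_j^c$ that ranks before $F_j^c$; controlling disconnectedness under this single vertex swap using the grid geometry is the hard lemma on this path. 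Finally, the truncated-Boolean-lattice method of \Cref{thm:truncBoolean-minus-antichain} and \Cref{prop:class-of-graphs2-2022-4-25} does not extend to large $k$, since the grid then has cycles far shorter than $k$, so the face lattice is no longer $P(mn,k)$ with a controlled antichain removed; and discrete Morse matchings of the kind used for $k=4$ yield only the homotopy type, not shellability.
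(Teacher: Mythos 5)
You are attempting a statement that the paper itself leaves open: \Cref{conj:GridGraphs} is stated as a conjecture, with no proof given anywhere in the paper, and your text is candid that it is a plan rather than a proof. Within that plan, the concrete sub-claims you do verify are sound: the identity $\lk_{\Delta_k(H)}(v)=\Delta_k(H-v)$ follows directly from \Cref{def:cut-cplx}; a last-added vertex $v$ of degree at most two is indeed a shedding vertex for all $k\ge 3$ by your two-case component analysis; the grid belongs to your class $\mathcal{G}$; and the observation that no member of $\mathcal{G}$ contains a $3$-connected subgraph (every subgraph has a vertex of degree at most $2$) correctly recovers the $k=3$ case via \Cref{cor:Delta3-min-forbidden-is3-conn-Results}, which is exactly the paper's \Cref{prop:MarkDelta3GridGraph}.

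The genuine gap is the one you name yourself, and it is fatal to the argument as it stands: the induction requires $\del_{\Delta_k(H)}(v)$ to be vertex decomposable (or at least shellable), and this complex is not a cut complex of any smaller graph — its facets are complements of $(k-1)$-sets $T$ with $H[T\cup\{v\}]$ disconnected, it has the same dimension as $\Delta_k(H)$, and no recursion is available for it. Since shellability is precisely the content of the conjecture, deferring this step means nothing new has been proved. A secondary flaw is in your proposed base cases: \Cref{prop:Delta4-GridGraph-2022April25} and \Cref{thm:MarkMorseMatching-GridGraph-k=4-topdim} establish only the Euler characteristic and the homotopy type of $\Delta_4(G(m,n))$ (via a discrete Morse matching), not shellability, let alone vertex decomposability, so $k=4$ cannot serve as a settled base of a vertex-decomposability induction; as you note at the end, Morse matchings do not yield shellability. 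So the proposal correctly maps the known terrain and identifies where the difficulty lies, but it does not close, or even narrow, the conjecture.
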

\begin{lemma}\label{lem:GridGraphs-k=2-4-6}
 Let $A$ be a connected subset of vertices of the grid graph $G=G(m,n)$ of size $k$, i.e., such that the induced subgraph $G[A]$ is connected.  If  $k\le 6$ is even, then for any vertex $x$ of $G$, $x\notin A$, there is a vertex $y\in A$ such that $A\setminus\{y\}\cup\{x\}$ is disconnected, i.e., such that $y$ is a cut vertex for the induced subgraph $G[A\cup\{x\}]$. 

In particular, the face lattice of $\Delta_k(G(m,n))$ coincides with the poset $P(mn,k)\setminus\mathcal{Z}_k(G)$, where $\mathcal{Z}_k(G)$ is the subposet whose elements are complements of connected subsets $A$ of $[mn]$ of  size $k$.  That is, $\Delta_k(G(m,n))$ contains a complete codimension 2 skeleton.
\end{lemma}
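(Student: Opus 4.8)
The plan is to reduce the statement to a purely graph-theoretic claim and then treat the three relevant values $k\in\{2,4,6\}$, first by a degree/bipartiteness count and then, for $k=6$, by a short finite case analysis. First, I would note that the ``in particular'' clause is immediate once the first sentence is proved: that sentence is exactly condition~\eqref{eqn:Condition-truncBoolean-2} of \Cref{thm:truncBoolean-minus-antichain} for the graph $G(m,n)$, so the theorem then delivers both the identification of the face lattice with $P(mn,k)\setminus\mathcal Z_k(G)$ and the complete codimension-$2$ skeleton. So the whole content is the first sentence. Fix a connected set $A$ with $|A|=k$ and a vertex $x\notin A$, and set $B=A\cup\{x\}$, so that $(A\setminus\{y\})\cup\{x\}=B\setminus\{y\}$ for $y\in A$. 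If $x$ has no neighbour in $A$ the claim is trivial: $x$ is isolated in $G[B]$, and for any $y\in A$ the set $B\setminus\{y\}$ still contains the isolated vertex $x$ and the nonempty set $A\setminus\{y\}$, hence is disconnected. So I may assume $G[B]$ is connected. Since $G[B]\setminus\{x\}=G[A]$ is connected, $x$ is never a cut vertex of $G[B]$; therefore it suffices to prove that $G[B]$ is \emph{not} $2$-connected, because then the connected graph $G[B]$, on $k+1\ge 3$ vertices, has a cut vertex, which must lie in $A$ and can be taken as $y$.

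To show $G[B]$ is not $2$-connected I would argue by contradiction. Assuming $2$-connectedness gives minimum degree $\ge 2$. Let $P\sqcup Q$ be the bipartition of $G[B]$ inherited from the $2$-colouring of the grid, with $|P|\ge|Q|$. Every vertex of $P$ has its (at least two) neighbours inside $Q$, so $e(G[B])\ge 2|P|$; every vertex of $Q$ has grid-degree at most $4$, so $e(G[B])=\sum_{v\in Q}\deg(v)\le 4|Q|$. Hence $|P|\le 2|Q|$, which together with $|P|+|Q|=k+1$ forces $(|P|,|Q|)=(2,1),(3,2),(4,3)$ for $k=2,4,6$ respectively. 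The first two cases are then quick: for $k=2$ a $2$-connected graph on $3$ vertices is a triangle, impossible since the grid has girth $4$; for $k=4$ each of the three $P$-vertices is adjacent to both $Q$-vertices, so those two $Q$-vertices would have three common neighbours, whereas any two distinct grid vertices have at most two common neighbours.

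The essential case is $k=6$, with $|P|=4$, $|Q|=3$, and here I expect the main obstacle. I would first establish the structural fact that no vertex of $P$ is adjacent to all three vertices of $Q$: if some $p\in P$ were, those three $Q$-vertices would be grid-neighbours of $p$, among any three of the four directions at $p$ exactly one ``collinear'' pair occurs, that pair has only $p$ as a common neighbour, and so the remaining three $P$-vertices (each needing two neighbours in $Q$) would all have to be common neighbours of the two other pairs, of which there are at most two in the grid — a contradiction. Consequently each $P$-vertex has exactly two neighbours in $Q$, i.e.\ uses exactly one of the three unordered pairs drawn from $Q$, and since two grid vertices have at most two common neighbours each pair is used by at most two $P$-vertices; so the four $P$-vertices distribute over the three pairs as $(2,2,0)$ or $(2,1,1)$. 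I would finish by eliminating both subcases. In the $(2,2,0)$ case the $Q$-vertex common to the two active pairs is adjacent to all four $P$-vertices, which forces it to be an interior grid vertex with a completely determined neighbourhood, pinning down $B$ (up to translation) as a single explicit $7$-vertex graph in which that very vertex is a cut vertex — contradicting $2$-connectedness. In the $(2,1,1)$ case, placing the two common neighbours that realise the doubly-used pair determines two of the $Q$-vertices up to translation, and then each of the finitely many admissible grid positions for the third $Q$-vertex forces two of the (a priori distinct) $P$-vertices to coincide. Either way $G[B]$ is not $2$-connected, completing the proof; the $(2,2,0)$/$(2,1,1)$ elimination is the only genuinely delicate step, everything before it being forced by the degree count and the ``at most two common neighbours'' property of grid graphs.
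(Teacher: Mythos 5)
Your proposal is correct, but it reaches the graph-theoretic core of the lemma by a genuinely different route from the paper. The paper argues directly and constructively: it classifies the connected sets $A$ of size $k$ into paths, trees with a vertex of degree $3$, and sets containing an induced $4$-cycle, and for $k=4,6$ exhibits a cut vertex $y$ of $G[A\cup\{x\}]$ in each case by inspecting how $x$ can attach (using, e.g., that a vertex outside an induced $4$-cycle or a $2\times 3$ block is adjacent to at most one of its vertices). You instead reduce to showing that $G[A\cup\{x\}]$ is not $2$-connected --- a legitimate reduction, since $x$ cannot be a cut vertex and a connected, non-$2$-connected graph on at least $3$ vertices has one --- and then argue by contradiction: bipartiteness plus maximum degree $4$ force the colour classes to have sizes $(2,1)$, $(3,2)$, $(4,3)$ for $k=2,4,6$, and the girth-$4$ and ``at most two common neighbours'' properties of the grid eliminate every case. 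I checked the delicate $k=6$ step: no $P$-vertex can see all of $Q$, the distribution over pairs is $(2,2,0)$ or $(2,1,1)$, the $(2,2,0)$ configuration is forced (up to symmetry) to be the plus-sign with two opposite diagonal tips, whose centre is a cut vertex, and in the $(2,1,1)$ case each of the four admissible positions for the third $Q$-vertex collides the required common neighbour with one already used --- so both subcases are sound. Your approach buys uniformity (no enumeration of the shapes of $A$) and makes the restriction to even $k\le 6$ emerge naturally from the inequality $|P|\le 2|Q|$, consistent with the failure at $k=8$ recorded in \Cref{rem:False}; the paper's approach buys an explicit identification of the cut vertex, and the same type-by-type analysis is what gets adapted for the odd case $k=5$ later. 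Both treatments dispose of the ``in particular'' clause the same way, via condition~\eqref{eqn:Condition-truncBoolean-2} of \Cref{thm:truncBoolean-minus-antichain}.
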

\begin{proof} Clearly we need only consider the case when $A\cup\{x\}$ is connected. Also, the claim is clear for $k=2$, so assume $k=4$ or $k=6$. 
First note that for any $k\ge 3$, the connected subsets $A$ of size $k$ correspond to  three types of induced subgraphs $G[A]$:
(1) paths, (2) trees with at least one vertex, say $y$, of degree 3,  and (3) sets  containing an induced  4-cycle of the grid graph.

The claim is clear for the first type.  For the second it suffices to note that if $A\cup\{x\}$ is connected, and $x$ is not connected to $y$ by an edge, then $y$ is a cut vertex for $G[A\cup\{x\}]$ since it is already a cut vertex for the tree $G[A]$. This is certainly true if $\{x,y\}$ is an edge of the grid graph.

Now consider the third case.  When $k=4$, $G[A]$ must be a 4-cycle, and so $x$ can be connected by an edge  to at most one vertex $y$ of $A$, and hence this vertex is a cut vertex.  When $k=6$, the same argument applies if $G[A]$ consists of two 4-cycles sharing an edge.  Otherwise, $G[A]$ consists of  a 4-cycle with two vertices $z_i, i=1,2$ connected by edges to $y_i\in A, i=1,2$ respectively, such that $\{z_1,z_2\}$ is \emph{not} an edge of the grid graph; see Figure~\ref{fig:GridGraph33} with the vertex set  $A=\{1,2,4,5,6,7\}$, where $\{z_1,z_2\}=\{6,7\}$, and $\{y_1,y_2\}=\{5,4\}$. Therefore adding the vertex $x$ introduces at most one new edge in $G[A\cup\{x\}]$, and thus at least one $y_i$ is a cut vertex of $G[A\cup\{x\}]$, since its removal isolates $z_i$.

However, $z_1$ and $z_2$ can be adjacent to the same vertex of the 4-cycle.  (For example, the $z_i$ can be vertices 6 and 8, added to $\{1,2,4,5\}$ in Figure 5.) In that case, choosing $x$ to be the vertex adjacent to both (9 in this example) adds two edges, and $A\cup \{x\}$ is the union of two 4-cycles intersecting at one vertex, which is then the needed cut vertex $y$.

 We have in fact shown that condition~\eqref{eqn:Condition-truncBoolean-2} of Theorem~\ref{thm:truncBoolean-minus-antichain} is satisfied.  The last statement now follows immediately.
\end{proof}

\begin{rem}\label{rem:False} This is false for $k\ge 8$.  When $k=8$, 
consider the vertex set $A$ such that $G[A]$ consists of three induced 4-cycles, $C_1, C_2, C_3$ such that $C_2$ shares an edge with each of $C_1, C_3$, but $C_1, C_3$ share only a vertex.  In that case we can add a vertex $x$ such $G[A\cup\{x\}]$ becomes the grid graph $G(3,3)$, (see Figure~\ref{fig:GridGraph33} with the vertex set  $A=\{1,2,\ldots,8\}$ and $x=9$), and this has no cut vertex.
\end{rem}

Theorem~\ref{thm:grid-k=2MyMorseMatching2021Sept} established the homotopy type for the case $k=2$.  The preceding lemma gives the following  result for the additional cases $k=4,6$.

\begin{prop}\label{prop:EulerChar246GridGraph} Let $G$ be the grid graph $G(m,n)$ and let $k=2,4,6$. Then the nonzero  homology of $\Delta_k(G(m,n))$ is torsion-free and concentrated in the top two dimensions, $mn-k-1$ and $mn-k-2$. The reduced Euler characteristic $\mu(\Delta_k(G(m,n)))$ satisfies
\begin{equation*}
\begin{split}
(-1)^{mn-k-1} \mu(\Delta_k(G(m,n)))
=\binom{mn-1}{k-1}-|\text{the number of connected subsets of size k in  $G(m,n)$}|\\
= |\text{the number of facets of $\Delta_k(G(m,n))$}| - \binom{mn-1}{k}.
\end{split}
\end{equation*}

\end{prop}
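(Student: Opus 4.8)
The plan is to deduce the proposition directly from \Cref{lem:GridGraphs-k=2-4-6} and \Cref{thm:truncBoolean-minus-antichain}, so that essentially no new argument is required. I would first record the bookkeeping: the grid graph $G(m,n)$ has $mn$ vertices, so the ambient poset of \Cref{sec:FaceLattice-BettinNoS} is $P(mn,k)=B_{mn}^{\le mn-k}$ and $\Delta_k(G(m,n))$ has dimension $mn-k-1$; by its very definition $\mathcal{Z}_k(G)$ is the antichain of complements of the connected $k$-vertex subsets of $G(m,n)$; and the facets of $\Delta_k(G(m,n))$ are exactly the complements of the disconnected $k$-vertex subsets, so their number is $\binom{mn}{k}-|\mathcal{Z}_k(G)|$.

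Next I would invoke \Cref{lem:GridGraphs-k=2-4-6}: for $k=2,4,6$ it already establishes that $\Delta_k(G(m,n))$ contains a complete codimension-$2$ skeleton, i.e.\ that condition~\eqref{eqn:Condition-truncBoolean-2} (equivalently~\eqref{eqn:Condition-truncBoolean}) of \Cref{thm:truncBoolean-minus-antichain} holds, and that the face lattice of $\Delta_k(G(m,n))$ coincides with $P(mn,k)\setminus\mathcal{Z}_k(G)$. With this hypothesis verified, \Cref{thm:truncBoolean-minus-antichain} applied to the graph $G=G(m,n)$ yields at once, \emph{without needing shellability} (only the part of that theorem conditioned on~\eqref{eqn:Condition-truncBoolean}), that the nonzero homology of $\Delta_k(G(m,n))$ is torsion-free and concentrated in dimensions $mn-k-1$ and $mn-k-2$, and that
\[(-1)^{mn-k-1}\mu(\Delta_k(G(m,n)))=\binom{mn-1}{k-1}-|\mathcal{Z}_k(G)|=|\{\text{facets of }\Delta_k(G(m,n))\}|-\binom{mn-1}{k}.\]
Rewriting $|\mathcal{Z}_k(G)|$ as the number of connected $k$-subsets of $G(m,n)$ gives precisely the displayed formula of the proposition. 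For the value $k=2$ one can alternatively read the answer off \Cref{thm:grid-k=2MyMorseMatching2021Sept}; the two descriptions are consistent, with the homology vanishing in the very top dimension and concentrated in dimension $mn-4$.

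The honest assessment is that there is no real obstacle left at this stage: the entire difficulty has already been packaged into \Cref{lem:GridGraphs-k=2-4-6}, whose proof is the combinatorial heart of the matter --- classifying connected $k$-subsets of the grid as paths, as trees with a degree-$3$ vertex, or as unions built from induced $4$-cycles, and checking in each configuration that attaching an outside vertex $x$ still leaves some $y\in A$ a cut vertex of $G[A\cup\{x\}]$ (this being exactly where $k\le 6$ is used; see \Cref{rem:False}). Given that lemma, the present proposition is a one-line corollary, and the only care required is the routine substitution of the ambient vertex count $mn$ for ``$n$'' in \Cref{thm:truncBoolean-minus-antichain} together with the identification of $\mathcal{Z}_k(G)$ with the connected $k$-subset count.
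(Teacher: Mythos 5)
Your proposal is correct and matches the paper exactly: the paper offers no separate proof of this proposition, presenting it as an immediate consequence of Lemma~\ref{lem:GridGraphs-k=2-4-6} (which verifies condition~\eqref{eqn:Condition-truncBoolean-2} for even $k\le 6$) combined with Theorem~\ref{thm:truncBoolean-minus-antichain} applied with $mn$ vertices, precisely as you argue. Your added bookkeeping (facet count, identification of $\mathcal{Z}_k(G)$ with connected $k$-subsets, and the remark that shellability is not needed for the torsion-free/concentration statement) is accurate and consistent with the paper.
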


We have the following for  $\Delta_4(m,n)$, for which \Cref{lem:GridGraphs-k=2-4-6} shows that Theorem~\ref{thm:truncBoolean-minus-antichain} does apply.

\begin{prop}\label{prop:Delta4-GridGraph-2022April25}   The nonzero  homology of $\Delta_4(G(m,n))$ is torsion-free and concentrated in the top two dimensions, $mn-5$ and $mn-6$. The reduced Euler characteristic $\mu(\Delta_4(G(m,n)))$ satisfies
\begin{equation*}
(-1)^{mn-5} \mu(\Delta_4(G(m,n)))=\binom{mn-1}{3}-\begin{cases}
(11n-23), &\mbox{if $n> m=2$,}\\
(19mn-28(m+n) +33), &\mbox{if $n\ge m\ge 3$.} 
\end{cases}
\end{equation*}
In particular the number of facets of $\Delta_4(G(m,n))$ is 
\[\binom{mn}{4}-\begin{cases}
(11n-23), &\mbox{if $n> m=2$,}\\
(19mn-28(m+n) +33), &\mbox{if $n, m\ge 3$.} 
\end{cases}\]
\end{prop}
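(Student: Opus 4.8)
The plan is to combine the structural result of \Cref{lem:GridGraphs-k=2-4-6} with an explicit enumeration of the connected four-vertex induced subgraphs of the grid. By \Cref{lem:GridGraphs-k=2-4-6} with $k=4$, the face lattice of $\Delta_4(G(m,n))$ coincides with $P(mn,4)\setminus\mathcal{Z}_4(G(m,n))$, so \Cref{prop:EulerChar246GridGraph} applies in the case $k=4$: the nonzero homology of $\Delta_4(G(m,n))$ is torsion-free and supported only in dimensions $mn-5$ and $mn-6$, one has $(-1)^{mn-5}\mu(\Delta_4(G(m,n)))=\binom{mn-1}{3}-|\mathcal{Z}_4(G(m,n))|$, and the number of facets equals $\binom{mn}{4}-|\mathcal{Z}_4(G(m,n))|$. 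Hence it suffices to show that $|\mathcal{Z}_4(G(m,n))|=\tau_4(G(m,n))$, the number of connected induced subgraphs on $4$ vertices, equals $11n-23$ when $n>m=2$ and $19mn-28(m+n)+33$ when $n\ge m\ge 3$.

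To count $\tau_4(G(m,n))$ I would first use that the grid graph has girth $4$, so a connected induced subgraph on four vertices has at most four edges; with three edges it is a tree, hence either a path $P_4$ or a claw $K_{1,3}$, and with four edges it is a $4$-cycle $C_4$. Equivalently, the connected four-element vertex subsets of $G(m,n)$ are precisely the placements of a tetromino on the $m\times n$ array of vertices: the $I$-tetromino gives the straight $P_4$'s, the $L$-, $J$-, $S$- and $Z$-tetrominoes the bent $P_4$'s, the $T$-tetromino the claws, and the $O$-tetromino the unit squares. (As a check, this yields $2(m-2)+2(n-2)+4(m-2)(n-2)$ claws, which matches counting each claw as a vertex of degree at least $3$ together with any three of its neighbours --- valid since no two neighbours of a common vertex of the grid are adjacent.)

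I would then tally the placements orientation by orientation. There are $19$ fixed tetrominoes --- $2$ of $I$, $1$ of $O$, $4$ of $T$, $2$ each of $S$ and $Z$, $4$ each of $L$ and $J$ --- and a fixed tetromino with $a\times b$ bounding box admits $(m-a+1)(n-b+1)$ placements in the grid. Sorting by bounding box, two pieces have a $1\times4$ or $4\times1$ box, one has a $2\times2$ box, and the remaining sixteen split evenly (eight and eight) between the $2\times3$ and $3\times2$ boxes. Thus for $n\ge m\ge 3$,
\[
\tau_4(G(m,n))=\bigl(m(n-3)+(m-3)n\bigr)+(m-1)(n-1)+8\bigl((m-1)(n-2)+(m-2)(n-1)\bigr),
\]
which expands to $19mn-28(m+n)+33$ (the leading coefficient $19$ being exactly the number of fixed tetrominoes). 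For the strip $m=2$ the same enumeration applies, except that every tetromino whose bounding box has height $3$ or $4$ now fails to fit; only the horizontal $I$, the square $O$, and the eight pieces with a $2\times3$ box survive, giving $\tau_4(G(2,n))=2(n-3)+(n-1)+8(n-2)=11n-23$. Substituting these into the formulas of the first paragraph produces the claimed expressions.

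The main obstacle is the combinatorial bookkeeping: one must enumerate all $19$ oriented tetrominoes exactly once with the correct bounding boxes --- in particular keeping the chiral pairs $S/Z$ and $L/J$ straight, since they contribute most of the count --- and then handle the degenerate $m=2$ strip carefully, where the many pieces requiring three or more rows disappear; this degeneration is exactly what produces the case split in the statement. (One should also note that $G(2,2)$ is excluded: there $k=mn$ and $\Delta_4(C_4)$ is the void complex by \Cref{ex:Examples-cut-complex}.)
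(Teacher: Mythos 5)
Your proposal is correct, and its structural skeleton is the same as the paper's: both reduce the statement to counting connected $4$-vertex subsets via \Cref{lem:GridGraphs-k=2-4-6} and \Cref{prop:EulerChar246GridGraph} (equivalently \Cref{thm:truncBoolean-minus-antichain}), after which everything hinges on showing $|\mathcal{Z}_4(G(m,n))|$ equals $11n-23$ or $19mn-28(m+n)+33$. Where you diverge is in the enumeration itself. The paper splits the connected $4$-sets into $4$-cycles, paths along the perimeter, and trees classified by the location of their ``central edge,'' which leads to a longer case analysis (edges touching the boundary, interior edges, etc.) assembled term by term. You instead observe that connected $4$-subsets are exactly placements of the $19$ fixed tetrominoes and count each by its bounding box, $(m-a+1)(n-b+1)$ placements for an $a\times b$ box, with the $m=2$ strip handled simply by discarding the pieces whose box needs three or more rows; I checked that your totals $2(n-3)+(n-1)+8(n-2)=11n-23$ and $\bigl(m(n-3)+(m-3)n\bigr)+(m-1)(n-1)+8\bigl((m-1)(n-2)+(m-2)(n-1)\bigr)=19mn-28(m+n)+33$ are correct, as are your tallies of $2+1+8+8=19$ fixed pieces and your claw cross-check. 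Your bounding-box bookkeeping is less error-prone and makes the case split at $m=2$ transparent (and explains the coefficient $19$), while the paper's central-edge decomposition is more ad hoc but self-contained in the same spirit; both yield identical formulas, and your aside excluding $G(2,2)$ (where $\Delta_4$ is void) is consistent with the hypotheses of the statement.
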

\begin{proof}  In view of the preceding discussion, it only remains to count the number of connected subsets of size 4 in $G(m,n)$.    

There are $(m-1)(n-1)$ 4-cycles, as in the proof of  Proposition~\ref{prop:BettiNumberDelta3GridGraph}. 

If $m\ge 3$, the number of paths of length 4 around the perimeter is $2m+2(n-2)$, since the perimeter is a $2m+2(n-2)$-cycle.

However, for $m=2$, the two  subsets of size 4 at the extreme left and right form an induced 4-cycle, so the count is $2(2) + 2(n-2)-2=2n-2$.

We count the paths NOT contained in the perimeter by their middle edge.

\begin{enumerate}
\item $m=2, n\ge 3$: There are $(n-1)$ 4-cycles, and $2n-2$ paths around the perimeter. The remaining subsets are all trees  having a distinguished central edge $\epsilon$.  

For each of the $(n-2)$ interior vertical edges  $\epsilon$, there are 4 trees with $\epsilon$ as the central edge.
This gives an additional $4(n-2)$ trees.

\begin{equation*}
\heha \hskip1mm \hehb \hskip1mm \hehc \hskip1mm \hehd
\end{equation*}



For each of the $2(n-3)$ interior (i.e., not touching the vertical sides) horizontal edges $\epsilon$ along the top and bottom boundaries, there are two trees  with $\epsilon$ as the central edge.
 This gives an additional $4(n-3)$ trees.

\begin{equation*}
\hevDOWN \hskip3mm \vDOWNeh \hskip1mm  \hevUP \hskip3mm  \vUPeh
\end{equation*}

The total number of subsets inducing a connected subgraph of size 4 is thus 
\[(n-1)+(2n-2)+4(n-2)+4(n-3)=11n -23.\]

\item $m\ge 3, n\ge 3$:  Now there are $(m-1)(n-1)$ 4-cycles, and $2m+2n-4$ paths around the perimeter.  Again the remaining subsets are all trees  having a distinguished central edge $\epsilon$.

\begin{enumerate}
\item[(a1)]
For each of the $2(m-2)$ interior horizontal edges $\epsilon$ touching the two vertical sides of the perimeter, there are 6 trees with central edge $\epsilon$.  
This gives $12(m-2)=12$ trees.
\begin{equation*}
\vUPeh \hskip3mm \vDOWNeh \text{ (along left boundary)};\qquad
\hevUP \hskip3mm  \hevDOWN  \text{ (along right boundary)};
\end{equation*}
and the following four along both left and right boundaries: 
\begin{equation*}
\veva \hskip3mm \vevb \hskip3mm \vevc \hskip 3mm \vevd
\end{equation*}
%
\item[(a2)]
For each of the $2(n-2)$ interior vertical edges $\epsilon$, which touch the top and bottom horizontal sides of the perimeter, there are 6 trees having $\epsilon$ as the central edge.
This gives $12(n-2)$ trees.
\begin{equation*}
\heha \hskip3mm \hehb \hskip3mm \hLEFTevDOWN \hskip3mm  \hRIGHTevDOWN
\end{equation*}
\item[(b1)]
For each of the $2(n-3)$ interior horizontal edges (not touching the vertical sides) $\epsilon$ along the top and bottom boundaries, there are $4(n-3)$ trees as in the case $m=2$.
\item[(b2)] Similarly for each of the $2(m-3)$ interior vertical edges (not touching the horizontal sides) $\epsilon$ along the left and right  boundaries, there are $4(m-3)$ trees.
\item[(c1)]
For each of the $(n-3)(m-2)$ interior horizontal edges $\epsilon$ not touching the perimeter, there are 9 trees with $\epsilon$ as the central edge.
This gives 
$9(n-3)(m-2)$ trees.
\begin{equation*}
\vUPeh \hskip3mm \vDOWNeh \hskip3mm 
\hevUP \hskip3mm  \hevDOWN  \hskip3mm 
\veva \hskip3mm \vevb \hskip3mm \vevc \hskip 3mm \vevd \hskip3mm \hhh
\end{equation*}
\item[(c2)]  Similarly, counting interior vertical edges not touching the perimeter, we have a matching item to  (c1) above, giving an additional $9(m-3)(n-2)$ trees. 
\end{enumerate}
The total number of subsets inducing a connected subgraph of size 4 is now
\begin{equation*}
\begin{split}(m-1)(n-1)+2(m+n-2)+12(n-2)+12(m-2)+4(n-3)+4(m-3)\\
+9(n-3)(m-2)+9(m-3)(n-2)
=19mn-28(m+n)+33.
\end{split}
\end{equation*}

\end{enumerate}
The  formulas in the statement now follow from  Theorem~\ref{thm:truncBoolean-minus-antichain}.
\end{proof}

We now give a Morse matching to show that   $\Delta_4(G(m,n))$ is in fact homotopy equivalent to a wedge of spheres in the top dimension. 
\begin{theorem}\label{thm:MarkMorseMatching-GridGraph-k=4-topdim}  The cut complex $\Delta_4(G(m,n))$ is homotopy equivalent to a wedge of spheres in the top dimension $mn-5$
and hence has homology concentrated in this dimension.  The 
 Betti number is given by the formula for the Euler characteristic in \Cref{prop:Delta4-GridGraph-2022April25}.
\end{theorem}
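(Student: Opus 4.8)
The plan is to show that the homology of $\Delta_4(G(m,n))$ is concentrated in the single top dimension $mn-5$, by constructing an acyclic matching on its face poset whose only critical faces are facets; \Cref{prop:Delta4-GridGraph-2022April25} then pins down the number of spheres. The starting point is the explicit shape of the complex: by \Cref{lem:GridGraphs-k=2-4-6} and \Cref{thm:truncBoolean-minus-antichain}, the face lattice of $\Delta_4(G(m,n))$ equals $P(mn,4)\setminus\mathcal{Z}_4(G)$, so the faces of $\Delta_4(G(m,n))$ are exactly the subsets $\sigma\subseteq[mn]$ with $|\sigma|\le mn-5$, together with the facets, whose complements are precisely the $4$-element vertex subsets inducing a disconnected subgraph. (In particular every $5$-element vertex subset contains a disconnected $4$-subset, since a grid graph is bipartite and $K_{2,3}$-free and hence has no $2$-connected induced subgraph on $5$ vertices.) Since \Cref{prop:Delta4-GridGraph-2022April25} already shows the homology is torsion-free and supported in dimensions $mn-5$ and $mn-6$, it suffices to kill the $(mn-6)$-dimensional homology.

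For the matching, fix a linear order $v_1<v_2<\dots<v_{mn}$ of the vertices and first apply the standard (acyclic) element matching along $v_1$: pair $\sigma$ with $\sigma\cup\{v_1\}$ whenever $v_1\notin\sigma$ and $\sigma\cup\{v_1\}$ is a face. From the description above, its critical faces are exactly (i) the facets $F$ with $v_1\notin F$, and (ii) the $(mn-6)$-dimensional faces $\sigma$ for which $\sigma^c$ is a $5$-set containing $v_1$ with $\sigma^c\setminus\{v_1\}$ connected. For a face $\sigma$ of type (ii) the $5$-set $\sigma^c$ has a disconnected $4$-subset, which cannot be $\sigma^c\setminus\{v_1\}$ and hence equals $\sigma^c\setminus\{v\}$ for some $v\in\sigma^c\setminus\{v_1\}$; pairing $\sigma$ with the facet $\sigma\cup\{v\}$ (it has disconnected complement and avoids $v_1$, so it is of type (i)) moves every type (ii) face upward to a facet. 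I would set this up as a single matching tree rooted at the whole face poset, branching first on $v_1$ and then, within the fibre of type (ii) faces, on the choice of $v\in\sigma^c$; the acyclicity criterion for matching trees then gives that the composite matching is acyclic, and by construction its critical faces all lie in the top dimension $mn-5$. By the fundamental theorem of discrete Morse theory, $\Delta_4(G(m,n))\simeq\bigvee\mathbb{S}^{mn-5}$, and as the homology is now concentrated in the top degree the number of spheres equals the reduced Euler characteristic of \Cref{prop:Delta4-GridGraph-2022April25}.

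The main obstacle is the bookkeeping hidden in the second step: one must make the assignment $\sigma\mapsto\sigma\cup\{v\}$ into a bona fide matching, which requires a deterministic choice of $v$ (say the smallest admissible one) together with a tie-break when several type (ii) faces would be sent to the same facet, and one must check that gluing the two steps introduces no directed cycle. Phrasing everything as a matching tree and appealing to its standard acyclicity lemma is the cleanest way to handle both points; a handful of degenerate small cases --- notably $m=2$ with $mn$ close to $5$ --- are best checked by hand.
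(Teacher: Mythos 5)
Your overall architecture is the same as the paper's: an element matching at a distinguished vertex, followed by matching the surviving codimension-one faces upward to facets, with \Cref{prop:Delta4-GridGraph-2022April25} supplying the count. The first stage is correct, and your identification of the critical cells (facets avoiding $v_1$, and codimension-one faces $\sigma$ with $v_1\in\sigma^c$ and $\sigma^c\setminus\{v_1\}$ connected) agrees with the paper's sets $X_0$ and $X_1$. The genuine gap is in the second stage, which is exactly where the content of the theorem lies. Sending each type (ii) face $\sigma$ to $\sigma\cup\{v\}$ for \emph{some} $v$ with $\sigma^c\setminus\{v\}$ disconnected does move each such face to a facet individually, but the assignment need not be a matching: distinct type (ii) faces $F\setminus\{u_1\}$ and $F\setminus\{u_2\}$ can both be sent to the same facet $F$, and the ``smallest admissible $v$'' rule does nothing to prevent this. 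Your proposed tie-break does not close the gap either: a face that loses the tie-break is left unmatched in codimension one, which is precisely what you cannot afford (the theorem requires \emph{every} codimension-one face to be matched so that all critical cells are top-dimensional), while redirecting it to a different facet reopens both well-definedness and acyclicity. The appeal to matching trees also does not apply: ``branching on the choice of $v\in\sigma^c$'' is not a vertex pivot in the sense of that machinery, so its acyclicity lemma gives you nothing; whether the composite matching is acyclic depends essentially on which pivots are chosen, and this must be verified.

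For comparison, the paper's proof is devoted almost entirely to these two points. It classifies each $\sigma\in X_1$ by the tetromino shape of $(\sigma\cup\{v\})^c$ (distinguishing the S- and Z-pieces precisely for this purpose), chooses the added vertex $y$ shape by shape as in \Cref{fig:Tetrominoes}, and uses the corner position of $v$ to guarantee $(\sigma\cup\{y\})^c$ is disconnected, so the image is indeed a facet in $X_0$. Injectivity is proved by checking that the six marked configurations are pairwise distinguishable, so a matched facet determines its partner in $X_1$ uniquely; acyclicity is proved by a case analysis showing that a directed cycle would have to stay inside $X_0\cup X_1$, cannot involve a face of type $X_T$, $X_L$ or $X_I$, can only leave $X_S\cup X_Z$ into $X_L$, and cannot circulate within $X_O$. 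None of this is automatic, and your proposal defers exactly these verifications; as written it does not establish that homology vanishes in degree $mn-6$. (Your side remarks are fine: the fact that every $5$-subset of the grid contains a disconnected $4$-subset is exactly the content of \Cref{lem:GridGraphs-k=2-4-6}, so the complex does have a complete codimension-one skeleton.)
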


 \begin{proof} Let $v$ be the vertex in the first row and the first column, i.e., the upper left corner of the grid graph.  We start by performing an element matching with $v$. From \Cref{lem:GridGraphs-k=2-4-6}  we know $\Delta_4(G(m,n))$ is a truncated Boolean lattice with only some facets removed; thus it has a complete codimension 1 skeleton. 
\begin{figure}[htb]
    \centering
    \begin{subfigure}{0.15\textwidth}
    \centering
        \begin{tikzpicture}
            \draw (0,1) -- (1,1) -- (1,0) -- (0,0) -- (0,1);
            \draw[fill=red] (0,1) circle (4pt);
            \node at (0,1.5) {y};
            \draw[fill=black] (1,1) circle (3pt);
            \draw[fill=black] (1,0) circle (3pt);
            \draw[fill=black] (0,0) circle (3pt);
        \end{tikzpicture}
        \caption{Square}
    \end{subfigure}
    \begin{subfigure}{0.15\textwidth}
    \centering
    \begin{tikzpicture}
        \draw (0,0) -- (1,0) -- (1,1) -- (2,1);
        \draw[fill=black] (0,0) circle (3pt);
        \draw[fill=black] (1,0) circle (3pt);
        \draw[fill=red] (1,1) circle (4pt);
        \node at (1,1.5) {y};
        \draw[fill=black] (2,1) circle (3pt);
    \end{tikzpicture}
    \caption{S-Piece}
    \end{subfigure}
    \begin{subfigure}{0.15\textwidth}
    \centering
        \begin{tikzpicture}
            \draw (0,1) -- (1,1);
            \draw (1,1) -- (1,0); 
            \draw (1,0) -- (2,0);
            \draw[fill=black] (0,1) circle (3pt);
            \draw[fill=black] (1,1) circle (3pt);
            \draw[fill=red] (1,0) circle (4pt);
            \node at (0.5,0) {y};
            \draw[fill=black] (2,0) circle (3pt);
        \end{tikzpicture}
        \caption{Z-Piece}
    \end{subfigure}
    \begin{subfigure}{0.15\textwidth}
    \centering
    \begin{tikzpicture}
        \draw (1,1) -- (0,1);
        \draw (1,1) -- (1,0); 
        \draw (1,1) -- (2,1);
        \draw[fill=black] (0,1) circle (3pt);
        \draw[fill=red] (1,1) circle (4pt);
        \node at (1,1.5) {y};
        \draw[fill=black] (2,1) circle (3pt);
        \draw[fill=black] (1,0) circle (3pt);
    \end{tikzpicture}
    \caption{T-Piece}
    \end{subfigure}
    \begin{subfigure}{0.15\textwidth}
    \centering
    \begin{tikzpicture}
        \draw (1,0) -- (0,0) -- (0,1) -- (0,2);
        \draw[fill=black] (1,0) circle (3pt);
        \draw[fill=black] (0,0) circle (3pt);
        \draw[fill=red] (0,1) circle (4pt);
        \node at (0.5,1) {y};
        \draw[fill=black] (0,2) circle (3pt);
    \end{tikzpicture}
    \caption{L-Piece}
    \end{subfigure}
    \begin{subfigure}{0.15\textwidth}
    \centering
    \begin{tikzpicture}
        \draw (1,0) -- (1,1) -- (1,2) -- (1,3);
        \draw[fill=black] (1,0) circle (3pt);
        \draw[fill=red] (1,1) circle (4pt);
        \node at (1.5,1) {y};
        \draw[fill=black] (1,2) circle (3pt);
        \draw[fill=black] (1,3) circle (3pt);
    \end{tikzpicture}
    \caption{I-Piece}
    \end{subfigure}
    \caption{Possible complements of $\sigma\in X_1$ and the $y$ such that $\sigma$ is matched with $\sigma\cup\{y\}$.}
    \label{fig:Tetrominoes}
\end{figure}
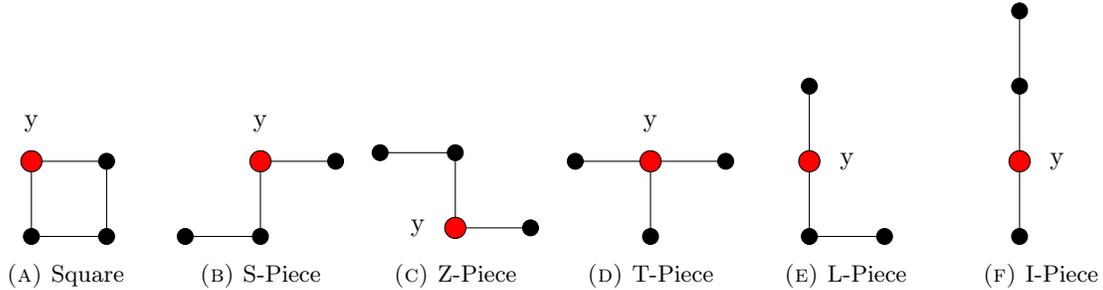
We know then that the unmatched faces must be of codimension 0 or 1.
Let $X_0$ be the facets without $v$ as an element, and let $X_1$ be the  codimension 1 faces where the addition of $v$ as an element would lead to a connected complement. We will further classify the faces $\sigma\in X_1$ by the shape of $(\sigma\cup\{v\})^c$. There are five tetrominoes up to rotation and reflection, but for technical reasons we will distinguish between what we call the S-piece and the Z-piece. The six categories are shown in \Cref{fig:Tetrominoes}.
\begin{align*}
    X_O&=\left\{ \sigma\in X_1\mid (\sigma\cup\{v\})^c \text{ is a square}\right\}\\
    X_S&=\left\{ \sigma\in X_1\mid (\sigma\cup\{v\})^c \text{ is an S-piece}\right\}\\
    X_Z&=\left\{ \sigma\in X_1\mid (\sigma\cup\{v\})^c \text{ is a Z-piece}\right\}\\
    X_T&=\left\{ \sigma\in X_1\mid (\sigma\cup\{v\})^c \text{ is a T-piece}\right\}\\
    X_L&=\left\{ \sigma\in X_1\mid (\sigma\cup\{v\})^c \text{ is an L-piece}\right\}\\
    X_I&=\left\{ \sigma\in X_1\mid (\sigma\cup\{v\})^c \text{ is an I-piece}\right\}\\
    X_1&=X_O\sqcup X_Z \sqcup X_T\sqcup X_L\sqcup X_I
\end{align*}
To match the faces in $X_1$, we must add an element that is not $v$, i.e.,  an element inside the tetronimo. The element we add to each set in each category is marked $y$ in \Cref{fig:Tetrominoes}.
Note that, as $v$ is in the top left corner of $G(m,n)$, $(\sigma\cup\{y\})^c$ is disconnected, so $\sigma\cup\{y\}\in X_0$. We claim this forms an acyclic matching. First we need to show this is a matching, which can be done by observing that none of the $(\sigma\cup\{v\cup\{y\})^c$ shown in \Cref{fig:Tetrominoes} are the same up to rotation (or reflection excluding $S$ and $Z$ pieces). So given any matched element in $X_0$, we can uniquely determine which element of $X_1$ is matched with it, even if it covers multiple elements in $X_1$.

Now we need to verify that  the matching is acyclic. The initial element matching with $v$ is known to be acyclic, so any cycle that appears must use the new matched elements,
and thus must exist between codimension 1 and 0 faces. In fact, we claim it must exist entirely between $X_0$ and $X_1$. Suppose $(\sigma_1,\ldots,\sigma_r)$ forms a cycle in the matching.  If $v\in\sigma_i$, and $\sigma_i$ is a facet, then $\sigma_i\setminus\{x\}\notin X_1$ for all $x\in G(m,n)$, as $v\notin \tau$ for all $\tau\in X_1$, so $x$ must be $v$, but as $\sigma_i$ is a facet, $\sigma_i^c$ is disconnected, so $\sigma_i\setminus\{v\}\notin X_1$ by definition of $X_1$. Now suppose $\sigma_i\notin X_1$ is a codimension 1 face; then $\sigma_i$ is matched with $\sigma_i\cup\{v\}$ which is not in $X_0$. If a cycle contains any face outside of $X_0$ or $X_1$, it must stay outside of $X_0$ and $X_1$, but an element matching is acyclic, so the cycle must be contained entirely in $X_0$ and $X_1$.

Again, suppose $(\sigma_1,\ldots,\sigma_r)$ is a cycle in the matching, $\sigma_i\in X_1$, and $\sigma_{i+1}=\sigma_i\cup\{ y\}$. Let $\sigma_{i+2}=\sigma_{i+1}\setminus\{x\}$; if $\sigma_{i+2}\in X_1$, then $(\sigma_i\cup\{ y\}\setminus\{x\})^c$ must be connected. If $\sigma_i\in X_T\cup X_L\cup X_I$, we can observe \Cref{fig:Tetrominoes} to see this is only possible if $x=y$, which is not permitted in cycles, so no cycle may contain a face in $X_T$, $X_L$, or $X_I$. Now suppose $\sigma_i\in X_S\cup X_Z$. Then we can see from \Cref{fig:S-to-L} that $\sigma_{i+2}$ could be in $X_L$, but no cycle can contain a face in $X_L$, so no cycle can contain a face in $X_S$ or $X_Z$. This leaves only $X_O$ for 
all faces of codimension 1, but if $\sigma_i\in X_O$, then $\sigma_{i+2}$ cannot be a different element of $X_O$. So the matching is in fact acyclic, and as every face in $X_1$ is matched, it has only critical cells in codimension 0.
\begin{figure}[htb]
    \centering
    \begin{tikzpicture}
        \draw (0,0) -- (1,0);
        \draw[dashed] (1,0) -- (1,1) -- (2,1);
        \draw (1,0) -- (2,0) -- (2,1);
        \draw[fill=black] (0,0) circle (3pt);
        \draw[fill=black] (1,0) circle (3pt);
        \draw[color=red, fill=white] (1,1) circle (4pt);
        \node at (1,1.5) {y};
        \draw[fill=black] (2,1) circle (3pt);
        \draw[color=blue, fill=blue] (2,0) circle (4pt);
        \node at (2.5,0) {x};
    \end{tikzpicture}
    \caption{Transforming an S-Piece into an L-Piece.}
    \label{fig:S-to-L}
\end{figure}
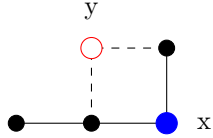
\end{proof}

For the case $k=5$, the Euler characteristic can be computed as in \Cref{prop:Delta4-GridGraph-2022April25}, by a slightly more involved analysis.
\begin{prop}\label{prop:EulerChar5GridGraph}  Let $G=G(m,n)$. The face lattice of the cut complex $\Delta_5(G)$ is isomorphic to 
\[P(mn,5)\setminus (\mathcal{Z}_5(G)\cup\mathcal{Y}_6(G)).\] 
The reduced Euler characteristic is 
\[(-1)^{n-6}\left(|\{F:F \text{ is a facet of }\Delta_5(G)\}|   -  \binom{mn-1}{5}                                                     +(m-2)(n-1)+(m-1)(n-2)\right). \]
\end{prop}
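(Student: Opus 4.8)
The plan is to derive both assertions from \Cref{prop:truncBoolean-minus-antichain-and-cycles} applied to $G=G(m,n)$ with $k=5$, so the whole proof reduces to checking conditions (1) and (2) of that proposition and then counting $6$-cycles. First I would record the easy structural facts about the grid: it is bipartite of girth $4$, hence has no cycles of length $3$ or $5$, and the $6$-vertex subsets of $G(m,n)$ whose induced subgraph contains a $6$-cycle are exactly the vertex sets of its $2\times 3$ and $3\times 2$ subgrids. The reason is that a $6$-cycle in the grid bounds a region of perimeter $6$, which must be a domino (the only two-cell polyomino), and the six boundary vertices of a domino induce precisely a $2\times 3$ (or $3\times 2$) block — in particular such a cycle always carries its ``middle'' chord. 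Counting placements then gives $|\mathcal{Y}_6(G)| = (m-1)(n-2)+(m-2)(n-1)$. Granting conditions (1) and (2), \Cref{prop:truncBoolean-minus-antichain-and-cycles} yields the claimed identification of $\mathcal{L}(\Delta_5(G))$ with $P(mn,5)\setminus(\mathcal{Z}_5(G)\cup\mathcal{Y}_6(G))$, and the reduced Euler characteristic is read off from \eqref{eqn:EulerChar-truncBoolean-minus-antichain-cycle} after the substitutions $n\mapsto mn$, $k=5$, and the above value of $|\mathcal{Y}_6(G)|$.

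The engine behind conditions (1) and (2) is a structural lemma: \emph{the only $2$-connected induced subgraph of $G(m,n)$ on six vertices is a $2\times 3$ or $3\times 2$ subgrid.} I would prove this by ear decomposition. A $2$-connected graph is obtained from a cycle by attaching ears; in the grid all cycles are even, and since no $6$-cycle is induced (each has its middle chord), an induced subgraph on six vertices that contains a spanning $6$-cycle is already the $2\times 3$ block. Otherwise the ear decomposition starts from a $C_4$, i.e.\ a unit square, and attaching an ear while staying within six vertices means attaching a path of length three between two vertices of the square: between two adjacent vertices this closes a second unit square sharing an edge (giving the $2\times 3$ block), while between two opposite vertices it would create a $5$-cycle, impossible in the bipartite grid. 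No further chord can be added without producing a triangle or an odd cycle.

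With the lemma in hand, condition (1) is immediate. Let $A$ be a connected $5$-set, $x\notin A$, and suppose $A\cup\{x\}$ is not a $6$-cycle. If $G[A\cup\{x\}]$ is disconnected, any $y\in A$ works; otherwise $G[A\cup\{x\}]$ is a connected six-vertex induced subgraph which, by the lemma, is not $2$-connected, so it has a cut vertex $z$. Since $G[A]=G[(A\cup\{x\})\setminus\{x\}]$ is connected, $z\neq x$, and $y:=z\in A$ is the vertex required by condition (1). For condition (2), let $B$ be a $6$-cycle, that is, a $2\times 3$ block, and $x\notin B$. Any two vertices of $B$ lying at distance $2$ have all of their common neighbours inside $B$, so no vertex outside $B$ is adjacent to two vertices of $B$; taking $y$ to be the unique vertex of $B$ adjacent to $x$ (or any $y\in B$ if $x$ has no neighbour in $B$) isolates $x$ in $(B\setminus\{y\})\cup\{x\}$, which is therefore disconnected.

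The main obstacle is the structural lemma on $2$-connected six-vertex induced subgraphs of the grid: it is elementary but needs a careful, somewhat tedious case check, and it is doing all the work — it is precisely what forces condition (1) to fail only for the excluded $2\times 3$ configuration and guarantees that removing the complements of $6$-cycles from the truncated Boolean lattice destroys no genuine face of $\Delta_5(G)$. Everything afterward — enumerating the $6$-cycles of $G(m,n)$ and substituting into \eqref{eqn:EulerChar-truncBoolean-minus-antichain-cycle} — is routine.
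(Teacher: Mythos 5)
Your proposal is correct and follows the same overall skeleton as the paper (verify conditions (1) and (2) of \Cref{prop:truncBoolean-minus-antichain-and-cycles} for $k=5$, then count the $6$-cycles, which both you and the paper identify with dominoes, giving $(m-1)(n-2)+(m-2)(n-1)$), but the engine you use for condition (1) is genuinely different. The paper argues directly on the connected $5$-set $A$, splitting into three shapes --- paths, trees with a vertex of degree $3$ or $4$, and sets containing an induced $4$-cycle --- and exhibiting a cut vertex of $G[A\cup\{x\}]$ in each case, in the spirit of \Cref{lem:GridGraphs-k=2-4-6}; you instead prove one structural lemma (the only $2$-connected six-vertex induced subgraph of the grid is a $2\times 3$ block) and then get the cut vertex abstractly, using that $x$ cannot be the cut vertex because $G[A]$ is connected. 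Your route is arguably cleaner and more conceptual, and your condition (2) argument (no vertex outside a $2\times 3$ block has two neighbours in it, since vertices of the block at distance $2$ have all common neighbours inside the block) is tighter than the paper's degree-based discussion; the trade-off is that all the work is concentrated in the classification lemma, whose ear-decomposition proof as sketched omits one sub-case: besides the length-$3$ ear you treat, one must also rule out ears of length $1$ and $2$ on the starting unit square (a chord gives a triangle; a length-$2$ ear between adjacent corners gives a triangle, and between diagonal corners its middle vertex would be a common neighbour of the diagonal pair, hence already a corner of the square), so no configuration with two short ears can occur. That gap is easily closed, and with it your argument is complete; the Euler characteristic then falls out of \eqref{eqn:EulerChar-truncBoolean-minus-antichain-cycle} exactly as in the paper.
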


\begin{proof} We follow the argument of Lemma~\ref{lem:GridGraphs-k=2-4-6}.  The connected subsets $A$ of size $5$ are of three types:
(1) paths, (2) trees with at least one vertex, say $y$, of degree 3, or of degree 4, and (3) sets  containing an induced  4-cycle of the grid graph.
Assume $A\cup\{x\}$ induces a connected subgraph of $G$, and assume it is not a 6-cycle.  

In type (1) it is clear that $A\cup\{x\}$ has a cut vertex.
In type (2), first suppose $A$ has at least one vertex $y$ of degree 3.  If $x$ is connected by an edge to only one vertex $z\in A$, ($z$ may coincide with $y$), then $y$ is always a cut vertex. The other possibility is that $x$ is part of an induced 4-cycle, which necessarily contains $y.$ In fact $y$ and $x$ are then diagonally opposite vertices in the grid graph. In that case $y$ is still a cut vertex, since being of degree 3, it is also connected by an edge to a vertex $w$ not in the 4-cycle. 

Now suppose $A$ has a vertex $z$ of degree 4. Again either $x$ is connected by an edge to exactly one vertex $w\ne z$ of $A$, or $x$ is part of an induced 4-cycle which also contains $z$ as a  vertex diagonally opposite to $x$. In either case $z$ is a cut vertex. 

In type (3), since  $A\cup\{x\}$ is connected but not a 6-cycle, $A$ must have exactly one vertex of degree 3, and it is easy to see that this vertex is always a cut vertex for $A\cup\{x\}$. 


It remains to 
check that every codimension 1 face of a subset of $\mathcal{Y}_6(G)),$ 
 is in fact in the cut complex.  Equivalently, if $B$ is a 6-cycle and $x\notin B$ such that $B\cup\{x\}$ is connected, then $B\cup\{x\}$ has a cut vertex.  Note that the induced subgraph $G[B]$ looks like two squares sharing an edge in the grid graph.  In particular, $B$ has two vertices of degree 3,

The possibilities are that either $x$ is connected by and edge to a vertex $y$ of degree 2, or to a vertex $z$ of degree 3 of $B$.  Since $B$ has two vertices of degree 3, it is clear that either $y$ or $z$ is a cut vertex.  

The conclusion about the face lattice now follows from Proposition~\ref{prop:truncBoolean-minus-antichain-and-cycles}.   

One sees that the number of 6-cycles is the number of pairs of 4-cycles sharing an edge, and this is $(n-2)(m-1)+(m-2)(n-1)=2mn-3(m+n)+4$.  
\end{proof}

\begin{table}[htbp]
\begin{center}
\scalebox{0.7}{
\begin{tabular}{|c|c|c|c|c|c|c|l|}
\hline
$k\backslash n$ & 2  &3 & 4 & 5 &6 &7 &\\
[2pt]\hline
%
2 &1 &2 & 3& 4  & 5 &6 &Thm.~\ref{thm:grid-k=2MyMorseMatching2021Sept}\\
$3$ & 0 &2 & 8 &18 & 32 & &Prop.~\ref{prop:BettiNumberDelta3GridGraph}\\
$4$ &0 &0 &14 & 52 & 122 & 232& Prop.~\ref{prop:Delta4-GridGraph-2022April25}\\
$5$ & $\emptyset$ & $\emptyset$ & 13  &85 &270  & 636 &\\
($5$)       &              &                             & (11)  & (82)  &(266) & (631) &Thm.~\ref{thm:truncBoolean-minus-antichain}\\
$6$ & $\emptyset$ & $\emptyset$ & 1 & 71  & & &\\
\hline
\end{tabular}
}
\end{center}
\vskip .1in
\caption{\small Betti numbers for $\Delta_k (G(2,n)), 2\le k\le 6.$ 
Parentheses show Betti numbers predicted by  the Euler characteristic formula in Theorem~\ref{thm:truncBoolean-minus-antichain}.}
\label{table:Betti-Grid2n}
\end{table}

\section{Group actions on the homology}\label{sec:homology-reps}
\phantom{TODO}

It is natural to ask for the representation of a group of symmetries of our complexes on the rational  homology.  See, e.g., \cite{SolJAlg1968}, \cite{RPSGaP1982},\cite{Jer93} and the bibliography in \cite{WachsPosetTop2007}.
In this section we compute the character of this representation 
via the face lattice, using poset topology tools and the Hopf trace formula.
Similar determinations were made for the group action on the cut complex of the join of two graphs in \cite{BDJRSX2024}.  There it was shown that when the symmetric group acts on the cut complex, the irreducible representation indexed by a hook appears frequently \cite[Section 7]{BDJRSX2024}. Theorem~\ref{thm:disj-union-Kn-Km-homrep} below is one more instance of this phenomenon.

\subsection{The homology representation of $\Delta_k(K_m+K_n)$}
\label{sec:rep-disj-union}\phantom{}

Recall 
that $\Delta_k(K_m+K_n)$ is shellable, and hence its (reduced) homology vanishes in all but the top dimension $m+n-k-1$. 
The group $\mathfrak{S}_m\times \mathfrak{S}_n$  acts simplicially on the cut complex of  the disjoint union $K_m+K_n$ of complete graphs. 

We begin by describing the face lattice 
of the cut complex $\Delta_k(K_m+K_n) $. The Boolean lattice  $B_{m+n}$ can be identified with the product  $B_m\times B_n$. In particular the truncated  Boolean lattice   $B_{m+n}^{\le m+n-k}$ (which has  an artificially appended top element $\hat 1$), may be viewed as the set $\{\hat 1\}\cup \{A\sqcup B| A\subseteq [m],  B\subseteq [n], 
|A|+|B|\le m+n-k\}$. With this identification, we have the following.

\begin{lemma}\label{lem:facelatticeS-disjunion-KmKn}
The face lattice $\mathcal{L}(\Delta_k(K_m+K_n))$ of the cut complex $\Delta_k(K_m+K_n) $ is the  following subposet $L_k(m,n)$ of the truncated  Boolean lattice  $B_{m+n}^{\le m+n-k}$. 

\noindent

Let $\mathcal{L}(\Delta_k(K_m+K_n))$ be the face lattice of the cut complex
$\Delta_k(K_m+K_n) $. Then ${\mathcal{L}(\Delta_k(K_m+K_n))}$ is the
subposet $L_k(m,n)$ of the truncated  Boolean lattice  $B_{m+n}^{\le m+n-k}$
given by
$$ L_k(m,n) = B_{m+n}^{\le m+n-k}  \setminus
\mathlarger{(}\{A_1\sqcup [n]: A_1\subseteq [m], |A_1|\le m-k\} \cup
\{[m] \sqcup B_1: B_1\subseteq [n], |B_1|\le n-k\}\mathlarger{)}.$$
Thus,

if $n<k\le m$, then
$L_k(m,n) =
B_{m+n}^{\le m+n-k}  \setminus\{A_1\sqcup [n]: A_1\subseteq [m], |A_1|\le m-k\}$,

if $m<k\le n$, then
$ L_k(m,n) =
B_{m+n}^{\le m+n-k}  \setminus\{[m] \sqcup B_1: B_1\subseteq [n], |B_1|\le n-k\}$,

and if $k>\max\{m,n\}$, then
$L_k(m,n)=B_{m+n}^{\le m+n-k}$.
\end{lemma}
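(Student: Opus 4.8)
The plan is to unwind the definition of $\Delta_k(K_m+K_n)$ and transport it through the identification $B_{m+n}\cong B_m\times B_n$ introduced just before the lemma.

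First I would record the face characterization, which is precisely the one extracted in the proof of Proposition~\ref{fpoly-kn-km}. Writing $V(K_m+K_n)=[m]\sqcup[n]$ and $\sigma=A\sqcup B$ with $A\subseteq[m]$, $B\subseteq[n]$, the set $\sigma$ is a face of $\Delta_k(K_m+K_n)$ if and only if $\sigma^c$ contains a disconnected set of size $k$. Since the two connected components of $K_m+K_n$ are the cliques on $[m]$ and on $[n]$, a vertex subset induces a disconnected subgraph exactly when it meets both $[m]$ and $[n]$ (the standing assumption $k\ge 2$ is what makes this the operative condition). Hence $\sigma^c$ contains such a set of size $k$ iff $|\sigma^c|\ge k$, $\sigma^c\cap[m]\ne\emptyset$ and $\sigma^c\cap[n]\ne\emptyset$, which translates to $|A|+|B|\le m+n-k$, $A\ne[m]$ and $B\ne[n]$.

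Next I would assemble the face lattice. Under the identification of $B_{m+n}^{\le m+n-k}$ with $\{\hat 1\}\cup\{A\sqcup B:A\subseteq[m],\,B\subseteq[n],\,|A|+|B|\le m+n-k\}$, the previous step identifies the faces of $\Delta_k(K_m+K_n)$ with exactly those elements $A\sqcup B$ of $B_{m+n}^{\le m+n-k}$ for which $A\ne[m]$ and $B\ne[n]$. So $\mathcal{L}(\Delta_k(K_m+K_n))$ is the subposet obtained from $B_{m+n}^{\le m+n-k}$ by deleting all elements of the form $[m]\sqcup B_1$ together with all elements of the form $A_1\sqcup[n]$; such elements actually occur in $B_{m+n}^{\le m+n-k}$ precisely when $m+|B_1|\le m+n-k$, that is $|B_1|\le n-k$, and when $|A_1|+n\le m+n-k$, that is $|A_1|\le m-k$, respectively. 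I would then note that the two deleted families are disjoint (having $A=[m]$ and $B=[n]$ at once forces $k\le 0$), and that the artificial top element of $B_{m+n}^{\le m+n-k}$ is the artificial top of the face lattice; together this yields the first displayed description of $L_k(m,n)$. The three itemized cases fall out immediately: if $k>m$ there is no $A_1$ with $|A_1|\le m-k$, if $k>n$ there is no $B_1$ with $|B_1|\le n-k$, and if $k>\max\{m,n\}$ both families are empty, leaving all of $B_{m+n}^{\le m+n-k}$.

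I do not anticipate a genuine obstacle here: the entire argument is a direct translation of definitions. The only points that require a moment of care are checking that the two excised families lie inside the truncated Boolean lattice exactly under the stated cardinality bounds, that these families are disjoint, and that the appended maximal elements on the two sides correspond — each of which is routine.
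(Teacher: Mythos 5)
Your proposal is correct and follows essentially the same route as the paper: both characterize the faces by the condition that the complement $A\sqcup B$ meets both $[m]$ and $[n]$ and has size at least $k$, then translate this into the deletion of the two families $A_1\sqcup[n]$ and $[m]\sqcup B_1$ from $B_{m+n}^{\le m+n-k}$. Your extra checks (the cardinality bounds placing the excised families inside the truncated lattice, their disjointness, and the matching of top elements) are the routine details the paper leaves implicit.
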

\begin{proof}
First note that $L_k(m,n)$ includes the top and bottom elements of $B_{m+n}^{\le m+n-k}$. It suffices to show that $\sigma$ is a face of the $k$-cut complex $\Delta_k(K_m+K_n) $ if and only if it is \emph{not} of the form $A_1\sqcup [n]$ or $[m] \sqcup B_1$ as in the statement of this lemma. By definition $\sigma$ is in $\Delta_k(K_m+K_n) $ if and only if its complement, $ [m]\sqcup [n]\setminus \sigma$, contains a disconnected set of size $k$ in $K_m + K_n$. This is true if and only if $[m]\sqcup [n]\setminus \sigma = A \sqcup B $ for nonempty subsets $A \subset [m], B\subset [n]$ such that $|A|+|B|\geq k$. Let $A_1= [m]\setminus A$ and $ B_1 = [n]\setminus B$. Equivalently, $\sigma = A_1 \sqcup B_1$ for $A_1\subsetneq [m]$, $B_1 \subsetneq [n]$ such that $|A_1| + |B_1| \leq m+n-k$. The lemma follows. \end{proof}

Recall \cite{RPSEC11997} that the reduced Euler characteristic of a simplicial complex $\Delta$ is the M\"obius number $\mu(\mathcal{L}(\Delta))$ of its face lattice $\mathcal{L}(\Delta)$.  We will compute the trace of a group element on the unique nonvanishing homology of the face lattice, which is equivalent to computing the M\"obius number of the fixed-point lattice,   by the Hopf Trace Theorem \cite[Chapter 2, Theorem~22.1]{Munkres1984}. See  \cite{Jer93} for a detailed discussion of this technique in the poset topology context.  We will need \Cref{thm:Bac-mu} for our computations. 

Let $V_{\lambda}$ denote the irreducible $\mathfrak{S}_r$-module indexed by the partition $\lambda$ of $r$.
From \cite{SolJAlg1968} it is known that 
the representation of $\mathfrak{S}_r$ on the  top homology of the truncated Boolean lattice $B_r^{\le r-k}$ is  $V_{(k,1^{r-k})}$, the module indexed by the hook $(k,1^{r-k})$.  By \Cref{prop:Deltak-edgeless-graph-homrepS}, this also coincides with the $\mathfrak{S}_r$-action on the homology of the $k$-cut complex of the edgeless graph on $r$ vertices. 
 See also \cite[Theorem 7.2]{BDJRSX2024}. In particular $V_{(1^r)}$, the  $\mathfrak{S}_r$-representation on the one-dimensional top homology of $B_r$, is the  module affording the sign representation $\sgn$ of $\mathfrak{S}_r$. We use the down arrow to indicate restriction of a module to a subgroup. 

\begin{theorem}\label{thm:disj-union-Kn-Km-homrep} Assume the cut complex $\Delta_k(K_m+K_n)$ is nonempty and  nonvoid, so that $m+n> k\ge 2$.  The representation of $\mathfrak{S}_m\times \mathfrak{S}_n$ 
on the top homology of the shellable cut complex ${\Delta_k(K_m+K_n)}$ is given by 
\[  V_{(k,1^{m+n-k})} \big\downarrow_{\mathfrak{S}_m\times \mathfrak{S}_n}^{\mathfrak{S}_{m+n}} 
-V_{(k,1^{m-k})}\otimes V_{(1^n)} - V_{(1^m)}\otimes V_{(k,1^{n-k})}. \]
If $m<k$, we interpret $V_{(k,1^{m-k})}$, and therefore $V_{(k,1^{m-k})}\otimes V_{(1^n)}$, to be the zero module, and similarly for $n<k$.

In particular, the dimension of the top homology module, i.e., the unique  nonzero Betti number of the cut complex, is 
\[\binom{m+n-1}{k-1}-\binom{m-1}{k-1}-\binom{n-1}{k-1}.\]
\end{theorem}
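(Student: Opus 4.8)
\emph{Proof strategy.} The key observation is that $\Delta'\coloneqq\Delta_k(K_m+K_n)$ is a subcomplex of $\Delta\coloneqq\Delta_k(E_{m+n})$, the $(m+n-k-1)$-skeleton of the simplex on the vertex set $[m]\sqcup[n]$: by \Cref{lem:facelatticeS-disjunion-KmKn} (or directly, since a $k$-subset of $K_m+K_n$ is disconnected iff it meets both parts), a set $\sigma$ is a face of $\Delta'$ iff $|\sigma|\le m+n-k$, $[m]\not\subseteq\sigma$ and $[n]\not\subseteq\sigma$. Hence $\Delta\setminus\Delta'=\mathcal R_1\sqcup\mathcal R_2$ where $\mathcal R_1=\{\sigma:[n]\subseteq\sigma,\ |\sigma|\le m+n-k\}$ and $\mathcal R_2=\{\sigma:[m]\subseteq\sigma,\ |\sigma|\le m+n-k\}$, the union being disjoint because $|\sigma|\le m+n-k<m+n$ rules out $\sigma\supseteq[m]\cup[n]$. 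The plan is to read off the homology representation of $\Delta'$ from the long exact sequence of the pair $(\Delta,\Delta')$, which is equivariant for the action of $\mathfrak S_m\times\mathfrak S_n\le\mathfrak S_{m+n}$ since this subgroup preserves $[m]$, $[n]$ and hence both $\Delta$ and $\Delta'$. (The Hopf trace / face-lattice route announced in the paper, using \Cref{thm:Bac-mu} on the fixed-point sublattices of $L_k(m,n)$, is an alternative.)

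First I would collect the vanishing statements. Both $\Delta$ and $\Delta'$ are pure shellable of dimension $d\coloneqq m+n-k-1$ ($\Delta$ by \Cref{ex:Examples-cut-complex}(5), $\Delta'$ as noted after \Cref{thm:MargeDisjunion}), so the reduced homology of each is concentrated in degree $d$; by \Cref{prop:Deltak-edgeless-graph-homrepS}, $\tilde H_d(\Delta)=V_{(k,1^{m+n-k})}$ as an $\mathfrak S_{m+n}$-module, i.e.\ $V_{(k,1^{m+n-k})}\big\downarrow_{\mathfrak S_m\times\mathfrak S_n}$ after restriction. The relative chain complex $C_*(\Delta,\Delta')$ is spanned by the faces lying in $\mathcal R_1\sqcup\mathcal R_2$, all of dimension $\le d$, so $\tilde H_i(\Delta,\Delta')=0$ for $i>d$. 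The long exact sequence therefore collapses to a short exact sequence of $\mathfrak S_m\times\mathfrak S_n$-modules
\[0\longrightarrow\tilde H_d(\Delta')\longrightarrow\tilde H_d(\Delta)\longrightarrow\tilde H_d(\Delta,\Delta')\longrightarrow 0,\]
which splits over $\mathbb Q$, giving $[\tilde H_d(\Delta')]=[\tilde H_d(\Delta)]-[\tilde H_d(\Delta,\Delta')]$ in the representation ring.

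It remains to compute $\tilde H_d(\Delta,\Delta')$. Taking the relative boundary of a face $\sigma\in\mathcal R_1$ one may only delete vertices of $\sigma\cap[m]$, since deleting a vertex of $[n]$ produces a set no longer containing $[n]$, hence a face of $\Delta'$ (zero in relative chains); so $C_*(\Delta,\Delta')$ splits as a direct sum $\bar C^{(1)}\oplus\bar C^{(2)}$ of subcomplexes spanned by $\mathcal R_1$-faces and $\mathcal R_2$-faces respectively. Writing $\sigma=A_1\sqcup[n]$ for $\sigma\in\mathcal R_1$ (necessarily $A_1\subsetneq[m]$), the map $\sigma\mapsto A_1$ identifies $\bar C^{(1)}$, up to an overall degree shift by $n$ (hence up to a global sign on the differential, irrelevant to homology), with the reduced simplicial chain complex of $\Delta_k(E_m)=\langle[m]\rangle^{(m-k-1)}$, $\mathfrak S_m$-equivariantly; and every $\tau\in\mathfrak S_n$ fixes each such $\sigma$ setwise while acting on its orientation by $\sgn(\tau)$, so $\mathfrak S_n$ acts on all of $\bar C^{(1)}$ by the scalar character $\sgn=V_{(1^n)}$. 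Thus $\tilde H_d(\bar C^{(1)})\cong\tilde H_{m-k-1}(\Delta_k(E_m))\otimes V_{(1^n)}=V_{(k,1^{m-k})}\otimes V_{(1^n)}$ by \Cref{prop:Deltak-edgeless-graph-homrepS}, and symmetrically $\tilde H_d(\bar C^{(2)})\cong V_{(1^m)}\otimes V_{(k,1^{n-k})}$; when $m<k$ (resp.\ $n<k$) the set $\mathcal R_1$ (resp.\ $\mathcal R_2$) is empty, consistent with the convention that $V_{(k,1^{m-k})}$ (resp.\ $V_{(k,1^{n-k})}$) is the zero module. Substituting into the short exact sequence yields the asserted formula for $\tilde H_d(\Delta')$. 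The dimension statement then follows immediately by taking dimensions, using $\dim V_{(k,1^{r-k})}=\binom{r-1}{k-1}$ for the hook partition of $r$ (equivalently, the top Betti number of $B_r^{\le r-k}$ recorded before \Cref{prop:Deltak-edgeless-graph-homrepS}) and the convention $\binom{a}{b}=0$ for $a<b$.

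The main obstacle is the sign bookkeeping. One must verify that the degree shift by $n$ in the identification of $\bar C^{(1)}$ does not alter the $\mathfrak S_m$-module structure, and — more substantively — must pin down the $\mathfrak S_n$-action on $\mathcal R_1$-chains as the sign character rather than the trivial one; this is precisely what produces the tensor factor $V_{(1^n)}$ and reflects the recurring role of hook modules. A secondary bit of care is needed for the degenerate ranges $k>\max\{m,n\}$, $m<k\le n$, and $k=m$ or $k=n$, where one checks that the void-complex and $(-1)$-skeleton conventions, together with the vanishing-binomial convention, keep both the representation formula and the Betti-number formula correct; these cases are routine once the generic one is settled.
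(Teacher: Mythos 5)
Your argument is correct, and it takes a genuinely different route from the paper. The paper works entirely at the level of the face lattice: it computes the character value of each $g\in\mathfrak{S}_m\times\mathfrak{S}_n$ via the Hopf trace formula as the M\"obius number of the fixed-point subposet of $L_k(m,n)$, and evaluates that M\"obius number by applying Baclawski's theorem (\Cref{thm:Bac-mu}) to the removed subposet, whose proper part splits into the two pieces $X\cong B_m^{\le m-k}$ (with $\mathfrak{S}_n$ trivial) and $Y\cong B_n^{\le n-k}$ (with $\mathfrak{S}_m$ trivial); the chain analysis there shows only chains through $\emptyset\sqcup[n]$ or $[m]\sqcup\emptyset$ contribute, producing exactly the two correction terms. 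You instead embed $\Delta_k(K_m+K_n)$ into $\Delta_k(E_{m+n})$ and use the equivariant long exact sequence of the pair: purity and shellability of both complexes collapse it to a short exact sequence in the top degree, and the relative complex splits $\mathfrak{S}_m\times\mathfrak{S}_n$-equivariantly into the $\mathcal R_1$- and $\mathcal R_2$-spans, each identified (after a degree shift by $n$, resp.\ $m$) with the reduced chain complex of $\Delta_k(E_m)$, resp.\ $\Delta_k(E_n)$, tensored with the sign character of the other factor. The two sign issues you flag do work out: with a vertex order placing $[m]$ before $[n]$ the identification of $\bar C^{(1)}$ is literally sign-preserving (any other order changes the differential by a global sign only), and a permutation $\tau\in\mathfrak{S}_n$ fixes each $\mathcal R_1$-face setwise while permuting its $[n]$-vertices, hence acts by $\sgn(\tau)$, which is exactly where the factor $V_{(1^n)}$ comes from; note that the paper's route produces the same factor as $\mu(B_n)^{\sigma_n}$, i.e.\ the sign character realized as the top homology of $B_n$. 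What each approach buys: yours yields an honest short exact sequence of modules (so the stated virtual difference is realized as a quotient of the restricted hook module, and the Betti number follows by counting dimensions with no character theory), and it bypasses fixed-point M\"obius computations entirely; the paper's Hopf-trace/Baclawski method stays within the poset-topology toolkit it reuses for the path and cycle computations in \Cref{sec:Traces-path-cycle}, where a skeleton-pair argument of your type is not available. Your degenerate cases ($k=m$, $k=n$, $\min\{m,n\}<k$) are handled correctly by the emptiness of $\mathcal R_1$ or $\mathcal R_2$ and the convention $V_{(k,1^{m-k})}=0$ for $m<k$ (for $k=m$ the single relative cell $[n]$ gives the trivial-times-sign module, matching $V_{(k)}\otimes V_{(1^n)}$), so no gap remains.
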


\begin{proof}  We will  compute the trace of an element $g\in \mathfrak{S}_m\times \mathfrak{S}_n$ on the top homology of the subposet $L_k(m,n)$. 
In this context, the Hopf trace formula says the following \cite{Jer93}. For any bounded poset $P$ and any order-preserving map $g:P\rightarrow P$, let $P^g$ denote the subposet of $P$ consisting of elements fixed pointwise by $g$. Then the trace $\tr(g)$ of $g$ on the Lefschetz module of $P$, i.e., the 
alternating sum of the rational homology modules 
of $P$, is given by the M\"obius number $\mu(P^g)$ of the fixed-point subposet $P^g$.  In particular, if homology is concentrated in a single dimension $d$, then 
\begin{equation}\label{eqn:Hopftrace-dim}
(-1)^d \mu(P^g)= \tr(g, \tilde{H}_d(P)).
\end{equation}

In the present situation, 
 the  trace of the element $g$ in $\mathfrak{S}_m\times \mathfrak{S}_n$ on the top homology is given by 
\[ (-1)^{m+n-k-1} \mu(L_k(m,n)^g).\]
Assume first that $m\ge k, n\ge k$. 

To compute the trace, we use Theorem~\ref{thm:Bac-mu}, taking $P^g$ to be the subposet  of elements of the product poset $P=(B_m\times B_n)^{\le m+n-k}$ that are fixed by $g$, and $Q^g$ to be similarly the subposet  of elements in $Q=\{\hat 0, \hat1\}\cup \{[m]\sqcup B_1, A_1\sqcup [n]: B_1\subseteq [n], |B_1|\le n-k, A_1\subseteq [m], |A_1|\le m-k   \}$ that are fixed by $g$. Note that the subposet $Q$ is invariant under the action of $\mathfrak{S}_m\times \mathfrak{S}_n$ .  Letting $(P\setminus Q)^g$ denote the subposet of $P\setminus Q$ fixed by $g$, we obtain 
\begin{equation}\label{eqn:Hopftrace}
\mu(P\setminus Q)^g-\mu(P)^g
=\sum_{\substack{c=(\hat 0<x_1<x_2<\dots<x_r<\hat 1)\\ r\ge 1\\ c \text{ a chain in $Q^g$}} }
(-1)^r \mu_P(\hat 0, x_1)^g \mu_P(x_1,x_2)^g \cdots \mu_P( x_r,\hat 1)^g.
\end{equation}

Now observe that the proper part of $Q$ is the disjoint union of 
\[X=\{A_1\sqcup [n] : A_1\subset [m], |A_1|\le m-k\} \ \text{ and } 
Y=\{[m]\sqcup B_1 : B_1\subset [n], |B_1|\le n-k\}.\]
Clearly a chain in $X\sqcup Y$ is  a chain lying either entirely in $X$ or entirely in $Y$.  Moreover, there are  $\mathfrak{S}_m\times \mathfrak{S}_n$-equivariant poset isomorphisms
\begin{equation}\label{eqn:X-Y}
X \cong B_m^{\le m-k},  \text{with $\mathfrak{S}_n$ acting trivially, and }\ \ 
Y \cong  B_n^{\le n-k}, \text{with $\mathfrak{S}_m$ acting trivially}.
\end{equation}

Denote by $X^g$ the subposet of $X$ fixed pointwise by $g$, and similarly $Y^g$.
The right-hand side of~\eqref{eqn:Hopftrace} is thus 
\begin{equation}\label{eqn:Hopftrace2}
\begin{split}
&\sum_{\substack{c=(\hat 0<x_1<x_2<\dots<x_r<\hat 1)\\ r\ge 1\\   x_i\in X^g}} 
(-1)^r \mu_P(\hat 0, x_1)^g \mu_P(x_1,x_2)^g \cdots \mu_P( x_r,\hat 1)^g \\
& +
\sum_{\substack{c'=(\hat 0<y_1<y_2<\dots<y_r<\hat 1)\\ r\ge 1\\ 
y_i\in Y^g}} 
(-1)^r \mu_P(\hat 0, y_1)^g \mu_P(y_1,y_2)^g \cdots \mu_P( y_r,\hat 1)^g.
\end{split}
\end{equation}

Now $g$ has the form $g=(\sigma_m,\sigma_n)$, for $\sigma_m\in \mathfrak{S}_m, \sigma_n\in \mathfrak{S}_n$.  Also, the bottom element of $B_m\times B_n$ is 
 $\hat 0={\hat 0}_{B_m}\sqcup {\hat 0}_{B_n}$, where 
${\hat 0}_{B_m}=\emptyset={\hat 0}_{B_n}$.   

The second expression in each claim below  is a consequence of \Cref{eqn:X-Y}.
\begin{enumerate}
\item[Claim 1:]
The first sum in~\eqref{eqn:Hopftrace2} equals 

$(-1)\, \mu_P(\hat 0, \emptyset_{B_m}\sqcup [n])^{\sigma_n} \,   \mu_P(\emptyset_{B_m}\sqcup[n],\hat 1)^{\sigma_m}
=(-1)\, \mu(B_n)^{\sigma_n} \, \mu(B_m^{\le m-k})^{\sigma_m}.$ 
\item[Claim 2:] The second sum in~\eqref{eqn:Hopftrace2} equals 

$(-1)\, \mu_P(\hat 0,  [m]\sqcup \emptyset_{B_n})^{\sigma_m} \, 
\mu_P(([m]\sqcup \emptyset_{B_n}), \hat 1)^{\sigma_n}
=(-1)\, \mu(B_m)^{\sigma_m} \,
\mu(B_n^{\le n-k})^{\sigma_n}.$
\end{enumerate}

It suffices to  show Claim 1, since Claim 2 will follow mutatis mutandis. To examine  the first sum in~\eqref{eqn:Hopftrace2}, 
we split the sum into two parts,  according to whether $x_1={\hat 0}_{B_m}\sqcup [n]$ or not.

If $x_1={\hat 0}_{B_m}\sqcup [n]$, then the subposet $(\hat 0, x_1)_P^g$ of $B_{m+n}^{\le m+n-k}$ is equivariantly poset-isomorphic to  $(B_n)^{\sigma_n}$, so 
\[\mu_P(\hat 0, x_1)^g=\mu(B_n)^{\sigma_n}.\]
If $x_1>{\hat 0}_{B_m}\sqcup [n]$, then observe that every maximal chain in the interval $(\hat 0, x_1)_P^g$ of $B_{m+n}^{\le m+n-k}$ passes through ${\hat 0}_{B_m}\sqcup [n] $, and hence 
we must have $\mu_P(\hat 0, x_1)^g=0.$ 

The first sum in~\eqref{eqn:Hopftrace2} therefore reduces to 
\begin{equation}\label{eqn:2024June28}
\begin{split}
&\sum_{\substack{(\hat 0<x_1=({\hat 0}_{B_m}\sqcup[n])<x_2<\dots<x_r<\hat 1)\\ r\ge 1\\ x_i\in X^g}}
(-1)^r \mu_P(\hat 0, x_1)^g \mu_P(x_1,x_2)^g \cdots \mu_P( x_r,\hat 1)^g\\
&=\mu(B_n)^{\sigma_n}\sum_{\substack{(x_1=({\hat 0}_{B_m}\sqcup[n])<x_2<\dots<x_r<\hat 1)\\ r\ge 1\\ x_i\in X^g}}
(-1)^r  \mu_P(x_1,x_2)^g \cdots \mu_P( x_r,\hat 1)^g\\
&=\mu(B_n)^{\sigma_n}  \mu_P({\hat 0}_{B_m}\sqcup[n],\hat 1)^{\sigma_m} (-1)\\
&=(-1)\mu(B_n)^{\sigma_n} \mu(B_m^{\le m-k})^{\sigma_m},
\end{split}
\end{equation}
thereby establishing Claim 1.  Claim 2 follows similarly.

We have shown that, with $g=(\sigma_m, \sigma_n)$,~\eqref{eqn:Hopftrace} becomes 
\[\mu(P\setminus Q)^g-\mu(P)^g= (-1)\mu(B_n)^{\sigma_n} \mu(B_m^{\le m-k})^{\sigma_m} +(-1)\mu(B_m)^{\sigma_m}  \mu(B_n^{\le n-k})^{\sigma_n}.\]
It remains to observe that, using~\eqref{eqn:Hopftrace-dim}, this can be rewritten as 
\begin{equation*}
\begin{split}
&(-1)^{m+n-k-1} \tr(g, \tilde{H}(\mathcal{L}(\Delta_k(K_m+K_n))) 
-(-1)^{m+n-k-1} \tr(g, \tilde{H}(B_{m+n}^{\le m+n-k})\\
&= (-1) \left((-1)^{n-2} \tr(\sigma_n, \tilde{H}(B_n)) (-1)^{m-k-1}\tr(\sigma_m, \tilde{H}(B_{m}^{\le m-k})\right)\\
&+(-1) \left((-1)^{m-2} \tr(\sigma_m, \tilde{H}(B_m)) (-1)^{n-k-1}\tr(\sigma_n, \tilde{H}(B_{n}^{\le n-k})\right).
\end{split}
\end{equation*} 
since $\Delta_k(K_m+K_n)$ and the order complex of $B_{m+n}^{\le m+n-k}$ both have dimension $(m+n-k-1)$,  the order complex of $B_n$ has dimension $n-2$, while that of $B_n^{\le n-k}$ has dimension $n-k-1$. 

By a result of Solomon \cite{SolJAlg1968}, see also \cite[Proposition 3.6 and Theorem 7.2]{BDJRSX2024}, the $\mathfrak{S}_n$-homology module of $B_n^{\le n-k}$ is $V_{(k, 1^{n-k})}$, and in particular $\tr(\sigma_m, \tilde{H}(B_m))=\sgn(\sigma_m)$. This gives  
the $\mathfrak{S}_m\times \mathfrak{S}_n $-isomorphism 
\[\tilde{H}(\mathcal{L}(\Delta_k(K_m+K_n)))\cong 
V_{(k,1^{m+n-k})} \big\downarrow_{\mathfrak{S}_m\times \mathfrak{S}_n} 
-V_{(k,1^{m-k})}\otimes V_{(1^n)} - V_{(1^m)}\otimes V_{(k,1^{n-k})},\]
as claimed.

Now assume one of $m,n$ is less than $k$, say $n<k$.  (Recall that $\Delta_k(K_m+K_m)$ is nonvoid.) In that case the set $Y$ is empty, and we only need  Claim 1, giving the $\mathfrak{S}_m\times \mathfrak{S}_n $-isomorphism 
\[\tilde{H}(\mathcal{L}(\Delta_k(K_m+K_n)))\cong 
V_{(k,1^{m+n-k})} \big\downarrow_{\mathfrak{S}_m\times \mathfrak{S}_n}
-V_{(k,1^{m-k})}\otimes V_{(1^n)}.
\]

Finally if $m<k$, $n<k$ but $m+n>k$, and $\Delta_k(K_m+K_m)$ is nonvoid, then one sees that the face lattice $\mathcal{L}(\Delta_k(K_m+K_n)))$ of the cut complex coincides with the truncated Boolean lattice $B_{m+n}^{\le m+n-k}$, and the homology representation is $V_{(k,1^{m+n-k})}$.
\end{proof}

\subsection{Paths and Cycles}\label{sec:Traces-path-cycle}
The goal of this subsection is to use the group equivariant version of \Cref{thm:truncBoolean-minus-antichain} to compute the character of the group representation on the homology of $\Delta_k(G)$, when 
 $G$ is the graph $P_n$ or the cycle graph $C_n$.   
 It was shown in \cite[Propositions 7.12, 7.13]{BDJRSX2024} that $\Delta_k(G)$ satisfies the conditions of \Cref{thm:truncBoolean-minus-antichain}, that it is homotopy equivalent to a wedge of equidimensional spheres (and shellable for $k\ge 3$), and hence more precisely that 
\begin{enumerate}
    \item $\Delta_k(P_n) \sim $ 
    $\begin{cases} \text{ a wedge of  }\binom{n-1}{k-1} -(n-k+1) \text{ spheres }  \mathbb{S}^{n-k-1}, & \mbox{if $k\ge 3$,} \\
    \text{ contractible}, & \mbox{if $k=2$.}
     \end{cases}$
     \item $\Delta_k(C_n)\sim$ 
    $\begin{cases} \text{ a wedge of  }\binom{n-1}{k-1} -n \text{ spheres } \mathbb{S}^{n-k-1} , & \mbox{if $k\ge 3$,} \\
    \text{ one sphere } \mathbb{S}^{n-k-2}, & \mbox{if $k=2$.}
     \end{cases}$
\end{enumerate}

 The automorphism group of $P_n$ is generated by an involution $\rho\in \mathfrak{S}_n$.  In this section we will compute the trace of $\rho$ on the unique nonvanishing homology of the cut complex $\Delta_k(P_n)$, i.e., the value of the character on $\rho$, thereby determining the action completely.  A similar but more involved analysis is required for the cut complex $\Delta_k(C_n)$ of the cycle graph $C_n$, whose group of symmetries is the dihedral group $I_2(n)$.

 Recall from \cite[Propositions 7.12, 7.13]{BDJRSX2024} that if the graph $G$ is either the path $P_n$ or the cycle $C_n$, then  the face lattice of the  cut complex $\Delta_k(G)$ coincides with $B_n^{\le n-k}\setminus \mathcal{Z}_k(G)$, where $\mathcal{Z}_k(G)$ is as in \Cref{thm:truncBoolean-minus-antichain}, namely 
 \[\mathcal{Z}_k(G) \coloneqq \{A^c: |A|=k, A \text{ is a connected subset of }V(G)\}\subseteq P(n,k),\]
 where $A^c$ denotes the complement of $A$ in the vertex set $V(G)$.

As in Section~\ref{sec:rep-disj-union} we compute the trace of a group element $g$ on the unique nonvanishing homology by computing the M\"obius numbers of the fixed-point subposets under $g$.  If $Q$ is any poset, write $Q^g$ for the subposet $Q$ fixed by the automorphism $g$. 

We apply \Cref{eqn:mu-deleted-antichain} of \Cref{sec:background} to the respective fixed-point subposets, using the facts that $\dim \Delta(P(n,k))=n-k-1$, and that $(x,\hat 1)$ is just the Boolean lattice $B_1$ with M\"obius number $-1$.

From \Cref{eqn:Hopftrace-dim}, 
we therefore have, for  $G=P_n \text{ or } C_n$,  
\begin{equation}\label{eqn:trace-kNOT2} \mathrm{tr}(g, \tilde{H}_{n-k-1}(\Delta_k(G)) -
\mathrm{tr}(g, \tilde{H}_{n-k-1}(P(n,k)))=(-1)^{n-k-1}\sum_{\stackrel {x\in \mathcal{Z}_k(G)}{g(x)=x}} \mu_{P(n,k)}(\hat 0, x)^g , \text{ if $k\ge 3$},
\end{equation}
and, recalling that for $k=2$, homology is concentrated in dimension one below the top for $\Delta_2(C_n)$, 
\begin{equation}\label{eqn:trace-kIS2} -\mathrm{tr}(g, \tilde{H}_{n-3}(\Delta_2(C_n)) -
\mathrm{tr}(g, \tilde{H}_{n-3}(P(n,2)))=(-1)^{n-3}\sum_{\stackrel {x\in \mathcal{Z}_2(C_n)}{g(x)=x}} \mu_{P(n,k)}(\hat 0, x)^g , \text{ if $k=2$}.
\end{equation}

Equivalent formulas  in terms of virtual modules for the group action  on the homology  (for any graph $G$) are as follows.  See also \cite[Proof of Theorem 7.15]{BDJRSX2024}. We omit the homology degrees for ease of notation.
\begin{equation}\label{eqn:equiv-hom-kNOT2}  \tilde{H}(\Delta_k(G)) -
\tilde{H}(P(n,k))=\bigoplus_{\stackrel {x\in \mathcal{Z}_k(G)}{g(x)=x}} \tilde{H}(\hat 0, x)\otimes \tilde{H}(x, \hat 1) , \text{ if $k\ge 3$},
\end{equation}
\begin{equation}\label{eqn:equiv-hom-kIS2}  \tilde{H}(\Delta_2(C_n))=\bigoplus_{\stackrel {x\in \mathcal{Z}_k(C_n)}{g(x)=x}} \tilde{H}(\hat 0, x)\otimes \tilde{H}(x, \hat 1) 
-
\tilde{H}(P(n,2)), \text{ if $k=2$}.
\end{equation}
The lemmas that follow will compute the M\"obius number of the fixed-point subposet $(\hat 0,x)^\rho$ of $P(n,k)$, for $x\in \mathcal{Z}_k(G)$. Recall that the $\mathfrak{S}_n$-representation on the homology of the truncated Boolean lattice $P(n,k)=B_n^{\le n-k}$ is the irreducible  module $V_{(k,1^{n-k})}$ indexed by the partition $(k, 1^{n-k})$ of $n$. 
\begin{lemma}\label{lem:fixed-points-Zk(Pn)} 

Label the vertices of $P_n$ by $\{1,\ldots,n\}$ in order.  Let $\rho$ be the automorphism of $P_n$ defined by $\rho(i)=n+1-i$.  
The number of facets in $\mathcal{Z}_k(P_n)$ fixed by $\rho$ is 
0 if $n-k$ is odd and 
1 if $n-k$ is even.  The M\"obius number of the fixed-point subposet $(\hat 0, x_0)$ in the latter case for the facet $x_0$ fixed by $\rho$ is $(-1)^{\frac{n-k}{2}}$.

Hence the trace of $\rho$ on the unique nonvanishing homology of the cut complex $\Delta_k(P_n)$, $k\ge 3$, is given by 
\[\mathrm{tr}(\rho, \tilde{H}(\Delta_k(P_n)) 
= \chi^{(k, 1^{n-k})}(\rho)
+(-1)^{n-k-1}
\begin{cases} 0, & \textrm{if $n-k$ is odd},\\
(-1)^{\frac{n-k}{2}}, & \textrm{if $n-k$ is even.}
\end{cases}
\]
\end{lemma}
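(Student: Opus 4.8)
The plan is to apply the equivariant machinery already set up in Equations~\eqref{eqn:trace-kNOT2} and~\eqref{eqn:equiv-hom-kNOT2}. By those formulas, for $k \ge 3$ we have
\[
\mathrm{tr}(\rho, \tilde{H}(\Delta_k(P_n))) - \mathrm{tr}(\rho, \tilde{H}(P(n,k))) = (-1)^{n-k-1}\sum_{\substack{x \in \mathcal{Z}_k(P_n) \\ \rho(x) = x}} \mu_{P(n,k)}(\hat 0, x)^\rho,
\]
and since $\mathrm{tr}(\rho, \tilde{H}(P(n,k))) = \chi^{(k,1^{n-k})}(\rho)$ by Solomon's result (quoted just before the lemma), the stated formula follows once I compute the right-hand sum. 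So the whole task reduces to two things: (i) enumerate the $\rho$-fixed facets $x \in \mathcal{Z}_k(P_n)$, and (ii) compute $\mu_{P(n,k)}(\hat 0, x)^\rho$ for each such $x$.

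For step (i): recall $x \in \mathcal{Z}_k(P_n)$ means $x = A^c$ where $A$ is a connected $k$-subset of the path, i.e. $A$ is an interval $\{a+1, a+2, \ldots, a+k\}$ for some $0 \le a \le n-k$. The complement $x = A^c$ consists of the two end-blocks $\{1,\ldots,a\}$ and $\{a+k+1,\ldots,n\}$, of sizes $a$ and $n-k-a$. The involution $\rho(i) = n+1-i$ fixes $x$ iff it fixes $A$, i.e. iff the interval $A$ is centered, which forces $a = n-k-a$, so $n-k$ must be even and $a = \frac{n-k}{2}$. Hence there is exactly one fixed facet $x_0$ when $n-k$ is even and none when $n-k$ is odd, which is the first assertion.

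For step (ii): when $n-k$ is even, the open interval $(\hat 0, x_0)$ in $P(n,k) = B_n^{\le n-k}$ is the poset of nonempty proper subsets of the $(n-k)$-element set $x_0$ — this is just $\bar{B}_{n-k}$, the proper part of the Boolean lattice $B_{n-k}$, since $|x_0| = n-k$ is exactly the truncation level so no truncation occurs below $x_0$. The involution $\rho$ acts on $x_0$ by swapping its two end-blocks via the order-reversal, giving a fixed-point-free involution on the $n-k$ underlying points, i.e. a product of $\frac{n-k}{2}$ transpositions. So $\mu((\hat 0, x_0)^\rho)$ is the trace of this permutation on $\tilde{H}(B_{n-k})$, up to a sign bookkeeping the dimension shift; since $\tilde{H}(\bar{B}_{n-k})$ is the sign representation of $\mathfrak{S}_{n-k}$ concentrated in dimension $n-k-2$, and a product of $\frac{n-k}{2}$ transpositions has sign $(-1)^{(n-k)/2}$, the Hopf-trace identity~\eqref{eqn:Hopftrace-dim} gives $\mu((\hat 0,x_0)^\rho) = (-1)^{n-k-2}\cdot(-1)^{(n-k)/2} = (-1)^{n-k}(-1)^{(n-k)/2}$. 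I need to be careful about which sign convention (M\"obius number versus reduced Euler characteristic of the order complex) is in force here, but after tracking it, the claimed value $(-1)^{(n-k)/2}$ for $\mu_{P(n,k)}(\hat 0, x_0)^\rho$ should drop out — more directly, one can just compute the M\"obius number of $B_{n-k}^\rho$-type fixed posets by the standard fact that for a permutation $w \in \mathfrak{S}_r$ the fixed subposet of $\bar B_r$ is the proper part of $\prod_i B_{|\text{cycle}_i|}$-style product and its M\"obius number is $(-1)^{r}$ times the number of ways... , but the cleanest route is the sign-representation fact quoted in the excerpt.

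Plugging these into the displayed identity: when $n-k$ is odd the sum is empty, so $\mathrm{tr}(\rho) = \chi^{(k,1^{n-k})}(\rho)$; when $n-k$ is even, the sum has one term equal to $(-1)^{(n-k)/2}$, scaled by $(-1)^{n-k-1}$, yielding the stated correction term. The main obstacle I anticipate is purely a sign-bookkeeping one: correctly aligning the dimension of the order complex of $(\hat 0, x_0)$ (which is $(n-k-2)$) with the dimension $(n-k-1)$ of $\Delta_k(P_n)$ in the Hopf-trace formula, and confirming that the $\rho$-action on the homology of $B_{n-k}$ really is via the sign of a fixed-point-free involution rather than something subtler. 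Once those signs are pinned down the result is immediate; everything else is the elementary combinatorics of centered intervals in a path.
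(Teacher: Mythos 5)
Your proposal is correct, and its overall architecture is the same as the paper's: reduce to Equation~\eqref{eqn:trace-kNOT2}, identify $\mathrm{tr}(\rho,\tilde H(P(n,k)))$ with $\chi^{(k,1^{n-k})}(\rho)$, enumerate the $\rho$-fixed facets of $\mathcal{Z}_k(P_n)$ (exactly one, the complement of the centered $k$-interval, and only when $n-k$ is even), and compute the fixed-point M\"obius number of the interval below it. The one place you diverge is that last computation. The paper does it directly: since a subset of $x_0$ is $\rho$-fixed exactly when it is a union of the pairs $\{i,n+1-i\}$, $1\le i\le \frac{n-k}{2}$, the fixed subposet $(\hat 0,x_0)^\rho$ \emph{is} the proper part of a Boolean lattice on $\frac{n-k}{2}$ atoms, so $\mu_{P(n,k)}(\hat 0,x_0)^\rho=(-1)^{\frac{n-k}{2}}$ with no further argument. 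You instead run the Hopf trace formula a second time, using that $\tilde H(\bar B_{n-k})$ is the sign representation and that $\rho$ restricts to a fixed-point-free involution on $x_0$, getting $(-1)^{n-k-2}\cdot(-1)^{(n-k)/2}$, which equals the claimed $(-1)^{(n-k)/2}$ precisely because $n-k$ is even in the relevant case — so the sign bookkeeping you flagged does close up. Both routes are legitimate and rest on facts already quoted in the paper; the paper's orbit-Boolean-lattice identification is the more economical one (it sidesteps the dimension-shift signs and the sign-representation input entirely), while your version has the mild advantage of reusing the same equivariant Hopf-trace template uniformly, which is essentially how the analogous computations for $C_n$ and $K_m+K_n$ are organized anyway.
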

\begin{proof} By definition,  the set 
 $\mathcal{Z}_k(P_n)$ consists of all intervals $[i, i+k-1]$, 
$1\le i\le n-k+1$. 

When $n-k$ is odd, there are no facets in $\mathcal{Z}_k(P_n)$ fixed by $\rho$.

When $n-k$ is even, the only facet in $\mathcal{Z}_k(P_n)$ fixed by $\rho$ is 
$x_0=[1, \frac{n-k}{2}]\cup [ \frac{n+k}{2} ,n]$. 
In this case the fixed-point subposet $(\hat 0, x_0)^\rho$ is easily seen to be isomorphic to the Boolean lattice on $\frac{n-k}{2}$ elements, since $x_0$ is the union of the elements in the set of 2-cycles  $\{(1,n), (2, n-1), \ldots, (\frac{n-k}{2}, \frac{n+k}{2}) \}$, and thus its M\"obius number is $(-1)^{\frac{n-k}{2} }$. 
The trace computation follows directly from \Cref{eqn:trace-kNOT2}. \end{proof}

The automorphism group $\mathrm{Aut}(P_n)$ is the cyclic group of order 2 generated by the unique automorphism $\rho$  of the path graph $P_n$. 
When $n-k$ is even,  let $\epsilon_{n,k}$ denote the one-dimensional representation of $\mathrm{Aut}(P_n))=\langle\rho\rangle$ defined by $\rho\mapsto (-1)^{\frac{n-k}{2}}$.  Let $\mathrm{Reg}_{\mathrm{Aut}(P_n) }$ denote the regular representation of $\mathrm{Aut}(P_n)$.  

\begin{prop}\label{prop:Z2-action-path} As an $\mathrm{Aut}(P_n)$-module,  we have that  
when $k\ge 3$:
\[\tilde{H}_{n-k-1}(\Delta_k(P_n)) =
\begin{cases}
 V_{(k, 1^{n-k})}\big\downarrow^{\mathfrak{S}_n}_{\mathrm{Aut}(P_n)} - \frac{n-k+1}{2} \,\mathrm{Reg}_{\mathrm{Aut}(P_n)}, & \textrm{if $n-k$ is odd},\\
V_{(k, 1^{n-k})}\big\downarrow^{\mathfrak{S}_n}_{\mathrm{Aut}(P_n)}  - \frac{n-k}{2}\, \mathrm{Reg}_{\mathrm{Aut}(P_n)} -\epsilon_{n,k}, & \textrm{if $n-k$ is even.}
\end{cases}
\]
\end{prop}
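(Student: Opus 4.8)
The plan is to exploit that $\mathrm{Aut}(P_n)=\langle\rho\rangle$ is cyclic of order two, so a virtual representation of it is pinned down by just two numbers: its dimension and the trace of $\rho$. Both of these have essentially already been computed for $\tilde H_{n-k-1}(\Delta_k(P_n))$: its dimension is $\binom{n-1}{k-1}-(n-k+1)$ by \cite[Proposition~7.12]{BDJRSX2024} (recalled in item~(1) above), and the trace of $\rho$ on it is the content of \Cref{lem:fixed-points-Zk(Pn)}. So the proof will amount to checking that, in each parity case, the virtual $\langle\rho\rangle$-module on the right-hand side of the proposition has exactly these two invariants, and then invoking that a virtual $\mathbb Z/2$-module is determined by its character.

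First I would record the ingredients. By Solomon's result \cite{SolJAlg1968} (equivalently \Cref{prop:Deltak-edgeless-graph-homrepS}), $\tilde H_{n-k-1}(P(n,k))\cong V_{(k,1^{n-k})}$ as $\mathfrak S_n$-modules, so $V_{(k,1^{n-k})}\big\downarrow_{\mathrm{Aut}(P_n)}$ has dimension $\binom{n-1}{k-1}$ and $\rho$-trace $\chi^{(k,1^{n-k})}(\rho)$; the module $\mathrm{Reg}_{\mathrm{Aut}(P_n)}$ has dimension $2$ and $\rho$-trace $0$; and (for $n-k$ even) $\epsilon_{n,k}$ has dimension $1$ and $\rho$-trace $(-1)^{(n-k)/2}$. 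Then I would split into cases. When $n-k$ is odd, $\tfrac{n-k+1}{2}$ is an integer and the candidate $V_{(k,1^{n-k})}\big\downarrow-\tfrac{n-k+1}{2}\,\mathrm{Reg}_{\mathrm{Aut}(P_n)}$ has dimension $\binom{n-1}{k-1}-(n-k+1)$ and $\rho$-trace $\chi^{(k,1^{n-k})}(\rho)$; by the odd case of \Cref{lem:fixed-points-Zk(Pn)} (no $\rho$-fixed facet in $\mathcal Z_k(P_n)$) this matches $\mathrm{tr}(\rho,\tilde H_{n-k-1}(\Delta_k(P_n)))$. When $n-k$ is even, the candidate $V_{(k,1^{n-k})}\big\downarrow-\tfrac{n-k}{2}\,\mathrm{Reg}_{\mathrm{Aut}(P_n)}-\epsilon_{n,k}$ has dimension $\binom{n-1}{k-1}-(n-k)-1=\binom{n-1}{k-1}-(n-k+1)$ and $\rho$-trace $\chi^{(k,1^{n-k})}(\rho)-(-1)^{(n-k)/2}$, which is exactly what \Cref{lem:fixed-points-Zk(Pn)} yields, since there the correction term $(-1)^{n-k-1}(-1)^{(n-k)/2}$ equals $-(-1)^{(n-k)/2}$ when $n-k$ is even. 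Matching both numbers in each case closes the argument.

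It is worth recording the structural picture underlying this, since it explains where the pieces come from: via \eqref{eqn:equiv-hom-kNOT2}, $\rho$ permutes the facets $[i,i+k-1]$, $1\le i\le n-k+1$, of $\mathcal Z_k(P_n)$, each free orbit contributing a copy of $\mathrm{Reg}_{\mathrm{Aut}(P_n)}$ (a one-dimensional homology piece and its $\rho$-image, interchanged by $\rho$) and the unique fixed facet $x_0$ — present exactly when $n-k$ is even — contributing a one-dimensional module with $\rho$-eigenvalue $(-1)^{(n-k)/2}$; there are $\tfrac{n-k+1}{2}$ free orbits in the odd case and $\tfrac{n-k}{2}$ in the even case. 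The only delicate step is the sign of the fixed-point contribution, which requires carefully tracking the degree shift in the join $\tilde H(\hat 0,x_0)\otimes\tilde H(x_0,\hat 1)$ against the $(-1)^{n-k-1}$ of \eqref{eqn:Hopftrace-dim}; but this is precisely the computation already carried out in \Cref{lem:fixed-points-Zk(Pn)}, which is why the cleanest route is the character-theoretic verification above rather than an explicit module decomposition.
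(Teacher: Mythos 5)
Your proof is correct, but it takes a different logical route from the paper. The paper's proof works directly with the equivariant decomposition \Cref{eqn:equiv-hom-kNOT2}: it analyzes the orbit structure of $\rho$ on the facets of $\mathcal{Z}_k(P_n)$, notes that each free orbit $\{S,\rho(S)\}$ contributes one copy of $\mathrm{Reg}_{\mathrm{Aut}(P_n)}$ while the unique fixed facet $x_0$ (present exactly when $n-k$ is even) contributes the one-dimensional module $\epsilon_{n,k}$ via its $\rho$-action on $\tilde H(\hat 0,x_0)$, and reads off the stated decomposition --- no independent knowledge of the Betti number is needed, since the count of summands falls out of the orbit count. You instead verify the claimed virtual-module identity by matching the two character values of $\mathrm{Aut}(P_n)\cong\mathbb{Z}/2$: the dimension, imported from \cite[Proposition~7.12]{BDJRSX2024}, and the $\rho$-trace, taken from \Cref{lem:fixed-points-Zk(Pn)}; your arithmetic in both parity cases is right, and since a virtual representation of a finite group in characteristic zero is determined by its character and this group has only two conjugacy classes, the check is complete. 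What your route buys is brevity and a purely numerical verification; what the paper's route buys is that it explains where each summand comes from, does not need the Betti number as an extra input, and is the argument that scales to the dihedral-group computation for $C_n$ in the next theorem, where matching characters would require handling every conjugacy class and the orbit/stabilizer analysis is the natural bookkeeping. Your closing paragraph in fact sketches the paper's argument, so the two proofs differ mainly in which of the two is promoted to the official one.
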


\begin{proof}  We apply the calculation of \Cref{lem:fixed-points-Zk(Pn)} to \Cref{eqn:equiv-hom-kNOT2}. The key observation is that when $n-k$ is even, $\rho$ fixes one point in $\mathcal{Z}_k(G)$, with the action on $\tilde{H}(\hat 0, x_0)$ given by $\epsilon_{n,k}$. Each of the remaining orbits has  size 2, and is of the form $\{S, \rho(S)\}$. The character computation  shows that this  gives  one copy of the regular representation of $\langle \rho\rangle$ for each such pair,  in the sum in the right-hand side of \Cref{eqn:equiv-hom-kNOT2}. When $n-k$ is odd, there are only orbits of size 2, and the same analysis applies.

The decomposition now follows from  \Cref{eqn:equiv-hom-kNOT2}. \end{proof}

Now we turn to the cycle graph $C_n$.  The action of the cyclic group $\mathfrak{C}_n$ on the unique nonvanishing homology of $\Delta_k(C_n)$ was described in \cite[Theorem~7.15]{BDJRSX2024}.  The full automorphism group of the graph  $C_n$ is the dihedral group $I_2(n)$ of order $2n$, generated by  $\mathfrak{C}_n$ and an involution $\rho$. Computing the trace of $\rho$   determines the character value on the conjugacy class of all the reflections when $n$ is odd.   When $n$ is even, the reflections form two conjugacy classes, one containing $\rho$ and the other containing the reflection  $\tau=\rho\sigma$, where $\sigma=(1,2,\ldots,n)$ is the long cycle in $\mathfrak{S}_n$, corresponding to a rotation by $\frac{2\pi}{n}$.   Thus  when $n$ is even, we need to compute the traces of both $\rho$ and $\tau=\rho\sigma$. See \cite[Sec. 5.3]{Serre1977} for details about $I_2(n)$. 
The group $I_2(n)$ acts on the face lattice of $\Delta_k(C_n)$. For any face $F$, we write $\mathrm{stab}(F)$ for the stabilizer subgroup $\{g\in I_2(n): g\cdot F= F\}$, and  $\langle g \rangle$ for the cyclic subgroup generated by the group element $g$.

\begin{lemma}\label{lem:fixed-points-Zk(Cn)} 

Label the vertices of $C_n$ by $\{1,\ldots,n\}$ in order.  
\begin{enumerate}
\item Suppose $n$ is even. 

Let $\rho$ be  the fixed-point-free involution $(1,n)(2, n-1)\cdots (\frac{n}{2}, \frac{n}{2}+1)$. 
The  number of facets in $\mathcal{Z}_k(C_n)$ that are fixed by $\rho$ is 0 if $k$ is odd and 2 if $k$ is  even. 
 For each  facet $A^c$ in $\mathcal{Z}_k(C_n)$ fixed by $\rho$, we have $\mathrm{stab}(A^c)=\langle \rho\rangle$ and 
 \begin{center}
$\mu(\hat 0, A^c)^\rho=(-1)^{\frac{n-k}{2}}$.
 \end{center}
 Let $\tau$ be  the  involution 
 $(n)(\frac{n}{2})(1, n-1)(2, n-2)
 \cdots(\frac{n}{2}-1, \frac{n}{2}+1)$, having two fixed points.
  The number of facets in $\mathcal{Z}_k(C_n)$ that are fixed by $\tau$ is 2 if $k$ is odd and 0 if $k$ is even.
 For each  facet $B^c$ in $\mathcal{Z}_k(C_n)$ fixed by $\tau$, we have $\mathrm{stab}(B^c)=\langle \tau\rangle$ and 
 \begin{center}
$\mu(\hat 0, B^c)^\tau=(-1)^{\frac{n+1-k}{2}}$.
\end{center}
\item Suppose $n$ is odd.

Let $\rho$  be the involution   
$(1,n-1)(2, n-2)\cdots (\frac{n-1}{2}, \frac{n+1}{2}) (n)$ with one fixed point $n$. 
Then exactly one facet $A^c$ in $\mathcal{Z}_k(C_n)$ is fixed by $\rho$.  We have $\mathrm{stab}(A^c)=\langle \rho\rangle$ and 
\[\mu(\hat 0, A^c)^\rho=
\begin{cases} (-1)^{\frac{n-k}{2}}, & \mbox{if $k$ is odd},\\
(-1)^{\frac{n+1-k}{2}}, & \mbox{if $k$ is even}.
\end{cases}\]
\end{enumerate}
\end{lemma}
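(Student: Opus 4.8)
The plan is to reduce everything to \emph{arcs} of $C_n$ and to Boolean lattices. A subset of $V(C_n)=\{1,\dots,n\}$ of size $k<n$ is connected exactly when it consists of $k$ cyclically consecutive vertices, so $\mathcal{Z}_k(C_n)$ is precisely the set of complements of $k$-arcs, i.e.\ the set of arcs of $m:=n-k$ consecutive vertices, and there are exactly $n$ of these (pairwise distinct since $1\le m\le n-1$). I would relabel the vertices by $\mathbb{Z}/n$; every reflection of $C_n$ is then of the form $g_c\colon x\mapsto c-x\pmod n$, and $g_c$ sends the $m$-arc $\{a,a+1,\dots,a+m-1\}$ to the $m$-arc $\{c-a-m+1,\dots,c-a\}$, so $g_c$ fixes that arc if and only if $c\equiv 2a+m-1\pmod n$. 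Hence for $n$ odd (where $2$ is invertible mod $n$) each reflection fixes exactly one $m$-arc, giving exactly one facet of $\mathcal{Z}_k(C_n)$. For $n$ even, the two-fixed-point reflections are those with $c$ even (this class contains $\tau$) and the fixed-point-free ones are those with $c$ odd (this class contains $\rho$, as one reads off the given cycle notations); the congruence $2a\equiv c-m+1\pmod n$ is solvable iff $c\equiv m-1\equiv n-k-1\equiv k+1\pmod 2$, and when solvable it has exactly the two solutions $a$ and $a+n/2$. Reading off parities then shows $\rho$ fixes $0$ facets if $k$ is odd and $2$ if $k$ is even, while $\tau$ fixes $2$ if $k$ is odd and $0$ if $k$ is even, as stated.

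For the stabilizer claims I would use orbit--stabilizer inside $I_2(n)$: a nontrivial rotation moves every $m$-arc since $m<n$, so the $I_2(n)$-orbit of any facet in $\mathcal{Z}_k(C_n)$ is the whole set of $n$ arcs of length $m$; thus the stabilizer has order $|I_2(n)|/n=2$, and since it contains the order-$2$ reflection $g$ fixing the facet, it equals $\langle g\rangle$.

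For the Möbius numbers, the key observation is that a facet $A^c\in\mathcal{Z}_k(C_n)$ has size exactly $m=n-k$, so every proper nonempty subset of $A^c$ has size strictly less than $n-k$ and hence lies in $P(n,k)=B_n^{\le n-k}$; therefore the open interval $(\hat 0,A^c)$ in $P(n,k)$ is the proper part of the Boolean lattice on the $m$-set $A^c$. Consequently the $g$-fixed subposet $(\hat 0,A^c)^g$ is the set of $g$-invariant proper nonempty subsets of $A^c$, i.e.\ the proper part of the Boolean lattice on the set of $g$-orbits of $A^c$, so that $\mu(\hat 0,A^c)^g=(-1)^r$, where $r$ is the number of $g$-orbits on $A^c$. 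Now $g$ acts on the $m$-arc $A^c$ as an order-reversing involution, which is fixed-point-free on $A^c$ when $m$ is even (so $r=m/2$) and has a single fixed vertex, the midpoint of the arc and automatically one of the $g$-fixed vertices of $C_n$, when $m$ is odd (so $r=(m+1)/2$). Substituting $m=n-k$ and the parity of $k$ forced in each non-vacuous case yields $r=(n-k)/2$ and $\mu(\hat 0,A^c)^g=(-1)^{(n-k)/2}$ when $n-k$ is even, and $r=(n+1-k)/2$ and $\mu(\hat 0,A^c)^g=(-1)^{(n+1-k)/2}$ when $n-k$ is odd, matching every line of the statement for $\rho$ (both parities of $n$) and for $\tau$.

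I expect the only genuinely fiddly part to be the parity bookkeeping in the second step: correctly aligning ``$\rho$-type versus $\tau$-type reflection'' with ``$c$ odd versus $c$ even'' and then with the parity of $k$, and checking that in each surviving case the parity of $m=n-k$ is what is needed for the orbit count. Everything else is routine once one observes that the open interval below a facet is the proper part of a Boolean lattice, so that passing to $g$-fixed points again produces the proper part of a Boolean lattice.
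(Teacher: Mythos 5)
Your proposal is correct and follows essentially the same route as the paper: the paper identifies the facets of $\mathcal{Z}_k(C_n)$ as the complements of the $k$-arcs and uses the observation that a subset is fixed by a permutation exactly when it is a union of its cycles, which is precisely your orbit count showing that the fixed interval below a fixed facet is a Boolean lattice $B_r$ with $\mu=(-1)^r$, $r$ the number of orbits on the $(n-k)$-arc. Your $\mathbb{Z}/n$ congruence $c\equiv 2a+m-1$ and the orbit--stabilizer argument are just a more systematic write-up of the counting and stabilizer facts that the paper dispatches as ``an analysis similar to the path case,'' so there is no substantive difference in method.
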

 \begin{proof}
The facets in $\mathcal{Z}_k(C_n)$ are the complements of the sets of consecutive integers 
$A=\{j, j+1,\ldots, j+k-1\}$, $1\le j\le n$ (taken modulo $n$). 
The statements follow by an analysis similar to the proof of \Cref{lem:fixed-points-Zk(Pn)}, the key observation being that a subset is fixed by a permutation $\sigma\in\mathfrak{S}_n$ if and only if it is a union of elements in some subset of the disjoint cycles of $\sigma$.
\end{proof}

Let $\sigma$ be the long cycle in the cyclic subgroup $\mathfrak{C}_n$ of $\mathfrak{S}_n$. 
Compare the next result to \cite[Theorem 7.15]{BDJRSX2024}.
 Down and up arrows denote restriction and induction respectively.
\begin{theorem}\label{prop:dihedral-action-cycle} 
For the action of the dihedral group $I_2(n)$,  the unique nonvanishing homology  module of the cut complex $\Delta_k(C_n)$, $2\le k\le n-2$ can be described by the following formulas.  

\begin{enumerate}
\item Let $k\ge 3$ and suppose $n$ is even.  Let $\rho$, $\tau=\rho\sigma$ be the reflections in Part (1) of \Cref{lem:fixed-points-Zk(Cn)}.   Let $\hat\rho_{n,k}, \hat\tau_{n,k}$ denoted the one-dimensional  representation of $\langle \rho\rangle, \, \langle \tau\rangle$, respectively, defined by $\rho\mapsto (-1)^{\frac{n-k}{2}}$ if $k$ is even, and by  $\tau\mapsto (-1)^{\frac{n-k+1}{2}}$ if $k$ is odd.  Then 
\begin{equation*}
    \tilde{H}_{n-k-1}(\Delta_k(C_n))
    =\begin{cases}
    V_{(k, 1^{n-k})}\big\downarrow_{I_2(n)}^{\mathfrak{S}_n}
    - \hat\rho_{n,k}\big\uparrow_{\langle \rho\rangle}^{I_2(n)}, & \text{ $k$ even},\\
     V_{(k, 1^{n-k})}\big\downarrow_{I_2(n)}^{\mathfrak{S}_n}
    - \hat\tau_{n,k}\big\uparrow_{\langle \tau\rangle}^{I_2(n)}, & \text{ $k$ odd}.
    \end{cases}
\end{equation*}

\item Let $k\ge 3$ and suppose $n$ is odd.  Let $\rho$ be the unique reflection of $C_n$ fixing $n$ as in Part (2) of \Cref{lem:fixed-points-Zk(Cn)}, and let $\hat\rho_{n,k}$ be the one-dimensional representation of $\langle \rho\rangle$ defined by $\rho\mapsto (-1)^{\lfloor \frac{n-k+1}{2}\rfloor}$.  Then  
\begin{center}
    $\tilde{H}_{n-k-1}(\Delta_k(C_n))
    =
    V_{(k, 1^{n-k})}\big\downarrow_{I_2(n)}^{\mathfrak{S}_n}
    - \hat\rho_{n,k}\big\uparrow_{\langle \rho\rangle}^{I_2(n)}.$
\end{center}
\item Now let $k=2$. Then $\tilde{H}_{n-3}(\Delta_2(C_n)) $ is  the following one-dimensional representation of $I_2(n)$:
\begin{center}
$\sigma\mapsto (-1)^{n-1}=\mathrm{sgn}(\sigma)$
and $\rho\mapsto \begin{cases} 
(-1)^{\frac{n-2}{2}},
& \text{ $n$ even},\\
(-1)^{\frac{n+1}{2}},  
& \text{ $n$ odd}.
\end{cases}$\end{center}
\end{enumerate}
\end{theorem}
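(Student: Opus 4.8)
The plan is to pin down the $I_2(n)$-representation by computing its character on every conjugacy class and matching it against the character of the virtual module in the statement; since a rational representation is determined by its character, this suffices. The conjugacy classes of $I_2(n)$ are the identity, the rotation classes $\{\sigma^j,\sigma^{-j}\}$, and (for $n$ odd) the single class of reflections or (for $n$ even) the two reflection classes represented by $\rho$ and $\tau=\rho\sigma$. On the identity the value is the dimension, which equals $\binom{n-1}{k-1}-n$ for $k\ge 3$ and $1$ for $k=2$ by the known Betti numbers; this matches the claimed virtual dimension since $\dim V_{(k,1^{n-k})}=\binom{n-1}{k-1}$ and an induced module from a one-dimensional representation of an order-$2$ subgroup of $I_2(n)$ has dimension $|I_2(n)|/2=n$.

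For the rotations I would simply invoke \cite[Theorem~7.15]{BDJRSX2024}, which records the $\mathfrak{C}_n$-module structure of $\tilde{H}(\Delta_k(C_n))$; one checks directly that these values agree with the character of $V_{(k,1^{n-k})}\big\downarrow_{I_2(n)}^{\mathfrak{S}_n}$ minus the induced character, because $\sigma^j$ with $j\ne 0$ is a rotation and hence never conjugate into the reflection subgroup $\langle\rho\rangle$ (or $\langle\tau\rangle$), so the induced character vanishes there. Thus everything is reduced to evaluating the character on $\rho$, and, when $n$ is even, also on $\tau$.

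For a reflection $g$, the equivariant Euler-characteristic identity \eqref{eqn:trace-kNOT2} (for $k\ge 3$), together with $\tilde{H}(P(n,k))=V_{(k,1^{n-k})}$, gives
\[
\mathrm{tr}\big(g,\tilde{H}_{n-k-1}(\Delta_k(C_n))\big)=\chi^{(k,1^{n-k})}(g)+(-1)^{n-k-1}\sum_{x}\mu_{P(n,k)}(\hat 0,x)^g,
\]
where the sum runs over $x\in\mathcal{Z}_k(C_n)$ with $g(x)=x$. \Cref{lem:fixed-points-Zk(Cn)} evaluates this sum exactly: the number of $g$-fixed facets is $0$, $1$, or $2$ according to the parities of $n$ and $k$ and to which reflection $g$ is, and each fixed $x$ contributes $\mu_{P(n,k)}(\hat 0,x)^g=(-1)^{(n-k)/2}$ or $(-1)^{(n+1-k)/2}$ as specified there. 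Substituting, one obtains closed formulas for $\mathrm{tr}(g,\tilde{H})$ in each case. To recognize these as the character of $V_{(k,1^{n-k})}\big\downarrow - \hat\rho_{n,k}\big\uparrow_{\langle\rho\rangle}^{I_2(n)}$ (respectively the $\tau$-version), I would note that $\mathcal{Z}_k(C_n)$ is a single $I_2(n)$-orbit of size $n$ whose point stabilizer is the order-$2$ subgroup generated by a reflection, so Baclawski's theorem in its Hopf-trace form (\Cref{thm:Bac-mu}, cf.\ \eqref{eqn:trace-kNOT2}) shows the virtual homology differs from $V_{(k,1^{n-k})}\big\downarrow$ by an induced module $\pm\,\tilde{H}_{n-k-2}(\hat 0,x_0)\big\uparrow$ from that stabilizer, and \Cref{lem:fixed-points-Zk(Cn)} identifies the one-dimensional stabilizer-module as precisely $\hat\rho_{n,k}$ or $\hat\tau_{n,k}$; the character of such an induced module vanishes off the identity and off the distinguished reflection class, where it equals (number of fixed facets)$\times$(the scalar), matching the computed traces. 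The case $k=2$ is handled identically but through \eqref{eqn:trace-kIS2}, which carries the extra sign reflecting that $\tilde{H}(\Delta_2(C_n))$ lives one dimension below the top (indeed $\Delta_2(C_n)\simeq\mathbb{S}^{n-4}$, so it is one-dimensional); the value on $\sigma$ is $(-1)^{n-1}=\sgn(\sigma)$ from \cite[Theorem~7.15]{BDJRSX2024} and the value on $\rho$ from the $k=2$ instance of \Cref{lem:fixed-points-Zk(Cn)}.

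I expect the main obstacle to be careful sign and degree bookkeeping: reconciling the Möbius normalization of \Cref{lem:fixed-points-Zk(Cn)} with the homological degree $n-k-2$ of the interval $(\hat 0,x_0)$, and ensuring the alternating sign $(-1)^{n-k-1}$ in the Euler-characteristic identity combines with the parity of the exponents in \Cref{lem:fixed-points-Zk(Cn)} so that $\hat\rho_{n,k}$ and $\hat\tau_{n,k}$ emerge exactly as defined in the statement — while also keeping track, when $n$ is even, of which family of fixed facets (hence which reflection class) contributes for $k$ even versus $k$ odd.
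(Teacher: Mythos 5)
Your plan is essentially the proof in the paper: traces via the equivariant identity \eqref{eqn:trace-kNOT2}/\eqref{eqn:equiv-hom-kNOT2}, the reflection data from \Cref{lem:fixed-points-Zk(Cn)}, and recognition of the correction term as a module induced from the order-two stabilizer of a facet in the single $I_2(n)$-orbit $\mathcal{Z}_k(C_n)$ (your extra verification on rotations, where no facet of $\mathcal{Z}_k(C_n)$ is fixed and the induced character vanishes, is harmless but redundant).

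The step you defer -- ``reconciling the M\"obius normalization of \Cref{lem:fixed-points-Zk(Cn)} with the homological degree $n-k-2$'' -- is exactly where the difficulty sits, and you should not take it on faith that the scalars ``emerge exactly as defined in the statement.'' By the Hopf trace formula applied to the open interval $(\hat 0,x_0)\simeq\mathbb{S}^{n-k-2}$, the stabilizing reflection $g$ acts on the one-dimensional space $\tilde H_{n-k-2}\bigl((\hat 0,x_0)\bigr)$ by $(-1)^{n-k-2}\mu(\hat 0,x_0)^g$, not by $\mu(\hat 0,x_0)^g$; combining this with the factor $(-1)^{n-k-1}$ in \eqref{eqn:trace-kNOT2}, the induced module that actually appears carries the scalar $(-1)^{n-k}$ times the M\"obius value of \Cref{lem:fixed-points-Zk(Cn)}, that is $(-1)^{\lfloor (n-k)/2\rfloor}$. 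When $n-k$ is even this agrees with the stated $\hat\rho_{n,k}$, but when $n-k$ is odd ($k$ odd in Item (1), $k$ even in Item (2)) it is the \emph{negative} of the displayed exponent. A concrete check: for $(n,k)=(6,3)$ and $\tau=(3)(6)(1\,5)(2\,4)$, the $\tau$-fixed subposet of the face lattice of $\Delta_3(C_6)$ has M\"obius number $0$, so $\mathrm{tr}\bigl(\tau,\tilde H_2(\Delta_3(C_6))\bigr)=0$, whereas with the stated $\hat\tau_{6,3}$ (the trivial character, since $(-1)^{(6-3+1)/2}=1$) the displayed formula gives $\chi^{(3,1^3)}(\tau)-\hat\tau_{6,3}\big\uparrow_{\langle\tau\rangle}^{I_2(6)}(\tau)=-2-2=-4$. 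So if you carry out your own consistency check honestly, you will land on the formula with scalar $(-1)^{\lfloor(n-k)/2\rfloor}$ rather than the printed exponents; the paper's argument identifies $\mu(\hat 0,A^c)^\rho$ with the action of $\rho$ on $\tilde H(\hat 0,A^c)$ and thereby elides precisely this degree sign, so this verification must be done explicitly rather than treated as routine bookkeeping.
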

\begin{proof} The dihedral group $I_2(n)$ has the presentation 
$\langle \rho, \sigma: \sigma^n=1, \rho^2=1, \rho\sigma=\sigma^{-1}\rho\rangle,$ where $\sigma$ is the long cycle in $\mathfrak{S}_n$ and $\rho$ is an involution.  Since the set of facets $\mathcal{Z}(C_n)$ is invariant under $I_2(n)$,  the right-hand side $Y$ of \Cref{eqn:equiv-hom-kNOT2} is an $I_2(n)$-module. 

Consider Item (1). Let $k$ be even, and let $A^c$ be a facet in $\mathcal{Z}(C_n)$ fixed by $\rho$, as in Part (1) of \Cref{lem:fixed-points-Zk(Cn)}. Since the set $\mathcal{Z}_k(C_n)$, and hence the spaces 
$\tilde{H}(\hat 0, x)\otimes \tilde{H}(x, \hat 1), x\in \mathcal{Z}_k(C_n)$, are permuted by $I_2(n)$,  writing 
$W_{A^c}=\tilde{H}(\hat 0, A^c)\otimes \tilde{H}(A^c, \hat 1)$, we have 
\[Y=\bigoplus_y y\cdot W_{A^c}
\]
where the sum ranges over all coset representatives $y$ of $\langle\rho\rangle$.  \Cref{lem:fixed-points-Zk(Cn)} tells us that $\rho$ acts like 
$\rho\mapsto (-1)^{\frac{n-k}{2}}$ on $\tilde{H}(\hat 0, A^c)$, 
and clearly trivially on $\tilde{H}(A^c, \hat 1)$, so $\rho$ acts on $W_{A^c}$ like $\hat\rho_{n,k}$, and $Y$ is in fact the induced module 
\[\hat\rho_{n,k}\big\uparrow_{\langle \rho\rangle}^{I_2(n)}.\]

When $k$ is odd, we take $B^c$ to be a facet in $\mathcal{Z}(C_n)$ fixed by $\tau$, as in Part (1) of \Cref{lem:fixed-points-Zk(Cn)}, and the result follows mutatis mutandis.

Item (2) follows  similarly using Part (2) of \Cref{lem:fixed-points-Zk(Cn)}.  

Finally consider Item (3). 
Now $\tilde{H}_{n-3}(\Delta_2(C_n))$ is a one-dimensional representation of $I_2(n)$,  so is completely determined by the character values on $\sigma$ and $\rho$, where as above $\rho$ is the fixed-point-free involution if $n$ is even, and an involution with one fixed point if $n$ is odd. 

From \cite[Theorem 7.15]{BDJRSX2024} we have $\sigma\mapsto (-1)^{n-1}$.

Applying \Cref{lem:fixed-points-Zk(Cn)} to \Cref{eqn:trace-kIS2} gives us the  trace computation
\[ (-1)\mathrm{tr}(\rho, \tilde{H}(\Delta_2(C_n)) = \chi^{(2, 1^{n-2})}(\rho)
+(-1)^{n-3}
\begin{cases}
2 (-1)^{\frac{n-2}{2}}, & \text{if $n$ is even},\\
(-1)^{\frac{n-1}{2}},
& \text{if $n$ is odd}. 
\end{cases}
\]
Since by the Murnaghan-Nakayama rule, $\chi^{(2, 1^{n-2})}(\rho)$ equals $(-1)^{\frac{n-2}{2}}$ if $n$ is even, and is  0 if $n$ is odd, the claim follows.
\end{proof}

\section{Conclusion} 

In this paper we described the $f$- and $h$-polynomials for the cut complexes of the disjoint union of graphs in terms of the polynomials for the components.  We showed that cut complexes of squared paths are shellable, and hence homotopy equivalent to a wedge of equidimensional spheres.  We  studied the cut complexes of grid graphs, showing that for $k=3,4$ the homotopy type is a wedge of spheres, in fact even shellable for $k=3$, and computing the Betti numbers, as well as giving partial information for $k=6$.   Finally we  completely determined  the group representation on the rational homology of the cut complex for the disjoint union of complete graphs, and for paths and cycles first considered in \cite{BDJRSX2024}.

In \Cref{conj:Betti-numbers-SqPaths} we propose an enumeratively intriguing  recurrence for the Betti numbers of the (shellable) $k$-cut complexes of squared paths for $4\le k\le n-4$.  
Another open problem, see \Cref{conj:GridGraphs}, is whether the $k$-cut complex of the grid graph $G(m,n)$ is shellable for all $3\le k\le mn-3$.    We would also like to have more information on the Betti numbers of cut complexes for grid graphs.

\section{Acknowledgments}
We thank the organizers of the 2021 Graduate Research Workshop in Combinatorics, where this work originated.  Marija Jeli\'c Milutinovi\'c has been supported by the Project No.\ 7744592 MEGIC ``Integrability and Extremal Problems in Mechanics, Geometry and Combinatorics'' of the Science Fund of Serbia, and by the Faculty of Mathematics University of Belgrade through the grant (No.\ 451-03-47/2023-01/200104) by the Ministry of Education, Science, and Technological Development of the Republic of Serbia. 

\bibliographystyle{plain}
\bibliography{ARXIV-GridfVecs-2024July9-FINAL}
\end{document}